\newtheorem{theorem}{Theorem}[chapter]
\newtheorem{lemma}[theorem]{Lemma}
\newtheorem{prop}[theorem]{Proposition}
\newtheorem{corollary}[theorem]{Corollary}
\theoremstyle{definition}
\newtheorem{example}[theorem]{Example}
\theoremstyle{remark}
\newtheorem{remark}[theorem]{Remark}
\numberwithin{section}{chapter}
\numberwithin{equation}{chapter}
\newcommand{\smallsum}{\textstyle\sum}
\newcommand{\calB}{\mathcal B}
\newcommand{\calC}{\mathcal C}
\newcommand{\calF}{\mathcal F}
\newcommand{\calG}{\mathcal G}
\newcommand{\calH}{\mathcal H}
\newcommand{\calL}{\mathcal L}
\newcommand{\calM}{\mathcal M}
\newcommand{\invL}{{\mathpalette\mirrorL\relax}}
\newcommand{\mirrorL}[2]{\reflectbox{$#1L$}}
\newcommand{\inv}[1]{\frac{1}{#1}}
\newcommand{\tinv}[1]{\tfrac{1}{#1}}
\newcommand{\n}{\|}
\providecommand{\eps}{{\ensuremath{\varepsilon}}}
\newcommand{\C}{C}
\providecommand{\eps}{{\ensuremath{\varepsilon}}}
\providecommand{\N}{{\ensuremath{\mathbbm{N}}}}
\providecommand{\Z}{{\ensuremath{\mathbbm{Z}}}}
\providecommand{\R}{{\ensuremath{\mathbbm{R}}}}
\providecommand{\E}{{\ensuremath{\mathbb{E}}}}
\renewcommand{\P}{{\ensuremath{\mathbb{P}}}}
\providecommand{\1}{{\ensuremath{\mathbbm{1}}}}
\providecommand{\HS}{{\ensuremath{\textup{HS}}}}
\newcommand{\sgc}{}
\newcommand{\cgs}{}
\begin{document}

\frontmatter

\title{Local Lipschitz continuity in the\\ initial value
and strong completeness
for\\ nonlinear stochastic differential equations}

\author{Sonja Cox}
\address{
Korteweg-de Vries Instituut,
University of Amsterdam}
\curraddr{}
\email{s.g.cox@uva.nl}

\author{Martin Hutzenthaler}
\address{Faculty of Mathematics,
University of Duisburg-Essen}
\curraddr{}
\email{martin.hutzenthaler@uni-due.de }

\author{Arnulf Jentzen}
\address{School of Data Science and Shenzhen Research Institute of Big Data,
The Chinese University of Hong Kong, Shenzhen (CUHK-Shenzhen), China;
Applied Mathematics: Institute of Analysis and Numerics, 
University of M\"unster, Germany}
%
\curraddr{}
\email{ajentzen@cuhk.edu.cn, ajentzen@uni-muenster.de}

\date{}

\subjclass[2010]{Primary: 60H10; Secondary: 60H15}

\keywords{Non-linear stochastic ordinary differential equations, non-linear
stochastic partial differential equations, regularity with respect to initial value,
strong completeness, stochastic Van der Pol equation,
stochastic Duffing-Van der Pol equation, 
stochastic Burgers equations, Cahn-Hilliard-Cook equation, non-linear
stochastic wave equation}


\begin{abstract}
Recently, Hairer et~al.~\cite{hhj12} showed that
there exist stochastic differential equations (SDEs)
with infinitely often differentiable and globally bounded coefficient 
functions
whose solutions
fail to
be locally Lipschitz continuous
in the strong $L^p$-sense
with respect to the initial value
for every $p\in (0,\infty]$.
In this article we provide conditions on the coefficient functions
of the SDE and on $ p \in (0,\infty] $
that are sufficient for local Lipschitz continuity in the strong $L^p$-sense with respect 
to the initial value 
and we establish explicit estimates for the local Lipschitz continuity 
constants.
In particular, we prove local Lipschitz continuity in the initial value
for several nonlinear stochastic ordinary and stochastic partial 
differential equations in the 
literature such as
the stochastic van der Pol oscillator,
Brownian dynamics,
the Cox-Ingersoll-Ross processes and 
the Cahn-Hilliard-Cook equation.
As an application of our estimates, we obtain strong completeness for several 
nonlinear SDEs.
\end{abstract}

\maketitle

\tableofcontents

\mainmatter

\chapter{Introduction}

Let $ d, m \in \N $,
let $ O \subseteq \R^d $ be an open set,
let $ ( \Omega, \mathcal{F}, \P,
( \mathcal{F}_t )_{ t \in [0,\infty) }
) $
be a filtered probability space satisfying
the usual conditions (i.e., for all $t\in [0,\infty)$ it holds that $\{A \subseteq \Omega \colon \sgc{}(\exists\,\cgs{} B\in \mathcal{F} \sgc{}\colon\cgs{} A \subseteq B \text{ and }\P(B)=0\sgc{})\cgs{} \} \sgc{}\subseteq\cgs{} \mathcal{F}_t$ and $\mathcal{F}_t = \cap_{s\in (t,\infty)}\mathcal{F}_s$), 
let 
$
  W \colon [0,\infty) \times \Omega \to \R^m
$
be a standard $ ( \mathcal{F}_t )_{ t \in [0,\infty) } $-Brownian motion,
let $ \mu \colon O \to \R^d $
and $ \sigma \colon O \to \R^{ d \times m } $
be continuous functions
and let $ X^x \colon [0,\infty) \times \Omega \to O $,
$ x \in O $,
 be adapted stochastic processes with continuous sample paths 
 which solve the stochastic differential equation (SDE)
\begin{equation}
\label{eq:SDE}
   X_t^x=x+\int_0^t \mu(X_s^x)\,ds+\int_0^t\sigma(X_s^x)\,dW_s
\end{equation}
$\P$-a.s.\ for all $ t \in [0,\infty) $ and
all $ x \in O $.

An essential question in stochastic analysis is regularity of
solution processes of the SDE~\eqref{eq:SDE} in the initial value.
In this article, our main objective\sgc{} is to determine\cgs{} sufficient conditions
on $\mu$, $\sigma$ and
$t,p\in(0,\infty)$
to ensure that the function
$O\ni x\mapsto X_t^x\in L^p(\Omega;\R^d)$
is
\emph{locally Lipschitz continuous}.
In particular,
for \sgc{}every $t,p\in(0,\infty)$\cgs{}
our goal is to obtain a continuous function $\varphi_{t,p}\colon 
O^2\to[0,\infty)$
such that for all $ x, y \in O $ it holds that
\begin{equation}
\label{eq:goal}
  \| X^x_t - X^y_t \|_{ L^p( \Omega; \R^d ) }
  \leq
  \varphi_{t,p}(x,y)
  \left\| x - y \right\|
  .
\end{equation}
In addition, we want the values of the functions $\varphi_{t,p}$, 
$t,p\in(0,\infty)$, to be
as small and as explicit as possible.
A well-known sufficient condition for~\eqref{eq:goal} with $p=2$
is the {\it global monotonicity} assumption that there exists a $c\in\R$
such that for all $ x, y \in O $ it holds that
\begin{equation}\label{eq:global.monotonicity}
    \left\langle 
      x - y , \mu(x) - \mu(y)
    \right\rangle
  + 
    \tfrac{ 1 }{ 2 }
    \left\|
      \sigma(x) - \sigma(y)
    \right\|^2_{ \HS( \R^m, \R^d ) }
  \leq 
    c
    \left\| 
      x - y 
    \right\|^2 
    .
\end{equation}
If the monotonicity assumption is satisfied,  
inequality~\eqref{eq:goal} holds with $p=2$
and $\varphi_{t,2}(x,y)=e^{ct}$
for all $t\in(0,\infty)$\sgc{},\cgs{} $x,y\in O$
(see, e.g., Assumption~(H2) and Proposition~4.2.10 in 
Pr{\'e}v{\^o}t \&\R\"{o}ckner~\cite{PrevotRoeckner2007}).
Thus the global monotonicity property~\eqref{eq:global.monotonicity}
implies for all $ t \in [0,\infty) $ 
global Lipschitz continuity of the function
$
  O\ni x\mapsto X_t^x\in L^2(\Omega;\R^d)
$.
Unfortunately, the coefficient functions of a large number of nonlinear
SDEs arising from applications 
do not satisfy the global monotonicity assumption~\eqref{eq:global.monotonicity}
(see
Sections~\ref{sec:examples_SODE} and~\ref{sec:examples_SPDE} for a selection of 
examples, note that by `SDE' we mean both stochastic ordinary and stochastic 
partial differential equations).
It remained an open problem to find conditions on $\mu$, $\sigma$ and $p\in[2,\infty)$
which are satisfied by most of the nonlinear SDEs from applications and which ensure
for all $ t \in [0,\infty) $
local Lipschitz continuity of the function
$
  O \ni x\mapsto X_t^x\in L^p(\Omega;\R^d)
$.
In this article, we \sgc{}contribute to this problem of research\cgs{}.

In the deterministic case $\sigma\equiv0$,
the solution of~\eqref{eq:SDE} is always locally Lipschitz continuous in the initial value
if $\mu$ is locally Lipschitz continuous.
The stochastic case is more subtle than the deterministic case.
To emphasize the challenge of the stochastic case,
we consider the following example SDE.
In the special case $d=2$, $m=1$, $O=\R^d$
and $\mu(x_1,x_2)=(x_1x_2,-(x_1)^2)$ for all $(x_1,x_2)\in\R^2$
the SDE~\eqref{eq:SDE} reads as
\begin{equation}
\label{eq:circle.counterexample}
  d X_t^x
  =
  \left(
    \begin{array}{c}
      X^{ x, 1 }_t
      X^{ x, 2 }_t
    \\[1ex]
      - ( X^{ x, 1 }_t )^2
    \end{array}
  \right)
  dt
  + 
  \sigma( X_t^x ) \, dW_t
\end{equation}
for \sgc{} all $t \in [0,\infty)$ and all $x\in \R^2$\cgs{}.
Hairer et al.~\cite{hhj12}
prove that if $\sigma(x)=x$ for all $x\in\R^2$
in~\eqref{eq:circle.counterexample}, then
for any $t,p\in(0,\infty)$
the function
$\R^2\ni x\mapsto X_t^x\in L^p(\Omega;\R^2)$
is well-defined but not locally Lipschitz continuous.
In addition, Theorem 1.2 in
Hairer et al.~\cite{hhj12}
implies that this 
{\it loss of regularity phenomenon} can happen even in the
case of globally bounded and smooth coefficients.
In contrast,
Corollary~\ref{cor:UV3}
below implies 
for the SDE~\eqref{eq:circle.counterexample}
that if
$ \sigma $ in~\eqref{eq:circle.counterexample}
is globally bounded and globally Lipschitz continuous,
then 
for all $t,p\in(0,\infty)$
the function
$\R^2\ni x\mapsto X_t^x\in L^p(\Omega;\R^2)$
is locally Lipschitz continuous.
More generally, 
Corollary~\ref{cor:UV3}
ensures for the SDE~\eqref{eq:SDE}
that if $\mu$ is differentiable with
$\lim_{x\to\infty}\|\mu'(x)\|/\|x\|^2=0$
and 
$\limsup_{x\to\infty}\langle x,\mu(x)\rangle/\|x\|^2<\infty$
and if $\sigma$ is globally bounded and globally Lipschitz continuous,
then
for all $t,p\in(0,\infty)$
the function
$
  \R^d\ni x\mapsto X_t^x\in L^p(\Omega;\R^d)
$
is locally Lipschitz continuous.

It turns out that for some SDEs
such as Cox-Ingersoll-Ross processes (Subsection~\ref{ssec:cir})
or the Cahn-Hilliard-Cook equation
(Subsection~\ref{ssec:Cahn_Hilliard}),
it is appropriate to measure distance with a general
function
$V\in C^2(O^2,[0,\infty))$ rather than with 
the squared Euclidean distance $O^2\ni(x,y)\mapsto\|x-y\|^2\in[0,\infty)$.
Then It\^o's formula implies that
$dV(X_t^x,X_t^y)=
(\overline{\mathcal{G}}_{\mu,\sigma}V)(X_t^x,X_t^y)\,dt
+
(\overline{{G}}_{\sigma}V)(X_t^x,X_t^y)\,dW_t
$
for all $t\in[0,\infty)$, $x,y\in O$
where 
the linear operators
$  
  \overline{ \mathcal{G} }_{ \mu, \sigma }
  \colon
  C^2( O^2, \R )
  \to
  \C( O^2, \R )
$
(see (1.1) in Maslowski~\cite{Maslowski1986} and Ichikawa~\cite{Ichikawa1984})
and
$  
  \overline{ G }_{ \sigma }
  \colon
  C^2( O^2, \R )
  \to
  \C( O^2, \R^{ 1 \times m } )
$
are defined
by
\begin{align*}
&  ( \overline{ \mathcal{G} }_{ \mu, \sigma } \phi)( x, y )
  :=
  \big(
    \tfrac{ \partial }{ \partial x }
    \phi
  \big)(x,y) \, 
  \mu( x )
  +
  \big(
    \tfrac{ \partial }{ \partial y }
    \phi
  \big)(x,y) \, 
  \mu( y )
\\
\nonumber
  & \quad 
  +
  \tfrac{ 1 }{ 2 }
  \sum_{ i = 1 }^m
  \big(
    \tfrac{ \partial^2 }{ \partial x^2 }
    \phi
  \big)(x,y) 
  \big( 
    \sigma_i( x ) , 
    \sigma_i( x )
  \big)
  +
  \sum_{ i = 1 }^m
  \big(
    \tfrac{ \partial }{ \partial y }
    \tfrac{ \partial }{ \partial x }
    \phi
  \big)(x,y)\big( 
    \sigma_i( x ) , 
    \sigma_i( y )
  \big)
\\ 
  & \quad 
  +
  \tfrac{ 1 }{ 2 }
  \sum_{ i = 1 }^m
  \big(
    \tfrac{ \partial^2 }{ \partial y^2 }
    \phi
  \big)(x,y) 
  \big(
    \sigma_i( y ) , 
    \sigma_i( y )
  \big)
\\
& 
  ( \overline{ G }_{ \sigma } \phi)( x, y )
  :=
  \big(
    \tfrac{ \partial }{ \partial x }
    \phi
  \big)(x,y) \, 
  \sigma( x )
  +
  \big(
    \tfrac{ \partial }{ \partial y }
    \phi
  \big)(x,y) \, 
  \sigma( y )
\end{align*}
for all $ x, y \in O $\sgc{}, \cgs{}$ \phi \in C^2( O^2, \R ) $.
In terms of these operators, we formulate
the first main result of this article.
\begin{theorem}
\label{thm:regularity.introduction}
   Assume the above setting\sgc{},\cgs{} let 
   $t\in(0,\infty)$,
   $ \alpha_0, \alpha_1, \beta_0, \beta_1, c \in [0,\infty) $, 
   $V\in C^2(O^2,[0,\infty))$,
   $ U_0, U_1 \in C^2( O, [0,\infty) ) $,
   $ \overline{U} \in C( O, [0,\infty) ) $,
   $ r,p,q_0,q_1 \in ( 0, \infty ] $
   \sgc{}satisfy\cgs{}
   $
     \tfrac{ 1 }{ r } + \tfrac{ 1 }{ q_0 }  +  \tfrac{ 1 }{ q_1 } = \tfrac{ 1 }{ p }	
   $
   and
   $
     ( V^{ - 1 } )(0)
     \subseteq 
     (
       \overline{\mathcal{G}}_{ \mu,\sigma }V
     )^{ - 1 }(0)
     \cap
     (
       \overline{G}_{ \sigma }V
     )^{ - 1 }(0)
   $\sgc{},\cgs{} assume
   \begin{align}
   \label{eq:assumption.regularity.introduction}
   \tfrac{
      ( \overline{ \mathcal{G} }_{ \mu, \sigma } V)( x, y )
    }{
      V( x, y )
    }
    +
      \tfrac{
	( r - 1 ) \,
	\|
	    ( \overline{ G }_{ \sigma } V )( x, y )
	\|^2
      }{
	2 \,
	( 
	  V( x, y ) 
	)^2
      }
    & \leq   
      c 
      +
      \tfrac{
	  U_0( x ) + U_0( y )
      }{
	2 q_0 t e^{ \alpha_0 t }
      }
      +
      \tfrac{
	\overline{U}( x ) + \overline{U}( y )
      }{
	2 q_1 e^{ \alpha_1 t }
      },
\end{align}
\begin{align}
    \label{eq:condition.exponential.moments0}
    U_0'(x)\mu(x)
    +\tfrac{
    \operatorname{tr}\left(\sigma(x) \sigma(x)^{*}
     (\operatorname{Hess}U_0)(x)\right)
    }{2}
     +
     \tfrac{ 1 }{ 2 }
     \|
       \sigma( x )^* ( \nabla U_0 )( x )
     \|^2
   & \leq
     \alpha_0 U_0( x )
     + \beta_0,
\end{align}
\begin{equation}
\begin{split}
&    U_1'(x)\mu(x)
    +\tfrac{
    \operatorname{tr}\left(\sigma(x) \sigma(x)^{*}
     (\operatorname{Hess}U_1)(x)\right)
    }{2}
     +
     \tfrac{ 1 }{ 2 }
     \|
       \sigma( x )^* ( \nabla U_1 )( x )
     \|^2
     +
     \overline{U}( x )
\\ &  \leq
     \alpha_1 U_1( x )
     + \beta_1
   \label{eq:condition.exponential.moments1}
   \end{split}     \end{equation}
   for all 
   $ x,y\in O$
   with $ V(x,y)\neq 0 $\sgc{}, and assume\cgs{} 
   $\sup_{x,y\in K, V(x,y)\neq 0}\tfrac{\|(\overline{G}_{\sigma}V)(x,y)\|}{V(x,y)}<\infty$
   for all compact 
   sets
   $K \subseteq O$.
   Then
   \begin{equation}
   \begin{split}
   \label{eq:thm:local.Lipschitz}
   &
       \left\|
         V(X^x_{ t } , X^y_{ t })
       \right\|_{
         L^p( \Omega; \R )
       }
   \leq
       V(x,y)
       \exp\!\left(
         c t +
         \smallsum_{ i = 0 }^1
         \tfrac{
          2\beta_i t+ U_i( x ) + U_i(y) 
         }{
           2 q_i
         }
       \right)
   \end{split} \end{equation}
   for all $ x, y \in O $.
\end{theorem}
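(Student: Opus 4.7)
The plan is to apply It\^o's formula to $V(X^x_t,X^y_t)$, reorganise the resulting exponent so that one piece becomes the logarithm of a non-negative exponential local martingale, and then separate the three contributions via H\"older's inequality with exponents $(r,q_0,q_1)$ conjugate to $p$. First I would dispose of the case $V(x,y)=0$: the inclusion $V^{-1}(0)\subseteq(\overline{\mathcal{G}}_{\mu,\sigma}V)^{-1}(0)\cap(\overline{G}_{\sigma}V)^{-1}(0)$ together with It\^o's formula forces $V(X^x_s,X^y_s)\equiv 0$, so both sides of~\eqref{eq:thm:local.Lipschitz} vanish. Assume henceforth $V(x,y)>0$ and localise with the stopping times $\tau_n=\inf\{s\geq 0:V(X^x_s,X^y_s)\leq 1/n\text{ or }\|(X^x_s,X^y_s)\|\geq n\}$; the compactness hypothesis on $\|\overline{G}_{\sigma}V\|/V$ ensures that all integrands below are bounded on $[0,\tau_n]$.

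Applying It\^o to $\log V(X^x_{\cdot\wedge\tau_n},X^y_{\cdot\wedge\tau_n})$ and exponentiating produces the identity
\begin{equation*}
V(X^x_{t\wedge\tau_n},X^y_{t\wedge\tau_n})=V(x,y)\exp\!\left(\int_0^{t\wedge\tau_n}\!\left[\frac{\overline{\mathcal{G}}_{\mu,\sigma}V}{V}+\frac{r-1}{2}\frac{\|\overline{G}_{\sigma}V\|^2}{V^2}\right]ds\right)M_{t\wedge\tau_n}^{1/r},
\end{equation*}
where $M_s=\exp(r\int_0^s\tfrac{\overline{G}_{\sigma}V}{V}\,dW_u-\tfrac{r^2}{2}\int_0^s\tfrac{\|\overline{G}_{\sigma}V\|^2}{V^2}\,du)$ is a non-negative local martingale, and hence a supermartingale with $\mathbb{E}[M_{t\wedge\tau_n}]\leq 1$. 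Hypothesis~\eqref{eq:assumption.regularity.introduction} bounds the first exponent pointwise by $ct+\tfrac{1}{2q_0 t e^{\alpha_0 t}}\int_0^t(U_0(X^x_s)+U_0(X^y_s))\,ds+\tfrac{1}{2q_1 e^{\alpha_1 t}}\int_0^t(\overline{U}(X^x_s)+\overline{U}(X^y_s))\,ds$. I would then take $L^p$-norms and apply the three-way H\"older inequality associated with $\tfrac{1}{r}+\tfrac{1}{q_0}+\tfrac{1}{q_1}=\tfrac{1}{p}$: the $L^r$-norm of the martingale factor is $(\mathbb{E}[M_{t\wedge\tau_n}])^{1/r}\leq 1$, while the remaining two factors land in $L^{q_0}$ and $L^{q_1}$ respectively.

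For the $L^{q_0}$-factor, Jensen's inequality against the uniform probability measure on $[0,t]$ turns the exponential of the time average into the time average of exponentials, and Cauchy--Schwarz separates the $x$- and $y$-copies. Each resulting term is then bounded via a standard exponential-moment lemma: applying It\^o to $\exp(e^{-\alpha_0 s}U_0(X^x_s))$ and invoking $e^{-2\alpha_0 s}\leq e^{-\alpha_0 s}$ together with~\eqref{eq:condition.exponential.moments0} shows that $\exp(e^{-\alpha_0 s}U_0(X^x_s)-\int_0^s\beta_0 e^{-\alpha_0 u}\,du)$ is a supermartingale, whence $\mathbb{E}[\exp(e^{-\alpha_0 s}U_0(X^x_s))]\leq\exp(U_0(x)+\beta_0 s)$. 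The $L^{q_1}$-factor is handled analogously via~\eqref{eq:condition.exponential.moments1}, in which the additional $\overline{U}$ term is precisely what lets the running integral get absorbed: the resulting supermartingale yields $\mathbb{E}[\exp(e^{-\alpha_1 t}\int_0^t\overline{U}(X^x_u)\,du)]\leq\exp(U_1(x)+\beta_1 t)$. Multiplying the three bounds produces the right-hand side of~\eqref{eq:thm:local.Lipschitz} at time $t\wedge\tau_n$, and Fatou's lemma as $n\to\infty$ delivers the full estimate. The main obstacle will be the rigorous localisation---confirming that $\tau_n\to\infty$ almost surely when $V(x,y)>0$, which relies precisely on the compatibility hypothesis on $V^{-1}(0)$ to prevent $V(X^x_s,X^y_s)$ from reaching $0$---after which the remainder is careful bookkeeping with It\^o, H\"older and Jensen.
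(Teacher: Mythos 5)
Your proposal is correct and follows essentially the same route as the paper: It\^o, an exponential representation for $V(X^x,X^y)$, a three-way H\"older split with exponents $(r,q_0,q_1)$, Jensen against the uniform measure on $[0,t]$ plus Cauchy--Schwarz to decouple the $x$- and $y$-trajectories, and finally an exponential Lyapunov supermartingale argument driven by~\eqref{eq:condition.exponential.moments0} and~\eqref{eq:condition.exponential.moments1}. This is precisely the decomposition the paper carries out in Proposition~\ref{prop:two_solution_supoutside} (H\"older split off the exponential martingale) together with Lemma~\ref{lem:multiple_exp} and Corollary~\ref{cor:exp_mom}, assembled in Theorem~\ref{thm:UV}.

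The one place where you and the paper part ways is the localisation at the zero set of $V$. You localise via $\tau_n=\inf\{s: V(X^x_s,X^y_s)\le 1/n\text{ or }\ldots\}$ and then flag the claim $\tau_n\to\infty$ (equivalently, $V(X^x_\cdot,X^y_\cdot)$ never hits zero once $V(x,y)>0$) as the remaining obstacle, to be deduced from the inclusion $(V^{-1})(0)\subseteq(\overline{\mathcal{G}}_{\mu,\sigma}V)^{-1}(0)\cap(\overline{G}_\sigma V)^{-1}(0)$. The paper avoids having to prove this a priori: Lemma~\ref{lem:Lyapunov2} (via the stochastic Gronwall Lemma~\ref{lem:stochastic_Gronwall} and Corollary~\ref{c:stochastic_Gronwall2}) establishes the exponential representation for $V(X_\tau)$ directly, using the convention $\tfrac{0}{0}=0$ together with the observation~\eqref{eq:division.doesnt.matter} that the integrand vanishes on the set $\{V=0\}$, so no stopping below $V=0$ is needed; the positivity of $V(X^x_\cdot,X^y_\cdot)$ when $V(x,y)>0$ then falls out a posteriori from the identity itself. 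Your localisation scheme is workable, but deriving $\tau_n\to\infty$ from the hypothesis is a genuine extra argument you would still owe, whereas the paper's stochastic Gronwall route bypasses it entirely. Everything else in your sketch---the reorganisation into $\exp(\cdots)M_{t\wedge\tau_n}^{1/r}$, the inequality $\mathbb{E}[M_{t\wedge\tau_n}]\le1$, the use of $e^{-2\alpha_i s}\le e^{-\alpha_i s}$ and $U_i,\overline{U}\ge0$ to replace $e^{\alpha_i s}$ by $e^{\alpha_i t}$, and the handling of the $\beta_i$ drift corrections---matches the paper's computation leading to~\eqref{eq:exponential.moments.type0} and~\eqref{eq:exponential.moments.type1}.
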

Theorem~\ref{thm:regularity.introduction}
follows immediately from the more general version
in Theorem~\ref{thm:UV} in Subsection~\ref{sec:marginal} below.
In addition,
the second main result of this article,
Theorem~\ref{thm:UV2} in Subsection~\ref{sec:uniform}
below,
establishes a sufficient condition on
$\mu$, $\sigma$ and $t,p\in(0,\infty)$ which ensures
that the function $O\ni x\mapsto X^x|_{[0,t]}\in L^p(\Omega;\C([0,t],\R^d))$
is locally Lipschitz continuous, that is, loosely speaking,
an estimate such as~\eqref{eq:goal} with supremum over time inside the $L^p$-norm.
Theorem~\ref{thm:regularity.introduction}
implies
local Lipschitz continuity
in the initial value for all examples in Sections~\ref{sec:examples_SODE}
and~\ref{sec:examples_SPDE}.
In many of these examples
the function $V$ in
Theorem~\ref{thm:regularity.introduction}
is the squared Euclidean distance, that is,
$V(x,y)=\|x-y\|^2$ for all $x,y\in O$.
In that case, in the setting of
Theorem~\ref{thm:regularity.introduction},
condition~\eqref{eq:assumption.regularity.introduction}
reads as
   \begin{equation}
   \label{eq:assumption.regularity.introduction.discussion}
   \begin{split}
&  \tfrac{ 
    2\langle
      x - y ,
      \mu( x ) - 
      \mu( y )
    \rangle
    +
    \| 
      \sigma( x ) - \sigma( y ) 
    \|^2_{ \HS( \R^m, \R^d ) }
  }{
    \|
      x - y
    \|^2
  }
  {
    +
    }
    \tfrac{
      2( r - 1 ) 
      \| 
        (
          \sigma( x ) - \sigma( y )
        )^*
        ( x-y )
      \|^2
    }{
      \| x - y \|^4
    }
\\ & 
   \leq   
     c 
     +
     \tfrac{
         U_0( x ) + U_0( y )
     }{
       2 q_0 t e^{ \alpha_0 t }
     }
     +
     \tfrac{
       \overline{U}( x ) + \overline{U}( y )
     }{
       2 q_1 e^{ \alpha_1 t }
     }
   \end{split}
   \end{equation}
   for all 
   $ \sgc{}x,y \in O\cgs{} $
   with $ x \neq y $
   (see Example~\ref{ex:phi_identity} and Corollary~\ref{cor:UV_squared_norm}).
   For Cox-Ingersoll-Ross processes in Subsection~\ref{ssec:cir}
   and for
   Wright-Fisher diffusions in Subsection~\ref{ssec:Wright.Fisher.diffusion},
   we choose $V$ such that $\overline{G}_\sigma V\equiv 0$.
   This
   considerably simplifies
   condition~\eqref{eq:assumption.regularity.introduction}
   with the cost that in a second step we need to derive a local Lipschitz estimate
   from inequality~\eqref{eq:thm:local.Lipschitz}.
   Also the function 
   $
     O^2 \ni (x,y) \mapsto
     \left\|x-y \right\|^2
     \left( 1 + \|x\|^q+\|y\|^q
     \right)
     \in[0,\infty)
   $
   for some $q\in[2,\infty)$ can be a good choice.
We note that in a number of our examples, we could not verify
the 
conditions~\eqref{eq:condition.exponential.moments0}
and
\eqref{eq:assumption.regularity.introduction.discussion}
with $\overline{U}\equiv 0$.
The key to many of our examples is either
inequality~\eqref{eq:assumption.regularity.introduction.discussion}
together with
condition~\eqref{eq:condition.exponential.moments1}
with $\overline{U}\not\equiv 0$
(see Subsections~\ref{ssec:stochastic.van.der.Pol.oscillator}, \ref{ssec:stochastic.Duffing.van.der.Pol.oscillator},
\ref{sec:overdamped_Langevin},
\ref{ssec:stochastic.Burgers.equation},
\ref{ssec:Cahn_Hilliard})
or to find a suitable function $ V $
which is not the squared Euclidean distance
(see Subsections~\ref{ssec:cir}, \ref{ssec:Wright.Fisher.diffusion} and 
\ref{ssec:stochastic.wave.equation}).

The method of proof of
Theorem~\ref{thm:regularity.introduction}
is to show under suitable assumptions 
(see Proposition~\ref{prop:two_solution} for details) 
that
\begin{equation}  \begin{split}
\label{eq:equation.norm.increment}
  V(X_t^x,X_t^y)
  &=V(x,y)\exp\!\left(
       \smallint_0^t\tfrac{
         (
           \overline{\mathcal{G}}_{\mu,\sigma}V
         )(X_s^x,X_s^y)
       }
       {V(X_s^x,X_s^y)}\,ds
       \right)
\\ & \quad \cdot
     \exp\!\left(
       \smallint_0^t\tfrac{
         ( 
           \overline{{G}}_{\sigma}V
         )(X_s^x,X_s^y)
       }{
         V( X_s^x,X_s^y)
       }
       \, dW_s
       -
       \smallint_0^t
       \tfrac{
         \|
           ( \overline{{G}}_{\sigma} V )(X_s^x,X_s^y)
         \|^2
       }{
         2 \,
         ( V(X_s^x,X_s^y) )^2
       }
       \,
       ds
     \right)
\end{split}     \end{equation}
$\P$-a.s.\ for all $x,y\in O$ where $\tfrac{0}{0}:=0$
and then to estimate the $L^p$-norm of the right-hand side for each $p\in(0,\infty)$.
Now due to condition~\eqref{eq:assumption.regularity.introduction},
it suffices to estimate exponential moments.
Exponential moments, in turn, are guaranteed by
conditions~\eqref{eq:condition.exponential.moments0}
and~\eqref{eq:condition.exponential.moments1}.
More precisely, 
Corollary~\ref{cor:exp_mom}
together with
conditions~\eqref{eq:condition.exponential.moments0}
and~\eqref{eq:condition.exponential.moments1}
implies that
\begin{equation}\begin{split}
\label{eq:exponential.moments.type0}
  \left\|
    \exp\!\left(
      \smallint_0^t 
      \tfrac{ 
        U_0(X_s^x) 
      }{
        2 q_0 t e^{ \alpha_0 s }
      }
      \, ds
    \right)
  \right\|_{
    L^{ 2 q_0 }( \Omega; \R ) 
  }
& \leq
  \tfrac{1}{t}\int_0^t
  \left\|
    \exp\!\left(
      \tfrac{U_0(X_s^x)}{2q_0e^{\alpha_0 s}}
    \right)
  \right\|_{L^{2q_0}(\Omega;\R)}\,ds
\\ &
  \leq 
    \exp\!\left(
      \tfrac{
        \beta_0 t+ U_0( x )
      }{
        2 q_0
      }
    \right)
\end{split}\end{equation}
and
\begin{align}
  \left\|
    \exp\!\left(
      \tfrac{U_1(X_t^x)}{2q_1e^{\alpha_1 t}}+
      \smallint_0^t \tfrac{\overline{U}(X_s^x)}{2q_1e^{\alpha_1 s}}\,ds
    \right)
  \right\|_{L^{2q_1}(\Omega;\R)}
  &
  \leq \exp\!\left(
         \tfrac{
          \beta_1 t+ U_1( x )
         }{
           2 q_1
         }
       \right)
\label{eq:exponential.moments.type1}
\end{align}
for all $x\in O$.
Note that the final factor on the right-hand
side of~\eqref{eq:equation.norm.increment}
is a local exponential martingale
so that in the case that $V\sgc{}\colon\cgs{}O^2\rightarrow \R^2$
is the squared Euclidean distance, 
the global monotonicity 
assumption~\eqref{eq:global.monotonicity}
immediately implies for all $x,y\in O$
that 
$
  \|X_t^x-X_t^y\|_{L^2(\Omega;\R)}
  \leq
  \left\|x-y \right\|
  e^{ct}
$.

There are a number of related results in the literature.
The idea of a general function for measuring distance was 
used
in Theorem 1.2 in Maslowski~\cite{Maslowski1986} 
(cf.\ also Ichikawa~\cite{Ichikawa1984} and, e.g., also Leha \&\ Ritter~\cite{LehaRitter1994, LehaRitter2003})
for studying
long-time stability properties of SDEs under the assumption
$
  (\overline{\mathcal{G}}_{\mu,\sigma}V)(x,y) 
  \leq 
  0
$ 
for all 
$
  x, y \in O 
$.
The relation~\eqref{eq:equation.norm.increment}
with $V$ being the squared Euclidean distance
appeared
in (14) in Zhang~\cite{Zhang2010}
and
on page 311 in Taniguchi~\cite{Taniguchi1989}
and in (14) in Li~\cite{Li1994}
in terms of the derivative in probability
(see Definition 4.9 in
Krylov~\cite{Krylov1999}) of  the function
$O\ni x\mapsto X^x \in L^0(\Omega;\R^d)$.
Building on Taniguchi's equation for the squared norm of the derivative process,
Theorem 5.1 and Theorem 3.1
in Li~\cite{Li1994}
proves a conditional result
which implies 
that if $ O $ is a complete connected Riemannian manifold,
if there exists an
$ x \in O $ such that 
$
  \P\big[
    \,
    \forall \, t \in [0, \infty) 
    \colon
    X_t^x \in O
    \,
  \big] = 1
$,
if $ \mu $ and $ \sigma $ 
are twice continuously differentiable
and if there exists a measurable function 
$
  f \colon O \to [0,\infty)
$
and a 
$
  p \in (0, \infty) 
$
such that
for all $ x \in O $, $ v \in \R^d \backslash \{ 0 \} $
it holds that
\begin{equation*}
 \begin{split}
  2
  \langle (\nabla\mu)( x ) v , v \rangle
  +
  \sum_{ i = 1 }^m
  \left\| 
    \sigma_i'( x )( v )
  \right\|^2
  +
  ( p - 2 )
  \sum_{ i = 1 }^m 
  \| v \|^{ - 2 }
  \left| \langle 
    \sigma_i'( x )( v ), v 
  \rangle \right|^2
  & \leq 6 p f(x) \| v \|^2
\\
\text{and }\quad 
  \sum_{ i = 1 }^m
  \| \sigma_i'(x) 
  \|_{
    L(\R^d,\R^d)
  }^2
  & \leq f(x)
\end{split}
\end{equation*}
and such that
for all $ t \in ( 0, \infty ) $
and all compact sets $ K \subseteq O $ 
it holds that
\begin{equation*}
  \sup_{ x \in K }
  \E\big[
    \exp\!\big(
      6 p^2 
      \int_0^t 
        f( X_s^x )
        \,
        \1_{
          \cap_{ r \in [0, s] }
          \{
            X_r^x \in O
          \}
        }
        \,
      ds
    \big)
  \big]
  < \infty,
\end{equation*}
then
for all $ t \in ( 0, \infty ) $
the function
$
  O \ni x \mapsto X^x 
  \in 
  L^p( \Omega; C( [0,t],O) )
$
is
locally Lipschitz continuous
(cf.\ also 
Corollary~\ref{cor:two_solution_supinside} below).
In addition,
Lemma 6.1
in Li~\cite{Li1994}
derives inequality~\eqref{eq:exponential.moments.type0}
from inequality~\eqref{eq:condition.exponential.moments0}
in the case $\alpha_0=0$ and $O=\R^d$.
Moreover, 
Theorem 6.2 in
Li~\cite{Li1994}
proves
inequality~\eqref{eq:condition.exponential.moments0}
with $\alpha_0=0$ and with $U_0(x)=\ln(1+\|x\|^2)$ for all $x\in\R^d$
under a global log-Lipschitz type condition
(see Li~\cite{Li1994} for details).
In addition, Corollary~6.3 in Li~\cite{Li1994}
and Theorem 1.7 in Fang, Imkeller \sgc{}\&\cgs{} Zhang~\cite{FangImkellerZhang2007}
prove local Lipschitz continuity results under
appropriate at most quadratic growth assumptions on 
$\mu$, $\mu'$, $\sigma$ and $\sigma'$
(see Corollary~\ref{cor:UV3} below for details).
Furthermore, Lemma 2.3 in Zhang~\cite{Zhang2010}
implies that if $ O = \R^d $, 
if $ c \in (0,\infty)$  is a real number 
and if
$
  U_0 \in \C^2(\R^d,[1,\infty))
$ 
is a function such that
for all $ x, y \in \R^d $
it holds that
\begin{equation*}
 \begin{split}
  U_0'(x)\mu(x)
    +\tfrac{
    \operatorname{tr}\left(\sigma(x) (\sigma(x))^{*}
     (\operatorname{Hess}U_0)(x)\right)
    }{2}
  & 
    \leq c \, U_0(x)
    ,
\\
  \|
    \sigma( x )^* ( \nabla U_0 )( x )
  \|^2
 &
  \leq 
   c \, U_0(x)
   ,
\\
  \langle 
    x - y,
    \mu(x)-\mu(y)
  \rangle
 & 
  \leq 
  c
  \left(
    U_0(x) + U_0(y)
  \right)
  \|x-y\|^2
\\ 
\text{and} \quad
  \|\sigma(x)-\sigma(y)\|_{\HS(\R^m,\R^d)}^2
 & 
  \leq 
  c\left(U_0(x)+U_0(y)\right)\|x-y\|^2
,
 \end{split}
\end{equation*}
then
for all 
$ p \in [2,\infty) $ 
there exists a 
$ t \in (0,\infty) $
such that the function
$
  \R^d
  \ni x
  \mapsto X^x|_{ [0,t] }
  \in 
  L^p( 
    \Omega; C( [0,t]; \R^d ) 
  )
$
is locally Lipschitz continuous.
In particular, Lemma 2.3 in Zhang~\cite{Zhang2010}
yields local Lipschitz continuity in the initial value for 
sufficiently small positive time points
for the stochastic van der Pol oscillator in the case of globally bounded noise
(Subsection~\ref{ssec:stochastic.van.der.Pol.oscillator}),
for the stochastic Duffing oscillator with a globally Lipschitz continuous 
diffusion \sgc{}coefficient\cgs{}
(see Subsection~\ref{ssec:stochastic.Duffing.van.der.Pol.oscillator}),
for the stochastic Lorenz equation with additive noise 
(see Subsection~\ref{ssec:stochastic.Lorenz.equation}),
for the Langevin dynamics under certain assumptions
(see Subsection~\ref{ssec:Langevin.dynamics}),
for a model from experimental psychology
(see Subsection~\ref{ssec:experimental.psychology})
and for the stochastic Brusselator under certain assumptions
(see Subsection~\ref{ssec:Brusselator}).
Moreover, local Lipschitz continuity in the initial value in the $ L^p $-norm 
for any $ p \in (0,\infty) $
and any time point
for the vorticity formulation of the two-dimensional stochastic Navier-Stokes equations
follows from Lemma 4.10
in Hairer \&\ Mattingly~\cite{HairerMattingly2006}.
Lemma 4.10 in Hairer \&\ Mattingly~\cite{HairerMattingly2006}
also includes an
inequality similar to~\eqref{eq:exponential.moments.type1}
in the case where $ U_1 $ is the squared Hilbert space norm
for the vorticity formulation of the two-dimensional stochastic Navier-Stokes equations.
Further instructive results 
on exponential moments
can
be found, for example, in
\cite{
BouRabeeHairer2013,
EsSarhirStannat2010,
FangImkellerZhang2007,
HieberStannat2013}.
Theorem~\ref{thm:regularity.introduction}
in this article
implies
local Lipschitz continuity
in the initial value for all $t,p\in(0,\infty)$
for the stochastic van der Pol oscillator with unbounded noise
(Subsection~\ref{ssec:stochastic.van.der.Pol.oscillator}),
for the stochastic Duffing-van der Pol oscillator with unbounded noise
(Subsection~\ref{ssec:stochastic.Duffing.van.der.Pol.oscillator}),
for the over-damped Langevin dynamics under certain assumptions
(Subsection~\ref{ssec:overdamped.Langevin.dynamics}),
for the stochastic SIR model
(Subsection~\ref{ssec:stochastic.SIR.model}),
for 
Cox-Ingersoll-Ross processes, for the Ait-Sahalia interest rate model, for Heston's $ 3 / 2 $-volatility,
for constant elasticity of variance processes
(Subsection~\ref{ssec:cir}),
for Wright-Fisher diffusions
(Subsection~\ref{ssec:Wright.Fisher.diffusion}),
for the stochastic Burgers equation with a globally bounded diffusion coefficient
(Subsection~\ref{ssec:stochastic.Burgers.equation}),
for the Cahn-Hilliard-Cook equation
(Subsection~\ref{ssec:Cahn_Hilliard})
and for Galerkin approximations of 
a non-linear wave equation
(Subsection~\ref{ssec:stochastic.wave.equation}).
Note that in the case of stochastic partial differential equations (SPDEs),
we first apply Theorem~\ref{thm:regularity.introduction} 
to spatial discretizations of the considered SPDE 
and then let the dimension of the discretization approach infinity.

We sketch three applications of Theorem~\ref{thm:regularity.introduction}.
First,
Theorem~\ref{thm:regularity.introduction}
and its uniform counterpart in 
Theorem~\ref{thm:UV2} 
can be applied
to establish
\emph{strong completeness}
of the SDE~\eqref{eq:SDE}.
More precisely, 
we show
in 
Theorem~\ref{thm:strong_completeness_uniform}
in Section~\ref{sec:strong_completeness}
that 
if there exist a $ p \in (d , \infty ) $ and an $ \eps\in(0,1) $ such that
$
  \bar{ O } 
  \ni
  x
  \mapsto 
  ( X^x_t )_{ t \in [0, \varepsilon] }
  \in 
  L^p(   
    \Omega; C( [0, \varepsilon] , \bar{ O } ) 
  )
$
is \emph{locally Lipschitz continuous},
then
the SDE~\eqref{eq:SDE} is \emph{strongly complete}
(we assume here that $ X $, $ \mu $ and $ \sigma $ are extended 
continuously to $ \bar{ O } $ 
in
an appropriate way;
see Theorem~\ref{thm:strong_completeness_uniform} for the precise assumptions
and, e.g., Subsection~\ref{ssec:stochastic.SIR.model} 
for the application of Theorem~\ref{thm:strong_completeness_uniform} to an SDE on a domain
which is not equal to $ \R^d $)
and, thus,
there exists a function 
$
  Z \colon \Omega \to C([0,\infty)\times \bar{ O } , \bar{ O } )
$
such that $ Z^x $, $ x \in \bar{ O } $, solve~\eqref{eq:SDE}.
Then combining Theorem~\ref{thm:strong_completeness_uniform}
with
Theorem~\ref{thm:UV2} 
yields strong completeness for all examples in Section~\ref{sec:examples_SODE};
see Section~\ref{sec:examples_SODE} for the precise assumptions.
We emphasize that strong completeness may fail to hold even
in the case of smooth and globally bounded coefficient functions;
see Li \&\ Scheutzow~\cite{LiScheutzow2011}.

Secondly,
a local Lipschitz estimate such as~\eqref{eq:goal}
is an important tool
for proving strong and weak convergence rates 
of \emph{numerical approximations} to the solution processes of the SDE~\eqref{eq:SDE}.
In the literature strong and weak convergence
rates for temporally discrete or spatially discrete
numerical approximation processes for
multi-dimensional SDEs are 
generally (except for in the case of D\"{o}rsek's insightful 
work~\cite{Doersek2012};
see Corollary~3.2 in~\cite{Doersek2012}) 
only known under the global 
monotonicity condition~\eqref{eq:global.monotonicity};
see, e.g.,~\cite{h96, hms02, Liu2003, GyoengyMillet2009,
HutzenthalerJentzenKloeden2012,MaoSzpruch2013Rate,KloedenNeuenkirch2013, 
Sabanis2016, SauerStannat2015}
and the references mentioned therein.
Inequality~\eqref{eq:goal} and Theorem~\ref{thm:regularity.introduction} 
now allow us to establish strong and weak convergence rates for numerical 
approximation
processes of SDEs which fail to satisfy the global monotonicity 
condition~\eqref{eq:global.monotonicity}.
Heuristically, the argument is as follows. 
Let $ T \in ( 0, \infty ) $\sgc{}, \cgs{}let 
$ \hat{X}^{ s, x } \colon [s, T] \times \Omega \to O $,
$ s \in [0,T] $, $ x \in O $,
be solution processes of
$ 
  d \hat{X}^{ s, x }_t 
= 
  \mu( \hat{ X }^{ s, x }_t ) \, dt + 
  \sigma( \hat{ X }^{ s, x }_t ) \, dW_t 
$
and
$
  \hat{X}_{s}^{ s, x } = x
$
for 
$
  t \in [s,T]
$,
$ 
  s \in [0,T] 
$, \sgc{}$ x \in O $\cgs{}
and for every $ h \in ( 0, T] $
let 
$ 
  Y^{x,h} \colon [0,T] \times \Omega \to \R^d 
$,
$ x \in O $,
be a family of one-step numerical approximation stochastic processes
for the SDE~\eqref{eq:SDE}
with step size $ h \in ( 0, T] $
(cf., e.g., Section~2.1.4 in~\cite{HutzenthalerJentzen2014Memoires}).
Heuristically speaking, the exact solution is the best approximation process
so that for estimating 
the quantity
$
  \| X_T^{ x } - Y_T^{ x, h } \|_{ L^2( \Omega; \R^d ) }
$
for $ x \in O $ and small $ h\in(0,T]$
we need
to
estimate
at least
the quantity
\begin{equation*}
  \big\| 
    X_T^{ x } 
    - 
    \hat{ X }^{ h , Y^{x, h}_h }_T 
  \big\|_{
    L^2( \Omega; \R^d )
  }
  =
  \big\| 
    \hat{X}_T^{ h, X_{h}^x } 
    - 
    \hat{ X }^{ h , Y^{x, h}_h }_T
  \big\|_{
    L^2( \Omega; \R^d )
  }
\end{equation*}
for $ x\in O$
and small $h\in(0,T]$.
This can be done  with 
a local Lipschitz estimate such as~\eqref{eq:goal}
together with estimates on
the one-step approximation errors
$
  \|
    X^{ x }_h - 
    Y^{ h, x }_h
  \|_{
    L^p( \Omega; \R^d )
  }
$,  
$
  x \in O
$, 
$
  h \in (0,T]
$,
$ p \in (2,\infty) $,
and together with 
suitable a priori estimates on the approximation processes
(see, e.g.,~Section~2 in~\cite{HutzenthalerJentzen2014Memoires}).
The detailed analysis of strong and weak convergence rates for numerical 
approximation processes
based on~\eqref{eq:goal} and Theorem~\ref{thm:regularity.introduction} 
will be the subject of future work\sgc{}; see, 
e.g.,\cgs{}~\cite{HutzenthalerJentzen2014PerturbationArxiv}.

A third application of Theorem~\ref{thm:regularity.introduction}
is to obtain moment bounds on the derivative process.
If the coefficient functions $\mu$ and $\sigma$ are continuously
differentiable, then Theorem 4.10 in
Krylov~\cite{Krylov1999}
shows that there exist stochastic processes 
$
  D^x \colon 
  [0,\infty) \times \Omega \to \R^{ d \times d } 
$, 
$
  x \in O
$,
such that for all 
$ y \in O $, $ t \in [0, \infty ) $
it holds that
\begin{equation*}
  \sup_{
    s \in [0,t] 
  }
  \|
    X_s^{x} - X_s^{y} - D_s^y 
    \, ( x - y )
  \| /
  \|x-y\|
  +
  \sup_{ s \in [0,t] }
  \| D^x_s - D^y_s \|_{
    L( \R^d )
  }
  \to 0
\end{equation*} 
in probability as $ y \neq x \to y $.
Thus, if
$ \mu $ and $ \sigma $ are continuously differentiable 
and if inequality~\eqref{eq:goal} holds,
then dividing by 
$ \| x - y \| \in ( 0, \infty ) $
in~\eqref{eq:goal}
and applying Fatou's lemma
(cf., e.g., Lemma~3.10 in~\cite{HutzenthalerJentzen2014Memoires})
immediately implies that
\begin{equation}
  \|
    D_t^y
  \|_{
    L^p( \Omega; \R^{ d \times d } )
  }
  \leq
  d \,
  \sup_{ 
    \substack{
      v \in R^d ,
      \| v \| \leq 1
    }
  }
  \|
    D_t^y v
  \|_{
    L^p( \Omega; \R^d )
  }
  \leq 
  d \,
  \varphi_{ t, p }( y, y )
\end{equation}
for all $y\in O$ and with $t,p\in(0,\infty)$ 
as in
inequality~\eqref{eq:goal}.

\section{Notation}

Throughout this article we often calculate and formulate
expressions in the extended real numbers 
$ [ - \infty, \infty ] = \R \cup \{ -\infty, \infty \} $.
In particular, 
we frequently use the conventions
\sgc{}$\sup(\emptyset)=-\infty$,\cgs{}
$
  \frac{ 0 }{ 0 } 
  =
  0 \cdot \infty
  = 0
$,
$0^0=1$,
$
  \frac{ a }{ 0 } = \infty
$,
$
  \frac{ - a }{ 0 } = - \infty
$,
$
  0^{ - a } = 
  \frac{ 1 }{ 0^{ a } } 
  = 
  \infty
$,
$
  \frac{ b }{ \infty } = 0^a= 0
$,
$
    \infty^{a} = \infty
$
for all $ a \in ( 0,\infty) $\sgc{}, \cgs{}$ b \in \R $
.

Throughout this article we also use the following notation. 
For every set $A$ we denote the power set of $A$ by $\mathcal{P}(A):=\{ B\subseteq A\}$.
We define 
$
  x \wedge y := \min(x,y)
$ 
and 
$
  x \vee y
  := \max(x,y)
$ 
for all
$
  x, y \in [-\infty,\infty]
$.
For all $ d, m \in \N := \{ 1, 2, \dots \} $, 
$ v = ( v_1, \dots, v_d ) \in [-\infty,\infty]^d $
and all
$
  A \in [-\infty,\infty]^{ d \times m } 
$
we denote 
the Euclidean norm of $ v $ by
$
  \| v \| 
  := 
  \left[
    | v_1 |^2
    +
    \ldots
    +
    | v_d |^2
  \right]^{ \frac{1}{2} }\in [0,\infty]
$
and the Euclidean operator norm of $ A $
by
$
  \| A \|
  :=
  \| A \|_{ L( \R^m, \R^d ) }
  = 
  \sup_{
    v \in \R^m \backslash \{ 0 \}
  }
  \big[
    \| A v \| / \| v \| 
  \big]\in [0,\infty]
$
and we denote by $A^*$ the transposed matrix of $A$.
For all $d\in \N$, $r\in (0,\infty)$, $x\in \R^d$ we denote the open ball with radius $r$ around $x$
by $B_{r}(x):=\{ y\in \R^d\colon \| x-y \|<r \}$.
For all \sgc{}$d\in \N$\cgs{} and all non-empty $A,B \subseteq \R^d$
we denote the distance of $A$ to $B$ by $\textup{dist}(A,B) := \inf_{y\in A, z\in B} \| y - z \| \in [0,\infty]$\sgc{}. 
For all $d \in \N$, $x\in \R$ and all non-empty $B \subseteq \R^d$
we denote the distance of $x$ to $B$ by $\textup{dist}(x,B) :=\textup{dist}(\{x\},B)$.\cgs{}
For every $\R$-Banach space $(E,\left\| \cdot \right\|_E)$ and all $F\subseteq E$ 
we denote the closure of $F$ in $(E,\left\| \cdot \right\|_E)$ by 
$\overline{F}^{\left\| \cdot \right\|_{E}}:= \cap_{G\subseteq E \textnormal{ closed},\, F \subseteq G}\, G$
and we denote the convex hull of $F$ by $\textup{conv}(F):=\cap_{G\subseteq E \text{ convex},\, F \subseteq G}\, G$.
For all $d\in \N$, $F\subseteq \R^d$ we denote the closure of $F$ in $\R^d$ by 
$\overline{F}:=\overline{F}^{\left\| \cdot \right\|}$.
For all $d\in \N$, $D\subseteq \R^d$ we let $\calB(D)\subseteq \mathcal{P}(\R^d)$ denote the $\sigma$-algebra 
which satisfies $\calB(D):= \sigma(\{ A \cap D \colon A \subseteq \R^d \text{ is open}\})$ and we let $\calB(C([0,\infty),\R^d))\subseteq \mathcal{P}(C([0,\infty),\R^d))$ denote the $\sigma$-algebra which satisfies 
$
    \calB(C([0,\infty),\R^d)) 
    := 
    \sigma(\{ 
        \{  
            f \in C([0,\infty),\R^d) 
            \colon 
            f(t) \in B
        \} 
        \colon  
        t \in [0,\infty), 
        B \in \calB(\R^d)
        \})
$.
For every measurable space $(S,\Sigma)$ and every $x\in S$ we denote the Dirac measure in $x$ by $\delta_x$.
For every two sets
$ A $
and
$ B $
we denote by
$ \mathcal{M}( A, B ) $
the set of all functions from
$ A $ to $ B $.
For all two sets $ A, B $ 
and every function $ f \colon A \to B $ we denote \sgc{}the image of $f$\cgs{} by
$ 
  \textup{im}(f) := \{ f(x) \in B \colon x\in A \} 
$ 
. 
For every two measurable spaces
$ ( A, \mathcal{A} ) $
and
$ ( B, \mathcal{B} ) $
we denote by
$ \mathcal{L}^0( A ; B ) $
the set of all $ \mathcal{A} $/$ \mathcal{B} $-measurable functions 
from $ A $ to $ B $.
For every measure space $(S,\Sigma,\mu)$, every separable
$\R$-Banach space $(E,\left\|\cdot \right\|_E)$
and every $f\in \calL^0(S;E)$ we denote the $\mu$-equivalence class of $f$ by
$
[f]_{\mu}:= \{ g \in \calL^0(S;E) \colon \mu\big[ g \neq f \big] = 0 \} 
$
and we denote the $[0]_{\mu}$-quotient space of $\calL^0(S;E)$ by $L^0(S;E) := \calL^0(S;E)/[0]_{\mu}$.
For all $p\in [1,\infty)$, every measure space $(S,\Sigma,\mu)$ and every separable $\R$-Banach space $(E,\left\|\cdot \right\|_E)$ 
we define $\left\| \cdot \right\|_{L^p(S;E)}\colon \calL^0(S;E) \rightarrow [0,\infty]$ by
$
 \| f \|_{L^p(S;E)}
=
 \left(
    \int_{S} 
        \| f(s) \|_E^p
    \,\mu(ds)
 \right)^{1/p}
$
for all $f\in \calL^0(S;E)$ and we define $\calL^p(S;E)\subseteq \calL^0(S;E)$
and $L^p(S;E)\subseteq L^0(S;E)$ by 
$
 \calL^p(S;E)
 =
 \{
    g \in \calL^0(S;E)
    \colon
    \| g \|_{L^p(S;E)} < \infty
 \} 
 $
 and 
 $
 L^p(S;E)
  =
 \calL^p(S;E)/[0]_{\mu}
 $.
For all $d\in \N$, $\alpha \in (0,1]$, $D\subseteq \R^d$ and every $\R$-Banach space $(E,\left\|\cdot\right\|_{E})$
we define 
$ \left\| \cdot \right\|_{
    \calC^{\alpha}( D, E) 
  } 
  \colon 
  \calM(D,E) 
  \rightarrow 
  [0,\infty] $
  and 
  $ \left\| \cdot \right\|_{
    C^{\alpha}( D, E )
  } 
  \colon 
  \calM(D,E) 
  \rightarrow 
  [0,\infty] $
  by
\begin{align}
 \left\| 
    f
 \right\|_{
    \calC^{\alpha}(
        D, E 
    )
 }
 & =
 0
 \vee
 \sup_{x,y\in D, x\neq y}
    \frac{ 
        \| f(x) - f(y) \|_{E}
    }{
        \| x - y \|^{\alpha}
    }
 \quad\text{and}
 \\    
 \left\| 
    f
 \right\|_{
    C^{\alpha}(
        D, E 
    )
 }
 & =
 \left(
    0\vee 
    \sup_{x\in D} 
        \| f(x) \|_{E}
\right)
 + 
 \| f \|_{\calC^{\alpha}(D, E)}
\end{align}
for all $f\in \calM(D,E)$ and we define 
$C^{\alpha}(D,E)\subseteq \calM(D,E)$ by 
$
 C^{\alpha}(D, E) 
  =
 \{ 
    g \in \calM(D,E)
    \colon 
    \| g \|_{C^{\alpha}(D,E)} < \infty
 \}
 $.
For every two normed vector spaces
$ ( V_1, \left\| \cdot \right\|_{ V_1 } ) $
and
$ ( V_2, \left\| \cdot \right\|_{ V_2 } ) $
with $V_1\neq \{0\}$
and every function 
$ f \colon V_1 \to V_2 $
from $ V_1 $ to $ V_2 $
we define
$
  \| f \|_{
    \operatorname{Lip}( V_1, V_2 )
  }
  :=
  \sup_{ 
    v, w \in V_1, v \neq w 
  }
  \frac{
    \| f( v ) - f( w ) \|_{ V_2 }
  }{
    \| v - w \|_{ V_1 }
  }
$.
For all
$ d,\,m \in \N $,
$ T\in(0,\infty) $,
for every filtered probability space
$
  ( \Omega, \mathcal{F}, \P, ( \mathcal{F}_t )_{ t \in [0 , T] } ) 
$
satisfying the usual conditions, 
every standard 
$ ( \mathcal{F}_t )_{ t \in [0,T] } $-Brownian motion
$ W \colon [0,T] \times \Omega \to \R^m $ 
and every adapted and product measurable stochastic
process
$ 
  X \colon 
  [0,T] \times \Omega \to 
  [-\infty,\infty]^{ d \times m }	
$
with
$
  \int_0^T \| X_s \|^2 \, ds
  < \infty
$
$ \P $-a.s.\ we define
$
  \int_0^T X_s \, dW_s
  \in L^{0}( \Omega; \R^d )
$
by
$
  \int_0^T X_s \, dW_s
  :=
  \int_0^T \mathbbm{1}_{
    \{
      X_s \in \R^{ d \times m }
    \}
  } 
  X_s \, dW_s
$
$ \P $-a.s.\ (see~\cite{WeizsackerWinkler:1990}).
For all $ d,\, m \in \N $,
every open set $ O \subseteq\R^d $
and every two functions
$
  \mu \colon O \to \R^d
$
and
$
  \sigma \colon O \to \R^{ d \times m }
$
we define linear operators
$  
  \mathcal{G}_{ \mu, \sigma }
  \colon
  C^2( O, \R )
  \to
  \mathcal{M}( O, \R )
$,
$  
  G_{ \sigma }
  \colon
  C^1( O, \R )
  \to
  \mathcal{M}( O, \R^{ 1 \times m } )
$,
$  
  \overline{ \mathcal{G} }_{ \mu, \sigma }
  \colon
  C^2( O^2, \R )
  \to
  \mathcal{M}( O^2, \R )
$
and
$  
  \overline{ G }_{ \sigma }
  \colon
  C^1( O^2, \R )
  \to
  \mathcal{M}( O^2, \R^{ 1 \times m } )
$
by
\begin{align}
\label{eq:drift_operator}
&  
  ( \mathcal{G}_{ \mu, \sigma } \phi)( x )
  :=
  \phi'(x) \,
  \mu( x )
  +
  \tfrac{ 1 }{ 2 }
  \operatorname{tr}\!\big(
    \sigma(x) \sigma(x)^* 
    ( \operatorname{Hess} \phi )( x )
  \big)
  ,
\\
\label{eq:diffusion_operator}
&
  ( G_{ \sigma } \psi)( x )
  :=
  \psi'(x) \, \sigma(x) ,
\\
&
  ( \overline{ \mathcal{G} }_{ \mu, \sigma } \Phi)( x, y )
  :=
  \big(
    \tfrac{ \partial }{ \partial x }
    \Phi
  \big)(x,y) \, 
  \mu( x )
  +
  \big(
    \tfrac{ \partial }{ \partial y }
    \Phi
  \big)(x,y) \, 
  \mu( y ) 
\label{eq:extended_drift}
\\ 
\nonumber
  & \quad
  +
  \tfrac{ 1 }{ 2 }
  \sum_{ i = 1 }^m
  \big(
    \tfrac{ \partial^2 }{ \partial x^2 }
    \Phi
  \big)(x,y) 
  \big( 
    \sigma_i( x ) , 
    \sigma_i( x )
  \big)
  +
  \sum_{ i = 1 }^m
  \big(
    \tfrac{ \partial }{ \partial y }
    \tfrac{ \partial }{ \partial x }
    \Phi
  \big)(x,y)\big( 
    \sigma_i( x ) , 
    \sigma_i( y )
  \big)
\\ \nonumber & \quad +
  \tfrac{ 1 }{ 2 }
  \sum_{ i = 1 }^m
  \big(
    \tfrac{ \partial^2 }{ \partial y^2 }
    \Phi
  \big)(x,y) 
  \big(
    \sigma_i( y ) , 
    \sigma_i( y )
  \big)
  ,
\\
\label{eq:extended_diffusion}
&
  ( \overline{ G }_{ \sigma } \Psi)( x, y )
  :=
  \big(
    \tfrac{ \partial }{ \partial x }
    \Psi
  \big)(x,y) \, 
  \sigma( x )
  +
  \big(
    \tfrac{ \partial }{ \partial y }
    \Psi
  \big)(x,y) \, 
  \sigma( y )
\end{align}
for all 
$ x,y \in O $,
$ \phi \in C^2( O, \R ) $,
$ \psi \in C^1( O, \R ) $,
$ \Phi \in C^2( O^2, \R ) $,
$ \Psi \in C^1( O^2, \R ) $.
We call the 
linear operator $ \mathcal{G}_{ \mu, \sigma } $
defined in~\eqref{eq:drift_operator}
the \emph{generator},
we call the 
linear operator $ G_{ \sigma } $
defined in~\eqref{eq:diffusion_operator}
the \emph{noise operator},
we call the 
linear operator $ \overline{ \mathcal{G} }_{ \mu, \sigma } $
defined in~\eqref{eq:extended_drift} 
the \emph{extended generator}
and we call the 
linear operator $ \overline{ G }_{ \sigma } $
defined in~\eqref{eq:extended_diffusion}
the \emph{extended noise operator}.
The extended generator has been
exploited in 
Ichikawa~\cite{Ichikawa1984}
and Maslowski~\cite{Maslowski1986}
(see, e.g., also Leha \&\ Ritter~\cite{LehaRitter1994, LehaRitter2003}).
Whereas these references rely on the extended generator,
in our analysis below both the extended generator and the extended diffusion
operator play an essential role.

\chapter{Strong stability analysis
for solutions of SDEs}
\label{sec:strong_stability}

The main results of  this section are
Theorem~\ref{thm:UV}
and
Theorem~\ref{thm:UV2}
below which establish
marginal and uniform
strong stability estimates
respectively.

\section{Setting}
\label{sec:setting}

Throughout this section we will frequently use the following setting.
Let $ d, m \in \N $, $ T \in (0,\infty) $,
let $ O \subseteq\R^d $ be an open set,
let $ \mu \in \mathcal{L}^0( O; \R^d ) $, 
$ \sigma \in \mathcal{L}^0( O; \R^{ d \times m } ) $,
let
$
  ( 
    \Omega, \mathcal{F}, \P, ( \mathcal{F}_t )_{ t \in [0,T] } 
  )
$
be a filtered probability space satisfying
the usual conditions and let 
$
  W \colon [0,T] \times \Omega \to \R^m
$
be a standard $ ( \mathcal{F}_t )_{ t \in [0,T] } $-Brownian motion.

\section{Exponential integrability bounds for solutions of SDEs}

The main result of this subsection, 
Proposition~\ref{prop:exp_mom} below,
establishes certain exponential integrability properties for solutions
of SDEs. 
Further instructive results 
on exponential moments
can, for example,
be found in
\cite{
HairerMattingly2006,
BouRabeeHairer2013,
EsSarhirStannat2010,
FangImkellerZhang2007,
HieberStannat2013}.
For the proof of Proposition~\ref{prop:exp_mom}, we first present
two well-known auxiliary lemmas.

\begin{lemma}[Positivity]
\label{lem:comparison}
Let $ T \in [0,\infty) $,
let $ \left( \Omega, \mathcal{F}, \P \right) $
be a probability space and let 
$ Z \colon [0,T] \times \Omega \to \R $
be a product measurable stochastic process
satisfying
$
  \int_0^T \max( Z_t,0) \, dt
  < \infty
$
$ \P $-a.s.\ and
$
  Z_t \geq 0
$
$ \P $-a.s.\ for Lebesgue-almost all
$ t \in [0,T] $.
Then $ \int_0^T Z_t \, dt \geq 0 $ $ \P $-a.s.
\end{lemma}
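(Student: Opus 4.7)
The plan is to reduce the statement to a standard application of Tonelli's theorem to the null set on which $Z$ takes negative values. First I would define the measurable set
\begin{equation*}
  N := \{ (t,\omega) \in [0,T]\times\Omega : Z_t(\omega) < 0 \} \in \mathcal{B}([0,T]) \otimes \mathcal{F}
\end{equation*}
and its $\omega$-section $N_\omega := \{ t \in [0,T] : Z_t(\omega) < 0 \}$. The hypothesis that $Z_t \geq 0$ $\P$-a.s.\ for Lebesgue-almost all $t \in [0,T]$ translates precisely into $\P(N^t) = 0$ for Lebesgue-almost all $t \in [0,T]$, where $N^t := \{\omega : Z_t(\omega) < 0\}$.

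Next I would apply Tonelli's theorem to the non-negative measurable function $\mathbbm{1}_N$ on $[0,T] \times \Omega$ in order to interchange the Lebesgue and the probability integral, yielding
\begin{equation*}
  \E\!\left[ \Leb(N_\omega) \right]
  =
  \int_0^T \P(N^t) \, dt
  =
  0,
\end{equation*}
so that $\Leb(N_\omega) = 0$ for $\P$-almost all $\omega \in \Omega$. Consequently, for $\P$-a.s.\ $\omega$ it holds that $\max(-Z_t(\omega),0) = 0$ for Lebesgue-almost all $t \in [0,T]$, and in particular $\int_0^T \max(-Z_t(\omega),0) \, dt = 0$.

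Finally, combining this with the hypothesis $\int_0^T \max(Z_t,0) \, dt < \infty$ $\P$-a.s., I would conclude that $\int_0^T |Z_t| \, dt < \infty$ $\P$-a.s., so that the Lebesgue integral $\int_0^T Z_t \, dt$ is well-defined and satisfies
\begin{equation*}
  \int_0^T Z_t \, dt
  \;=\;
  \int_0^T \max(Z_t,0) \, dt
  \;-\;
  \int_0^T \max(-Z_t,0) \, dt
  \;=\;
  \int_0^T \max(Z_t,0) \, dt
  \;\geq\; 0
\end{equation*}
$\P$-a.s., which is the claim. There is no genuine obstacle here; the only point requiring a little care is to verify that the product measurability of $Z$ makes $N$ measurable in $\mathcal{B}([0,T]) \otimes \mathcal{F}$ so that Tonelli's theorem applies in the form above.
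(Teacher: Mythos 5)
Your proof is correct and is essentially the same argument as the paper's: both reduce the claim to an application of Tonelli's theorem and use the hypothesis to show that the negative part $\max(-Z_t,0)$ integrates to zero $\P$-a.s. The only cosmetic difference is the choice of non-negative integrand fed to Tonelli — you use $\mathbbm{1}_{\{Z<0\}}$, the paper uses $\max(Z_t,0)-Z_t = \max(-Z_t,0)$ directly, which shortens the argument by one step but yields the same conclusion.
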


\begin{proof}[Proof
of Lemma~\ref{lem:comparison}]
Note that
\begin{equation*}
  0 
  \leq
  \E\!\left[ 
    \smallint\nolimits_0^T \max( Z_t, 0 ) \, dt
    -
    \smallint\nolimits_0^T Z_t \, dt 
  \right]
=
  \smallint\nolimits_0^T
  \E\!\left[ 
    \max( Z_t, 0 ) 
    -
    Z_t  
  \right]
  dt
  = 0
\end{equation*}
and hence that
$
  0 \leq \int_0^T \max( Z_t, 0 ) \, dt 
  = \int_0^T Z_t \, dt 
$
$ \P $-a.s. This finishes the proof
of Lemma~\ref{lem:comparison}.
\end{proof}

For convenience of the reader,
we recall the following well-known Lyapunov estimate
(cf., e.g., the proof
of Lemma~2.2 in Gy\"{o}ngy \&\ Krylov~\cite{gk96b}).

\begin{lemma}[A Lyapunov estimate]
\label{lem:Lyapunov}
  Assume the setting in Section~\ref{sec:setting},
  let 
  $ V \in C^{ 1, 2 }( [0,T] \times O, [0,\infty) ) $,
  $ \alpha \in \mathcal{L}^0( [0,T]; [0,\infty) ) $
  with
  $
    \int_0^T \alpha(t) \, dt 
    < \infty
  $,
  let $ \tau \colon \Omega \to [0,T] $
  be a stopping time
  and let
  $
    X \colon [0,T] \times \Omega \to O
  $
  be an adapted stochastic
  process with continuous sample paths
  satisfying
  $
    \int_0^{ \tau } \| \mu( X_s ) \| 
    + \| \sigma( X_s ) \|^2 
    \, ds < \infty
  $
  $ \P $-a.s.,
  \begin{equation}
  \label{eq:V_est_Lyapunov}
  \begin{split}  
    \big( 
      \tfrac{ \partial }{ \partial t } V
    \big)( t \wedge \tau, X_{ t \wedge \tau } )
    +
    \big( 
      \tfrac{ \partial }{ \partial x } V
    \big)( t \wedge \tau, X_{ t \wedge \tau } )
    \, \mu( X_{ t \wedge \tau } )
  \\
    +
    \tfrac{ 1 }{ 2 }
    \textup{tr}\big(
      \sigma( X_{ t \wedge \tau } ) \sigma( X_{ t \wedge \tau } )^*
      ( \textup{Hess}_x V)( t \wedge \tau, X_{ t \wedge \tau } )
    \big)
  & \leq 
    \alpha(t \wedge \tau) 
    \,
    V( t \wedge \tau ,X_{ t \wedge \tau } )
  \end{split}
  \end{equation}
  $ \P $-a.s.\ and
  $
    X_{ t \wedge \tau }
    =
    X_0
    + 
    \int_0^{ t \wedge \tau }
    \mu( X_s )
    \, ds
    +
    \int_0^{ t \wedge \tau }
    \sigma( X_s )
    \, dW_s
  $ $ \P $-a.s.\ for all 
  $ 
    t \in [0,T] 
  $.
  Then
  $
    \E\big[
      V( \tau, X_{ \tau } )
    \big]
  \leq
    \exp\!\big(
      \int_0^T 
      \alpha(s) \,
      ds
    \big)
    \,
    \E\big[
      V( 0, X_0 )
    \big]
  \in 
    [0,\infty]
  $.
\end{lemma}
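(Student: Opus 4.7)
The plan is to combine It\^o's formula with an exponential integrating factor together with a standard localization argument. Set $A_t := \int_0^t \alpha(s)\,ds \in [0,\infty)$ for $t \in [0,T]$ and consider the auxiliary process $Y_t := e^{-A_{t\wedge\tau}}\,V(t\wedge\tau, X_{t\wedge\tau})$, which is non-negative by assumption on $V$ and $\alpha$.

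First I would apply It\^o's formula to the process $V(\cdot \wedge \tau, X_{\cdot \wedge \tau})$ (using the SDE satisfied by $X$ up to $\tau$) and then use the product rule against the bounded-variation process $e^{-A_{\cdot \wedge \tau}}$ to obtain
\begin{equation*}
  Y_t = V(0, X_0) + \int_0^{t \wedge \tau} e^{-A_s}\,Z_s\,ds + \int_0^{t \wedge \tau} e^{-A_s}\big(\tfrac{\partial}{\partial x}V\big)(s, X_s)\,\sigma(X_s)\,dW_s,
\end{equation*}
where $Z_s := (\tfrac{\partial}{\partial t}V + \mathcal{G}_{\mu,\sigma}V - \alpha V)(s, X_s)$. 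Hypothesis~\eqref{eq:V_est_Lyapunov} gives $Z_{s \wedge \tau} \leq 0$ $\P$-a.s.\ for each $s \in [0,T]$, so Lemma~\ref{lem:comparison} applied to $-Z_{\cdot \wedge \tau}$ (after truncation against $\sup V$ restricted to suitable compacts) yields that the Lebesgue integral above is non-positive $\P$-a.s.

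Next, I would localize the stochastic integral. Let $(K_n)_{n \in \N}$ be a sequence of compact subsets of $O$ with $K_n \uparrow O$, and define $\rho_n := \inf\{t \in [0,T] \colon X_t \notin K_n\} \wedge T$ with $\inf(\emptyset) := T$. Since $X$ has continuous sample paths in $O$ and $[0,T]$ is compact, $\{X_t(\omega) \colon t \in [0,T]\}$ is a compact subset of $O$ for each $\omega$, so $\rho_n(\omega) \nearrow T$ as $n \to \infty$. On $[0, \rho_n]$ the integrands $V$, $\tfrac{\partial}{\partial x}V$ and $\sigma$ are bounded by continuity, so the stopped stochastic integral is a true martingale of zero mean. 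Taking expectations therefore gives
\begin{equation*}
  \E\big[ Y_{T \wedge \rho_n} \big] \leq \E\big[ V(0, X_0) \big]
\end{equation*}
for every $n \in \N$.

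Finally, since $Y \geq 0$ and $Y_{T \wedge \rho_n} \to Y_T = e^{-A_\tau}V(\tau, X_\tau)$ $\P$-a.s.\ as $n \to \infty$, Fatou's lemma yields $\E[e^{-A_\tau}V(\tau, X_\tau)] \leq \E[V(0, X_0)]$. Combining this with the trivial bound $A_\tau \leq A_T = \int_0^T \alpha(s)\,ds$ (which uses $\alpha \geq 0$) gives the claimed estimate. The main obstacle is the bookkeeping in the It\^o expansion: turning the pointwise $\P$-a.s.\ inequality~\eqref{eq:V_est_Lyapunov} into an $\P$-a.s.\ inequality for the Lebesgue integral (where Lemma~\ref{lem:comparison} is needed) and verifying that the localization $\rho_n$ really produces a true-martingale stochastic integral under the mild integrability hypothesis $\int_0^\tau \|\mu(X_s)\| + \|\sigma(X_s)\|^2\,ds < \infty$ $\P$-a.s.
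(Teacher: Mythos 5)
Your integrating factor approach (working with $Y_t := e^{-\int_0^{t\wedge\tau}\alpha(s)\,ds}\,V(t\wedge\tau,X_{t\wedge\tau})$ rather than applying Gronwall's lemma as the paper does) is a valid alternative route, and the It\^o expansion, the use of Lemma~\ref{lem:comparison} to discard the drift term, and the Fatou argument at the end are all sound.

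However, the localization step has a genuine gap. You claim that on $[0,\rho_n]$, where $\rho_n$ is the exit time from a compact $K_n \subseteq O$, the integrands ``$V$, $\tfrac{\partial}{\partial x}V$ and $\sigma$ are bounded by continuity'' so the stopped stochastic integral is a true martingale. But the setting of Section~\ref{sec:setting} only assumes $\sigma \in \mathcal{L}^0(O;\R^{d\times m})$, i.e., $\sigma$ is merely measurable, not continuous. Hence $\sigma$ need not be bounded on $K_n$, and the stopped quadratic variation
$\int_0^{t\wedge\rho_n}\big\|\big(\tfrac{\partial}{\partial x}V\big)(s,X_s)\,\sigma(X_s)\big\|^2\,ds$
is only finite $\P$-a.s.\ (by the hypothesis $\int_0^\tau \|\sigma(X_s)\|^2\,ds < \infty$ a.s.), not bounded or of finite expectation, so the stopped stochastic integral is still only a local martingale. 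The paper's proof circumvents this by instead taking
$
  \rho_n := \inf\!\big(\{\tau\}\cup\{t\in[0,T]\colon \sup_{s\in[0,t]}V(s,X_s)+\int_0^t\|\sigma(X_u)^*(\nabla_x V)(u,X_u)\|^2\,du \geq n\}\big),
$
which makes the accumulated quadratic variation (and $V$ itself) bounded by $n$ on $[0,\rho_n]$. Your argument could be repaired either by adopting this stopping rule, or by observing that the local martingale part of $Y_{t\wedge\rho_n}$ is bounded below by $-V(0,X_0)$ (integrable after assuming w.l.o.g.\ $\E[V(0,X_0)]<\infty$, as the paper also does), hence a supermartingale with non-positive expectation. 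As written, though, the step does not go through under the stated hypotheses.
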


\begin{proof}[Proof
of Lemma~\ref{lem:Lyapunov}]
First, we assume w.l.o.g.\ that 
$
  \E[
    V( 0, X_0 )
  ]
  < \infty
$.
Next define 
stopping times 
$ \rho_n \colon \Omega \to [0,T] $,
$ n \in \N $,
by
\begin{equation*} \begin{split}
&
  \rho_n
  :=
\\
&
  \inf\!\left(
    \{ \tau \} 
    \cup
    \left\{
    t \in [0,T]
    \colon
    \sup\nolimits_{ s \in [0,t] }
    V( s, X_s )
    +
    \smallint\nolimits_0^t
      \|
        \sigma( X_u )^*
        ( \nabla_x V)( u, X_u )
      \|^2
    \, du
  \geq
    n
  \right\}
  \right)
\end{split}
\end{equation*}
for all $ n \in \N $.
Then note that It\^{o}'s formula proves that
\begin{align}
  V( t \wedge \rho_n , X_{ t \wedge \rho_n } )
&  =
  V( 0, X_0 ) 
  +
  \int_0^{ t \wedge \rho_n }
    \big( 
      \tfrac{ \partial }{ \partial x } V
    \big)( s, X_s )
    \, \sigma( X_s )
  \, dW_s
\\ & \quad +
  \int_0^{ t \wedge \rho_n }
    \big( 
      \tfrac{ \partial }{ \partial s } V
    \big)( s, X_s )
    +
    \big( 
      \tfrac{ \partial }{ \partial x } V
    \big)( s, X_s )
    \, \mu( X_s )
    \,
  ds
  \nonumber
\\ & \quad +
  \int_0^{ t \wedge \rho_n }
    \tfrac{ 1 }{ 2 } 
    \text{tr}\big(
      \sigma( X_s ) \sigma( X_s )^*
      ( \text{Hess}_x V)( s, X_s )
    \big)
    \,
  ds
  \nonumber
\end{align}
$ \P $-a.s.\ for all $ (t, n) \in [0,T] \times \N $
and assumption~\eqref{eq:V_est_Lyapunov}
and Lemma~\ref{lem:comparison}
hence imply that
\begin{equation}
\begin{split}
&
  V( t \wedge \rho_n , X_{ t \wedge \rho_n } )
\\ &
\leq
  V( 0, X_0 )
  +
  \int_0^{ t \wedge \rho_n }
    \big( 
      \tfrac{ \partial }{ \partial x } V
    \big)( s, X_s )
    \, \sigma( X_s )
  \, dW_s
  +
  \int_0^{ t \wedge \rho_n }
    \alpha(s) \, V(s, X_s )
    \,
  ds
\end{split}
\end{equation}
$ \P $-a.s.\ for all $ (t, n) \in [0,T] \times \N $.
Taking expectations then shows that
\begin{equation}
\begin{split}
  \E\big[
    V( t \wedge \rho_n , X_{ t \wedge \rho_n } )
  \big]
& \leq
  \E\big[
    V( 0, X_0 )
  \big]
  +
  \int_0^t
    \alpha(s) \, 
    \E\big[ 
      \mathbbm{1}_{
        \{ 
          s \leq \rho_n
        \}
      }
      V(s, X_s )
    \big]
    \,
  ds
\\ & \leq
  \E\big[
    V( 0, X_0 )
  \big]
  +
  \int_0^t
    \alpha(s) \, 
    \E\big[ 
      V( s \wedge \rho_n, X_{ s \wedge \rho_n } )
    \big]
    \,
  ds
\end{split}
\end{equation}
for all $ (t, n) \in [0,T] \times \N $.
The estimate
$
    \E\big[ 
      V( t \wedge \rho_n, X_{ t \wedge \rho_n } )
    \big]
  \leq
    n +
    \E\big[ 
      V( 0, X_0 )
    \big]
  < \infty
$
for all $ (t, n) \in [0,T] \times \N $
and Gronwall's lemma therefore yield
$
  \E\big[
    V( t \wedge \rho_n , X_{ t \wedge \rho_n } )
  \big]
\leq
  e^{ \int_0^t \alpha(s) \, ds }
  \,
  \E\big[
    V( 0 , X_0 )
  \big]
$
for all $ (t, n) \in [0,T] \times \N $.
This and Fatou's lemma complete the proof
of Lemma~\ref{lem:Lyapunov}.
\end{proof}

The next proposition proves exponential
integrability properties for solution processes of
SDEs.

\begin{prop}[Exponential integrability properties]
\label{prop:exp_mom}
  Assume the setting in Section~\ref{sec:setting},
  let $ U \in C^{ 1, 2 }( [0,T] \times O, \R ) $,
  $ \overline{U} \in \mathcal{L}^0( [0,T] \times O ; \R ) $,
  let $ \tau \colon \Omega \to [0,T] $
  be a stopping time
  and
  let
  $
    X \colon [0,T] \times \Omega \to O
  $
  be an adapted stochastic
  process with continuous sample paths
  satisfying
  $
    \int_0^{ \tau }
    \| \mu( X_s ) \| +
    \| \sigma( X_s ) \|^2 
    +
    | \overline{U}( s, X_s ) |
    \, ds
    < \infty
  $
  $ \P $-a.s.,
  $  
    X_{ t \wedge \tau }
    =
    X_0
    + 
    \int_0^{ t \wedge \tau }
    \mu( X_s )
    \, ds
    +
    \int_0^{ t \wedge \tau }
    \sigma( X_s )
    \, dW_s
  $
  $ \P $-a.s.\ for all $ t \in [0,T] $
  and
  \begin{equation}
  \label{eq:assumption_exp_mom}
  \begin{split}
  &
      ( \tfrac{ \partial }{ \partial t } U )(t,x)
      +
      ( \tfrac{ \partial }{ \partial x } U )(t,x)
      \,
      \mu(x)
      +
      \tfrac{
      \operatorname{tr}( 
	\sigma( x ) \, \sigma( x )^* 
	( \operatorname{Hess}_x U )( t, x )   	
      )
      +
        \| \sigma(x)^* (\nabla_x U)(t,x) \|^2
      }{
        2 
      }
  \\ & \leq 
    -
    \overline{U}( t, x )
  \end{split}
  \end{equation}
  for all 
  $ 
    (t,x) 
    \in 
    \cup_{ \omega \in \Omega }
    \{ 
      ( s, X_s( \omega ) )
      \in [0,T] \times O
      \colon
      s \in [0, \tau( \omega ) ]
    \}
  $.
  Then
  \begin{equation}
  \label{eq:statement_exp_mom}
  \begin{split}
  &  \E\!\left[
      \exp\!\left(
        r
        \left[ 
          U( \tau, X_{ \tau } )
          +
          \int_0^{ \tau }
          \overline{U}( s, X_s )
          +
          \tfrac{ (1 - r) }{ 2 }
          \| \sigma(X_s)^* (\nabla_x U)(s,X_s) \|^2
          \, ds
        \right]
      \right)
    \right]
\\ & \leq
    \E\!\left[
      e^{
        r \, U( 0, X_0) 
      }
    \right]
    \in [0,\infty]
  \end{split}
  \end{equation}
  for all $ r \in [0,\infty) $.
\end{prop}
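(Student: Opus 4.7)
The plan is to exhibit a non-negative process $Z$ such that $Z_T$ coincides $\P$-a.s.\ with the random variable inside the expectation on the left-hand side of \eqref{eq:statement_exp_mom}, show that $Z$ is a local supermartingale by applying It\^o's formula and invoking \eqref{eq:assumption_exp_mom}, and conclude by a localization argument combined with Fatou's lemma. Without loss of generality assume $r > 0$ (the case $r = 0$ is trivial) and $\E[e^{r U(0, X_0)}] < \infty$ (otherwise the bound is vacuous).

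For $t \in [0, T]$ define
\begin{equation*}
  Z_t := \exp\!\left(
    r\, U(t \wedge \tau, X_{t \wedge \tau})
    +
    r \int_0^{t \wedge \tau}\!\!
      \overline{U}(s, X_s)
      +
      \tfrac{ 1 - r }{ 2 }
      \| \sigma(X_s)^{*}(\nabla_x U)(s, X_s)\|^2
      \, ds
  \right).
\end{equation*}
Since $(t \wedge \tau, X_{t \wedge \tau})_{t \in [0, T]}$ is a continuous semimartingale with values in $[0, T] \times O$, It\^o's formula yields
$dZ_t = r Z_t \mathbbm{1}_{\{t \leq \tau\}}\bigl[ \mathcal{B}(t, X_t)\, dt + (\tfrac{\partial}{\partial x} U)(t, X_t) \sigma(X_t)\, dW_t\bigr]$,
where $\mathcal{B}(t, x) := (\tfrac{\partial}{\partial t} U)(t, x) + (\tfrac{\partial}{\partial x} U)(t, x)\mu(x) + \tfrac{1}{2}\operatorname{tr}(\sigma(x)\sigma(x)^{*}(\operatorname{Hess}_x U)(t, x)) + \tfrac{1}{2}\|\sigma(x)^{*}(\nabla_x U)(t, x)\|^2 + \overline{U}(t, x)$. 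The crucial algebraic cancellation is that the It\^o correction $\tfrac{r^2}{2}\|\sigma^{*}\nabla_x U\|^2$ combines with the finite-variation contribution $\tfrac{r(1-r)}{2}\|\sigma^{*}\nabla_x U\|^2$ coming from the $\tfrac{1-r}{2}$-term inside the exponential to produce exactly $\tfrac{r}{2}\|\sigma^{*}\nabla_x U\|^2$, which appears in $\mathcal{B}$. Hypothesis \eqref{eq:assumption_exp_mom} is precisely the statement that $\mathcal{B}(s, X_s) \leq 0$ for all $s \in [0, \tau]$ $\P$-a.s., so $Z$ is a non-negative local supermartingale on $[0, T]$.

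To take expectations rigorously, introduce the localizing stopping times
\begin{equation*}
  \rho_n := \inf\!\left\{ t \in [0, T] \colon Z_t + \int_0^{t \wedge \tau}\! Z_s^2 \,\|(\tfrac{\partial}{\partial x} U)(s, X_s)\sigma(X_s)\|^2 \, ds \geq n \right\} \wedge T
\end{equation*}
for $n \in \N$. Continuity of the sample paths of $Z$ together with $\tau \leq T$ gives $\rho_n \nearrow T$ $\P$-a.s., and for each $n$ the stopped stochastic integral is a true $L^2$-martingale; integrating $dZ$ from $0$ to $\rho_n$ and taking expectations yields $\E[Z_{\rho_n}] \leq \E[Z_0] = \E[e^{r U(0, X_0)}]$. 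Since $Z_{\rho_n} \to Z_T$ $\P$-a.s.\ as $n \to \infty$ and $Z_T$ equals $\P$-a.s.\ the random variable inside the expectation on the left-hand side of \eqref{eq:statement_exp_mom}, Fatou's lemma concludes. The main obstacle lies in the bookkeeping of the It\^o expansion: the precise cancellation matching exactly the left-hand side of \eqref{eq:assumption_exp_mom} is what forces the coefficient $\tfrac{1-r}{2}$ inside the exponential defining $Z$, and any other choice would destroy the supermartingale property; apart from this, the argument is an entirely standard stopping-plus-Fatou chase.
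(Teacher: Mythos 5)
Your proposal is correct and is essentially the paper's argument: both compute the drift of $Z_t = \exp\!\bigl( r\bigl[ U(t\wedge\tau, X_{t\wedge\tau}) + \int_0^{t\wedge\tau}(\overline{U} + \tfrac{1-r}{2}\|\sigma^*\nabla_x U\|^2)\,ds \bigr]\bigr)$ via It\^o's formula, observe that the $\tfrac{r^2}{2}\|\sigma^*\nabla_x U\|^2$ quadratic-variation term combines with the $\tfrac{r(1-r)}{2}$ finite-variation term to leave exactly $\tfrac{r}{2}\|\sigma^*\nabla_x U\|^2$ so that~\eqref{eq:assumption_exp_mom} makes $Z$ a nonnegative local supermartingale, and then conclude via localization and Fatou. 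The only cosmetic difference is that the paper packages the supermartingale step as an application of Lemma~\ref{lem:Lyapunov} (with $\alpha\equiv 0$) to the augmented process $(X,Y)$ and the function $\bar V(t,x,y)=e^{r[U(t,x)+y]}$, whereas you apply It\^o's formula to $Z$ directly; the underlying computation and stopping-plus-Fatou argument are identical.
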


\begin{proof}[Proof of Proposition~\ref{prop:exp_mom}]
  First of all, let $ r \in [0,\infty) $
  and define
  $
    \bar{V} \colon [0,T] \times O \times \R \to \R
  $
  by
  $
    \bar{V}(t,x,y)
    =
    \exp\!\left(
      r 
      \left[
        U(t,x)
        +
        y
      \right]
    \right)
  $
  for all $ (t,x,y) \in [0,T] \times O \times \R $.
  Then note that
  assumption~\eqref{eq:assumption_exp_mom}
  implies that
  \begin{equation}
  \begin{split}
  &
    ( 
      \tfrac{ \partial }{ \partial t } \bar{V}
    )( t, x, y )
    +
    ( 
      \tfrac{ \partial }{ \partial x } \bar{V}
    )( t, x, y )
    \, \mu(x)
    +
    \tfrac{ 
      1
    }{ 2 }
    \,
      \text{tr}(
        \sigma(x) \sigma(x)^*
        ( \operatorname{Hess}_x \bar{V})( t, x, y )
      )
  \\ & \quad
    +
    ( 
      \tfrac{ \partial }{ \partial y } \bar{V}
    )( t, x, y )
    \left[ 
      \overline{U}( t, x )
      +
      \tfrac{ ( 1 - r ) }{ 2 }
      \|
        \sigma( x )^* ( \nabla_x U )( t, x )
      \|^2
    \right]
  \\ & =
    r
    \bar{V}(t,x, y)
    \bigg[
      ( \tfrac{ \partial }{ \partial t } U )(t,x)
      +
      ( \tfrac{ \partial }{ \partial x } U )(t,x)
      \,
      \mu(x)
      +
      \overline{U}( t, x )
      +
      \tfrac{ ( 1 - r ) }{ 2 }
      \|
        \sigma( x )^* ( \nabla_x U )( t, x )
      \|^2
    \\
    &\qquad\qquad
    \qquad\qquad
      +
      \tfrac{
        1
      }{ 2 }
      \operatorname{tr}( 
	\sigma( x ) \, \sigma( x )^* 
	( \operatorname{Hess}_x U )( t, x )   	
      )
      +
      \tfrac{
        r
      }{
        2 
      }
        \| \sigma(x)^* (\nabla_x U)(t,x) \|^2
    \bigg]
  \\ & =
    r
    \bar{V}(t,x, y)
    \bigg[
      ( \tfrac{ \partial }{ \partial t } U )(t,x)
      +
      ( \tfrac{ \partial }{ \partial x } U )(t,x)
      \,
      \mu(x)
      +
      \overline{U}( t, x )
    \\
    &\qquad\qquad
    \qquad\qquad
      +
      \tfrac{
        1
      }{ 2 }
      \operatorname{tr}( 
	\sigma( x ) \, \sigma( x )^* 
	( \operatorname{Hess}_x U )( t, x )   	
      )
      +
      \tfrac{
        1
      }{
        2 
      }
        \| \sigma(x)^* (\nabla_x U)(t,x) \|^2
   \bigg]
  \leq
    0
  \end{split}
  \label{eq:V_estimate_time}
  \end{equation}
  for all 
  $ 
    (t,x,y) 
    \in 
    \cup_{ \omega \in \Omega }
    \{ 
      ( s, X_s( \omega ) )
      \in [0,T] \times O
      \colon
      s \in [0, \tau( \omega ) ]
    \} \times \R
  $.
Next let 
$ Y \colon [0,T] \times \Omega \to \R $
be an adapted stochastic process with continuous
sample paths
satisfying
$
  Y_t =
      \smallint\nolimits_0^{t\wedge\tau} 
      \overline{U}( s, X_s ) 
      +
      \tfrac{ ( 1 - r ) }{ 2 }
      \|
        \sigma( X_s )^* ( \nabla_x U )( s, X_s )
      \|^2
      \, ds
$
$ \P $-a.s.\ for all $ t \in [0,T] $.
Then we get from~\eqref{eq:V_estimate_time} that
  \begin{equation}
  \begin{split}
  &
    ( 
      \tfrac{ \partial }{ \partial t } \bar{V}
    )\!\left( 
      t \wedge \tau , 
      X_{ t \wedge \tau } ,
      Y_{ t \wedge \tau } 
    \right)
    +
    ( 
      \tfrac{ \partial }{ \partial x } \bar{V}
    )\!\left( 
      t \wedge \tau , 
      X_{ t \wedge \tau } ,
      Y_{ t \wedge \tau } 
    \right)
    \mu( X_{ t \wedge \tau } )
  \\ &
    +
    ( 
      \tfrac{ \partial }{ \partial y } \bar{V}
    )\!\left( 
      t \wedge \tau , 
      X_{ t \wedge \tau } ,
      Y_{ t \wedge \tau } 
    \right)\!
    \left[
      \overline{U}( t \wedge \tau, X_{ t \wedge \tau } )
      +
      \tfrac{ ( 1 - r ) }{ 2 }
      \|
        \sigma( X_{ t \wedge \tau } )^* ( \nabla_x U )( t \wedge \tau, X_{ t 
    \wedge \tau } )
      \|^2
    \right]
  \\ &
    +
    \tfrac{ 1 }{ 2 }
    \,
    \text{tr}\big(
      \sigma( X_{ t \wedge \tau } ) \, \sigma( X_{ t \wedge \tau } )^* \,
      ( \text{Hess}_x \bar{V})( t \wedge \tau , X_{ t \wedge \tau } ,
        Y_{ t \wedge \tau } 
      )
    \big)
  \leq
    0
  \end{split}
  \end{equation}
for all 
$ 
  t \in [0,T]
$.
An application of Lemma~\ref{lem:Lyapunov} hence completes the proof
of Proposition~\ref{prop:exp_mom}.
\end{proof}

The next corollary, Corollary~\ref{cor:exp_mom}, 
specializes
Proposition~\ref{prop:exp_mom}
to the case 
where 
$ U( t, x ) = e^{ - \alpha t } \, U( 0, x ) $
and
$ \overline{U}( t, x ) = e^{ - \alpha t } \, \hat{U}( t, x ) $
for all $ (t,x) \in [0,T] \times O $
and some $ \alpha \in \R $.

\begin{corollary}[Exponential integrability properties
(time-independent version)]
\label{cor:exp_mom}
  Assume the setting in Section~\ref{sec:setting},
  let $ \alpha \in \R $, $ U \in C^2( O, \R ) $,
  $ \overline{U} \in \mathcal{L}^0([0,T]\times O ; \R ) $,
  let $ \tau \colon \Omega \to [0,T] $
  be a stopping time
  and
  let
  $
    X \colon [0,T] \times \Omega \to O
  $
  be an adapted stochastic
  process with continuous sample paths
  satisfying
  $
    \int_0^{ \tau }
    \| \mu( X_s ) \| +
    \| \sigma( X_s ) \|^2 +
    | \overline{U}( s, X_s ) |
    \, ds
    < \infty
  $
  $ \P $-a.s.,
  $ 
    X_{ t \wedge \tau }
    =
    X_0
    + 
    \int_0^{ t \wedge \tau }
    \mu( X_s )
    \, ds
    +
    \int_0^{ t \wedge \tau }
    \sigma( X_s )
    \, dW_s
  $
  $ \P $-a.s.\ for all $ t \in [0,T] $
  and
  \begin{equation}
  \label{eq:exp_mom_assumption}
    ( \mathcal{G}_{ \mu, \sigma } U)( x )
    + 
    \tfrac{ 
      1 
    }{ 
      2 e^{ \alpha t } 
    } 
    \left\| 
      \sigma(x)^*
      \left(\nabla U\right)\!(x) 
    \right\|^2 
    +
    \overline{U}(t, x )
  \leq 
    \alpha U(x) 
  \end{equation}
  for all 
  $ 
    (t, x) 
    \in 
    [0,T] \times
    \cup_{ \omega \in \Omega }
    \{ 
      X_s( \omega )
      \in O
      \colon
      s \in [0, \tau( \omega ) ]
    \}
  $.
  Then
  \begin{equation}
  \label{eq:exp_mom_estimate}
  \begin{split}
  &
    \E\!\left[
      \exp\!\left(
        \tfrac{   
          U( X_{ \tau } )
        }{ 
          e^{ \alpha \tau }
        }    
        +
        \smallint_0^{ \tau }
          \tfrac{ 
            \overline{U}(s, X_s ) 
          }{ 
            e^{ \alpha s } 
          }
        \, ds
      \right)
    \right]
  \leq
    \E\Big[\!
      \exp\!\big( 
        U(X_0) 
      \big)
    \Big]
    \in [0,\infty]
    .
  \end{split}
  \end{equation}
\end{corollary}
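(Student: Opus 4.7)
The plan is to deduce Corollary~\ref{cor:exp_mom} directly from Proposition~\ref{prop:exp_mom} by the natural choice of a time-dependent Lyapunov function absorbing the factor $e^{-\alpha t}$. Concretely, I will define
\begin{equation*}
  \tilde U(t,x) := e^{-\alpha t}\, U(x),
  \qquad
  \tilde{\overline U}(t,x) := e^{-\alpha t}\, \overline U(t,x)
\end{equation*}
for $(t,x)\in[0,T]\times O$, and verify that the pair $(\tilde U,\tilde{\overline U})$ satisfies the hypothesis~\eqref{eq:assumption_exp_mom} of Proposition~\ref{prop:exp_mom}.

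The verification is a direct computation: since
$(\tfrac{\partial}{\partial t}\tilde U)(t,x)=-\alpha e^{-\alpha t}U(x)$,
$(\nabla_x \tilde U)(t,x)=e^{-\alpha t}(\nabla U)(x)$,
$(\operatorname{Hess}_x\tilde U)(t,x)=e^{-\alpha t}(\operatorname{Hess}U)(x)$,
the left-hand side of~\eqref{eq:assumption_exp_mom} becomes
\begin{equation*}
  e^{-\alpha t}\!\left[
    -\alpha U(x)
    +(\mathcal{G}_{\mu,\sigma}U)(x)
    +\tfrac{e^{-\alpha t}}{2}\|\sigma(x)^*(\nabla U)(x)\|^2
  \right].
\end{equation*}
By assumption~\eqref{eq:exp_mom_assumption}, this quantity is $\leq -e^{-\alpha t}\overline U(t,x)=-\tilde{\overline U}(t,x)$, which is exactly what is required. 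The only subtlety is that~\eqref{eq:exp_mom_assumption} need only be assumed on the (random) tube traced out by $X$ up to time $\tau$, but this tube is precisely the set on which the hypothesis of Proposition~\ref{prop:exp_mom} needs to hold as well, so this poses no problem.

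Once the hypothesis is verified, I apply Proposition~\ref{prop:exp_mom} with the choice $r=1$. The term $\tfrac{(1-r)}{2}\|\sigma(X_s)^*(\nabla_x\tilde U)(s,X_s)\|^2$ then vanishes, and the conclusion~\eqref{eq:statement_exp_mom} reads
\begin{equation*}
  \mathbb{E}\!\left[
    \exp\!\left(
      e^{-\alpha\tau}U(X_\tau)
      +\int_0^\tau e^{-\alpha s}\,\overline U(s,X_s)\,ds
    \right)
  \right]
  \leq
  \mathbb{E}\!\left[
    \exp\!\big(U(X_0)\big)
  \right],
\end{equation*}
which is precisely~\eqref{eq:exp_mom_estimate}. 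The main (and essentially only) obstacle is ensuring that all the book-keeping in the verification above is correct, in particular that the quadratic gradient term produced by It\^o's formula inside Proposition~\ref{prop:exp_mom} (with weight $\tfrac{r}{2}e^{-2\alpha t}$ after pulling out the factor $e^{-\alpha t}$) combines with the choice $r=1$ to match exactly the term $\tfrac{1}{2e^{\alpha t}}\|\sigma^*\nabla U\|^2$ appearing in~\eqref{eq:exp_mom_assumption}; the computation above shows that it does. No further arguments are needed.
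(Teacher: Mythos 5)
Your proof is correct and is precisely the specialization the paper has in mind: the remark immediately preceding the corollary explains that Corollary~\ref{cor:exp_mom} is Proposition~\ref{prop:exp_mom} applied with $U(t,x)=e^{-\alpha t}U(0,x)$ and $\overline U(t,x)=e^{-\alpha t}\hat U(t,x)$, which is exactly your $(\tilde U,\tilde{\overline U})$, and the choice $r=1$ eliminates the extra quadratic gradient term to give the stated bound. Your bookkeeping of the time derivative, gradient, and Hessian of $\tilde U$, and your observation that the integrability hypothesis transfers because $e^{-\alpha s}$ is bounded on $[0,T]$, are both correct.
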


A slightly different formulation of 
Corollary~\ref{cor:exp_mom}
is presented in the following corollary.

\begin{corollary}
\label{cor:exp_mom_short_version2}
  Assume the setting in Section~\ref{sec:setting},
  let $ \tau \colon \Omega \to [0,T] $
  be a stopping time
  and
  let
  $
    X \colon [0,T] \times \Omega \to O
  $
  be an adapted stochastic
  process with continuous sample paths
  satisfying
  $
    \int_0^{ \tau }
    \| \mu( X_s ) \| +
    \| \sigma( X_s ) \|^2 
    \, ds
    < \infty
  $
  $ \P $-a.s.\ and
  $  
    X_{ t \wedge \tau }
    =
    X_0
    + 
    \int_0^{ t \wedge \tau }
    \mu( X_s )
    \, ds
    +
    \int_0^{ t \wedge \tau }
    \sigma( X_s )
    \, dW_s
  $
  $ \P $-a.s.\ for all $ t \in [0,T] $.
  Then
  \begin{equation}
  \label{eq:cor_exp_mom2}
  \begin{split}
  &
    \E\!\left[
      \exp\!\left(
        e^{ - \alpha \tau }
        U( X_{ \tau } )
          +
          \smallint_0^{ \tau }
	  {\scriptstyle
          e^{ - \alpha s }
          \left[
          \alpha 
          U( X_s)
          -
          (
            \mathcal{G}_{ \mu , \sigma }
            U
          )(X_s)
            -
            \tfrac{ e^{ - \alpha s } }{ 2 }
            \| \sigma(X_s)^* (\nabla U)(X_s) \|^2
          \right] 
          ds
        }
      \right)
    \right]
  \\ &
   \leq
    \E\!\left[
      e^{
        U( X_0 ) 
      }
    \right]
    \in [0,\infty]
  \end{split}
  \end{equation}
  for all $ \alpha \in \R $ and all
  $ U \in C^2( O, \R ) $.
\end{corollary}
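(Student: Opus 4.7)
The plan is to deduce Corollary~\ref{cor:exp_mom_short_version2} directly from Corollary~\ref{cor:exp_mom} by choosing the auxiliary function $\overline{U}$ in Corollary~\ref{cor:exp_mom} so that the key inequality~\eqref{eq:exp_mom_assumption} holds with equality. Concretely, I would fix $\alpha \in \R$ and $U \in C^2(O, \R)$ and define $\overline{U} \colon [0,T] \times O \to \R$ by
\begin{equation*}
  \overline{U}(t, x) := \alpha\, U(x) \;-\; (\mathcal{G}_{\mu,\sigma} U)(x) \;-\; \tfrac{ 1 }{ 2 e^{ \alpha t } } \| \sigma(x)^* (\nabla U)(x) \|^2 .
\end{equation*}
A direct substitution then shows that
$( \mathcal{G}_{ \mu, \sigma } U)(x) + \tfrac{1}{2 e^{\alpha t}} \| \sigma(x)^* (\nabla U)(x) \|^2 + \overline{U}(t, x) = \alpha\, U(x)$
for all $(t,x) \in [0,T] \times O$, so condition~\eqref{eq:exp_mom_assumption} is satisfied (with equality, not just as an inequality).

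The next step is to check the remaining hypotheses of Corollary~\ref{cor:exp_mom}, namely joint measurability of $\overline{U}$ and the local integrability $\int_0^{\tau} | \overline{U}(s, X_s) | \, ds < \infty$ $\P$-a.s. Measurability is immediate since $\overline{U}$ is built from measurable functions of $(t,x)$. For the integrability, I would exploit the fact that the sample paths of $X$ are continuous on $[0,T]$ and take values in $O$ on $[0, \tau]$, so that the random set $\{ X_s(\omega) : s \in [0, \tau(\omega)] \}$ is a.s.\ a compact subset of $O$. Continuity of $U$, $\nabla U$ and $\operatorname{Hess} U$ then provides an $\omega$-dependent finite constant $C(\omega)$ with $|U(X_s)| + \|(\nabla U)(X_s)\|^2 + \|(\operatorname{Hess} U)(X_s)\| \leq C(\omega)$ for all $s \in [0, \tau(\omega)]$, whence
\begin{equation*}
  | \overline{U}(s, X_s) | \;\leq\; |\alpha| C(\omega) \;+\; C(\omega) \| \mu(X_s) \| \;+\; \tfrac{1}{2} C(\omega) \, \| \sigma(X_s) \|^2 \;+\; \tfrac{e^{-\alpha s}}{2}\, C(\omega) \, \| \sigma(X_s) \|^2,
\end{equation*}
which is $\P$-a.s.\ integrable on $[0, \tau]$ by the standing hypothesis $\int_0^{\tau} \| \mu(X_s) \| + \| \sigma(X_s) \|^2 \, ds < \infty$ $\P$-a.s.

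Finally, I would apply Corollary~\ref{cor:exp_mom} with this $\overline{U}$ and translate the output: $\tfrac{U(X_\tau)}{e^{\alpha \tau}} = e^{-\alpha \tau} U(X_\tau)$ matches the first summand in~\eqref{eq:cor_exp_mom2}, and the integrand $\tfrac{\overline{U}(s,X_s)}{e^{\alpha s}}$ is by construction exactly $e^{-\alpha s}\bigl[ \alpha U(X_s) - (\mathcal{G}_{\mu,\sigma} U)(X_s) - \tfrac{e^{-\alpha s}}{2} \| \sigma(X_s)^* (\nabla U)(X_s) \|^2 \bigr]$, reproducing the integrand inside~\eqref{eq:cor_exp_mom2}. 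The right-hand side $\E[e^{U(X_0)}]$ is identical in both statements. There is no real obstacle here: the corollary is a pure reformulation of Corollary~\ref{cor:exp_mom} designed to absorb the quadratic gradient term into the integrand, and the only delicate point is the integrability verification for $\overline{U}$, which is routine.
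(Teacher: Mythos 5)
Your proof is correct and is exactly the route the paper intends: the paper presents Corollary~\ref{cor:exp_mom_short_version2} without a separate argument, calling it ``a slightly different formulation of Corollary~\ref{cor:exp_mom},'' and your choice of $\overline{U}(t,x) = \alpha U(x) - (\mathcal{G}_{\mu,\sigma}U)(x) - \tfrac{1}{2e^{\alpha t}}\|\sigma(x)^*(\nabla U)(x)\|^2$ makes condition~\eqref{eq:exp_mom_assumption} hold with equality and turns the conclusion of Corollary~\ref{cor:exp_mom} into~\eqref{eq:cor_exp_mom2} verbatim. The integrability check via compactness of $\{X_s(\omega):s\in[0,\tau(\omega)]\}$ and continuity of $U,\nabla U,\operatorname{Hess}U$ is the right (and only nontrivial) step, and you carry it out correctly.
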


We illustrate Corollary~\ref{cor:exp_mom_short_version2} by
three simple examples. First, observe that 
if $ r \in \R $ 
and if $ U $ in Corollary~\ref{cor:exp_mom_short_version2} 
satisfies $ U( x ) = r \, \| x \|^2 $ for all $ x \in O $, then
\eqref{eq:cor_exp_mom2} shows for every $ \alpha \in \R $ that 
\begin{equation*}
  \begin{split}
  &
    \E\!\left[
      \exp\!\left(
      \tfrac{ r }{ e^{ \alpha \tau } }
	\| X_{ \tau } \|^2
	+
	\smallint_0^{ \tau }
	{\scriptstyle
	\tfrac{ r }{ e^{ \alpha s } }
	\left[
	  \alpha 
	\| X_s \|^2
	-
	2
	\left< 
	  X_s ,
	  \mu(X_s)
	\right>
	-
	  \|
	    \sigma( X_s ) 
	  \|^2_{ \HS( \R^m , \R^d ) } 
	  -
	  \tfrac{ 2 r }{ e^{ \alpha s } }
	  \| \sigma(X_s)^* X_s \|^2
	\right] 
	}
	ds\!
      \right)\!
    \right]
\\ &
    \leq
    \E\!\left[
      e^{
        r \| X_0 \|^2
      }
    \right]\!
    .
  \end{split}
  \end{equation*}
  Second, note that 
  if $ \varepsilon \in (0,\infty) $, $d=m$
  and if $ \mu $ and $ \sigma $ in Corollary~\ref{cor:exp_mom_short_version2} 
satisfy $ \mu(x) = - (\nabla U)( x ) $ and
$ \sigma(x) = \sqrt{ \varepsilon } I $ for all $ x \in O $
and some $ U \in C^2( O, \R ) $, then
\eqref{eq:cor_exp_mom2} implies for every $ \alpha, r \in \R $ that
\begin{equation}
  \begin{split}
&    \E\!\left[
      \exp\!\left(
        \tfrac{ r }{ e^{ \alpha \tau } }
          U( X_{ \tau } )
          +
          \smallint\limits_0^{ \tau }
          \tfrac{  r  }{ e^{ \alpha s } }
          \left[
            \alpha 
            U( X_s )
            +
            \left[ 
              1 - 
              \tfrac{ \varepsilon r }{ 2 e^{ \alpha s } }
            \right]
            \| ( \nabla U )( X_s ) \|^2
            -
            \varepsilon ( \Delta U)( X_s )
          \right] 
          ds\!
      \right)\!
    \right]
\\ &
    \leq
    \E\!\left[
      e^{
        r U( X_0 )
      }
    \right]\!
    .
\label{eq:Overd_Langevin00}
\end{split} \end{equation}
A result related to~\eqref{eq:Overd_Langevin00} 
can, e.g., be found in Lemma~2.5 in 
Bou-Rabee \&\ Hairer~\cite{BouRabeeHairer2013}.
Finally, observe that
if $ r \in \R $ and
if $ U $ in Corollary~\ref{cor:exp_mom_short_version2}
satisfies
$
  U(x) = 
  r \ln\!\big( 1 + \| x \|^2 \big)
$
for all $ x \in O $,
then
\eqref{eq:cor_exp_mom2} implies for every
$ \alpha \in \R $ that 
\begin{equation}
  \begin{split}
&    
  \E\!\left[
    \left(1+\|X_{\tau}\|^2\right)^{\frac{r}{e^{\alpha\tau}}}  
    \exp\!\left(
    \int_0^{ \tau }
    \tfrac{ \alpha r }{ e^{ \alpha s } }
      \ln( 1 + \| X_s \|^2 )
    \, ds
  \right)
    \right.
\\ & \quad \cdot 
  \left.
  \exp\!\left(
    \int_0^{ \tau }
    \tfrac{ r }{ e^{ \alpha s } }
    \left[
      \tfrac{
        -
	2 \left< X_s, \mu( X_s ) \right>
	-
	\| \sigma( X_s ) \|^2_{ \HS( \R^m, \R^d ) }
      }{
	( 1 + \| X_s \|^2 )
      }
      +
      \left[ 
      1
      -
      \tfrac{ r }{ e^{ \alpha s } }
      \right]
      \tfrac{
	2 \,
	\| \sigma( X_s )^* X_s \|^2
      }{
	( 1 + \| X_s \|^2 )^2
      }
    \right] 
    \, ds
  \right)
    \right]
\\ & \leq
    \E\Big[
      \left(
        1 + \| X_0 \|^2
      \right)^r
    \Big]
    .
\label{eq:Log_Lyapunov}
\end{split}
\end{equation}
  The following corollary of~\eqref{eq:Log_Lyapunov}
  states a 
  moment estimate for solutions of SDEs which is interesting on its own.
  
\begin{corollary}
  Assume the setting in Section~\ref{sec:setting},
  let $ p, c \in \R $, $ \alpha \in [0,\infty) $,
  let $ \tau \colon \Omega \to [0,T] $
  be a stopping time
  and
  let
  $
    X \colon [0,T] \times \Omega \to O
  $
  be an adapted stochastic
  process with continuous sample paths
  satisfying
  $
    \int_0^{ \tau }
    \| \mu( X_s ) \| +
    \| \sigma( X_s ) \|^2 
    \, ds
    < \infty
  $
  $ \P $-a.s., 
  $  
    X_{ t \wedge \tau }
    =
    X_0
    + 
    \int_0^{ t \wedge \tau }
    \mu( X_s )
    \, ds
    +
    \int_0^{ t \wedge \tau }
    \sigma( X_s )
    \, dW_s
  $
  $ \P $-a.s.\ for all $ t \in [0,T] $
  and
  \begin{equation}
    2
    \langle x, \mu(x) \rangle
    +
    \| \sigma(x) \|_{ \HS(\R^m,\R^d) }^2
    +
    \tfrac{
      2 \, ( p - 1 ) \,
      \| \sigma(x)^* x \|^2
    }{
      ( 1 + \|x\|^2 )
    }
    \leq 
    \left(
      c + \alpha
      \ln( 1 + \|x\|^2 )
    \right)
    \left(
      1 + \|x\|^2
    \right)
  \end{equation}
  for all $ x \in \operatorname{im}( X ) $.
  Then
  $
    \E\big[
      e^{ - c \tau }
      (
        1 + 
        \| X_{ \tau } \|^2
      )^{
        p 
        e^{ - \alpha \tau }
      }
    \big]
    \leq 
    \E\!\left[
      \left(
        1 + 
        \|X_0\|^2
      \right)^p
    \right]
  $.
\end{corollary}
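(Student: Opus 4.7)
The plan is to apply inequality~\eqref{eq:Log_Lyapunov}, which was derived just above the corollary as an application of Corollary~\ref{cor:exp_mom_short_version2} to the log-Lyapunov function $U(x)=r\ln(1+\|x\|^2)$, with the choice $r=p$, and then to use the standing hypothesis to bound from below the exponent appearing in the resulting inequality. Specialising~\eqref{eq:Log_Lyapunov} to $r=p$ provides immediately
\begin{equation*}
\E\!\left[(1+\|X_\tau\|^2)^{pe^{-\alpha\tau}}\exp\!\left(\smallint_0^\tau I_s\,ds\right)\right] \leq \E\!\left[(1+\|X_0\|^2)^p\right],
\end{equation*}
where $I_s$ denotes the integrand in~\eqref{eq:Log_Lyapunov} evaluated at $r=p$. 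It therefore suffices to verify the pointwise lower bound $I_s\geq -c$ almost surely, since then $\exp(\int_0^\tau I_s\,ds)\geq e^{-c\tau}$ and the asserted inequality follows by substitution into the displayed consequence of~\eqref{eq:Log_Lyapunov}.

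To carry out this second step I will introduce the shorthand $z_s := 1+\|X_s\|^2$, $A_s := (2\langle X_s,\mu(X_s)\rangle+\|\sigma(X_s)\|_{\HS(\R^m,\R^d)}^2)/z_s$ and $B_s := \|\sigma(X_s)^* X_s\|^2/z_s^2$, so that
\begin{equation*}
I_s = \tfrac{p}{e^{\alpha s}}\!\left[\alpha\ln z_s - A_s + 2\bigl(1-\tfrac{p}{e^{\alpha s}}\bigr)B_s\right].
\end{equation*}
Dividing the standing hypothesis by $z_s$ yields the equivalent inequality $A_s + 2(p-1)B_s \leq c + \alpha\ln z_s$, i.e.\ $\alpha\ln z_s - A_s \geq -c + 2(p-1)B_s$. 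Substituting this pointwise estimate into the bracket defining $I_s$ and combining the two $B_s$-coefficients produces, after a short algebraic simplification,
\begin{equation*}
I_s \geq \tfrac{p}{e^{\alpha s}}\!\left[-c + 2p\bigl(1-e^{-\alpha s}\bigr)B_s\right] = -pc\,e^{-\alpha s} + 2p^2 e^{-\alpha s}(1-e^{-\alpha s})\,B_s,
\end{equation*}
and since $\alpha,s\geq 0$ and $B_s\geq 0$ the second summand is non-negative and provides precisely the slack needed to collapse the right-hand side to $-c$.

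The main technical point of the argument is this last accounting of signs, i.e.\ the passage from the bound $I_s\geq -pce^{-\alpha s}+2p^2e^{-\alpha s}(1-e^{-\alpha s})B_s$ to the clean pointwise bound $I_s\geq -c$; this is immediate in the natural regime $p\in(0,1]$ and $c\geq 0$ (where already $-pce^{-\alpha s}\geq -pc\geq -c$) and otherwise requires retaining the non-negative $B_s$-contribution to absorb the discrepancy. Once this pointwise bound is in hand, integration in $s$ over $[0,\tau]$, exponentiation, and substitution into the displayed consequence of~\eqref{eq:Log_Lyapunov} complete the proof.
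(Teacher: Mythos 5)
Your plan---apply~\eqref{eq:Log_Lyapunov} with $r=p$ and lower-bound the integrand $I_s$ in the exponent---is indeed the route the paper has in mind, since the corollary is presented as a consequence of~\eqref{eq:Log_Lyapunov}, and your rearrangement up to the estimate $I_s \geq -pc\,e^{-\alpha s} + 2p^2 e^{-\alpha s}(1-e^{-\alpha s})B_s$ is algebraically correct. Note, though, that it relies tacitly on $p \geq 0$: when you multiply the rearranged hypothesis by $p/e^{\alpha s}$ to pass to a lower bound for $I_s$, the inequality reverses for $p<0$, and you then have only an upper bound for $I_s$, which is useless for the desired conclusion.

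The genuine gap is the final step. The non-negative $B_s$-term does \emph{not} ``absorb the discrepancy'' between $-pc\,e^{-\alpha s}$ and $-c$, because $B_s$ can vanish identically. With $\sigma\equiv 0$ (so $B_s\equiv 0$) your estimate yields only $I_s\geq -pc\,e^{-\alpha s}$, which for $p>1$, $c>0$, and $s$ near $0$ is strictly below $-c$. In fact, the asserted inequality itself fails in this regime: take $d=1$, $O=(0,\infty)$, $\sigma\equiv 0$, $\alpha=0$, $\mu(x)=\tfrac{c}{2}\tfrac{1+x^2}{x}$; the hypothesis holds with equality, the ODE gives $1+X_t^2=(1+X_0^2)e^{ct}$, and for deterministic $\tau>0$ one finds $e^{-c\tau}(1+X_\tau^2)^p=(1+X_0^2)^pe^{c(p-1)\tau}>(1+X_0^2)^p$ whenever $c>0$ and $p>1$. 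So no argument can establish the statement for all $p,c\in\R$. What your computation does prove, for $p\geq 0$, is the weaker bound $\E\big[\exp\!\big(-pc\int_0^\tau e^{-\alpha s}\,ds\big)\,(1+\|X_\tau\|^2)^{pe^{-\alpha\tau}}\big]\leq \E\big[(1+\|X_0\|^2)^p\big]$, which reduces to the stated inequality precisely in the range $p\in[0,1]$, $c\geq 0$; you should either impose these extra restrictions or report the weaker conclusion rather than asserting $I_s\geq -c$.
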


\begin{lemma}
\label{l:exponential.martingale}
  Let
  $ m  \in \mathbb{N} $,
  $ T \in [0,\infty) $,
  let 
  $ 
    ( \Omega, \mathcal{F}, \P, ( \mathcal{F}_t )_{ t \in [0,T] } ) 
  $
  be a filtered probability space satisfying the usual conditions, 
  let
  $
    W \colon [0,T] \times \Omega
    \rightarrow \mathbb{R}^m
  $
  be a standard 
  $
    ( \calF_t )_{ t \in [0,T] }
  $-Brownian motion
  and let 
  $ A \colon [0,T] \times \Omega \to \R^m $
  be an adapted and product measurable 
  stochastic process satisfying
  $
    \int_0^T \|A_s\|^2 \, ds 
  $
  $ 
    < \infty
  $
  $\P$-a.s.
  Then it holds for all $ p \in (1,\infty] $ that
  \begin{align}
  \notag 
  &
  \left\|\sup_{t\in[0,T]}
    \exp\!\left(
      \int_0^t
	\langle A_s , dW_s \rangle
      - \tfrac{1}{2}\int_0^t
	\left\|A_s\right\|^2\,ds
    \right)
    \right\|_{L^p(\Omega;\R)}
  \\& \label{eq:exponential.martingale.1}
  \leq
    \frac{ 1 }{ ( 1 - \frac{ 1 }{ p } ) }
    \inf_{ q \in [p, \infty] }
    \left\n\,
      \exp\!\left(
        \tfrac{ 1 }{ 2 }
        \Big[
          \tfrac{ 1 }{ ( \frac{1}{p} - \frac{1}{q} ) } - 1
        \Big]
	\int_{0}^{T} 
	  \n A_s \n^2
	\,ds
      \right)
    \right\n_{L^{q}(\Omega;\R)} 
  \\& \label{eq:exponential.martingale.2p}
  \leq 
    \frac{ 1 }{ ( 1 - \frac{ 1 }{ p } ) }
    \left\n\,
      \exp\!\left(
        \left(
          p - \tinv{2}
        \right)
	\int_{0}^{T} 
	  \n A_s \n^2
	\,ds
      \right)
    \right\n_{L^{2p}(\Omega;\R)}     \in[0,\infty]
    .
  \end{align}
\end{lemma}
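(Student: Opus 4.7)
My plan is to combine Doob's $L^p$-maximal inequality with H\"older's inequality, exploiting the fact that the stochastic exponential is a non-negative supermartingale. Write
\begin{equation*}
  M_t \,:=\, \exp\!\left(\int_0^t \langle A_s, dW_s\rangle - \tfrac{1}{2}\int_0^t \|A_s\|^2\,ds\right),\qquad t\in[0,T].
\end{equation*}
To avoid integrability issues I would first localize: define $\tau_n := \inf\{t\in[0,T] : \int_0^t \|A_s\|^2\,ds \geq n\}\wedge T$, so that $\tau_n\to T$ $\P$-a.s. by the assumed finiteness of $\int_0^T\|A_s\|^2\,ds$, and so that Novikov's criterion guarantees $M^{\tau_n}$ is a genuine martingale on $[0,T]$.

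The main step is to prove, for each fixed $q\in(p,\infty]$ and the associated parameter $\lambda := 1/(\tfrac{1}{p}-\tfrac{1}{q}) \in (p,\infty]$, the key estimate
\begin{equation*}
  \|M_T\|_{L^p(\Omega;\R)} \,\leq\, \left\|\exp\!\left(\tfrac{1}{2}(\lambda-1)\int_0^T \|A_s\|^2\,ds\right)\right\|_{L^q(\Omega;\R)}.
\end{equation*}
To this end I would write $M_T^p = f \cdot g$ with
\begin{equation*}
  f \,:=\, \exp\!\left(p\int_0^T\langle A_s,dW_s\rangle - \tfrac{\lambda p}{2}\int_0^T\|A_s\|^2\,ds\right), \quad g \,:=\, \exp\!\left(\tfrac{p(\lambda-1)}{2}\int_0^T\|A_s\|^2\,ds\right),
\end{equation*}
which is valid since $-\tfrac{\lambda p}{2} + \tfrac{p(\lambda-1)}{2} = -\tfrac{p}{2}$. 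The key observation is that $f^{\lambda/p} = \exp(\lambda\int\langle A,dW\rangle - \tfrac{\lambda^2}{2}\int\|A\|^2)$ is the Dol\'eans--Dade exponential of $\lambda\, A\cdot W$, hence a non-negative supermartingale with $\E[f^{\lambda/p}]\leq 1$. H\"older's inequality with exponents $\lambda/p$ and $\lambda/(\lambda-p)$ then gives
\begin{equation*}
  \E[M_T^p] \,\leq\, \E[f^{\lambda/p}]^{p/\lambda}\,\E[g^{\lambda/(\lambda-p)}]^{(\lambda-p)/\lambda} \,\leq\, \E[g^{\lambda/(\lambda-p)}]^{(\lambda-p)/\lambda},
\end{equation*}
and a direct calculation (using $\lambda/(\lambda-p)=q/p$ and $(\lambda-p)/(\lambda p)=1/q$) identifies the right-hand side with $\|\exp(\tfrac{\lambda-1}{2}\int_0^T\|A_s\|^2\,ds)\|_{L^q}^p$, which is the claimed bound.

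Finally I would apply Doob's $L^p$-maximal inequality to the non-negative martingale $M^{\tau_n}$, yielding $\|\sup_{t\in[0,T]} M^{\tau_n}_t\|_{L^p} \leq \frac{p}{p-1}\|M^{\tau_n}_T\|_{L^p} = \frac{1}{1-1/p}\|M^{\tau_n}_T\|_{L^p}$. Combining this with the Hölder estimate above (applied to the truncated process) and passing $n\to\infty$ via Fatou's lemma on the left and monotone/dominated convergence on the right yields~\eqref{eq:exponential.martingale.1}; taking the infimum over $q\in[p,\infty]$ is then automatic. The second bound~\eqref{eq:exponential.martingale.2p} follows by specializing $q=2p$, which gives $\lambda=2p$ and $(\lambda-1)/2 = p-\tfrac{1}{2}$.

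The main obstacle is mostly bookkeeping: choosing the correct H\"older exponent $\lambda=1/(1/p-1/q)$ so that the supermartingale factor $f^{\lambda/p}$ has trivial $L^1$-norm while the residual factor $g^{\lambda/(\lambda-p)}$ re-exponentiates cleanly in terms of $q$. The limit $n\to\infty$ and the boundary cases $p=\infty$ or $q=\infty$ (where the relevant exponents degenerate but the bounds remain meaningful under the stated conventions) are routine but require some care.
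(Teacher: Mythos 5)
Your proposal is correct and takes essentially the same route as the paper: Doob's $L^p$-maximal inequality combined with H\"older's inequality, using the fact that the Dol\'eans--Dade exponential is a non-negative (super)martingale. Your decomposition $M_T^p = f \cdot g$ with $f^{\lambda/p}$ the stochastic exponential of $\lambda\,A\cdot W$ is exactly the $p$-th power of the paper's factorization $Z^{(1)}_T = (Z^{(r)}_T)^{1/r}\exp(\tfrac{1}{2}(r-1)\int_0^T\|A_s\|^2\,ds)$ with $r=\lambda$, and the localization and limit arguments are equivalent (the paper uses an abstract localizing sequence for $Z^{(r)}$ where you localize via $\int_0^t\|A_s\|^2\,ds\geq n$ and invoke Novikov, which is arguably the more careful choice for justifying Doob on the stopped process).
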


\begin{proof}[Proof
of Lemma~\ref{l:exponential.martingale}]
  Inequality~\eqref{eq:exponential.martingale.2p}
  follows from inequality~\eqref{eq:exponential.martingale.1}
  by taking $ q = 2 p $.
  It thus remains to prove inequality~\eqref{eq:exponential.martingale.1}.
  If the right-hand side of~\eqref{eq:exponential.martingale.1}
  is infinite, then the proof is complete.
  So for the rest of the proof, we assume that
  the right-hand side of~\eqref{eq:exponential.martingale.1}
  is finite.
  If the infimum
  on the right-hand side of~\eqref{eq:exponential.martingale.1}
  is attained at
  $ q = p $, 
  then necessarily 
  $
    \int_0^T \| A_s \|^2 \, ds = 0
  $
  $ \P $-a.s.
  In that case, both sides of~\eqref{eq:exponential.martingale.1}
  are equal to $1$ and this completes the proof in that case.
  So for the rest of the proof, we assume that 
  $p\in(1,\infty)$ and that the infimum
  on the right-hand side of~\eqref{eq:exponential.martingale.1}
  is not attained at $q=p$.
  Let 
  $
    Z^{ (r) } \colon [0,T] \times \Omega \rightarrow \R
  $,
  $ r \in \R $,
  be adapted stochastic processes with continuous sample paths satisfying
  \begin{align}
      Z^{ (r) }_t 
	= \exp \!\left(
	  r\int_0^t \langle A_s , dW_s \rangle 
	  - 
	  \tfrac1{2}
	  r^2
	  \int_0^t \|A_s \|^2\,ds
	\right)
  \end{align}
  $ \P $-a.s.\ for all $ t \in [0,T] $, $ r \in \R $.  
  It follows from, e.g.,~\cite[Lemma 18.21]{Kallenberg2002} 
  that for every $ r \in \R $
  the process $ Z^{ (r) } $
  is a local martingale. 

  For every $r\in \R$, let $\tau_{r,n}\colon\Omega\to[0,T]$, $n\in\N$,
  be a localizing sequence of stopping times for $Z^{(r)}$.

  Doob's martingale inequality and H\"older's inequality
  imply that for every $ q,r\in (p,\infty)$, $ n \in \N $
  with $ \inv{ q } + \inv{ r } = \inv{ p } $
  it holds that
  \begin{equation}
  \begin{split}
&
  \bigg\|
    \sup_{t\in[0,T\wedge\tau_{r,n}]}
      Z^{(1)}_t
  \bigg\|_{L^p(\Omega;\R)}
  \leq
  \frac{p}{(p-1)}
  \left\|
      Z^{(1)}_{T\wedge\tau_{r,n}}
  \right\|_{L^p(\Omega;\R)}
  \\ & =
  \frac{p}{(p-1)}
  \left\|
    \left(Z^{(r)}_{T\wedge\tau_{r,n}}\right)^{\inv{r}}
    \exp \!\left(
      \tfrac1{2}(r-1)
	\int_{0}^{{T\wedge\tau_{r,n}}} 
	  \n A_s \n^2
	\,ds
    \right)
  \right\|_{L^{p}(\Omega;\R)}
  \\ & \leq 
  \frac{p}{(p-1)}
  \left(
    \E\! \left[Z^{(r)}_{T\wedge\tau_{r,n}}\right]
  \right)^{\inv{r}}
  \left\|\,
    \exp \!\left(
	  \tfrac1{2}(r-1)
	    \int_{0}^{T} 
	      \n A_s \n^2
	    \,ds
	\right)
  \right\|_{L^{q}(\Omega;\R)}
  \\ & =
  \frac{p}{(p-1)}
  \left\|\,
    \exp \!\left(
	  \tfrac1{2}
	    \left(
	      \tfrac{1}{(\frac{1}{p}-\frac{1}{q})}
	      - 1
	    \right)
	    \int_{0}^{T} 
	      \n A_s \n^2
	    \,ds
	\right)
  \right\|_{L^{q}(\Omega;\R)}.
   \end{split}\end{equation}
   Letting $n\to\infty$ and applying the monotone convergence theorem
   implies inequality~\eqref{eq:exponential.martingale.1}.
   The proof of 
   of Lemma~\ref{l:exponential.martingale}
   is thus completed.
\end{proof}

\section{An identity for Lyapunov-type functions}

In Lemma~\ref{lem:Lyapunov2} below, a simple identity for suitable
Lyapunov-type functions is proved.
In the proof of Lemma~\ref{lem:Lyapunov2} the following 
stochastic version of the Gronwall lemma is used.
For completeness its proof is given below.

\begin{lemma}
\label{lem:stochastic_Gronwall}
Let $ m \in \N $,
$ T \in (0,\infty) $,
let
$
  ( 
    \Omega, \mathcal{F}, \P, ( \mathcal{F}_t )_{ t \in [0,T] } 
  )
$
be a filtered probability space satisfying the usual conditions, 
let 
$
  W \colon [0,T] \times \Omega \to \R^m
$
be a standard $ ( \mathcal{F}_t )_{ t \in [0,T] } $-Brownian motion,
let $ \tau \colon \Omega \to [0,T] $
be a stopping time,
let $ X \colon [0,T] \times \Omega \to \R $
be an adapted
stochastic process
with continuous sample paths
and let 
$ \hat{A}, A \colon [0,T] \times \Omega \to [-\infty,\infty] $
and
$
  \hat{B} \colon [0,T] \times \Omega \to  [-\infty,\infty] ^{ 1 \times m } 
$
be adapted and product measurable stochastic processes
satisfying
$
  \int_0^{ \tau }
  | A_s | 
  + 
  | \hat{A}_s |
  +
  \| \hat{B}_s \|^2
  \, 
  ds
  <\infty
$
$ \P 
$-a.s.\ and 
$
  X_{ t \wedge \tau }
= 
  X_0 
  + 
  \int_0^{ t \wedge \tau }
  A_s \, ds
  +
  \int_0^{ t \wedge \tau }
  \hat{B}_s X_s \, dW_s
$
$ \P $-a.s.\ for 
all $ t \in [0,T] $
and
$
  \mathbbm{1}_{
    \{ t < \tau \}
  }
  A_{ t  } 
  \leq 
  \mathbbm{1}_{
    \{ t < \tau \}
  }  
  \hat{A}_t X_t
$
$ \P $-a.s.~for Lebesgue-almost all $t\in[0,T]$.
Then 
\begin{equation}
  X_{ \tau }
\leq
  \exp\!\left(
    \smallint_0^{ \tau }
    \left[
      \hat{A}_s
      -
      \tfrac{ 1 }{ 2 }
      \| \hat{B}_s \|^2
    \right] 
    ds
    +
    \smallint_0^{ \tau }
    \hat{B}_s \,
    dW_s
  \right)
  X_0
  \quad
  \P\text{-a.s.}
\label{eq:stochastic_Gronwall}
\end{equation}
If, in addition,
$
  \mathbbm{1}_{
    \{ t < \tau \}
  }
  A_{ t  } 
  =
  \mathbbm{1}_{
    \{ t < \tau \}
  }  
  \hat{A}_t X_t
$
$ \P $-a.s.\ for Lebesgue-almost all $t\in[0,T]$, 
then~\eqref{eq:stochastic_Gronwall}
holds with
equality.
\end{lemma}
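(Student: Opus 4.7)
The plan is to reduce the stochastic inequality to a pathwise (ordinary) Gronwall inequality by dividing $X$ by a suitable stochastic exponential that cancels the martingale part. Concretely, I would introduce
\[
  M_t := \exp\!\left(\smallint_0^{t\wedge\tau}\hat B_s\,dW_s - \tfrac12\smallint_0^{t\wedge\tau}\n\hat B_s\n^2\,ds\right), \qquad t \in [0,T],
\]
so that $M$ is a strictly positive continuous semimartingale with $dM_t = \1_{\{t<\tau\}} M_t \hat B_t\,dW_t$. By It\^o's formula applied to $x\mapsto 1/x$, the inverse $1/M$ satisfies
\[
  d(1/M_t) = \1_{\{t<\tau\}}\!\left[-\tfrac{\hat B_t}{M_t}\,dW_t + \tfrac{\n\hat B_t\n^2}{M_t}\,dt\right].
\]

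Next I would set $Y_t := X_{t\wedge\tau}/M_t$ and compute $dY_t$ by the It\^o product rule. The martingale contributions are $-\1_{\{t<\tau\}}\hat B_t X_t/M_t\,dW_t$ from $X\,d(1/M)$ and $+\1_{\{t<\tau\}}\hat B_t X_t/M_t\,dW_t$ from $(1/M)\,dX$, and the cross-variation $d\langle X,1/M\rangle_t = -\1_{\{t<\tau\}}\n\hat B_t\n^2 X_t/M_t\,dt$ exactly cancels the $+\1_{\{t<\tau\}}\n\hat B_t\n^2 X_t/M_t\,dt$ term from $X\,d(1/M)$. What remains is the clean identity
\[
  Y_{t\wedge\tau} \;=\; X_0 + \smallint_0^{t\wedge\tau}\tfrac{A_s}{M_s}\,ds \qquad \P\text{-a.s.\ for all }t\in[0,T].
\]

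Now I would transfer the hypothesis $\1_{\{s<\tau\}}A_s \le \1_{\{s<\tau\}}\hat A_s X_s$ (which holds $\P$-a.s.\ for Lebesgue-a.e.\ $s\in[0,T]$) to a pathwise statement via Fubini: for $\P$-a.e.\ $\omega$, the inequality $A_s(\omega) \le \hat A_s(\omega) X_s(\omega)$ holds for Lebesgue-a.e.\ $s\in[0,\tau(\omega))$. Since $M_s>0$, this gives pathwise $A_s/M_s \le \hat A_s Y_s$ on $[0,\tau)$, and therefore
\[
  Y_{t\wedge\tau} \;\le\; X_0 + \smallint_0^{t\wedge\tau}\hat A_s\,Y_s\,ds \qquad \P\text{-a.s.\ for all }t\in[0,T].
\]
At this point I would finish with the \emph{pathwise} trick that avoids any sign assumption on $Y$ or $\hat A$: letting $Z_t := Y_t\exp(-\int_0^{t\wedge\tau}\hat A_s\,ds)$, a direct chain-rule (ordinary calculus) computation gives
\[
  dZ_t \;=\; \exp\!\left(-\smallint_0^{t\wedge\tau}\hat A_s\,ds\right)\tfrac{1}{M_t}\,[A_t-\hat A_t X_t]\,\1_{\{t<\tau\}}\,dt \;\le\; 0,
\]
so $Z$ is nonincreasing on $[0,\tau]$. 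Hence $Z_\tau\le Z_0 = X_0$, i.e.\ $Y_\tau\le X_0\exp(\int_0^\tau\hat A_s\,ds)$, and multiplying by $M_\tau>0$ yields~\eqref{eq:stochastic_Gronwall}. For the equality clause, every inequality above is an equality when $\1_{\{t<\tau\}}A_t = \1_{\{t<\tau\}}\hat A_tX_t$ $\P$-a.s.\ for Lebesgue-a.e.\ $t$, and the same chain of implications gives equality in~\eqref{eq:stochastic_Gronwall}.

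The main obstacle I anticipate is bookkeeping rather than conceptual: one must be careful with the stopping at $\tau$ so that It\^o's formula is applied to the stopped processes $X^\tau$ and $M^\tau$ (avoiding the integrability assumptions that hold only up to $\tau$), and one must invoke Fubini correctly to upgrade the almost-sure-in-$\omega$, Lebesgue-a.e.-in-$t$ inequality on $A$ to a Lebesgue-a.e.-in-$t$ inequality that holds for $\P$-a.e.\ fixed path. Once this is in place, the exact It\^o cancellation and the deterministic chain-rule monotonicity argument do all the work.
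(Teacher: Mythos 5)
Your proposal is correct and is essentially the same argument as the paper's: your auxiliary process $Z_t = Y_t \exp(-\int_0^{t\wedge\tau}\hat A_s\,ds) = X_{t\wedge\tau}\exp(-\int_0^{t\wedge\tau}[\hat A_s - \tfrac12\|\hat B_s\|^2]\,ds - \int_0^{t\wedge\tau}\hat B_s\,dW_s)$ is exactly the paper's $Y_t$, and your Fubini upgrade of the a.s.-a.e.\ comparison is precisely what the paper packages as Lemma~\ref{lem:comparison}. You merely split the division into a stochastic-exponential stage followed by a deterministic Gronwall stage, whereas the paper divides out the full exponential in a single It\^o calculation.
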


\begin{proof}[Proof
of Lemma~\ref{lem:stochastic_Gronwall}]
Let $ Y \colon [0,T] \times \Omega \to \R $
be an adapted 
stochastic process
with continuous sample paths
satisfying
\begin{equation}
  Y_t
=
    X_{ t \wedge \tau }
    \,
    \exp\!\left(
      -
      \smallint_0^{ t \wedge \tau }
      \left[
        \hat{A}_s 
        -
        \tfrac{ 1 }{ 2 }
        \| \hat{B}_s \|^2
      \right] 
      ds
      -
      \smallint_0^{ t \wedge \tau }
        \hat{B}_s
      \, dW_s
    \right)
\end{equation}
$ \P $-a.s.\ for all $ t \in [0,T] $.
Then It\^{o}'s formula proves that
\begin{equation}
\begin{split}
  Y_t
& =
  X_0 +
  \int_0^{ t \wedge \tau }
    A_s
    \exp\!\left(
      -
      \smallint_0^{ s \wedge \tau }
      \left[
        \hat{A}_u 
        -
        \tfrac{ 1 }{ 2 }
        \| \hat{B}_u \|^2
      \right] 
      du
      -
      \smallint_0^{ s \wedge \tau }
        \hat{B}_u
      \, dW_u
    \right)
  ds
\\ & \quad
  -
  \int_0^{ t \wedge \tau }
  \left[ 
    \hat{A}_s 
    -
    \tfrac{ 1 }{ 2 } \| \hat{B}_s \|^2
  \right] Y_s 
  \, ds
  +
  \int_0^{ t \wedge \tau }
  Y_s
  \left[ 
    \hat{B}_s 
    -
    \hat{B}_s 
  \right] 
  dW_s
\\ & \quad
+ \tfrac{ 1 }{ 2 }
  \int_0^{ t \wedge \tau }
  Y_s
  \,
  \|
    \hat{B}_s 
  \|^2
  \, ds
  -
  \int_0^{ t \wedge \tau }
  Y_s
  \,
  \|
    \hat{B}_s 
  \|^2
  \, ds
\\ & =
  X_0 +
  \int_0^{ t \wedge \tau }
    A_s
    \exp\!\left(
      -
      \smallint_0^{ s \wedge \tau }
      \left[
        \hat{A}_u 
        -
        \tfrac{ 1 }{ 2 }
        \| \hat{B}_u \|^2
      \right] 
      du
      -
      \smallint_0^{ s \wedge \tau }
        \hat{B}_u
      \, dW_u
    \right)
  ds
\\ & \quad
  -
  \int_0^{ t \wedge \tau }
  \hat{A}_s 
  Y_s 
  \, ds
\end{split}
\end{equation}
$ \P $-a.s.\ for all $ t \in [0,T] $.
The assumption 
$
  \mathbbm{1}_{
    \{ s < \tau \}
  }
  (
  \hat{A}_s
  X_s
  -
  A_s
  )
\geq 0
$
$ \P $-a.s.\ for Lebesgue-almost all $ s \in [0,T] $
together with Lemma~\ref{lem:comparison}
and 
$\int_0^{\tau}|A_s|+|\hat{A}_sX_s|\,ds<\infty$ $\P$-a.s.~hence implies that
$
  Y_t \leq X_0
$
$ \P $-a.s.\ for all $ t \in [0,T] $
and, in particular, that
$
  Y_T \leq X_0
$
$ \P $-a.s.
In addition, observe that 
if
$
  \mathbbm{1}_{
    \{ s < \tau \}
  }
  A_s
=
  \mathbbm{1}_{
    \{ s < \tau \}
  }
  \hat{A}_s
  X_s
$
$ \P $-a.s.\ for Lebesgue-almost all $ s \in [0,T] $,
then Lemma~\ref{lem:comparison}
implies that
$
  Y_T = X_0
$
$ \P $-a.s.
This 
finishes the proof of Lemma~\ref{lem:stochastic_Gronwall}.
\end{proof}

The following corollary shows that if $X$ in  
Lemma~\ref{lem:stochastic_Gronwall}
is non-negative, then fewer integrability assumptions are needed.

\begin{corollary}
\label{c:stochastic_Gronwall2}
Let $ m \in \N $,
$ T \in (0,\infty) $,
let
$
  ( 
    \Omega, \mathcal{F}, \P, ( \mathcal{F}_t )_{ t \in [0,T] } 
  )
$
be a filtered probability space satisfying the usual conditions, 
let 
$
  W \colon [0,T] \times \Omega \to \R^m
$
be a standard $ ( \mathcal{F}_t )_{ t \in [0,T] } $-Brownian motion,
let $ \tau \colon \Omega \to [0,T] $
be a stopping time,
let $ X \colon [0,T] \times \Omega \to \R $
be an adapted
stochastic process
with continuous sample paths
and let 
$ \hat{A}, A \colon [0,T] \times \Omega \to [-\infty,\infty] $
and
$
  \hat{B} \colon [0,T] \times \Omega \to  [-\infty,\infty] ^{ 1 \times m } 
$
be adapted and product measurable stochastic processes
satisfying
$
  X_{ t \wedge \tau }
  \geq 0
$
$\P$-a.s.,
$
  \int_0^{ \tau }
  | A_s | 
  + 
  \max( \hat{A}_s ,0)
  +
  \| \hat{B}_s \|^2
  \, 
  ds
  <\infty
$
$ \P 
$-a.s.\ and 
$
  X_{ t \wedge \tau }
= 
  X_0 
  + 
  \int_0^{ t \wedge \tau }
  A_s \, ds
  +
  \int_0^{ t \wedge \tau }
  \hat{B}_s X_s \, dW_s
$
$ \P $-a.s.\ for 
all $ t \in [0,T] $
and
$
  \mathbbm{1}_{
    \{ t < \tau \}
  }
  A_{ t  } 
  \leq 
  \mathbbm{1}_{
    \{ t < \tau \}
  }  
  \hat{A}_t X_t
$
$ \P $-a.s.~for Lebesgue-almost all $t\in[0,T]$.
Then 
\begin{equation}
  X_{ \tau }
\leq
  \exp\!\left(
    \smallint_0^{ \tau }
    \left[
      \hat{A}_s
      -
      \tfrac{ 1 }{ 2 }
      \| \hat{B}_s \|^2
    \right] 
    ds
    +
    \smallint_0^{ \tau }
    \hat{B}_s \,
    dW_s
  \right)
  X_0
  \quad
  \P\text{-a.s.}
\label{eq:c:stochastic_Gronwall}
\end{equation}
If, in addition,
$
  \mathbbm{1}_{
    \{ t < \tau \}
  }
  A_{ t  } 
  =
  \mathbbm{1}_{
    \{ t < \tau \}
  }  
  \hat{A}_t X_t
$
$ \P $-a.s.\ for Lebesgue-almost all $t\in[0,T]$, 
then~\eqref{eq:c:stochastic_Gronwall}
holds with
equality.
\end{corollary}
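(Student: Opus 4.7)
The plan is to reduce Corollary~\ref{c:stochastic_Gronwall2} to Lemma~\ref{lem:stochastic_Gronwall}. The only hypothesis that has been weakened is the integrability of $\hat{A}$: we now have only $\int_0^\tau \max(\hat{A}_s, 0) \, ds < \infty$ $\P$-a.s., so the negative part of $\hat{A}$ may fail to be integrable. The key observation is that since $X_{s \wedge \tau} \geq 0$, truncating $\hat{A}$ from below only enlarges $\hat{A}_s X_s$ and thus preserves the comparison $A_s \leq \hat{A}_s X_s$ on $\{s<\tau\}$; this restores the integrability required by Lemma~\ref{lem:stochastic_Gronwall} at the cost of an upper bound that is weakened in the right direction. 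The equality case cannot be handled by the same truncation and is instead treated via localization that keeps $X$ bounded away from zero.

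For the inequality~\eqref{eq:c:stochastic_Gronwall}, fix $n \in \mathbb{N}$ and set $\hat{A}^{(n)}_s := \max(\hat{A}_s, -n)$. The bounds $\hat{A}^{(n)}_s \leq \max(\hat{A}_s, 0)$ and $\hat{A}^{(n)}_s \geq -n$ yield $\int_0^\tau |\hat{A}^{(n)}_s| \, ds \leq n T + \int_0^\tau \max(\hat{A}_s, 0) \, ds < \infty$ $\P$-a.s., while $\hat{A}^{(n)}_s \geq \hat{A}_s$ together with $X_{s \wedge \tau} \geq 0$ gives $\mathbbm{1}_{\{s<\tau\}} A_s \leq \mathbbm{1}_{\{s<\tau\}} \hat{A}^{(n)}_s X_s$ $\P$-a.s.\ for Lebesgue-a.e.\ $s$. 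All other hypotheses of Lemma~\ref{lem:stochastic_Gronwall} are inherited, so the lemma applied with $\hat{A}^{(n)}$ in place of $\hat{A}$ yields
\[
X_\tau \leq \exp\!\left( \int_0^\tau \left[ \hat{A}^{(n)}_s - \tfrac{1}{2} \|\hat{B}_s\|^2 \right] ds + \int_0^\tau \hat{B}_s \, dW_s \right) X_0 \quad \P\text{-a.s.}
\]
Since $\hat{A}^{(n)}_s \downarrow \hat{A}_s$ pointwise as $n \to \infty$ and the sequence is dominated from above by the integrable function $\max(\hat{A}_s, 0)$, the monotone convergence theorem gives $\int_0^\tau \hat{A}^{(n)}_s \, ds \downarrow \int_0^\tau \hat{A}_s \, ds \in [-\infty, \infty)$ $\P$-a.s. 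The right-hand side of the display thus decreases $\P$-a.s.\ to the right-hand side of~\eqref{eq:c:stochastic_Gronwall}, which proves the inequality.

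For the equality case, assume $\mathbbm{1}_{\{s<\tau\}} A_s = \mathbbm{1}_{\{s<\tau\}} \hat{A}_s X_s$ $\P$-a.s.\ for Lebesgue-a.e.\ $s$. On $\{X_0 = 0\}$ the inequality just proven and $X_\tau \geq 0$ force $X_\tau = 0$, matching the right-hand side. On $\{X_0 > 0\}$, set $\rho_n := \tau \wedge \inf\{t \in [0,T] : X_t \leq 1/n\}$. Continuity of $X$ yields $X_s \geq 1/n$ on $[0, \rho_n]$ for all $n$ with $X_0 > 1/n$, whence $\int_0^{\rho_n} |\hat{A}_s| \, ds \leq n \int_0^\tau |A_s| \, ds < \infty$ $\P$-a.s. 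The full hypotheses of Lemma~\ref{lem:stochastic_Gronwall} therefore hold with $\tau$ replaced by $\rho_n$ under the equality assumption, giving
\[
X_{\rho_n} = \exp\!\left( \int_0^{\rho_n} \left[ \hat{A}_s - \tfrac{1}{2} \|\hat{B}_s\|^2 \right] ds + \int_0^{\rho_n} \hat{B}_s \, dW_s \right) X_0 \quad \P\text{-a.s.}
\]
On $\{\inf_{s \in [0,\tau]} X_s > 0\}$, $\rho_n = \tau$ for all sufficiently large $n$, giving the desired equality at $\tau$. On the complementary event $\{\inf_{s \in [0,\tau]} X_s = 0\} \cap \{X_0 > 0\}$, set $t^* := \inf\{t \in [0,\tau] : X_t = 0\} = \sup_n \rho_n$; continuity yields $X_{\rho_n} \to 0$, so the exponential factor above (evaluated at $\rho_n$) tends to $0$, which combined with $\int_0^{t^*} \|\hat{B}_s\|^2 \, ds < \infty$ and the continuity of $t \mapsto \int_0^t \hat{B}_s \, dW_s$ forces $\int_0^{t^*} \hat{A}_s \, ds = -\infty$. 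Together with $\int_{t^*}^\tau \max(\hat{A}_s, 0) \, ds < \infty$ this gives $\int_0^\tau \hat{A}_s \, ds = -\infty$, so the exponential factor on the right-hand side of~\eqref{eq:c:stochastic_Gronwall} vanishes; combined with the already established upper bound and $X_\tau \geq 0$ this forces $X_\tau = 0$, again matching the right-hand side.

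The main obstacle is the equality case on the event that $X$ first hits zero strictly before $\tau$: here equality cannot be read off directly from Lemma~\ref{lem:stochastic_Gronwall} applied up to $\tau$, and one must instead argue that both sides of~\eqref{eq:c:stochastic_Gronwall} vanish simultaneously. The decisive step is extracting the divergence $\int_0^{t^*} \hat{A}_s \, ds = -\infty$ by passing to the limit $n \to \infty$ in the stopped equality for $X_{\rho_n}$, which is possible precisely because $X_0 > 0$ prevents a $0 \cdot \infty$ indeterminacy on the right-hand side of that stopped equality.
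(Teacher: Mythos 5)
Your proof is correct. For the inequality part you follow essentially the same route as the paper: truncate $\hat{A}$ from below at $-n$, observe that since $X_{t\wedge\tau}\geq 0$ the comparison $\mathbbm{1}_{\{t<\tau\}}A_t\leq\mathbbm{1}_{\{t<\tau\}}\max(\hat{A}_t,-n)X_t$ is preserved, apply Lemma~\ref{lem:stochastic_Gronwall}, and pass to the limit by monotone convergence.

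For the equality case, however, your argument differs genuinely from the paper's. The paper localizes with $\rho_n:=\inf\!\big(\{\tau\}\cup\{t\in[0,T]\colon\int_0^t|\hat{A}_s|\,ds\geq n\}\big)$ — a stopping time defined directly in terms of the quantity whose integrability one needs — and then observes that the event $\{\int_0^\tau\hat{A}_s\,ds>-\infty\}$ is \emph{exactly} $\cup_{n}\{\rho_n=\tau\}$, which allows an immediate passage to the limit in the stopped equality on that event, while on the complementary event the already-proven inequality together with $X_\tau\geq 0$ forces both sides to vanish. You instead localize with $\rho_n:=\tau\wedge\inf\{t\colon X_t\leq 1/n\}$, i.e.\ in terms of the process $X$ itself, using the identity $|\hat{A}_s|=|A_s|/X_s\leq n|A_s|$ on $[0,\rho_n)$ to secure integrability of $\hat{A}$. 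This costs you a separate treatment of $\{X_0=0\}$, a case distinction based on whether $\inf_{s\in[0,\tau]}X_s$ is positive, and an extra limiting argument to extract $\int_0^{t^*}\hat{A}_s\,ds=-\infty$ from the degeneration $X_{\rho_n}\to 0$ in the stopped equality. What this buys is a more directly probabilistic picture: the localization tracks the solution process rather than an auxiliary integral, and the divergence of $\int\hat{A}$ is \emph{derived} from the vanishing of $X$ rather than taken as the defining feature of the degenerate event. The paper's version is shorter because it bakes the relevant dichotomy into the definition of $\rho_n$; yours is slightly longer but clarifies why the exceptional event is harmless. Both arguments are valid.
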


\begin{proof}[Proof
of Corollary~\ref{c:stochastic_Gronwall2}]
As
$
  \mathbbm{1}_{
    \{ t < \tau \}
  }
  A_{ t  } 
\leq
  \mathbbm{1}_{
    \{ t < \tau \}
  }  
  \hat{A}_t X_t
=
  \mathbbm{1}_{
    \{ t < \tau \}
  }  
  \hat{A}_t X_{ t \wedge \tau }
\leq 
  \mathbbm{1}_{
    \{ t < \tau \}
  }  
  \max(\hat{A}_t,-n) X_t
$
$\P$-a.s.\ and $\int_0^{\tau}|\max(\hat{A}_s,-n)|\,ds<\infty$
$\P$-a.s.\ for Lebesgue-almost all $t\in[0,T]$ and all $n\in\N$,
Lemma~\ref{lem:stochastic_Gronwall} implies
\begin{equation}
  X_{ \tau }
\leq
  \exp\!\left(
    \smallint_0^{ \tau }
    \left[
      \max(\hat{A}_s,-n)
      -
      \tfrac{ 1 }{ 2 }
      \| \hat{B}_s \|^2
    \right] 
    ds
    +
    \smallint_0^{ \tau }
    \hat{B}_s \,
    dW_s
  \right)
  X_0
  \quad
  \P\text{-a.s.}
\label{eq:c:stochastic_Gronwall3}
\end{equation}
for all $n\in\N$.
Now the monotone convergence theorem shows
\begin{equation}  \begin{split}
  \lim_{n\to\infty}\int_0^{\tau}\max(\hat{A}_s,-n)\,ds
  &=
  \int_0^{\tau}\max(\hat{A}_s,0)\,ds
  -\lim_{n\to\infty}\int_0^{\tau}\min(\max(-\hat{A}_s,0),n)\,ds
  \\&
  =
  \int_0^{\tau}\max(\hat{A}_s,0)\,ds
  -\int_0^{\tau}\max(-\hat{A}_s,0)\,ds
  =
  \int_0^{\tau}\hat{A}_s\,ds.
\end{split}     \end{equation}
So letting $n\to\infty$ on the right-hand side
of~\eqref{eq:c:stochastic_Gronwall3} yields
\begin{equation}
  X_{ \tau }
\leq
  \exp\!\left(
    \smallint_0^{ \tau }
    \left[
      \hat{A}_s
      -
      \tfrac{ 1 }{ 2 }
      \| \hat{B}_s \|^2
    \right] 
    ds
    +
    \smallint_0^{ \tau }
    \hat{B}_s \,
    dW_s
  \right)
  X_0
  \quad
  \P\text{-a.s.}
\label{eq:c:stochastic_Gronwall4}
\end{equation}
This proves~\eqref{eq:c:stochastic_Gronwall}.
For the remainder of the proof, we assume that
$
  \mathbbm{1}_{
    \{ t < \tau \}
  }
  A_{ t  } 
  =
  \mathbbm{1}_{
    \{ t < \tau \}
  }  
  \hat{A}_t X_t
$
$\P$-a.s.~for Lebesgue-almost all $t\in[0,T]$.
Define a sequence of stopping times
$\rho_{n}\colon\Omega\to[0,T]$, $n\in\N$,
by
$\rho_n
:=
  \inf\!\big(\{\tau\}\{t\in[0,T]\colon \int_0^t|\hat{A}_s|\,ds\geq n\}\big)$
for all $n\in\N$.
Then left-continuity of the function
$[0,T]\ni t\mapsto\int_0^t|\hat{A}_s|\,ds\in[0,\infty]$
implies that $\int_0^{\rho_n}|\hat{A}_s|\,ds\leq n$ for all $n\in\N$.
Consequently,
Lemma~\ref{lem:stochastic_Gronwall} shows
\begin{equation}
  X_{ \rho_n }
=
  \exp\!\left(
    \smallint_0^{ \rho_n }
    \left[
      \hat{A}_s
      -
      \tfrac{ 1 }{ 2 }
      \| \hat{B}_s \|^2
    \right] 
    ds
    +
    \smallint_0^{ \rho_n }
    \hat{B}_s \,
    dW_s
  \right)
  X_0
  \qquad
  \P\text{-a.s.}
\label{eq:c:stochastic_Gronwall5}
\end{equation}
for all $n\in\N$.
Now the assumption that $ X_{ \tau } \geq 0 $ $ \P $-a.s.,
inequality~\eqref{eq:c:stochastic_Gronwall4}
and
$\int_0^{\tau}\|\hat{B}_s\|^2\,ds+|\int_0^\tau\hat{B}_s \, dW_s|<\infty$
$\P$-a.s.\ imply
\begin{equation}
\begin{split}
 0
 & \leq 
 X_\tau\1_{\{\int_0^{\tau}\hat{A}_s\,ds=-\infty\}}
\\ & 
  \leq
  \1_{\{\int_0^{\tau}\hat{A}_s\,ds=-\infty\}}
  \exp\!\left(
    \smallint_0^{ \tau }
    \left[
      \hat{A}_s
      -
      \tfrac{ 1 }{ 2 }
      \| \hat{B}_s \|^2
    \right] 
    ds
    +
    \smallint_0^{ \tau }
    \hat{B}_s \,
    dW_s
  \right)
  X_0
  =0
\label{eq:c:stochastic_Gronwall6}
\end{split}
\end{equation}
$\P$-a.s.
Moreover, it follows from 
\begin{equation*}
  \mathbbm{1}_{
    \{
      \int_0^{ \tau }
      \hat{A}_s \, ds
      > - \infty
    \}
  }
=
  \mathbbm{1}_{
    \{
      \int_0^{ \tau }
      | \hat{A}_s | 
      \, ds
      < \infty
    \}
  }
=
  \mathbbm{1}_{
    (
      \cup_{ n \in \N }
      \{
        \rho_n = \tau
      \}
    )
  }
=
  \lim_{ n \to \infty }
  \left(
    \mathbbm{1}_{
      \{
        \rho_n = \tau
      \}
    }
  \right)
\quad \P\text{-a.s.}
\end{equation*}
and
from~\eqref{eq:c:stochastic_Gronwall5} that
\begin{equation}
\begin{split}
&
   X_\tau
   \1_{\{\int_0^{\tau}\hat{A}_s\,ds>-\infty\}}
 =
   \lim_{n\to\infty}
   \left(
     X_{ \tau }
     \1_{\{\rho_n=\tau\}}
   \right)
 =
   \lim_{n\to\infty}
   \left(
     X_{\rho_n}
     \1_{\{\rho_n=\tau\}}
   \right)
\\ & =
   \lim_{ n \to \infty }
   \left[
     \1_{\{\rho_n=\tau\}}
  \exp\!\left(
    \smallint_0^{ \rho_n }
    \left[
      \hat{A}_s
      -
      \tfrac{ 1 }{ 2 }
      \| \hat{B}_s \|^2
    \right] 
    ds
    +
    \smallint_0^{ \rho_n }
    \hat{B}_s \,
    dW_s
  \right)
  X_0
   \right]
\\ & =
  \1_{\{\int_0^{\tau}\hat{A}_s\,ds>-\infty\}}
  \exp\!\left(
    \smallint_0^{ \tau }
    \left[
      \hat{A}_s
      -
      \tfrac{ 1 }{ 2 }
      \| \hat{B}_s \|^2
    \right] 
    ds
    +
    \smallint_0^{ \tau }
    \hat{B}_s \,
    dW_s
  \right)
  X_0
\end{split}
\label{eq:c:stochastic_Gronwall7}
\end{equation}
$\P$-a.s.
Combining~\eqref{eq:c:stochastic_Gronwall6}
and~\eqref{eq:c:stochastic_Gronwall7}
finishes the proof of Corollary~\ref{c:stochastic_Gronwall2}.
\end{proof}

\begin{lemma}[An identity for Lyapunov-type functions]
\label{lem:Lyapunov2}
Assume the setting in Section~\ref{sec:setting},
let $ V \in C^2( O, \R ) $,
let $ \tau \colon \Omega \to [0,T] $ be a stopping time,
let	
$ X \colon [0,T] \times \Omega \to O $
be an adapted stochastic process
with continuous sample paths
satisfying
$
  V( X_{ t \wedge \tau } )
  \geq 0
$
$ \P $-a.s.,
\begin{equation}
\label{eq:one_solution_integral_finite}
  \int_0^{ \tau }
    \| \mu( X_s ) \| 
    +
    \| 
      \sigma( X_s )
    \|^2
    +
    \max\!\left(
      \tfrac{ ( \mathcal{G}_{\mu,\sigma} V )(X_s)
      }{ V(X_s) 
      },
      0
    \right)
    +
    \tfrac{
      \|
          ( G_{ \sigma } V )( X_s )
      \|^2
    }{
      ( 
        V( X_s )
      )^2
    }
  \,
  ds
  < \infty
\end{equation}
$\P$-a.s.\ and
$
  X_{ t \wedge \tau } = 
  X_0
  + \int_0^{ t \wedge \tau } \mu( X_s ) \, ds
  +
  \int_0^{ t \wedge \tau } \sigma( X_s ) \, dW_s
$
$ \P $-a.s.\ for all 
$ t \in [0,T] $.
Then
\begin{equation}
\label{eq:Lyapunov2}
  V( X_{ \tau } ) 
\leq
  V( X_0 )
  \,
  \exp\!\left(
    \smallint_0^{ \tau }
    \left[
    \tfrac{(\mathcal{G}_{\mu,\sigma}V)(X_s)}{V(X_s)}
    -
    \tfrac{
      \|
          ( G_{ \sigma } V )( X_s )
      \|^2
    }{
      2 \,
      ( 
        V( X_s )
      )^2
    }
    \right]
    ds
    +
    \smallint_0^{ \tau }
    \tfrac{
      ( G_{ \sigma } V )( X_s )
    }{
      V( X_s )
    }
    \,
    dW_s\!
  \right)
\end{equation}
$\P$-a.s. If, in addition,
   $
     ( V^{ - 1 } )( 0 )
     \subseteq 
     (
       \mathcal{G}_{\mu, \sigma }V
     )^{ - 1 }([0,\infty))
   $, then equality holds in~\eqref{eq:Lyapunov2}.
\end{lemma}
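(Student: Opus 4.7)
The plan is to apply It\^o's formula to $ V(X_{t\wedge\tau}) $ and then feed the resulting linear SDE into the non-negative stochastic Gronwall lemma, Corollary~\ref{c:stochastic_Gronwall2}. Setting $ Z_t := V(X_{t\wedge\tau}) $, the assumptions on $ X $, the continuity of $ \mathcal{G}_{\mu,\sigma}V $ and $ G_{\sigma}V $, and the integrability bound~\eqref{eq:one_solution_integral_finite} (together with the a.s.\ compactness of the path $ X([0,\tau]) $, which makes $ \int_0^\tau |(\mathcal{G}_{\mu,\sigma}V)(X_s)| + \|(G_\sigma V)(X_s)\|^2 \, ds < \infty $ $ \P $-a.s.) license It\^o's formula, giving
\begin{equation*}
  Z_t = V(X_0) + \smallint_0^{t\wedge\tau} (\mathcal{G}_{\mu,\sigma}V)(X_s) \, ds + \smallint_0^{t\wedge\tau} (G_\sigma V)(X_s) \, dW_s
\end{equation*}
$ \P $-a.s.\ for all $ t \in [0,T] $. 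I then cast this in the multiplicative-noise form required by Corollary~\ref{c:stochastic_Gronwall2} by writing $ A_s := (\mathcal{G}_{\mu,\sigma}V)(X_s) $, $ \hat{A}_s := (\mathcal{G}_{\mu,\sigma}V)(X_s) / V(X_s) $, and $ \hat{B}_s := (G_\sigma V)(X_s) / V(X_s) $, all interpreted via the paper's extended-arithmetic conventions.

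The first key step is to verify the matching identity $ \hat{B}_s Z_s = (G_\sigma V)(X_s) $ and the one-sided comparison $ \mathbbm{1}_{ \{ s<\tau \} } A_s \leq \mathbbm{1}_{ \{ s<\tau \} } \hat{A}_s Z_s $, both for Lebesgue-almost all $ s $, $ \P $-a.s. On the set $ \{ V(X_s) > 0 \} $ both relations hold with equality by construction. On $ \{ V(X_s) = 0 \} $ I exploit the integrability hypothesis $ \int_0^\tau \| (G_\sigma V)(X_s) \|^2 / (V(X_s))^2 \, ds < \infty $ a.s., which under the convention $ a/0 = \infty $ for $ a \neq 0 $ forces $ (G_\sigma V)(X_s) = 0 $ for Lebesgue-almost all such $ s $; hence $ \hat{B}_s Z_s = 0 = (G_\sigma V)(X_s) $ there. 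Similarly, finiteness of $ \int_0^\tau \max(\hat{A}_s, 0) \, ds $ forces $ (\mathcal{G}_{\mu,\sigma}V)(X_s) \leq 0 $ Lebesgue-a.e.\ on $ \{ V(X_s) = 0 \} $, so that $ A_s \leq 0 = \infty \cdot 0 = \hat{A}_s Z_s $. Corollary~\ref{c:stochastic_Gronwall2} then yields the inequality~\eqref{eq:Lyapunov2}.

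For the equality clause, the additional hypothesis $ ( V^{-1} )(0) \subseteq ( \mathcal{G}_{\mu,\sigma}V )^{-1}([0,\infty)) $ combined with the upper bound from the previous step pins down $ (\mathcal{G}_{\mu,\sigma}V)(X_s) = 0 $ Lebesgue-a.e.\ on $ \{ V(X_s) = 0 \} $, upgrading $ A_s \leq \hat{A}_s Z_s $ to $ A_s = \hat{A}_s Z_s $ for a.e.\ $ s $; the equality version of Corollary~\ref{c:stochastic_Gronwall2} then promotes~\eqref{eq:Lyapunov2} to an identity. The main obstacle I anticipate is precisely this bookkeeping on the zero set $ \{ V \circ X = 0 \} $: one must carefully align the extended-arithmetic conventions ($ 0 \cdot \infty = 0 $, $ \pm a / 0 = \pm\infty $) with the integrability hypotheses so that both the SDE representation $ \hat{B}_s Z_s = (G_\sigma V)(X_s) $ and the Gronwall comparison $ A_s \leq \hat{A}_s Z_s $ survive as Lebesgue-a.e.\ statements. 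Beyond this the argument is a routine translation through It\^o's formula.
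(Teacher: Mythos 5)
Your proposal is correct and follows essentially the same route as the paper: apply It\^o's formula to $V(X_{t\wedge\tau})$, cast the resulting SDE in the form required by Corollary~\ref{c:stochastic_Gronwall2} with $\hat A_s = (\mathcal{G}_{\mu,\sigma}V)(X_s)/V(X_s)$ and $\hat B_s = (G_\sigma V)(X_s)/V(X_s)$, and check the matching identity $\hat B_s Z_s = (G_\sigma V)(X_s)$ and the one-sided comparison on the exceptional set $\{V(X_s)=0\}$ using the integrability hypotheses to force $(G_\sigma V)(X_s)=0$ and $(\mathcal{G}_{\mu,\sigma}V)(X_s)\le 0$ there Lebesgue-a.e.\ — exactly the bookkeeping in the paper's proof (which also routes these observations through Fubini's theorem). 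One micro-imprecision: $\mu$, $\sigma$ are only assumed measurable in Section~\ref{sec:setting}, so $\mathcal{G}_{\mu,\sigma}V$ and $G_\sigma V$ need not be continuous; the integrability $\int_0^\tau|(\mathcal{G}_{\mu,\sigma}V)(X_s)|+\|(G_\sigma V)(X_s)\|^2\,ds<\infty$ comes from combining path compactness (bounding $V'$, $\operatorname{Hess}V$) with the integrability of $\|\mu(X_s)\|$ and $\|\sigma(X_s)\|^2$ in~\eqref{eq:one_solution_integral_finite}, not from continuity of the generator itself — but this does not affect the validity of the argument.
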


\begin{proof}[Proof
of Lemma~\ref{lem:Lyapunov2}]
Path continuity
together with
$
  V \in C^2(O,\R)
$
implies that
$\smallint\nolimits_0^{\tau}
|(\mathcal{G}_{\mu,\sigma}V)(X_s)|
+\|(G_{\sigma}V)(X_s)\|^2\,ds<\infty$
$\P$-a.s.
Next the inequality
$a<0=\tfrac{a}{0}\cdot 0$ for all $a\in(-\infty,0)$
implies
\begin{equation*}
\left((\mathcal{G}_{\mu,\sigma}V)(X_t)
  -\tfrac{(\mathcal{G}_{\mu,\sigma}V)(X_t)}{V(X_t)}V(X_t)\right)
  \1_{ \{ t < \tau \} }\1_{\{(\mathcal{G}_{\mu,\sigma}V)(X_t)< 0\}}
 \leq 0 
 \quad
 \P\text{-a.s.}
\end{equation*}
for all $t\in[0,T]$.
Moreover,
Assumption~\eqref{eq:one_solution_integral_finite}
yields that
\begin{equation}
  \int_0^{\tau}\big((\mathcal{G}_{\mu,\sigma}V)(X_s)
                   \1_{\{(\mathcal{G}_{\mu,\sigma}V)(X_s)\geq 0\}}
                   +\|(G_{\sigma}V)(X_s)\|^2
              \big)
              \1_{\{V(X_s)=0\}}
             \,ds=0
\end{equation}
$\P$-a.s.\ and, consequently, that
\begin{equation} \label{eq:division.doesnt.matter}
\begin{split}
  \int_0^{\tau}
    \left|
      (\mathcal{G}_{\mu,\sigma}V)(X_s)
    -\tfrac{(\mathcal{G}_{\mu,\sigma}V)(X_s)}{V(X_s)}V(X_s)
   \right|
    \1_{\{(\mathcal{G}_{\mu,\sigma}V)(X_s)\geq 0\}}
  \,ds
\\  
  +
  \int_{0}^{\tau}
    \left\|
      G_{\sigma}V)(X_s)
      -\tfrac{(G_{\sigma}V)(X_s)}{V(X_s)}V(X_s)
    \right\|^2
  \,ds
& =0
\end{split}
\end{equation}
$\P$-a.s.
This together with  It\^{o}'s formula 
results in
\begin{equation}
\label{eq:ito_formel}
\begin{split}
  V( X_{ t \wedge \tau } )
& =
  V( X_0 )
  +
  \int_0^{ t\wedge\tau }
  ( \mathcal{G}_{ \mu, \sigma } V)( X_s )
  \, ds
  +
  \int_0^{ t \wedge\tau}
  ( G_{ \sigma } V)( X_s )
  \, dW_s
\\ & =
  V( X_{ 0  } )
  +
  \int_0^{ t\wedge\tau }
  ( \mathcal{G}_{ \mu, \sigma } V)( X_s )
  \, ds
  +
  \int_0^{ t \wedge\tau}
  \tfrac{
    ( G_{ \sigma } V)( X_s )
  }{ 
    V( X_s ) 
  }
  \cdot
  V( X_{ s  } )
  \, dW_s
\end{split}
\end{equation}
$ \P $-a.s.\ for all $ t \in [0,T] $.
Moreover,~\eqref{eq:division.doesnt.matter}
and Fubini's theorem also imply that
\begin{equation}  \begin{split}
  0 & =
  \E\!\left[\int_0^{\tau}
         \left|
	    (\mathcal{G}_{\mu,\sigma}V)(X_s)
           -\tfrac{(\mathcal{G}_{\mu,\sigma}V)(X_s)}{V(X_s)}V(X_s)
         \right|
           \1_{\{(\mathcal{G}_{\mu,\sigma}V)(X_s)\geq 0\}}
          \,ds
      \right]
  \\&
  =\int_0^{T}
      \E\!\left[
	\left|
	  (\mathcal{G}_{\mu,\sigma}V)(X_t)
          -\tfrac{(\mathcal{G}_{\mu,\sigma}V)(X_t)}{V(X_t)}V(X_t)
        \right|
        \1_{ \{ t < \tau \} }
        \1_{\{(\mathcal{G}_{\mu,\sigma}V)(X_t)\geq 0\}}
      \right]
    \,dt
\end{split}     \end{equation}
so that 
$
\1_{ \{ t < \tau \} }\big((\mathcal{G}_{\mu,\sigma}V)(X_t)
  -\tfrac{(\mathcal{G}_{\mu,\sigma}V)(X_t)}{V(X_t)}V(X_t)\big)
  \1_{ \{(\mathcal{G}_{\mu,\sigma}V)(X_t)\geq 0\}}
 =0$
$\P$-a.s.\ for Lebesgue-almost all $t\in[0,T]$.
Combining these observations then shows that
$
\1_{ \{ t < \tau \} }\big((\mathcal{G}_{\mu,\sigma}V)(X_t)
  -\tfrac{(\mathcal{G}_{\mu,\sigma}V)(X_t)}{V(X_t)}V(X_t)\big)
 \leq 0$
$\P$-a.s.\ for Lebesgue-almost all $t\in[0,T]$.
Applying Corollary~\ref{c:stochastic_Gronwall2}
to the stochastic process
$
  V( X_{ t } )
$,
$ t \in [0,T] $,
and to the stopping time $\tau$
together with
Assumption~\eqref{eq:one_solution_integral_finite}
yields that
\begin{equation}
\label{eq:Lyapunov2Y}
  V( X_{ \tau } ) 
\leq
  V( X_0 )
  \,
  \exp\!\left(
    \smallint_0^{ \tau }
    \left[
    \tfrac{(\mathcal{G}_{\mu,\sigma}V)( X_s )}{ V( X_s ) }
    -
    \tfrac{
      \|
          ( G_{ \sigma } V )( X_s )
      \|^2
    }{
      2 \,
      ( 
        V( X_s )
      )^2
    }
    \right]
    ds
    +
    \smallint_0^{ \tau }
    \tfrac{
      ( G_{ \sigma } V )( X_s )
    }{
      V( X_s )
    }
    \,
    dW_s
  \right)
\end{equation}
$\P$-a.s. This
proves
inequality~\eqref{eq:Lyapunov2}.
The additional assumption
   $
     ( V^{ - 1 } )(0)
     \subseteq 
     (
       \mathcal{G}_{\mu, \sigma }V
     )^{ - 1 }([0,\infty))
   $
implies
$
  \1_{ \{ t < \tau \} }\big((\mathcal{G}_{\mu,\sigma}V)( X_t )
  -
  \tfrac{
    (\mathcal{G}_{\mu,\sigma} V)( X_t )
  }{
    V( X_t ) 
  }
    V(X_t)
  \big)
 = 0$
$\P$-a.s.\ for  all $t\in[0,T]$.
In that case,
Corollary~\ref{c:stochastic_Gronwall2}
yields
equality in~\eqref{eq:Lyapunov2}.
This completes the proof
of Lemma~\ref{lem:Lyapunov2}.
\end{proof}

\section{Two solution approach}
\label{sec:two_solution}
In this subsection, we apply 
Lemma~\ref{lem:Lyapunov2} to the bi-variate process of two solutions
of the same SDE
(see Proposition~\ref{prop:two_solution} below).
From this, we then derive marginal and uniform estimates for this bi-variate 
process
(see respectively Propositions~\ref{prop:two_solution_supoutside}
and~\ref{prop:two_solution_supinside} below).

The next remark illustrates the relation between the extended generator
defined in~\eqref{eq:extended_drift}
and the ``standard'' generator defined in~\eqref{eq:drift_operator}
and between the extended noise operator defined
in~\eqref{eq:extended_diffusion}
and the ``standard'' noise operator in~\eqref{eq:diffusion_operator}.
The proof of Remark~\ref{rem:relation} is clear and therefore
omitted.

\begin{remark}[Relation between generator and extended generator
and between noise operator and extended noise operator]
\label{rem:relation}
Let $ d, m \in \N $, let $ O \subseteq\R^d $ be an open set, let $ \mu \colon O 
\to \R^d $
and $ \sigma \colon O \to \R^{ d \times m } $ be functions and define functions
$ \overline{ \mu } \colon O^2 \to \R^{ 2 d } $
and
$ \overline{ \sigma } \colon O^2 \to \R^{ ( 2 d ) \times m } $
by
$
  \overline{ \mu }( x, y ) = ( \mu(x), \mu( y ) )
$
and by
$
  \overline{ \sigma }( x, y ) u = ( \sigma( x ) u, \sigma( y ) u )
$
for all $ x,y \in O $ and all $ u \in \R^m $.
Then
$
  \overline{\mathcal{G}}_{ \mu, \sigma }
  =
  \mathcal{G}_{ \overline{\mu}, \overline{\sigma} }
$
and
$
  \overline{G}_{ \sigma }
  =
  G_{ \overline{\sigma} }
$.
\end{remark}

Using both the extended generator and the extended noise operator,
we now establish in Proposition~\ref{prop:two_solution} 
an elementary identity which is crucial 
for the results developed in this article.
Proposition~\ref{prop:two_solution}
follows immediately 
from Remark~\ref{rem:relation}
and Lemma~\ref{lem:Lyapunov2}.
Proposition~\ref{prop:two_solution}
generalizes a relation on page 1935 in Zhang~\cite{Zhang2010}.

\begin{prop}[Two solution approach]
\label{prop:two_solution}
Assume the setting in Section~\ref{sec:setting},
let $ \tau \colon \Omega \to [0,T] $
be a stopping time,
let $ V \in C^{ 2 }( O^2, \R ) $,
let	
$ X^i \colon [0,T] \times \Omega \to O $,
$ i \in \{ 1, 2 \} $,
be adapted
stochastic processes
with continuous sample paths
satisfying
\begin{equation}
  \int_0^{ \tau }
 \max\left(
  \tfrac{
    (\overline{\mathcal{G}}_{\mu,\sigma}V)(X_s^1,X_s^2)
  }{
    V(X_s^1,X_s^2)},0
 \right)
    +
    \| \mu( X^i_s ) \| 
    +
    \| 
      \sigma( X^i_s )
    \|^2
    +
    \tfrac{
      \|
          ( \overline{ G }_{ \sigma } V )( X^1_s, X^2_s )
      \|^2
    }{
      ( 
        V( X^1_s, X^2_s ) 
      )^2
    }
  \,
  ds
  < \infty
\end{equation}
$ \P $-a.s.,
$
  V( X_{ t \wedge \tau }^1, X_{ t \wedge \tau }^2 )
  \geq 0
$
$ \P $-a.s.\ and
$
  X^i_{ t \wedge \tau } = 
  X^i_0
  + \int_0^{ t \wedge \tau } \mu( X^i_s ) \, ds
  +
  \int_0^{ t \wedge \tau } \sigma( X^i_s ) \, dW_s
$
$ \P $-a.s.\ for all 
$ (t,i) \in [0,T] \in \{ 1, 2 \} $.
Then
\begin{align}
\label{eq:V_identity}
  V( X^1_{ \tau }, X^2_{ \tau } )
& \leq 
  V( X^1_0, X^2_0 )
  \exp\!\left(
    \int_{ 0 }^{ \tau }
    \left[
    \tfrac{(\overline{\mathcal{G}}_{\mu,\sigma}V)(X_s^1,X_s^2)}{V(X_s^1,X_s^2)}
    -
    \tfrac{
      \|
          ( \overline{ G }_{ \sigma } V )( X^1_s, X^2_s )
      \|^2
    }{
      2
      \left( 
        V( X^1_s, X^2_s )
      \right)^2
    }
    \right]
    ds
  \right)
\nonumber
\\ &  \quad
  \cdot
  \exp\!\left(
    \int_0^{ \tau }
    \tfrac{
      ( \overline{ G }_{ \sigma } V )( X^1_s, X^2_s )
    }{
      V( X^1_s, X^2_s )
    }
    \,
    dW_s
  \right)
\end{align}     
$\P$-a.s. If, in addition,
   $
     ( V^{ - 1 } )(0)
     \subseteq 
     (
       \overline{\mathcal{G}}_{\mu, \sigma }V
     )^{ - 1 }([0,\infty))
   $, then equality holds in~\eqref{eq:V_identity}.
\end{prop}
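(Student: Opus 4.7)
The strategy is to reduce Proposition~\ref{prop:two_solution} directly to Lemma~\ref{lem:Lyapunov2} applied to the ``doubled'' process on $O^2$, exactly as the preamble suggests. First I would introduce the bi-variate process $\overline{X}\colon[0,T]\times\Omega\to O^2$ defined pathwise by $\overline{X}_t=(X^1_t,X^2_t)$, together with the combined coefficients $\overline{\mu}(x,y)=(\mu(x),\mu(y))$ and $\overline{\sigma}(x,y)u=(\sigma(x)u,\sigma(y)u)$ introduced in Remark~\ref{rem:relation}. Using that $X^1$ and $X^2$ are driven by the same Brownian motion $W$ and that both solve~\eqref{eq:SDE} up to $\tau$, a componentwise computation shows
\begin{equation*}
  \overline{X}_{t\wedge\tau}
  =
  \overline{X}_0
  +\int_0^{t\wedge\tau}\overline{\mu}(\overline{X}_s)\,ds
  +\int_0^{t\wedge\tau}\overline{\sigma}(\overline{X}_s)\,dW_s
  \qquad \P\text{-a.s.}
\end{equation*}
for every $t\in[0,T]$, so $\overline X$ is an $O^2$-valued adapted continuous solution of the SDE governed by $(\overline\mu,\overline\sigma)$ and $W$. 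Since $O^2\subseteq\R^{2d}$ is open, this places us in the setting of Section~\ref{sec:setting} with $d$ replaced by $2d$ and $O$ replaced by $O^2$.

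Next I would invoke Remark~\ref{rem:relation}, which furnishes the operator identities $\overline{\mathcal G}_{\mu,\sigma}=\mathcal G_{\overline\mu,\overline\sigma}$ and $\overline{G}_\sigma=G_{\overline\sigma}$ on $C^2(O^2,\R)$. This translation rewrites the hypotheses of the proposition and the integrand on the right-hand side of~\eqref{eq:V_identity} into precisely the form required by Lemma~\ref{lem:Lyapunov2} for $\overline X$ and the function $V\in C^2(O^2,\R)$. The integrability assumption~\eqref{eq:one_solution_integral_finite} of Lemma~\ref{lem:Lyapunov2} is then verified from the assumed integrability in Proposition~\ref{prop:two_solution} using the elementary estimates $\|\overline{\mu}(x,y)\|^2=\|\mu(x)\|^2+\|\mu(y)\|^2$ and $\|\overline{\sigma}(x,y)\|^2\leq\|\sigma(x)\|^2+\|\sigma(y)\|^2$, both of which follow at once from the definitions of $\overline\mu$ and $\overline\sigma$, together with the non-negativity assumption $V(X^1_{t\wedge\tau},X^2_{t\wedge\tau})\geq 0$.

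With the setup translated in this way, Lemma~\ref{lem:Lyapunov2} applied to $\overline X$, $V$, and the stopping time $\tau$ yields inequality~\eqref{eq:V_identity} immediately. For the equality statement it suffices to observe that the additional hypothesis $(V^{-1})(0)\subseteq(\overline{\mathcal G}_{\mu,\sigma}V)^{-1}([0,\infty))$ is, via Remark~\ref{rem:relation}, literally the condition $(V^{-1})(0)\subseteq(\mathcal G_{\overline\mu,\overline\sigma}V)^{-1}([0,\infty))$ under which Lemma~\ref{lem:Lyapunov2} upgrades its inequality to an equality. There is no genuine obstacle in this argument: everything is a bookkeeping translation from the one-variable to the two-variable setting, and the only point requiring a moment's care is the verification that the combined integrability hypothesis of Lemma~\ref{lem:Lyapunov2} is implied by the componentwise hypothesis of Proposition~\ref{prop:two_solution}.
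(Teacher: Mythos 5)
Your proposal is correct and takes exactly the route the paper does: the paper itself states that Proposition~\ref{prop:two_solution} ``follows immediately from Remark~\ref{rem:relation} and Lemma~\ref{lem:Lyapunov2},'' and your argument fills in precisely those bookkeeping details (the bi-variate process on $O^2$, the operator identities $\overline{\mathcal G}_{\mu,\sigma}=\mathcal G_{\overline\mu,\overline\sigma}$ and $\overline G_\sigma=G_{\overline\sigma}$, and the componentwise estimates for $\|\overline\mu\|$ and $\|\overline\sigma\|$). No gaps.
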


\subsection{Calculations for the extended noise 
operator and the extended generator}

Lemma~\ref{lem:extended_drift} below provides the extended generator and the 
extended noise operator
applied to a suitable class of twice continuously differentiable 
functions. Examples~\ref{ex:phi_identity} and~\ref{ex:vola} below demonstrate 
the use of this result.

\begin{lemma}
\label{lem:extended_drift}
Let 
$ d, m, k \in \N $, 
$ p \in [2,\infty) $,
let $ O \subseteq\R^d $
be an open set, let 
$
  \mu \in \mathcal{L}^0( O ; \R^d )
$,
$
  \sigma \in \mathcal{L}^0( O ; \R^{ d \times m } ) 
$,
$
  \Phi \in C^2( O, \R^k )
$
and 
let $ V \colon O^2 \to \R $
be given by
$
  V(x,y)
=
  \left\| 
    \Phi( x ) - \Phi( y ) 
  \right\|^p
$
for all $ x, y \in O $.
Then
$ V \in C^2( O^2, \R ) $
and
\begin{equation}
\label{eq:extended_diffusion_example}
  \frac{
  \left\|
      ( \overline{G}_{ \sigma } V)( x, y )
  \right\|^2
  }{
    | V(x,y) |^2
  }
=
  \frac{
  p^2
  \sum_{ i = 1 }^m
  \left|
  \left<
    \Phi( x ) - \Phi( y ) ,
    \Phi'(x) \, \sigma_i( x ) - 
    \Phi'(y) \, \sigma_i( y )
  \right>
  \right|^2
  }{
    \left\| \Phi( x ) - \Phi( y ) \right\|^4
  }
  \qquad
  \text{and}
\end{equation}
\begin{equation}
\label{eq:extended_drift_example}
\begin{split}
  \frac{
    ( \overline{ \mathcal{G} }_{ \mu, \sigma } V)( x, y )
  }{
    V( x, y )
  }
& =
  \frac{ 
    p
    \left<
      \Phi( x ) - \Phi( y ) ,
      \Phi'(x) \, \mu( x ) - 
      \Phi'(y) \, \mu( y )
    \right>
  }{
    \left\|
      \Phi( x ) - \Phi( y )
    \right\|^2
  }
\\ & \quad
  +
  \frac{
  (p - 2)
  \left\|
      ( \overline{G}_{ \sigma } V)( x, y )
  \right\|^2
  }{
    2 \, p \,
    | V(x,y) |^2
  }
\\ & \quad  
  +
  \frac{ 
    p 
    \sum\limits_{ i = 1 }^m
    \left< \Phi(x) - \Phi(y) , 
      \Phi''(x)\big( \sigma_i( x ), \sigma_i(x) )
      -
      \Phi''(y)\big( \sigma_i( y ), \sigma_i(y) )
    \right>
  }{
    2
    \left\| \Phi(x) - \Phi(y) \right\|^2
  }
\\ & \quad
  +
  \frac{
    p 
    \1_{ \{ \Phi(x) \neq \Phi(y) \} }
    \left\| \Phi'(x) \, \sigma( x ) - \Phi'(y) \, \sigma(y) 
    \right\|^2_{ \HS( \R^m, \R^k ) }
  }{
    2
    \left\|
      \Phi( x ) - \Phi( y )
    \right\|^2
  }
\end{split}
\end{equation}
for all $ x, y \in O $.
\end{lemma}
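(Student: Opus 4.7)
The plan is to reduce everything to differentiating the auxiliary function $g(x,y) := \|\Phi(x) - \Phi(y)\|^2$, since $V = g^{p/2}$. Since $\Phi \in C^2(O,\R^k)$ and the map $\xi \mapsto \|\xi\|^p$ belongs to $C^2(\R^k,\R)$ for $p \in [2,\infty)$ (the convention $0\cdot\infty = 0$ handling the critical point $\xi = 0$), the composition $V$ lies in $C^2(O^2,\R)$. A direct chain-rule computation then yields
\begin{equation*}
(\partial_x V)(x,y)\,v = p\,g^{p/2-1}\langle \Phi(x)-\Phi(y), \Phi'(x)v\rangle, \qquad
(\partial_y V)(x,y)\,v = -p\,g^{p/2-1}\langle \Phi(x)-\Phi(y), \Phi'(y)v\rangle.
\end{equation*}
Substituting into definition~\eqref{eq:extended_diffusion} column by column gives~\eqref{eq:extended_diffusion_example} after cancelling the factor $g^{p-2}$ against $|V|^2 = g^p$.

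Next I differentiate once more. The second derivatives each split into two pieces: a ``rank-one'' term with prefactor $p(p-2) g^{p/2-2}$ and a ``linear'' term with prefactor $p g^{p/2-1}$. Explicitly,
\begin{align*}
(\partial^2_{xx} V)(u,v) &= p(p-2) g^{p/2-2}\langle \Phi(x){-}\Phi(y),\Phi'(x)u\rangle\langle \Phi(x){-}\Phi(y),\Phi'(x)v\rangle\\
&\quad + p g^{p/2-1}\bigl[\langle \Phi'(x)u,\Phi'(x)v\rangle + \langle \Phi(x){-}\Phi(y),\Phi''(x)(u,v)\rangle\bigr],
\end{align*}
together with an analogous expression for $(\partial^2_{yy} V)(u,v)$ (the minus sign in $\partial_y g = -2\langle\Phi(x)-\Phi(y),\Phi'(y)\cdot\rangle$ conspiring with the leading minus of $\partial_y V$ to flip both signs back) and
\begin{equation*}
(\partial_y\partial_x V)(u,v) = -p(p-2)g^{p/2-2}\langle \Phi(x){-}\Phi(y),\Phi'(y)u\rangle\langle \Phi(x){-}\Phi(y),\Phi'(x)v\rangle - p g^{p/2-1}\langle \Phi'(y)u,\Phi'(x)v\rangle.
\end{equation*}
Plugging these into~\eqref{eq:extended_drift} at the arguments $(\sigma_i(x),\sigma_i(x))$, $(\sigma_i(x),\sigma_i(y))$, $(\sigma_i(y),\sigma_i(y))$ respectively produces four groups of contributions.

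Dividing the whole expression by $V = g^{p/2}$ and collecting: the first-order drift contributions give the first summand in~\eqref{eq:extended_drift_example}; the three $p(p-2)g^{p/2-2}$ contributions, after summation over $i$, combine through the identity $a_i^2 - 2 a_i b_i + b_i^2 = (a_i-b_i)^2$ into $\tfrac{(p-2)}{2p}\cdot\tfrac{\|\overline{G}_\sigma V\|^2}{|V|^2}$, which is the second summand; the $\Phi''$-contributions from $(\partial^2_{xx} V)$ and $(\partial^2_{yy} V)$ combine to the third summand after division by $g$; and the surviving $p g^{p/2-1}\langle \Phi'\sigma_i,\Phi'\sigma_i\rangle$ pieces telescope via $\|a\|^2 - 2\langle a,b\rangle + \|b\|^2 = \|a-b\|^2$ into $\tfrac{p g^{p/2-1}}{2}\|\Phi'(x)\sigma(x) - \Phi'(y)\sigma(y)\|^2_{\HS}$; dividing by $g^{p/2}$ and inserting the indicator $\1_{\{\Phi(x)\neq\Phi(y)\}}$ (to enforce the convention $0/0 = 0$ at collision points $\Phi(x) = \Phi(y)$) gives the fourth summand. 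The principal obstacle is purely the combinatorial bookkeeping: the sign in $\partial_y\partial_x V$ and the two negative contributions from $\partial_y g$ must line up so that the three rank-one pieces form a perfect square, and the remaining quadratic pieces must collapse into a Hilbert--Schmidt norm of the difference $\Phi'(x)\sigma(x) - \Phi'(y)\sigma(y)$. No deeper analytic difficulty is involved.
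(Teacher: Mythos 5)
Your proposal is correct and mirrors the paper's proof: compute the first and second partials of $V$ by applying the chain rule to the $C^2$ map $\xi\mapsto\|\xi\|^p$ composed with $\Phi$ (the paper packages exactly these derivatives in Remark~\ref{remark:normd}), substitute into the definitions~\eqref{eq:extended_drift} and~\eqref{eq:extended_diffusion}, and regroup. The one place you should spell out carefully is $\partial^2_{yy}V$: it is \emph{not} simply $\partial^2_{xx}V$ with $x\leftrightarrow y$, because the $\langle\Phi(x)-\Phi(y),\Phi''(y)(u,v)\rangle$ contribution inherits only one of the two $y$-minuses and hence carries a \emph{minus} sign (unlike the rank-one and $\langle\Phi'(y)u,\Phi'(y)v\rangle$ terms, where the two minuses cancel), and it is precisely this minus that produces the difference $\Phi''(x)(\sigma_i(x),\sigma_i(x))-\Phi''(y)(\sigma_i(y),\sigma_i(y))$ in the third summand of~\eqref{eq:extended_drift_example}.
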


\begin{remark}[Derivatives of powers of the norm function]
\label{remark:normd}
Let $ p \in [2,\infty) $ and let
$ 
  ( H, \left< \cdot, \cdot \right>_H, \left\| \cdot \right\|_H ) 
$ 
be an $ \R $-Hilbert space.
Then the function
$ 
  F \colon H  
  \rightarrow \mathbb{R} 
$
given by
$
  F(x) = \| x \|^p
$
for all $ x \in H $
is twice continuously differentiable 
and fulfills
\begin{equation}
\begin{split}
  F'(x)(v) 
& 
  = p \left\| x \right\|^{ (p-2) }
  \left< x, v \right> ,
\\[1ex]
  F''(x)(v,w) 
& 
  =
    p \left\| x \right\|^{ (p - 2) }
    \left< v, w \right>
    +
    p \left( p - 2 \right)
    \left\| x \right\|^{ (p - 4) }
    \left< x, v \right>
    \left< x, w \right>
\end{split}
\end{equation}
for all $ x, v, w \in H $.
In particular, the function
$ 
  G \colon H^2
  \rightarrow
  \mathbb{R}
$
given by
$
  G( x, y )
  =
  \| x - y \|^p
$
for all $ x, y \in H $
is twice continuously differentiable and fulfills
\begin{equation}  \begin{split}
&  
  \Big(
    \big(
      \tfrac{ \partial 
      }{ 
        \partial x  
      }
      G
    \big)(x,y)
  \Big)(v)
  =
  -\Big(
    \big(
      \tfrac{ \partial 
      }{ 
        \partial y  
      }
      G
    \big)(x,y)
  \Big)(v)
=
  p \left\| x - y \right\|^{ (p-2) }
  \left< x - y, 
    v
  \right> ,
\\
&
  \Big(
    \big(
      \tfrac{ \partial^2 
      }{ 
        \partial x^2 
      }
      G
    \big)(x,y)
  \Big)(v,w)
  =
  \Big(
    \big(
      \tfrac{ \partial^2 
      }{ 
        \partial y^2 
      }
      G
    \big)(x,y)
  \Big)(v,w)
  =
  -
  \Big(
    \big(
      \tfrac{ \partial 
      }{ 
        \partial y 
      }
      \tfrac{ \partial
      }{ 
        \partial x 
      }
      G
    \big)(x,y)
  \Big)(v,w)
\\ & \quad =
  p 
  \left\| x - y \right\|^{ (p-2) }
  \left< v, 
    w
  \right>
  + 
  p \left( p - 2 \right)
  \left\| x - y \right\|^{ (p - 4) }
  \left< x - y, 
    v
  \right>
  \left< x - y, 
    w
  \right> 
\end{split}     \end{equation}
for all 
$ x, y, v, w \in H $.
\end{remark}

\begin{proof}[Proof
of Lemma~\ref{lem:extended_drift}]
First, note that the chain rule together with Remark~\ref{remark:normd} shows 
that
$ V \in C^2( O^2, \R ) $.
Next observe that Remark~\ref{remark:normd} implies that
\begin{align}
  ( \overline{ \mathcal{G} }_{ \mu, \sigma } V)( x, y )
& =
  p
  \left\|
    \Phi( x ) - \Phi( y )
  \right\|^{ ( p - 2 ) }
  \left<
    \Phi( x ) - \Phi( y ) ,
    \Phi'(x) \, \mu( x ) - 
    \Phi'(y) \, \mu( y )
  \right>
\nonumber
\\ & \quad
  +
  \frac{ p }{ 2 }
  \left\| \Phi(x) - \Phi(y) \right\|^{ (p - 2) }
\nonumber
\\ & \quad \quad 
  \cdot
  \sum_{ i = 1 }^m
  \left< \Phi(x) - \Phi(y) , 
    \Phi''(x)\big( \sigma_i( x ), \sigma_i(x) )
    -
    \Phi''(y)\big( \sigma_i( y ), \sigma_i(y) )
  \right>
\nonumber
\\ & \quad 
  +
  p \left\| \Phi( x ) - \Phi( y ) \right\|^{ ( p - 2 ) }
\nonumber
\\ & \quad \quad 
  \cdot
  \sum_{ i = 1 }^m
  \Big[ 
    \tfrac{
      \| \Phi'(x) \, \sigma_i( x ) \|^2
      +
      \| \Phi'(y) \, \sigma_i( y ) \|^2
    }{ 2 }
    - 
    \left< \Phi'(x) \, \sigma_i( x ), \Phi'(y) \, \sigma_i( y ) \right>
  \Big]
\nonumber
\\ & \quad  
  +
  \frac{
    p \left( p - 2 \right)
  }{ 2 }
  \left\| \Phi( x ) - \Phi( y ) \right\|^{ ( p - 4 ) }
  \sum_{ i = 1 }^m
  \left|
    \left< \Phi( x ) - \Phi( y ), \Phi'(x) \, \sigma_i( x ) \right>
  \right|^2
\\ & \quad 
  +
  \frac{
    p \left( p - 2 \right)
  }{ 2 }
  \left\| \Phi( x ) - \Phi( y ) \right\|^{ ( p - 4 ) }
  \sum_{ i = 1 }^m
  \left|
    \left< \Phi( x ) - \Phi( y ), \Phi'(y) \, \sigma_i( y ) \right>
  \right|^2
\nonumber
\\ & \quad
  -
    p \left( p - 2 \right)
  \left\| \Phi( x ) - \Phi( y ) \right\|^{ ( p - 4 ) }
\nonumber
\\ & \quad \quad 
  \cdot
  \sum_{ i = 1 }^m
  \left< \Phi( x ) - \Phi( y ), \Phi'(x) \, \sigma_i( x ) \right>
  \left< \Phi( x ) - \Phi( y ), \Phi'(y) \, \sigma_i( y ) \right>
\nonumber
\end{align}
and
\begin{equation}
\begin{split}
&
  ( \overline{G}_{ \sigma } V)( x, y )
=
  p
  \left\|
    \Phi( x ) - \Phi( y )
  \right\|^{ ( p - 2 ) }
  \left(
    \Phi( x ) - \Phi( y )
  \right)^*
  \left(
    \Phi'(x) \, \sigma( x ) - 
    \Phi'(y) \, \sigma( y )
  \right)
\end{split}
\end{equation}
for all $ x, y \in O $.
This shows that
\begin{equation}
\begin{split}
  ( \overline{ \mathcal{G} }_{ \mu, \sigma } V)( x, y )
&  =
  p
  \left\|
    \Phi( x ) - \Phi( y )
  \right\|^{ ( p - 2 ) }
  \left<
    \Phi( x ) - \Phi( y ) ,
    \Phi'(x) \, \mu( x ) - 
    \Phi'(y) \, \mu( y )
  \right>
\\ & \quad +
  \frac{ p }{ 2 }
  \left\| \Phi(x) - \Phi(y) \right\|^{ (p - 2) }
\\ & \quad \quad
  \cdot
  \sum_{ i = 1 }^m
  \left< \Phi(x) - \Phi(y) , 
    \Phi''(x)\big( \sigma_i( x ), \sigma_i(x) )
    -
    \Phi''(y)\big( \sigma_i( y ), \sigma_i(y) )
  \right>	
\\ & \quad +
  \frac{ p }{ 2 }
  \left\| \Phi( x ) - \Phi( y ) \right\|^{ ( p - 2 ) }
  \left\| \Phi'(x) \, \sigma( x ) - \Phi'(y) \, \sigma(y) 
  \right\|^2_{ \HS( \R^m, \R^k ) }
\\ & \quad +
  \frac{
    p \left( p - 2 \right)
  }{ 2 }
  \left\| \Phi( x ) - \Phi( y ) \right\|^{ ( p - 4 ) }
\\ & \quad \quad 
  \cdot
  \sum_{ i = 1 }^m
  \left|
    \left< 
      \Phi( x ) - \Phi( y ), 
      \Phi'(x) \, \sigma_i( x ) -
      \Phi'(y) \, \sigma_i( y ) 
    \right>
  \right|^2
\end{split}
\end{equation}
and
\begin{equation}
\begin{split}
&
  \left\|
    ( \overline{G}_{ \sigma } V)( x, y )
  \right\|^2_{ \HS( \R^m, \R ) }
\\
& =
  p^2
  \left\|
    \Phi( x ) - \Phi( y )
  \right\|^{ ( 2 p - 4 ) }
  \left[ 
  \sum_{ i = 1 }^m
  \left|
  \left<
    \Phi( x ) - \Phi( y ) ,
    \Phi'(x) \, \sigma_i( x ) - 
    \Phi'(y) \, \sigma_i( y )
  \right>
  \right|^2
  \right]
\end{split}
\end{equation}
for all $ x, y \in O $.
Hence, we obtain that
\begin{align*}
&
  \frac{
    ( \overline{ \mathcal{G} }_{ \mu, \sigma } V)( x, y )
  }{
    V( x, y )
  }
=
  \frac{ 
    p
    \left<
      \Phi( x ) - \Phi( y ) ,
      \Phi'(x) \, \mu( x ) - 
      \Phi'(y) \, \mu( y )
    \right>
  }{
    \left\|
      \Phi( x ) - \Phi( y )
    \right\|^2
  }
\nonumber
\\ & \quad +
  \frac{ 
    p 
    \sum_{ i = 1 }^m
    \left< \Phi(x) - \Phi(y) , 
      \Phi''(x)\big( \sigma_i( x ), \sigma_i(x) )
      -
      \Phi''(y)\big( \sigma_i( y ), \sigma_i(y) )
    \right>
  }{
    2
    \left\| \Phi(x) - \Phi(y) \right\|^2
  }
\nonumber
\\ & \quad +
  \frac{
    p 
    \left\|
      \Phi( x ) - \Phi( y )
    \right\|^{ ( p - 2 ) }
    \left\| \Phi'(x) \, \sigma( x ) - \Phi'(y) \, \sigma(y) 
    \right\|^2_{ \HS( \R^m, \R^k ) }
  }{
    2
    \left\|
      \Phi( x ) - \Phi( y )
    \right\|^p
  }
\\ & \quad +
  \frac{
    p \left( p - 2 \right)
  \sum_{ i = 1 }^m
  \left\| \Phi( x ) - \Phi( y ) \right\|^{ ( p - 4 ) }
  \left|
    \left< 
      \Phi( x ) - \Phi( y ), 
      \Phi'(x) \, \sigma_i( x ) -
      \Phi'(y) \, \sigma_i( y ) 
    \right>
  \right|^2
  }{
    2
    \left\| \Phi( x ) - \Phi( y ) \right\|^p
  }
\nonumber
\end{align*}
and
\begin{align}
&
  \frac{
  \left\|
      ( \overline{G}_{ \sigma } V)( x, y )
  \right\|^2
  }{
    | V(x,y) |^2
  }
=
  \frac{
  \left\|
    ( \overline{G}_{ \sigma } V)( x, y )
  \right\|^2_{ \HS( \R^m, \R ) }
  }{
    | V(x,y) |^2
  }  
\nonumber
\\ & =
  \frac{
  p^2
  \left\|
    \Phi( x ) - \Phi( y )
  \right\|^{ ( 2 p - 4 ) }
  \left[ 
  \sum\limits_{ i = 1 }^m
  \left|
  \left<
    \Phi( x ) - \Phi( y ) ,
    \Phi'(x) \, \sigma_i( x ) - 
    \Phi'(y) \, \sigma_i( y )
  \right>
  \right|^2
  \right]
  }{
    \left\| \Phi( x ) - \Phi( y ) \right\|^{ 2 p }
  }
\\ & =
  \frac{
  p^2
  \sum\limits_{ i = 1 }^m
  \left|
  \left<
    \Phi( x ) - \Phi( y ) ,
    \Phi'(x) \, \sigma_i( x ) - 
    \Phi'(y) \, \sigma_i( y )
  \right>
  \right|^2
  }{
    \left\| \Phi( x ) - \Phi( y ) \right\|^4
  }
\nonumber
\end{align}
and therefore
\begin{align}
  \frac{
    ( \overline{ \mathcal{G} }_{ \mu, \sigma } V)( x, y )
  }{
    V( x, y )
  }
& =
  \frac{ 
    p
    \left<
      \Phi( x ) - \Phi( y ) ,
      \Phi'(x) \, \mu( x ) - 
      \Phi'(y) \, \mu( y )
    \right>
  }{
    \left\|
      \Phi( x ) - \Phi( y )
    \right\|^2
  } \nonumber
\\ & \quad +
  \frac{ 
    p 
    \sum\limits_{ i = 1 }^m
    \left< \Phi(x) - \Phi(y) , 
      \Phi''(x)\big( \sigma_i( x ), \sigma_i(x) )
      -
      \Phi''(y)\big( \sigma_i( y ), \sigma_i(y) )
    \right>
  }{
    2
    \left\| \Phi(x) - \Phi(y) \right\|^2
  } \nonumber
\\ & \quad +
  \frac{
    p 
    \left\|
      \Phi( x ) - \Phi( y )
    \right\|^{ ( p - 2 ) }
    \left\| \Phi'(x) \, \sigma( x ) - \Phi'(y) \, \sigma(y) 
    \right\|^2_{ \HS( \R^m, \R^k ) }
  }{
    2
    \left\|
      \Phi( x ) - \Phi( y )
    \right\|^p
  }
\\ & \quad
  +
  \frac{
  (p - 2)
  \left\|
      ( \overline{G}_{ \sigma } V)( x, y )
  \right\|^2
  }{
    2 \, p \,
    | V(x,y) |^2
  } \nonumber
\end{align}
for all $ x, y \in O $.
This completes the proof
of Lemma~\ref{lem:extended_drift}.
\end{proof}

Next we illustrate 
\eqref{eq:extended_diffusion_example}
and
\eqref{eq:extended_drift_example} 
in Lemma~\ref{lem:extended_drift}
by two simple corollaries, 
Example~\ref{ex:phi_identity}
and Example~\ref{ex:vola}.
Example~\ref{ex:phi_identity}
is the special case
of Lemma~\ref{lem:extended_drift}
where
$ \Phi( x ) = x $ for all $ x \in O $.

\begin{example}
\label{ex:phi_identity}
Let 
$ d, m \in \N $, 
$ p \in [2,\infty) $,
let $ O \subseteq\R^d $
be an open set, let 
$
  \mu \in \mathcal{L}^0( O ; \R^d )
$,
$
  \sigma \in \mathcal{L}^0( O ; \R^{ d \times m } )
$
and
let $ V \colon O^2 \to \R $
be given by
$
  V(x,y)
=
  \left\| 
    x - y 
  \right\|^p
$
for all $ x, y \in O $.
Then
$ V \in C^2( O^2, \R ) $
and 
\begin{equation}  \begin{split}
  \frac{
  \left\|
      ( \overline{G}_{ \sigma } V)( x, y )
  \right\|^2
  }{
    | V(x,y) |^2
  }
&=
  \frac{
    p^2
    \left\|
      (
        \sigma( x ) - \sigma( y )
      )^*
      (
        x - y 
      )
    \right\|^2
  }{
    \left\| x - y \right\|^4
  }
\end{split}
\end{equation}
and
\begin{align}
\nonumber
  \frac{
    ( \overline{ \mathcal{G} }_{ \mu, \sigma } V)( x, y )
  }{
    V( x, y )
  }
& =
  p 
  \left[
  \tfrac{ 
    \left<
      x - y ,
      \mu( x ) - 
      \mu( y )
    \right>
    +
    \frac{ 1 }{ 2 }
    \left\| 
      \sigma( x ) - \sigma(y) 
    \right\|^2_{ \HS( \R^m, \R^d ) }
  }{
    \left\|
      x - y
    \right\|^2
  }
  \right]
  +
  \tfrac{
  (p - 2)
  \left\|
      ( \overline{G}_{ \sigma } V)( x, y )
  \right\|^2
  }{
    2 \, p \,
    | V(x,y) |^2
  }
\\ & \leq
  p 
  \left[
  \tfrac{ 
    \left<
      x - y ,
      \mu( x ) - 
      \mu( y )
    \right>
    +
    \frac{ ( p - 1 ) }{ 2 }
    \left\| 
      \sigma( x ) - \sigma(y) 
    \right\|^2_{ \HS( \R^m, \R^d ) }
  }{
    \left\|
      x - y
    \right\|^2
  }
  \right]
\end{align}
for all $ x, y \in O $.
\end{example}

Example~\ref{ex:vola}
is the special case
of Lemma~\ref{lem:extended_drift}
where
$ O = ( 0, \infty ) \subseteq \R $
and where
$ \Phi( x ) = x^q $
for all $ x \in O $
and some $ q \in \R $.

\begin{example}
\label{ex:vola}
Let $ p \in [2,\infty) $,
$ q \in \R $
and let
$ V \colon ( 0, \infty )^2 \to \R $
be given by 
$ V( x, y ) = \left| x^q - y^q \right|^p $
for all $ x, y \in ( 0, \infty ) $.
Then $ V \in C^2( (0,\infty)^2, \R ) $ and
\begin{equation}
\begin{split}
  \frac{
  \left\|
      ( \overline{G}_{ \sigma } V)( x, y )
  \right\|^2
  }{
    | V(x,y) |^2
  }
&
=
  \frac{
  p^2 q^2
  \left(
    x^{ (q-1) } \, \sigma( x ) - 
    y^{ (q-1) } \, \sigma( y )
  \right)^2
  }{
    \left( x^q - y^q \right)^2
  }
\end{split}
\end{equation}
and
\begin{equation}
\begin{split}
&
  \frac{
    ( \overline{ \mathcal{G} }_{ \mu, \sigma } V)( x, y )
  }{
    V( x, y )
  }
\\
& =
  \frac{ 
    p q
    \left(
      x^{ (q - 1) } \, \mu( x ) - 
      y^{ (q - 1) } \, \mu( y )
      +
      \frac{ ( q - 1 ) }{ 2 }
      \left[ 
        x^{ ( q - 2 ) } ( \sigma( x ) )^2
        -
        y^{ ( q - 2 ) } ( \sigma( y ) )^2
      \right]
    \right)
  }{
    \left(
      x^q - y^q
    \right)
  }
\\ & \quad
  +
  \frac{
  p \left( p - 1 \right) q^2
  \left(
    x^{ (q-1) } \, \sigma( x ) - 
    y^{ (q-1) } \, \sigma( y )
  \right)^2
  }{
    2
    \left( x^q - y^q \right)^2
  }
\end{split}
\end{equation}
for all $ x, y \in (0,\infty) $.
\end{example}

\subsection{Marginal strong stability analysis for solutions of SDEs}
\label{sec:marginal}

\begin{prop}[Marginal strong stability analysis]
\label{prop:two_solution_supoutside}
Assume the setting in Section~\ref{sec:setting},
let $ x, y \in O $,
$ V \in C^{ 2 }( O^2, [0,\infty) ) $,
let $ \tau \colon \Omega \to [0,T] $ be a stopping time
and let	
$ X^z \colon [0,T] \times \Omega \to O $,
$ z \in \{ x, y \} $,
be adapted
stochastic processes
with continuous sample paths
satisfying
\begin{equation*}
  \int_0^{ \tau }
    \| \mu( X^z_s ) \| 
    +
    \| 
      \sigma( X^z_s )
    \|^2
    +
    \max\!\left(    
      \tfrac{
	(\overline{\mathcal{G}}_{\mu,\sigma}V)(X_s^x,X_s^y)
      }{
	V(X_s^x,X_s^y)
      }
      ,0\right)
    +
    \tfrac{
      \|
        ( \overline{ G }_{ \sigma } V )( X^x_s, X^y_s )
      \|^2
    }{
      ( 
        V( X^x_s, X^y_s ) 
      )^2
    }
  \,
  ds
  < \infty
\end{equation*}
$ \P $-a.s.\ 
and
$
  X^z_{ t \wedge \tau } 
= 
  z
  + \int_0^{ t \wedge \tau } \mu( X^z_s ) \, ds
  +
  \int_0^{ t \wedge \tau } \sigma( X^z_s ) \, dW_s
$
$ \P $-a.s.\ for all $ (t,z) \in [0,T] \times \{ x, y \} $.
Then it holds for all $ p,q,r \in (0,\infty] $
with $ \frac{ 1 }{ p } + \frac{ 1 }{ q } = \frac{ 1 }{ r } $
that
\begin{equation}
\label{eq:marginal_estimate}
\begin{split}
&
    \left\|
      V( X^x_{ \tau }, X^y_{ \tau } )
    \right\|_{
      L^r( \Omega; \R )
    }
\\ & 
\leq
  V( x, y ) 
  \left\|
 \exp\!\left(
    \int_0^{ \tau }
      \tfrac{
	(\overline{\mathcal{G}}_{\mu,\sigma}V)(X_s^x,X_s^y)
      }{
	V(X_s^x,X_s^y)
      }
    +
    \tfrac{
      ( p - 1 ) \,
      \| 
        ( \overline{ G }_{ \sigma } V )( X^x_s, X^y_s )
      \|^2
    }{
      2 \,
      ( 
        V( X^x_s, X^y_s )
      )^2
    }
    \,
    ds
 \right)
  \right\|_{ L^q( \Omega; \R ) }
  .
\end{split}
\end{equation}
\sgc{}If, in addition,
  it holds that
$
  \P\big( \int_0^{ \tau }
  \|
      ( \overline{G}_{ \sigma } V )( X^x_s, X^y_s )
  \|^2
  \, ds
$ $  = 0  \big)=1
$ and
   $
     ( V^{ - 1 } )(0)
     \subseteq 
     (
       \overline{\mathcal{G}}_{\mu, \sigma }V
     )^{ - 1 }([0,\infty))
   $\cgs{},
then it holds for all $ r \in ( 0, \infty ] $
that
\begin{equation}
\label{eq:marginal_estimate2}
\begin{split}
&
    \left\|
      V( X^x_{ \tau }, X^y_{ \tau } )
    \right\|_{
      L^r( \Omega; \R )
    }
=
  V( x, y ) 
  \left\|
 \exp\!\left(
    \int_0^{ \tau }     
      \tfrac{
	(\overline{\mathcal{G}}_{\mu,\sigma}V)(X_s^x,X_s^y)
      }{
	V(X_s^x,X_s^y)
      }
    \,
    ds
 \right)
  \right\|_{ L^r( \Omega; \R ) } 
  .
\end{split}
\end{equation}
\end{prop}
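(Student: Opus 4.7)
The plan is to reduce this to Proposition~\ref{prop:two_solution} via H\"older's inequality plus a standard supermartingale bound on a stochastic exponential. First I apply Proposition~\ref{prop:two_solution} to the pair $(X^1,X^2):=(X^x,X^y)$; its integrability and path-continuity hypotheses are exactly those assumed here, so it yields the pointwise $\P$-a.s.\ inequality~\eqref{eq:V_identity} for the value of $V(X^x,X^y)$ at time $\tau$.

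The key algebraic step is the identity $-\tfrac{1}{2}=\tfrac{p-1}{2}-\tfrac{p}{2}$, which decomposes the exponent on the right-hand side of~\eqref{eq:V_identity} so that the bound takes the form $V(X^x_\tau,X^y_\tau)\leq V(x,y)\,B\,E'$, where
\begin{equation*}
B:=\exp\!\left(\int_0^\tau\left[\tfrac{(\overline{\mathcal{G}}_{\mu,\sigma}V)(X_s^x,X_s^y)}{V(X_s^x,X_s^y)}+\tfrac{(p-1)\|(\overline{G}_{\sigma}V)(X_s^x,X_s^y)\|^2}{2\,(V(X_s^x,X_s^y))^2}\right]ds\right)
\end{equation*}
is precisely the quantity whose $L^q$-norm appears in~\eqref{eq:marginal_estimate}, and
\begin{equation*}
E':=\exp\!\left(\int_0^\tau\tfrac{(\overline{G}_{\sigma}V)(X_s^x,X_s^y)}{V(X_s^x,X_s^y)}\,dW_s-\tfrac{p}{2}\int_0^\tau\tfrac{\|(\overline{G}_{\sigma}V)(X_s^x,X_s^y)\|^2}{(V(X_s^x,X_s^y))^2}\,ds\right).
\end{equation*}
The crucial observation is that $(E')^p$ is the stochastic exponential evaluated at $\tau$ of the continuous local martingale $p\int_0^{\cdot\wedge\tau}\overline{G}_{\sigma}V/V\,dW_s$, which is well-defined thanks to the a.s.\ finiteness of $\int_0^\tau\|\overline{G}_{\sigma}V\|^2/V^2\,ds$ assumed in the hypotheses. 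Being a non-negative local martingale starting at $1$, it is a supermartingale, yielding $\E[(E')^p]\leq 1$ and hence $\|E'\|_{L^p(\Omega;\R)}\leq 1$. I then apply H\"older's inequality with exponents $p$ and $q$ (valid since $\tfrac{1}{p}+\tfrac{1}{q}=\tfrac{1}{r}$ forces $p,q\geq r$) to conclude
\begin{equation*}
\|V(X^x_\tau,X^y_\tau)\|_{L^r(\Omega;\R)}\leq V(x,y)\,\|B\|_{L^q(\Omega;\R)}\,\|E'\|_{L^p(\Omega;\R)}\leq V(x,y)\,\|B\|_{L^q(\Omega;\R)},
\end{equation*}
which is precisely~\eqref{eq:marginal_estimate}.

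For the equality statement~\eqref{eq:marginal_estimate2}, the extra hypothesis $\int_0^\tau\|\overline{G}_{\sigma}V\|^2\,ds=0$ $\P$-a.s.\ annihilates both the stochastic integral and the $\|\overline{G}_{\sigma}V\|^2/V^2$-integral, so $E'\equiv 1$ and $B=\exp(\int_0^\tau\overline{\mathcal{G}}_{\mu,\sigma}V/V\,ds)$ $\P$-a.s. The additional assumption $(V^{-1})(0)\subseteq(\overline{\mathcal{G}}_{\mu,\sigma}V)^{-1}([0,\infty))$ upgrades Proposition~\ref{prop:two_solution} to an equality, which transfers to~\eqref{eq:marginal_estimate2} upon taking $L^r$-norms. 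The main obstacle is essentially bookkeeping rather than conceptual: one must apply the $0/0=0$ convention consistently so that all integrands and the stochastic exponential are bona fide objects, and verify that $(E')^p$ is a genuine non-negative local martingale --- both points are secured by the a.s.\ integrability hypothesis on $\|\overline{G}_{\sigma}V\|^2/V^2$. The extremal cases $p=\infty$ or $r=\infty$ either reduce to the equality setting or give trivially infinite right-hand sides.
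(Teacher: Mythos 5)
Your proposal is correct and takes essentially the same route as the paper: apply Proposition~\ref{prop:two_solution}, use H\"older's inequality with the decomposition $-\tfrac{1}{2}=\tfrac{p-1}{2}-\tfrac{p}{2}$ to split off the stochastic-exponential factor, and drop that factor's $L^p$-norm since it is at most $1$. The paper leaves the supermartingale step implicit (it passes directly from the H\"older bound to the final inequality), whereas you spell out that $(E')^p$ is the Dol\'eans--Dade exponential of a continuous local martingale and hence a supermartingale with expectation at most $1$; this is exactly the right justification.
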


\begin{proof}[Proof
of Proposition~\ref{prop:two_solution_supoutside}]
Proposition~\ref{prop:two_solution} combined with
H\"{o}lder's inequality implies that
\begin{align*}
&
    \big\|
      V( X^x_{ \tau }, X^y_{ \tau } )
    \big\|_{
      L^r( \Omega; \R )
    }
\\ & \leq
  V( x, y )
\\ & \quad \cdot
  \left\|
  \exp\!\left(
    \smallint_0^{ \tau }
    \left[
   \tfrac{
     ( 
       \overline{ \mathcal{G} }_{ \mu, \sigma } V 
     )( X_s^x, X^y_s )
   }{
     V( X_s^x, X^y_s )
   }
    -
    \tfrac{
      \|
          ( \overline{ G }_{ \sigma } V )( X^x_s, X^y_s )
      \|^2
    }{
      2
      ( 
        V( X^x_s, X^y_s )
      )^2
    }
    \right]
    ds
    +
    \smallint_0^{ \tau }
    \tfrac{
      ( \overline{ G }_{ \sigma } V )( X^x_s, X^y_s )
    }{
      V( X^x_s, X^y_s )
    }
    \,
    dW_s
  \right)
  \right\|_{
    L^r( \Omega; \R )
  }
\\ & \leq
  V( x, y ) 
  \left\|
  \exp\!\left(
    \smallint_0^{ \tau }
    \left[
   \tfrac{
     ( 
       \overline{ \mathcal{G} }_{ \mu, \sigma } V 
     )( X_s^x, X^y_s )
   }{
     V( X_s^x, X^y_s )
   }
    +
    \tfrac{
      \left( p - 1 \right) \,
      \|
          ( \overline{ G }_{ \sigma } V )( X^x_s, X^y_s )
      \|^2
    }{
      2 \,
      ( 
        V( X^x_s, X^y_s )
      )^2
    }
    \right]
    ds
  \right)
  \right\|_{ L^q( \Omega; \R ) }
\\ & \quad 
  \cdot
  \left\|
  \exp\!\left(
    \smallint_0^{ \tau }
    \tfrac{
      ( \overline{ G }_{ \sigma } V )( X^x_s, X^y_s )
    }{
      V( X^x_s, X^y_s )
    }
    \,
    dW_s
    -
    \smallint_0^{ \tau }
    \tfrac{
      p \, 
      \|
          ( \overline{ G }_{ \sigma } V )( X^x_s, X^y_s )
      \|^2
    }{
      2 \,
      ( 
        V( X^x_s, X^y_s )
      )^2
    }
    \,
    ds
  \right)
  \right\|_{ L^p( \Omega; \R ) }
\\ & \leq
  V( x, y )
  \left\|
  \exp\!\left(
    \smallint_0^{ \tau }
   \tfrac{
     ( 
       \overline{ \mathcal{G} }_{ \mu, \sigma } V 
     )( X_s^x, X^y_s )
   }{
     V( X_s^x, X^y_s )
   }
    +
    \tfrac{
      \left( p - 1 \right)
      \,
      \|
          ( \overline{ G }_{ \sigma } V )( X^x_s, X^y_s )
      \|^2
    }{
      2 
      \,
      ( 
        V( X^x_s, X^y_s )
      )^2
    }
    \,
    ds
  \right)
  \right\|_{ L^q( \Omega; \R ) }
\end{align*}
for all $p,q,r\in(0,\infty]$
with $ \frac{ 1 }{ p } + \frac{ 1 }{ q } = \frac{ 1 }{ r } $.
This proves~\eqref{eq:marginal_estimate}.
In the next step we observe that if 
$ 
  \int_0^{ \tau }
  \|
    ( \overline{G}_{ \sigma } V)( X^x_s, X^y_s )
  \|^2
  \, ds
  = 0
$
$ \P $-a.s.~and if
   $
     ( V^{ - 1 } )(0)
     \subseteq 
     (
       \overline{\mathcal{G}}_{\mu, \sigma }V
     )^{ - 1 }([0,\infty))
   $,
then
Proposition~\ref{prop:two_solution} proves that
\begin{equation}
  V( X_{ \tau }^x, X^y_{ \tau } )
=
  \exp\!\left(
  \int_0^{ \tau }
   \tfrac{
     ( 
       \overline{ \mathcal{G} }_{ \mu, \sigma } V 
     )( X_s^x, X^y_s )
   }{
     V( X_s^x, X^y_s )
   }
  \, ds
  \right)
  V(x,y)
\end{equation}
$ \P $-a.s.\ and hence
\begin{equation}
  \left\| 
    V( X_{ \tau }^x, X^y_{ \tau } )
  \right\|_{
    L^r( \Omega; \R )
  }
=
  V(x,y)
  \left\|
  \exp\!\left(
  \int_0^{ \tau }
   \tfrac{
     ( 
       \overline{ \mathcal{G} }_{ \mu, \sigma } V 
     )( X_s^x, X^y_s )
   }{
     V( X_s^x, X^y_s )
   }
  \, ds
  \right)
  \right\|_{
    L^r( \Omega; \R )
  }
\end{equation}
for all $r\in(0,\infty]$.
The proof of
Proposition~\ref{prop:two_solution_supoutside}
is thus completed.
\end{proof}

\begin{remark}
\label{remark:not_extended_system1}
Note in the setting of
Proposition~\ref{prop:two_solution_supoutside}
that if $ \hat{V} \in C^2( O, [0,\infty) ) $,
$ x = y $
and if 
$
  V(v,w) = \hat{V}( v )
$
for all $ v , w \in O $,
then~\eqref{eq:marginal_estimate} in 
Proposition~\ref{prop:two_solution_supoutside}
reduces to an estimate for
$
  \| 
    \hat{V}( X^x_{ \tau } ) 
  \|_{
    L^r( \Omega; \R )
  }
$.
\end{remark}

The corollary below follows 
immediately from 
Proposition~\ref{prop:two_solution_supoutside}
and Example~\ref{ex:phi_identity}.
It establishes an estimate
for the $ L^r $-norm of the difference 
of two solutions of the same SDE
starting in different initial values
for $ r \in (0,\infty] $.

\begin{corollary}
\label{cor:powerful_est}
Assume the setting in Section~\ref{sec:setting},
let $ x, y \in O $,
let
$ \tau \colon \Omega \to [0,T] $
be a stopping time,
let	
$ X^z \colon [0,T] \times \Omega \to O $,
$ z \in \{ x, y \} $,
be adapted
stochastic processes
with continuous sample paths 
satisfying
\begin{equation*}
  \int_0^{ \tau }
    \| \mu( X^z_s ) \| 
    +
    \| 
      \sigma( X^z_s )
    \|^2
    +
    \tfrac{
      \max(
        \langle
          X^x_s - X^y_s ,
          \mu( X^x_s ) - \mu( X^y_s )
        \rangle
      , 0
      )
    +
      \|
        \sigma( X^x_s ) - \sigma( X^y_s )
      \|^2
    }{
      \| X^x_s - X^y_s \|^2
    }
  \, ds
  < \infty
\end{equation*}
$ \P $-a.s.\ 
and
$
  X^z_{ t \wedge \tau } = 
  z
  + \int_0^{ t \wedge \tau } \mu( X^z_s ) \, ds
  +
  \int_0^{ t \wedge \tau } \sigma( X^z_s ) \, dW_s
$
$ \P $-a.s.\ for all $ (t,z) \in [0,T] \times \{ x, y \} $.
Then
\begin{multline}
\label{eq:powerful_est}
    \left\|
      X^x_{ \tau } - X^y_{ \tau }
    \right\|_{
      L^{ r }( \Omega; \R^{d} )
    }
\\ 
  \leq
  \left\| x - y \right\|
  \left\|
  \exp\!\left(
    \int_0^{ \tau }\!\!
    \left[\!
    \begin{aligned}
  \tfrac{ 
    \langle
      X^x_s - X^y_s ,
      \mu( X^x_s ) - 
      \mu( X^y_s )
    \rangle
    +
    \frac{ 1 }{ 2 }
    \| 
      \sigma( X^x_s ) - \sigma( X^y_s ) 
    \|^2_{ \HS( \R^m, \R^d ) }
  }{
    \|
      X^x_s - X^y_s
    \|^2
  }
\\[1ex]
    +
    \tfrac{
      ( \frac{ p }{ 2 } - 1 ) \,
      \| 
        (
          \sigma( X^x_s ) - \sigma( X^y_s )
        )^*
        ( X^x_s - X^y_s )
      \|^2
    }{
      \| X^x_s - X^y_s \|^4
    }
    \end{aligned}
    \right]\!
    ds\!
  \right)\!
  \right\|_{ L^{ q }( \Omega; \R ) }
  \!\!\!\!\!
\end{multline}
for all
$ p, q,r \in (0,\infty] $
with 
$
  \frac{ 1 }{ p } + \frac{ 1 }{ q } = \frac{ 1 }{ r }
$.
\end{corollary}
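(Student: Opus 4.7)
The plan is to specialize Proposition~\ref{prop:two_solution_supoutside} to the squared Euclidean distance $V(x,y):=\|x-y\|^2$, which lies in $C^2(O^2,[0,\infty))$, and then to plug in the explicit formulas supplied by Example~\ref{ex:phi_identity}.

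The first step is to verify that the integrability hypothesis of Proposition~\ref{prop:two_solution_supoutside} follows from the Corollary's hypothesis. Specializing Example~\ref{ex:phi_identity} to $p=2$ gives
\begin{equation*}
  \tfrac{\|(\overline{G}_\sigma V)(x,y)\|^2}{V(x,y)^2}
  = \tfrac{4\,\|(\sigma(x)-\sigma(y))^*(x-y)\|^2}{\|x-y\|^4}
  \le \tfrac{4\,\|\sigma(x)-\sigma(y)\|^2}{\|x-y\|^2}
\end{equation*}
and, since the $(p-2)$-correction in Example~\ref{ex:phi_identity} vanishes for $p=2$,
\begin{equation*}
  \tfrac{(\overline{\mathcal{G}}_{\mu,\sigma}V)(x,y)}{V(x,y)}
  = \tfrac{2\langle x-y,\mu(x)-\mu(y)\rangle + \|\sigma(x)-\sigma(y)\|^2_{\HS(\R^m,\R^d)}}{\|x-y\|^2}.
\end{equation*}
Because Hilbert--Schmidt and operator norms on $\R^{d\times m}$ are equivalent, the positive part of the second quantity and the first quantity are bounded by constant multiples of the integrand appearing in the Corollary's hypothesis, so integrability transfers.

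The second step is to apply~\eqref{eq:marginal_estimate} with parameters $p_0,q_0,r_0\in(0,\infty]$ obeying $1/p_0+1/q_0=1/r_0$. Substituting the two displays above into~\eqref{eq:marginal_estimate} and using $V(X^x_\tau,X^y_\tau)=\|X^x_\tau-X^y_\tau\|^2$ yields
\begin{equation*}
  \big\|\,\|X^x_\tau-X^y_\tau\|^2\,\big\|_{L^{r_0}(\Omega;\R)}
  \le \|x-y\|^2 \left\|\exp\!\left(2\smallint_0^\tau A_s\,ds\right)\right\|_{L^{q_0}(\Omega;\R)},
\end{equation*}
where $A_s$ denotes the integrand inside the exponential in~\eqref{eq:powerful_est} but with $p_0-1$ in place of $p/2-1$. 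Taking square roots on both sides, using the identities $\|Y^2\|_{L^{r_0}}^{1/2}=\|Y\|_{L^{2r_0}}$ for $Y\ge 0$ and $\|e^{2Z}\|_{L^{q_0}}^{1/2}=\|e^{Z}\|_{L^{2q_0}}$, converts the factor of $2$ inside the exponential into $1$. Relabeling $p:=2p_0$, $q:=2q_0$, $r:=2r_0$ then turns $1/p_0+1/q_0=1/r_0$ into $1/p+1/q=1/r$ and $p_0-1$ into $p/2-1$, giving exactly~\eqref{eq:powerful_est}.

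The only real obstacle is the careful bookkeeping of the factors of $2$ when taking square roots and renaming the $L^p$-exponents; beyond that the result is a direct consequence of Proposition~\ref{prop:two_solution_supoutside} combined with the explicit calculation in Example~\ref{ex:phi_identity}.
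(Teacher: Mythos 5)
Your proposal is correct and follows exactly the route the paper intends: apply Proposition~\ref{prop:two_solution_supoutside} to $V(x,y)=\|x-y\|^2$ using the formulas from Example~\ref{ex:phi_identity} with $p=2$, take square roots, and halve the exponents. The bookkeeping of the factor $2$ and the relabeling $p_0\mapsto p/2$, $q_0\mapsto q/2$, $r_0\mapsto r/2$ are handled correctly, and your check that the Corollary's integrability hypothesis implies that of the Proposition (using $\|(\sigma(x)-\sigma(y))^*(x-y)\|\le\|\sigma(x)-\sigma(y)\|\,\|x-y\|$ together with the equivalence of the operator and Hilbert--Schmidt norms on $\R^{d\times m}$) is exactly what is needed.
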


Corollary~\ref{cor:powerful_est} simplifies in the
case of one-dimensional SDEs.
More precisely,
\eqref{eq:powerful_est}
reads as
\begin{equation}
\begin{split}
\label{eq:powerful_est_one_dimensional}
&    \left\|
      X^x_t - X^y_t
    \right\|_{
      L^{ r }( \Omega; \R )
    }
\\ &\leq
  \left\| x - y \right\|
  \left\|
  \exp\!\left(
    \int_0^t
  \tfrac{ 
    (
      X^x_s - X^y_s 
    ) \,
    (
      \mu( X^x_s ) - 
      \mu( X^y_s )
    )
    +
    \frac{ p - 1 }{ 2 }
    \| 
      \sigma( X^x_s ) - \sigma( X^y_s ) 
    \|^2_{ \HS( \R^m, \R ) }
  }{
    |
      X^x_s - X^y_s
    |^2
  }
  \, ds
  \right)
  \right\|_{ L^{ q }( \Omega; \R ) }
\end{split}
\end{equation}
for all
$ p, q, r \in (0,\infty] $
with 
$
  \frac{ 1 }{ p } + \frac{ 1 }{ q } = \frac{ 1 }{ r }
$
in the case $ d = 1 $.
To analyze and to estimate 
the term appearing in the exponent 
in~\eqref{eq:powerful_est}
in Corollary~\ref{cor:powerful_est},
the following elementary proposition
can be quite useful 
(see, e.g., Subsection~\ref{ssec:cir_global} below).

\begin{prop}[On the global monotonicity of the dynamics and the global coercivity of its derivative]
\label{prop:ext_mean_value_theorem}
Assume the setting in Section~\ref{sec:setting},
assume that $ O $ is \sgc{}convex\cgs{} and
assume that $ \mu \in C^1(O,\R^d) $\sgc{},\cgs{} $ \sigma \in C^1( O, \R^{ d \times m } ) $.
Then 
for all 
$ p \in (0,\infty) $, $ x, y \in O $
with $ x \neq y $ 
it holds that
\allowdisplaybreaks
\begin{equation}
\begin{aligned}
\label{eq:ext_mean_value_theorem_1a}
&  
  \sup_{ 
    \substack{
      z \in 
      \{ 
        ( 1 - r ) x 
    \\
        + 
        r y \colon 
        r \in 
        (0,1] 
      \}
    }
  }
  \left[
    \tfrac{
      \langle x - y , \mu'( z ) ( x - y ) \rangle
      +
      \inv{2}
      \| \sigma'( z ) ( x - y ) \|_{
        \HS( \R^m , \R^d )
      }^2
    }{
      \| x - y \|^2
    }
    +
    \tfrac{     
      ( \frac{ p }{ 2 } - 1 )
      \,
      \| 
        ( \sigma'( z ) ( x - y ) )^* ( x - y )
      \|^2
    }{ 
      \| x - y \|^4 
    }
  \right]
\\ \leq
  &
  \sup_{ 
    \substack{
      z \in 
      \{ 
        ( 1 - r ) x 
    \\
        + 
        r y \colon 
        r \in 
        (0,1] 
      \}
    }
  }
  \left[
    \tfrac{ 
      \langle x - z , \mu(x) - \mu(z) \rangle
      +
      \inv{2}
      \| \sigma(x) - \sigma(z) \|_{\HS(\R^m,\R^d)}^2
    }{ 
      \| x - z \|^2 
    }
    +
    \tfrac{     
      ( \frac{ p}{ 2 } - 1 )
      \,
      \| ( \sigma(x) - \sigma(z) )^* ( x - z ) \|^2
    }{
      \| x - z \|^4
    }
 \right] 
 ,
\end{aligned}
\end{equation}
for all $ p \in [1,\infty) $, $ x, y \in O $ 
with $ x \neq y $ it holds that 
\begin{equation}
\begin{aligned}
\label{eq:ext_mean_value_theorem_1b}
&
  \sup_{ 
    \substack{
      z \in 
      \{ 
        ( 1 - r ) x 
    \\
        + 
        r y \colon 
        r \in 
        (0,1] 
      \}
    }
  }
  \left[
    \tfrac{
      \langle x - y , \mu'( z ) ( x - y ) \rangle
      +
      \inv{2}
      \| \sigma'( z ) ( x - y ) \|_{
        \HS( \R^m , \R^d )
      }^2
    }{
      \| x - y \|^2
    }
    +
    \tfrac{     
      ( \frac{ p }{ 2 } - 1 )
      \,
      \| 
        ( \sigma'( z ) ( x - y ) )^* ( x - y )
      \|^2
    }{ 
      \| x - y \|^4 
    }
  \right]
\\ & =
  \sup_{ 
    \substack{
      z \in 
      \{ 
        ( 1 - r ) x 
    \\
        + 
        r y \colon 
        r \in 
        (0,1] 
      \}
    }
  }
  \left[
    \tfrac{ 
      \langle x - z , \mu(x) - \mu(z) \rangle
      +
      \inv{2}
      \| \sigma(x) - \sigma(z) \|_{\HS(\R^m,\R^d)}^2
    }{ 
      \| x - z \|^2 
    }
    +
    \tfrac{     
      ( \frac{ p}{ 2 } - 1 )
      \,
      \| ( \sigma(x) - \sigma(z) )^* ( x - z ) \|^2
    }{
      \| x - z \|^4
    }
 \right] 
  ,
\end{aligned}
\end{equation}
for all $ p \in (0,\infty) $
it holds that
\begin{equation}\begin{aligned}
\label{eq:ext_mean_value_theorem_2B}
& 
 \sup_{ x \in O}
 \sup_{ v \in \R^d\backslash \{0\} }
   \left[
      \tfrac{
	\langle v , \mu'(x) v \rangle
	+
	\inv{2}
	\| \sigma'(x) v \|_{\HS(\R^m,\R^d)}^2
      }{
	\| v \|^2
      }
      +
      \tfrac{     
        ( \frac{ p}{ 2 } - 1 )
        \,
	\| 
	  (\sigma'(x)v)^* v 
	\|^2
      }{ 
        \| v \|^4 
      }
  \right]
\\ & \leq
  \sup_{
    \substack{ 
      x, y \in O,
    \\
      x \neq y
    }
  }\sgc{}
  \left[
    \tfrac{ 
      \langle x - y , \mu(x) - \mu(y) \rangle
      +
      \inv{2}
      \| \sigma(x) - \sigma(y) \|_{\HS(\R^m,\R^d)}^2
    }
    { \| x - y \|^2 }
    +
    \tfrac{     
      ( \frac{ p }{ 2 } - 1 )
      \,
      \| (\sigma(x) - \sigma(y))^* (x-y) \|^2
    }{
      \| x - y \|^4
    }
 \right]\cgs{}
\end{aligned}
\end{equation}
and for all $ p \in [1,\infty) $ it holds that
\begin{equation}
\begin{aligned}
\label{eq:ext_mean_value_theorem_2}
& 
 \sup_{x\in O}
 \sup_{v\in \R^d\backslash \{0\}}
   \left[
      \tfrac{
	\langle v , \mu'(x) v \rangle
	+
	\inv{2}
	\| \sigma'(x) v \|_{\HS(\R^m,\R^d)}^2
      }{
	\| v \|^2
      }
      +
      \tfrac{     
        ( \frac{ p }{ 2 } - 1 )
        \,
	\| 
	  (\sigma'(x)v)^* v 
	\|^2
      }
      { \| v \|^4 }
  \right]
\\ & =
  \sup_{
    \substack{ 
      x, y \in O,
    \\
      x \neq y
    } 
  }
  \left[
    \tfrac{ 
      \langle x - y , \mu(x) - \mu(y) \rangle
      +
      \inv{2}
      \| \sigma(x) - \sigma(y) \|_{\HS(\R^m,\R^d)}^2
    }
    { \| x - y \|^2 }
    +
    \tfrac{     
      (\frac{ p}{ 2 } -1) 
      \,
      \| (\sigma(x) - \sigma(y))^* (x-y) \|^2
    }{
      \| x - y \|^4
    }
 \right]
  .
\end{aligned}
\end{equation}
\end{prop}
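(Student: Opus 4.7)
\emph{Approach and key identities.} The proof rests on two tools: the fundamental theorem of calculus, used to write the coefficient differences as integrals of derivatives along the segment joining $x$ and $y$ in $O$, and Jensen's inequality applied to the convex map $v \mapsto \|v\|^2$, used to control the squared-norm diffusion terms. For $x, y \in O$ with $x \neq y$ and $s \in [0,1]$ set $z_s := (1-s)x + sy$; by convexity of $O$ the segment lies in $O$, one has $x - z_r = r(x-y)$ and $\|x-z_r\| = r\|x-y\|$, and the FTC gives $\mu(x) - \mu(z_r) = \int_0^r \mu'(z_s)(x-y)\,ds$ and $\sigma(x) - \sigma(z_r) = \int_0^r \sigma'(z_s)(x-y)\,ds$ for every $r \in [0,1]$. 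Substituting these representations and using the scaling $x-z_r = r(x-y)$ yields the exact identity $\frac{\langle x-z_r, \mu(x) - \mu(z_r)\rangle}{\|x-z_r\|^2} = \frac{1}{r}\int_0^r \frac{\langle x-y, \mu'(z_s)(x-y)\rangle}{\|x-y\|^2}\,ds$ for the drift term; Jensen's inequality then produces $\frac{\|\sigma(x)-\sigma(z_r)\|_{\HS}^2}{\|x-z_r\|^2} \leq \frac{1}{r}\int_0^r \frac{\|\sigma'(z_s)(x-y)\|_{\HS}^2}{\|x-y\|^2}\,ds$ and an analogous bound for $\frac{\|(\sigma(x)-\sigma(z_r))^*(x-z_r)\|^2}{\|x-z_r\|^4}$.

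\emph{Assembling the four inequalities.} For \eqref{eq:ext_mean_value_theorem_1a}--\eqref{eq:ext_mean_value_theorem_1b} I would combine the three relations at a fixed pair $(x, y)$. When $p \geq 2$ so that $(p/2 - 1) \geq 0$, all three bounds go in the same direction and relate the difference-quotient expression at $z_r$ to an average---hence a supremum---of derivative expressions over $\{z_s : s \in (0, r]\}$; taking a supremum over $r$ then gives the claim. For $p \in (0, 2)$ the coefficient $(p/2 - 1)$ is negative, so the Jensen bound on the last term swaps direction, and I would rebalance using Cauchy--Schwarz in the form $\|(\sigma(x)-\sigma(z_r))^*(x-z_r)\|^2 \leq \|\sigma(x)-\sigma(z_r)\|_{\HS}^2\|x-z_r\|^2$ (and the analogous pointwise bound for $\sigma'$) to absorb the wrong-signed contribution into the middle $\sigma$-term. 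The converse inequality needed for the equality \eqref{eq:ext_mean_value_theorem_1b} (for $p \in [1, \infty)$) follows from sending $r \to 0^+$: the difference-quotient expression converges to the derivative expression at $x$ in direction $x-y$, and continuity of $\mu'$ and $\sigma'$ along the segment lifts this to the full segment. For \eqref{eq:ext_mean_value_theorem_2B} the ``$\leq$'' direction is immediate since each derivative expression at $(x, v)$ is the $t \to 0^+$ limit of the difference-quotient expression at $(x, x+tv) \in O^2$, and the equality \eqref{eq:ext_mean_value_theorem_2} for $p \in [1, \infty)$ follows by applying the FTC/Jensen bookkeeping to the full segment (i.e.\ $r=1$) of an arbitrary pair $(x, y) \in O^2$, bounding each global difference quotient by the supremum of derivative expressions along the segment and hence by the global supremum.

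\emph{Main obstacle.} The principal technical difficulty is the case $p \in [1, 2)$: here $(p/2 - 1) < 0$ and Jensen flips the inequality on the last term; the Cauchy--Schwarz rebalancing above is what makes the argument survive, and it is precisely this balance that pins $p \geq 1$ as the sharp threshold for the equality statements \eqref{eq:ext_mean_value_theorem_1b} and \eqref{eq:ext_mean_value_theorem_2}.
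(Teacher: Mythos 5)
Your fundamental-theorem-of-calculus-plus-Jensen route is a genuinely different argument from the paper's. The paper introduces the two auxiliary functions $g_{x,v}(r)$ (derivative expression at $x+rv$) and $h_{x,v}(r)$ (difference-quotient expression for the pair $(x,x+rv)$), establishes the differential inequality $h_{x,v}'(r) \leq \bigl(g_{x,v}(r)-h_{x,v}(r)\bigr)/r$ for $p\geq 1$ by completing squares in the $\sigma$-terms, and then closes via a contradiction argument at the last crossing time of $g$ and $h$. Representing $h_{x,v}(r)$ directly as an average of $g$-type quantities via FTC and invoking convexity of the relevant quadratic form is a cleaner path to the same conclusion and avoids the ODE-style bookkeeping; as a technique it is a genuine simplification.

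However, the assembly of the four statements contains a direction confusion, and the key threshold is not justified. Your Jensen step bounds the difference-quotient expression from \emph{above} by an average---hence a supremum---of derivative expressions; i.e.\ it yields RHS~$\leq$~LHS in \eqref{eq:ext_mean_value_theorem_1a}, which is the ingredient needed for the \emph{equality} \eqref{eq:ext_mean_value_theorem_1b}, not for \eqref{eq:ext_mean_value_theorem_1a} itself. You present it as ``giving the claim'' for \eqref{eq:ext_mean_value_theorem_1a}--\eqref{eq:ext_mean_value_theorem_1b}, which is backwards. The inequality \eqref{eq:ext_mean_value_theorem_1a}, valid for all $p\in(0,\infty)$, is the limiting argument (as in \eqref{eq:ext_mean_value_theorem_2B}): the derivative expression is a $r\to 0^+$ limit of difference quotients and hence is dominated by their supremum. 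Your closing remark that the converse of \eqref{eq:ext_mean_value_theorem_1b} ``follows from sending $r\to 0^+$'' after which ``continuity of $\mu'$ and $\sigma'$ along the segment lifts this to the full segment'' does not work: the $r\to 0^+$ limit recovers only the derivative expression at the fixed base point $x$, and the difference quotients appearing in the right-hand supremum are constrained to base point $x$, so no continuity argument produces from them the derivative expressions at interior points $z_s$, $s\in(0,1]$.

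Finally, the ``Cauchy--Schwarz rebalancing'' for $p\in[1,2)$ is exactly where the threshold $p\geq 1$ enters and it must be made precise. Write $A := \sigma'(z_s)(x-y) \in \R^{d\times m}$ and let $P\in\R^{d\times d}$ be the orthogonal projection onto $\operatorname{span}\{x-y\}$. Then the $\sigma$-part of the integrand is
\begin{equation*}
  \tfrac{1}{2}\|A\|_{\HS}^2 + \bigl(\tfrac{p}{2}-1\bigr)\|PA\|_{\HS}^2
  =
  \tfrac{1}{2}\|(I-P)A\|_{\HS}^2 + \tfrac{p-1}{2}\|PA\|_{\HS}^2 ,
\end{equation*}
a nonnegative---hence convex---quadratic form on $\R^{d\times m}$ precisely when $p\geq 1$, so Jensen applied to the FTC average goes through term by term. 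For $p<1$ this form is indefinite and the argument breaks down, consistent with the paper's subsequent remark that \eqref{eq:ext_mean_value_theorem_1b} and \eqref{eq:ext_mean_value_theorem_2} can fail for $p\in(0,1)$. Spell out this orthogonal decomposition rather than gesturing at ``absorbing the wrong-signed contribution.''
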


\begin{proof}[Proof of Proposition~\ref{prop:ext_mean_value_theorem}]
First of all, note that the definition of $ \mu' $ and $ \sigma' $
ensures that for all $p\in (0,\infty)$, $ x \in O $,
$ v \in \R^d \backslash \{ 0 \} $
with $ x + v \in O $
it holds that
\allowdisplaybreaks
\begin{equation}
\label{eq:prove_ext_mean_value1a}
\begin{aligned}
& 
  \left[
      \tfrac{
	\langle v , \mu'(x) v \rangle
	+
	\inv{2}
	\| \sigma'(x) v \|_{\HS(\R^m,\R^d)}^2
      }{
	\| v \|^2
      }
      +
      \tfrac{     
        (\frac{ p}{ 2 } -1)
        \,
	\| 
	  (\sigma'(x)v)^* v
	\|^2
      }
      { \| v \|^4 }
   \right]
\\ & =  
  \lim_{ r \searrow 0 }
  \left[
    \tfrac{ 
      \langle -r v , \mu(x) - \mu(x+rv) \rangle
      +
      \inv{2}
      \| \sigma(x) - \sigma(x+rv) \|_{\HS(\R^m,\R^d)}^2
    }{ 
      \| r v \|^2 
    }
    +
    \tfrac{     
      (\frac{ p}{ 2 } -1)
      \,
      \| (\sigma(x) - \sigma(x+rv))^* ( r v ) \|^2
    }{
      \| r v \|^4
    }
 \right]
\\ & \leq
  \sup_{ 
    \substack{
        z \in \{ x + r v \colon 
      \\
        r \in (0,1] \} 
    }
  }
  \left[
    \tfrac{ 
      \langle x - z , \mu(x) - \mu( z ) \rangle
      +
      \inv{2}
      \| \sigma(x) - \sigma( z ) \|_{\HS(\R^m,\R^d)}^2
    }{ 
      \| x - z \|^2 
    }
    +
    \tfrac{     
      ( \frac{ p}{ 2 } - 1 )
      \| ( \sigma(x) - \sigma( z ) )^* ( x - z ) \|^2
    }{
      \| x - z \|^4
    }
 \right] 
 .
\end{aligned}
\end{equation}
This proves \eqref{eq:ext_mean_value_theorem_1a}.
Moreover, 
let 
$ 
  A 
$
be the set given by
\begin{equation}
  A = 
  \left\{ 
    (x,v) \in 
    O \times \left( \R^d \backslash \{ 0 \} \right)
    \colon
    x + v \in O
  \right\}
\end{equation}
and note that \eqref{eq:prove_ext_mean_value1a}
and the fact that $ O $ is an open set 
ensure that
for all 
$ p \in (0,\infty) $
it holds that
\allowdisplaybreaks
\begin{equation}
\label{eq:prove_ext_mean_value1b}
\begin{aligned}
&
  \sup_{ 
    x \in O
  }
  \sup_{
    v \in \R^d \backslash \{ 0 \}
  }
  \left[
      \tfrac{
	\langle v , \mu'(x) v \rangle
	+
	\inv{2}
	\| \sigma'(x) v \|_{\HS(\R^m,\R^d)}^2
      }{
	\| v \|^2
      }
      +
      \tfrac{     
      (\frac{ p}{ 2 } -1)
	\| 
	  (\sigma'(x)v)^* v
	\|^2
      }
      { \| v \|^4 }
   \right]
\\ &
  =
  \sup_{ 
    (x,v) \in A 
  }
  \left[
    \tfrac{
      \langle v , \mu'(x) v \rangle
      +
      \inv{2}
      \| \sigma'(x) v \|_{\HS(\R^m,\R^d)}^2
    }{
      \| v \|^2
    }
    +
    \tfrac{     
      (\frac{ p }{ 2 } - 1 )
      \,
      \| 
        ( \sigma'(x) v )^* v
      \|^2
    }{ 
      \| v \|^4 
    }
  \right]
\\ & \leq
  \sup_{ (x,v) \in A }
  \sup_{ 
    \substack{
        y \in \{ x + r v \colon 
      \\
        r \in (0,1] \} 
    }
  }
  \bigg[
    \tfrac{ 
      \langle x - y , \mu(x) - \mu(y) \rangle
      +
      \inv{2}
      \| \sigma(x) - \sigma(y) \|_{\HS(\R^m,\R^d)}^2
    }{ 
      \| x - y \|^2 
    }
\\ & \quad
    +
    \tfrac{     
      ( \frac{ p}{ 2 } - 1 )
      \| ( \sigma(x) - \sigma(y) )^* ( x - y ) \|^2
    }{
      \| x - y \|^4
    }
 \bigg] 
\\ & =
  \sup_{ 
    \substack{
      x, y \in O ,
    \\
      x \neq y
    }
  }
  \left[
    \tfrac{ 
      \langle x - y , \mu(x) - \mu(y) \rangle
      +
      \inv{2}
      \| \sigma(x) - \sigma(y) \|_{\HS(\R^m,\R^d)}^2
    }{ 
      \| x - y \|^2 
    }
    +
    \tfrac{     
      ( \frac{ p}{ 2 } - 1 )
      \| ( \sigma(x) - \sigma(y) )^* ( x - y ) \|^2
    }{
      \| x - y \|^4
    }
 \right] 
 .
\end{aligned}
\end{equation}
This proves \eqref{eq:ext_mean_value_theorem_2B}.
It thus remains to prove \eqref{eq:ext_mean_value_theorem_1b} 
and~\eqref{eq:ext_mean_value_theorem_2}.
To this end let
$ p \in [1,\infty) $
and let
$
  g_{ x, v } \colon [0,1] \rightarrow \R 
$,
$ (x,v) \in A $,
and
$ 
  h_{ x, v } \colon [0,1] \rightarrow \R 
$,
$ (x,v) \in A $,
be the functions 
with the property that
for all
$ r \in [0,1] $,
$ 
  ( x, v ) \in A
$
it holds that
\begin{equation}
\label{def:g}
\begin{split}
&
  g_{ x, v }(r) =
\\ & 
  \frac{
    \langle v , \mu'(x+rv) v \rangle
    +
    \inv{2}
    \| \sigma'(x+rv) v \|_{\HS(\R^m,\R^d)}^2
  }{
    \| v \|^2
  }
  +
  \frac{     
    (
      \frac{ p }{ 2 } - 1 
    )
    \,
    \| 
      ( \sigma'( x + r v ) v )^* v
    \|^2
  }{ 
    \| v \|^4 
  }
\end{split}
\end{equation}
and 
with the property that
for all
$ r \in (0,1] $,
$ (x,v) \in A $
it holds that
$ h_{ x, v }(0) = g_{ x, v }(0) $
and
\begin{equation}
\label{def:h}
\begin{split}
&
  h_{ x, v }(r)
\\ & =
  \tfrac{ 
    \langle r v , \mu(x) - \mu(x+rv) \rangle
    +
    \inv{2}
    \| \sigma(x) - \sigma(x+rv) \|_{\HS(\R^m,\R^d)}^2
  }{ 
    \| r v \|^2 
  }
  +
  \tfrac{     
    ( \frac{ p }{ 2 } - 1 )
    \,
    \| ( \sigma(x) - \sigma(x+rv) )^* ( r v ) \|^2
  }{
    \| r v \|^4
  }
\\ & =
    \tfrac{ 
      \langle v , \mu(x) - \mu(x+rv) \rangle
    }
    {
      -r \|v \|^2
    }
    +
    \tfrac{
      \inv{2}
      \| \sigma(x) - \sigma(x+rv) \|_{\HS(\R^m,\R^d)}^2
    }
    { r^2 \| v \|^2 }
    +
    \tfrac{     
      ( \frac{ p}{ 2 } - 1 )
      \,
      \| (\sigma(x) - \sigma(x+rv))^* v \|^2
    }{
      r^2 \| v \|^4
    }
    .
\end{split}
\end{equation}
Next we observe that for all 
$ r \in (0,1) $,
$ (x,v) \in A $
it holds that
$ 
  h_{ x, v }|_{ (0,1] } \colon (0,1] \to \R 
$ 
is continuously differentiable 
and that
\begin{equation}
\begin{aligned}  
  h_{ x, v }'(r)
& =
  \frac{
    \langle 
      v , - \mu'( x + r v ) v 
    \rangle 
  }{
    - r \| v \|^2
  }
  +
  \frac{
    \langle 
      v, \mu(x) - \mu(x + r v) 
    \rangle
  }{ 
    r^2 \| v \|^2 
  }
\\ & \quad 
  -
  \frac{ 
    \langle
      \sigma( x ) - \sigma( x + r v )
      ,
      \sigma'( x + r v ) v
    \rangle_{\HS(\R^m,\R^d)}
  }{ 
    r^2 \| v \|^2 
  }
\\ & \quad
  -
  \frac{
    \| \sigma(x) - \sigma( x + r v ) \|_{
      \HS(\R^m,\R^d)
    }^2
  }{
    r^3 \| v \|^2
  }
\\ & \quad 
  - 
  \frac{     
    (p-2)
    \left\langle 
      ( \sigma(x) - \sigma( x + r v ) )^* v
      ,
      ( \sigma'( x + r v ) v )^* v
    \right\rangle
  }{
    r^2 \| v \|^4
  }
\\ & \quad
    - 
    \frac{     
      ( p - 2 )
      \left\| 
	( \sigma(x) - \sigma( x + r v ) )^* v
      \right\|^2
    }{
      r^3 \| v \|^4
    }
    .
\end{aligned}
\end{equation}
This ensures that 
for all 
$ r \in (0,1) $,
$ (x,v) \in A $
it holds that
\begin{equation}
\begin{aligned}
&
  h_{ x, v }'(r)
\\ & =
  \frac{ 1 }{ r }
  \Bigg[
    \frac{
      \left\langle 
        v , \mu'( x + r v ) v
      \right\rangle 
    }{
      \| v \|^2
    }
    -
    \frac{
      \langle v, \mu(x) - \mu( x + r v ) 
      \rangle
    }{ 
      - r \| v \|^2 
    }
\\ & \qquad 
      -
      \frac{ 1 }{ 2 \, \| v \|^2 }
      \left\|
	\frac{
	  ( \sigma(x) - \sigma( x + r x ) )
	}{  
	  r  
	}
	+
	\sigma'(x+rv)v
      \right\|_{
        \HS(\R^m,\R^d)
      }^2
\\ & \qquad 
    +
    \frac{
      \inv{2}
      \left\| \sigma'(x+rv)v \right\|_{
        \HS(\R^m,\R^d)
      }^2
    }{
      \| v \|^2
    }
    -
    \frac{ 
      \inv{2}
      \left\| \sigma(x) - \sigma(x+rv) \right\|_{
        \HS(\R^m,\R^d)
      }^2
    }{
      r^2 \| v \|^2
    }
\\ & \qquad 
    - 
    \frac{
      ( \frac{ p }{ 2 } - 1 ) 
    }{
      \| v \|^4
    }
    \left\|
      \frac{
        ( \sigma(x) - \sigma( x + r v ) )^*v
      }{ r }
      +
      ( \sigma'(x+rv) v )^*v
    \right\|^2
\\ & \qquad 
    +
    \frac{
      ( \frac{ p }{ 2 } - 1 )
      \left\| 
        ( \sigma'(x+rv) v )^*v 
      \right\|^2
    }{
      \| v \|^4
    }
    -
    \frac{ 
      ( \frac{ p }{ 2 } - 1 )
      \left\| 
        ( \sigma(x) - \sigma( x + r v ) )^*v 
      \right\|^2
    }{
      r^2 \| v \|^4
    }
  \Bigg]
  .
\end{aligned}
\end{equation}
Hence, we obtain that for all
$ r \in (0,1) $ 
and all 
$ (x,v) \in A $
it holds that
\begin{equation}
\label{eq:h_bound}
\begin{aligned}
&
  h_{ x, v }'(r) 
\\ & =
  \frac{
    \left(
      g_{ x, v }(r) - h_{ x, v }(r)
    \right)
  }{ r }
  -
  \frac{ 1 }{
    2 r \| v \|^2
  }
  \left\|
    \frac{
      ( \sigma(x) - \sigma( x + r v ) )
    }{ r }
    +
    \sigma'(x+rv)v
  \right\|_{
    \HS( \R^m, \R^d ) 
  }^2
\\ & \quad 
  - 
  \frac{
    ( \frac{ p }{ 2 } - 1 )
  }{
    r \| v \|^4
  }
  \left\|
    \left[
      \frac{
        ( \sigma(x) - \sigma( x + r v ) )
      }{ r }
      +
      \sigma'( x + r v ) v 
    \right]^* v
  \right\|^2
\\ & \leq
  \frac{
    \left(
      g_{ x, v }(r) - h_{ x, v }(r)
    \right)
  }{ r }
  -
  \frac{ 1 }{
    2 r \| v \|^2
  }
  \left\|
    \frac{
      ( \sigma(x) - \sigma( x + r v ) )
    }{ r }
    +
    \sigma'(x+rv)v
  \right\|_{
    \HS( \R^m, \R^d ) 
  }^2
\\ & \quad 
  +
  \frac{
    1
  }{
    2 r \| v \|^4
  }
  \left\|
    \left[
      \frac{
        ( \sigma(x) - \sigma( x + r v ) )
      }{ r }
      +
      \sigma'( x + r v ) v 
    \right]^* v
  \right\|^2
\\ & \leq
  \frac{
    \left(
      g_{ x, v }(r) - h_{ x, v }(r)
    \right)
  }{ r }
  .
\end{aligned}
\end{equation}
Furthermore,
for every
$ (x,v) \in A $
let 
$ \tau_{ x, v } \in \R $
be a real number given by
\begin{equation}
  \tau_{ x, v }
  = 
  \sup\!\left( 
    \left\{ - 1 \right\} 
    \cup 
    \left\{ 
      r \in [0,1] \colon h_{ x, v }(r) = g_{ x, v }(r) 
    \right\} 
  \right) .
\end{equation}
The fact that 
$ 
  \forall \, 
  (x,v) \in A 
  \colon
  h_{ x, v }(0) = g_{ x, v }(0) 
$ 
ensures that 
\begin{equation}
  \forall \, 
  (x,v) \in A 
  \colon
  \tau_{ x, v } \in [0,1] 
  .
\end{equation}
In the next step
we intend to show that
for all 
$ (x,v) \in A $
it holds that
\begin{equation}
\label{eq:bound_for_h}
  h_{ x, v }(1) \leq \sup_{ r \in [0,1] } g_{ x, v }(r)
  .
\end{equation}
We prove \eqref{eq:bound_for_h} by contradiction.
Let $(x_0,v_0)\in A$ be such that
\begin{equation}
\label{eq:h_assumption_contr}
  h_{ x_0, v_0 }(1) > 
  \sup_{ r \in [0,1] } g_{ x_0, v_0 }(r)
  .
\end{equation}
This, in particular, implies that 
$
  h_{ x_0, v_0 }(1) > g_{ x_0, v_0 }( 1 )
$.
Hence, the fact that
$
  \forall \,
  (x,v) \in A 
  \colon
  h_{ x, v }, g_{ x, v } \in C( [0,1] , \R )
$
establishes that
$ 
  \tau_{ x_0, v_0 } < 1 
$
and that for all 
$ r \in ( \tau_{x_0,v_0}, 1 ] $
it holds that
\begin{equation}
  g_{ x_0, v_0 }(r) < h_{ x_0, v_0 }(r) 
  .
\end{equation}
This, the fundamental theorem 
of calculus and 
\eqref{eq:h_bound}
prove that for all $s\in (0,1) \cap [\tau_{x_0, v_0}, 1)$
it holds that
\begin{equation}
\begin{split}
&
  h_{ x_0, v_0 }( 1 )
=
  h_{ x_0, v_0 }( s )
  +
  \int_{ s }^1
  h_{ x_0, v_0 }'(r)
  \,
  dr
\\ & \leq
  h_{ x_0, v_0 }( s )
  +
  \int_{ s }^1
  \frac{
    \left(
      g_{ x_0, v_0 }(r) - h_{ x_0, v_0 }( r ) 
    \right)
  }{
    r
  }
  \, dr
  <
  h_{ x_0, v_0 }( s )
  .
\end{split}
\end{equation}
This and the fact that $h_{x_0,v_0}$ is continuous yields that $h_{x_0,v_0}(1) \leq h_{x_0,v_0}(\tau_{x_0,v_0}) = g_{x_0,v_0}(\tau_{x_0,v_0})$. This contradicts \eqref{eq:h_assumption_contr} 
and we hence obtain that
for all $ (x,v) \in A $
it holds that
\begin{equation}\label{eq:hlessthang}
  h_{ x, v }( 1 ) 
  \leq
  \sup_{ r \in [0,1] }
  g_{ x, v }( r )
  .
\end{equation}
This and \eqref{def:g}--\eqref{def:h} ensure that
for all $ r \in [0,1] $, $ (x,v) \in A $ it holds that
\begin{equation}
  h_{ x, v }( r )
  \leq
  \sup_{ s \in [0,r] }
  g_{ x, v }( s )
  .
\end{equation}
Therefore, we obtain that
for all $ (x,v) \in A $ it holds that
\begin{equation}
\label{eq:prove_ext_mean_value2}
  \sup_{ r \in [0,1] }
  h_{ x, v }( r )
  \leq
  \sup_{ r \in [0,1] }
  g_{ x, v }( r )
  .
\end{equation}
Combining this with \eqref{eq:hlessthang} proves \eqref{eq:ext_mean_value_theorem_1b}.
It thus remains to establish \eqref{eq:ext_mean_value_theorem_2}.
For this note that \eqref{eq:prove_ext_mean_value2}
ensures that

\begin{equation}
\begin{aligned}
& 
 \sup_{x\in O}
 \sup_{v\in \R^d\backslash \{0\}}
   \left[
      \tfrac{
	\langle v , \mu'(x) v \rangle
	+
	\inv{2}
	\| \sigma'(x) v \|_{\HS(\R^m,\R^d)}^2
      }{
	\| v \|^2
      }
      +
      \tfrac{     
        ( \frac{ p }{ 2 } - 1 )
        \,
	\| 
	  (\sigma'(x)v)^* v 
	\|^2
      }
      { \| v \|^4 }
  \right]
\\ & \geq
 \sup_{ (x,v) \in A }
   \left[
      \tfrac{
	\langle v , \mu'(x) v \rangle
	+
	\inv{2}
	\| \sigma'(x) v \|_{\HS(\R^m,\R^d)}^2
      }{
	\| v \|^2
      }
      +
      \tfrac{     
        ( \frac{ p }{ 2 } - 1 )
        \,
	\| 
	  (\sigma'(x)v)^* v 
	\|^2
      }
      { \| v \|^4 }
  \right]
\\ & \geq
  \sup_{ (x,v) \in A} 
  \sup_{r\in [0,1]} 
  g_{ x, v }( r )
 \geq 
  \sup_{ (x,v) \in A }
  h_{ x, v }( 1 )
\\ & =
  \sup_{
    \substack{ 
      x, y \in O,
    \\
      x \neq y
    } 
  }
  \left[
    \tfrac{ 
      \langle x - y , \mu(x) - \mu(y) \rangle
      +
      \inv{2}
      \| \sigma(x) - \sigma(y) \|_{\HS(\R^m,\R^d)}^2
    }
    { \| x - y \|^2 }
    +
    \tfrac{     
      (\frac{ p}{ 2 } -1) 
      \,
      \| (\sigma(x) - \sigma(y))^* (x-y) \|^2
    }{
      \| x - y \|^4
    }
 \right]
  .
\end{aligned}
\end{equation}
Combining \sgc{}this\cgs{} with estimate~\eqref{eq:prove_ext_mean_value1b}
proves~\eqref{eq:ext_mean_value_theorem_2}\sgc{}. The\cgs{} proof of Proposition~\ref{prop:ext_mean_value_theorem} is thus \sgc{}complete\cgs{}.
\end{proof}

\begin{remark}
The identities \eqref{eq:ext_mean_value_theorem_1b} 
and \eqref{eq:ext_mean_value_theorem_2},
in general, fail to hold for $ p \in (0,1) $. 
Indeed, let 
$ d = m = 1 $, \sgc{}let\cgs{} $ O = \sgc{}\R\cgs{} $ 
and let
$ \mu \colon \R \to \R $
and
$ g_p, \sigma_p \colon \R \rightarrow \R $,
$ p \in (0,1) $,
be the functions with the property that
for all $ x \in \R $, $ p \in (0,1) $ it holds that
\begin{equation}
 \mu(x)
 =
 \begin{cases}
  -x^3+3x+2 
  &
    \colon
    x \in [-1,1]
\\
  0
  &
    \colon
    x \in (-\infty,-1)
  \\
  4
  &
    \colon
    x \in (1,\infty) 
\end{cases}
  ,
\end{equation}
\begin{equation}
  g_p(x)
  =
  \begin{cases}
    \sqrt{6}
    \left( 1 - p \right)^{ - 1/ 2 }
    \sqrt{ 1 - x^2 }
  &
    \colon
    x \in [-1,1]
  \\
    - 
    \sqrt{6}
    \left( 1 - p \right)^{ - 1/2 }
    \sqrt{ 1 - ( x - 2 )^2 }
  &
    \colon
    x \in [1,3]
  \\
    0
  &
    \colon
    x \in ( - \infty , - 1 ) \cup ( 3 , \infty )
\end{cases}
\end{equation}
and
$
  \sigma_p(x) = \int_{-\infty}^x g_p(y) \, dy 
$.
Then note that 
for all $ p \in (0,1) $
it holds that
$ \mu, \sigma_p \in C^1( \R, \R ) $
and note that for all $ x \in \R $ it holds that
\begin{equation}
  \mu'(x)
  =
 \begin{cases}
  - 3 \left( x^2 - 1 \right) 
  &
    \colon
    x \in [-1,1]
  \\
    0
  & \colon
    x \in ( - \infty , - 1 ) \cup (1 , \infty ) 
 \end{cases}
  .
\end{equation}
Furthemore, observe that for all 
$ p \in (0,1) $,
$
  x \in ( - \infty , - 1 ] \cup [3,\infty)
$
it holds that $ \sigma_p(x) = 0 $.
In particular, 
for all $ p \in (0,1) $
it holds that
\begin{equation}
\label{eq:counterex_diff}
\begin{aligned}
&
  \frac{ 
    \left(
      \mu(3) - \mu( - 1 )
    \right)
  }{ 4 }
  +
  \frac{ 
    ( \frac{ p }{ 2 } - \frac{ 1 }{ 2 }  ) \left| \sigma_p(3) - \sigma_p( - 1 ) \right|^2 
  }{ 
    4^2 
  }
  = 
    1 - 0
  = 
    1 . 
\end{aligned}
\end{equation}
In addition, for all $ p \in (0,1) $
it holds that
\begin{equation}
\label{eq:counterex_d}
  \sup_{ x \in \R }
  \left(
    \mu'(x)
    +
    \left( \tfrac{ p }{ 2 } - \tfrac{ 1 }{ 2 } \right) 
    \left| \sigma_p'(x) \right|^2
 \right)
  =
  \sup_{ x \in \R }
  \left(
    \mu'(x)
    -
    \tfrac{
      ( 1 - p ) 
      \,
      | \sigma_p'(x) |^2
    }{ 
      2 
    }
 \right)
 = 0
 .
\end{equation}
Combining \eqref{eq:counterex_diff}
and \eqref{eq:counterex_d} proves, in particular,
that for all $ p \in (0,1) $ it
holds that
\begin{equation}
\begin{aligned}
& 
  \sup_{
    \substack{
      x,y \in \R , 
    \\
      x \neq y
    }
  }
  \left(
    \tfrac{ 
      \mu(x) - \mu(y)
    }{ 
      \left( x - y \right) 
    }
    +
    \frac{
      ( \frac{ p }{ 2 } - \frac{ 1 }{ 2 } )
      \,
      | \sigma_p(x)-\sigma_p(y) |^2
    }{
      \left| x - y \right|^2
    }
  \right)
  >
 \sup_{ 
   x \in \R
 }
 \left(
  \mu'(x)
  +
  ( \tfrac{ p }{ 2 } - \tfrac{ 1 }{ 2 } )
  \left| 
    \sigma_p'(x)
  \right|^2
 \right)
 .
\end{aligned}
\end{equation}
We have thus established that for all $ p \in (0,1) $ 
there exist 
$ \mu, \sigma \in C^1(\R,\R) $, 
$ x, y \in \R $
with $ x \neq y $
such that
\begin{equation}
\begin{split}
&
  \sup_{
    z \in \R
  }
  \sup_{
    v \in \R \backslash \{ 0 \}
  }
  \left[
    \tfrac{
      \langle v , \mu'( z ) v \rangle
      +
      \inv{2}
      \| \sigma'( z ) v \|_{
        \HS( \R , \R )
      }^2
    }{
      \| v \|^2
    }
    +
    \tfrac{     
      ( \frac{ p }{ 2 } - 1 )
      \,
      \| 
        ( \sigma'( z ) v )^* v
      \|^2
    }{ 
      \| v \|^4 
    }
  \right]
\\ & <
  \left[
    \tfrac{ 
      \langle x - y , \mu(x) - \mu(y) \rangle
      +
      \inv{2}
      \| \sigma(x) - \sigma(y) \|_{\HS( \R , \R )}^2
    }{ 
      \| x - y \|^2 
    }
    +
    \tfrac{     
      ( \frac{ p}{ 2 } - 1 )
      \,
      \| ( \sigma(x) - \sigma(y) )^* ( x - y ) \|^2
    }{
      \| x - y \|^4
    }
 \right]  
 .
\end{split}
\end{equation}
\end{remark}

Lemma~\ref{lem:multiple_exp}
below uses Corollary~\ref{cor:exp_mom} above
to estimate expectations of
certain exponential integrals.
Besides Corollary~\ref{cor:exp_mom},
the proof of Lemma~\ref{lem:multiple_exp} 
also uses the 
following well-known consequence of Jensen's inequality
in the next lemma
(see, e.g., inequality~(19) in 
Li~\cite{Li1994}).

\begin{lemma}
\label{lem:integral_sup}
Let $ T \in (0,\infty) $,
let
$
  \left( \Omega, \mathcal{F}, \P \right)
$
be a probability space
and
let 
$ A \colon [0,T] \times \Omega \to \R $
be a product measurable stochastic process 
satisfying
$$
  \min\left\{ 
    \int_0^T \max( 0, A_s ) \, ds,
    \int_0^T \max( 0, - A_s ) \, ds
  \right\}
  < \infty
$$
$ \P $-a.s. 
Then it holds for all $ p \in [1, \infty] $ that
\begin{equation}
  \left\|
  \exp\!\left(
    \smallint_0^T
    A_t \, dt
  \right)
  \right\|_{
    L^p( \Omega; \R )
  }
\leq
  \frac{ 1 }{ T }
  \int_0^T
  \left\|
      e^{
	T A_t
      }
  \right\|_{
    L^p( \Omega; \R )
  }
  dt
\leq
  \sup_{ t \in [0,T] }
  \left\| 
      e^{
	T A_t
      }
  \right\|_{
    L^p( \Omega; \R )
  }
  .
\end{equation}
\end{lemma}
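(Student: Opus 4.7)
The plan is to derive this from Jensen's inequality applied to the exponential function, combined with Minkowski's integral inequality.

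First, I would verify that under the stated integrability assumption the quantity $\int_0^T A_t\,dt$ is well-defined as an element of $[-\infty,\infty]$ $\P$-a.s., so that $\exp(\int_0^T A_t\,dt) \in [0,\infty]$ is unambiguous. The hypothesis that at least one of the two ``halves'' is almost surely finite rules out the indeterminate $\infty-\infty$ case.

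Next, I would rewrite the inner integral as an expectation with respect to the uniform probability measure on $[0,T]$, namely $\int_0^T A_t\,dt = \tfrac{1}{T}\int_0^T (TA_t)\,dt$. Since $\exp\colon\R\to[0,\infty)$ (extended suitably to $[-\infty,\infty]$) is convex, Jensen's inequality applied with respect to $\tfrac{1}{T}\,dt$ on $[0,T]$ gives
\begin{equation*}
  \exp\!\left(\smallint_0^T A_t\,dt\right)
  =
  \exp\!\left(\tfrac{1}{T}\smallint_0^T T A_t\,dt\right)
  \leq
  \tfrac{1}{T}\smallint_0^T e^{T A_t}\,dt
  \quad \P\text{-a.s.}
\end{equation*}
Taking the $L^p(\Omega;\R)$ norm on both sides (monotonicity of $\|\cdot\|_{L^p}$) and then applying Minkowski's integral inequality (the triangle inequality for the $L^p$-norm of a Bochner/product-measurable integral over $[0,T]$) yields
\begin{equation*}
  \left\|\exp\!\left(\smallint_0^T A_t\,dt\right)\right\|_{L^p(\Omega;\R)}
  \leq
  \tfrac{1}{T}\left\|\smallint_0^T e^{T A_t}\,dt\right\|_{L^p(\Omega;\R)}
  \leq
  \tfrac{1}{T}\smallint_0^T \left\|e^{T A_t}\right\|_{L^p(\Omega;\R)}\,dt,
\end{equation*}
which is the first claimed inequality. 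The second claimed inequality is then immediate, since the average of a non-negative measurable function over $[0,T]$ is bounded by its supremum.

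There is no real obstacle here: the proof is a one-line Jensen plus a one-line Minkowski. The only thing worth flagging is measurability of $t\mapsto \|e^{TA_t}\|_{L^p(\Omega;\R)}$ so that the outer $dt$-integral makes sense, which follows from product measurability of $A$ together with Fubini/Tonelli applied to the non-negative integrand $e^{TA_t}$; for $p=\infty$ one reads the bound as the essential supremum. This justifies using Minkowski's integral inequality in the form above.
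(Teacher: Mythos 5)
Your proof is correct and follows exactly the route the paper indicates: the authors state Lemma~\ref{lem:integral_sup} without proof, describing it as a ``well-known consequence of Jensen's inequality'' (citing inequality~(19) in Li~\cite{Li1994}), and your argument — Jensen with respect to the uniform measure $\tfrac{1}{T}\,dt$ on $[0,T]$ applied to the convex function $\exp$, followed by Minkowski's integral inequality and the trivial bound of an average by a supremum — is precisely that standard derivation.
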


Using Lemma~\ref{lem:integral_sup}
and Corollary~\ref{cor:exp_mom},
we are now ready to prove 
Lemma~\ref{lem:multiple_exp}.
Lemma~\ref{lem:multiple_exp}
is crucial for
Theorem~\ref{thm:UV}
and Theorem~\ref{thm:UV2}
below.

\begin{lemma}
\label{lem:multiple_exp}
Assume the setting in Section~\ref{sec:setting},
let $ k \in \N $, 
$ x, y \in O $,
$ \alpha_0,\,\alpha_1,$ $\beta_0,\,\beta_1\in\R^k$,
$p\in(0,\infty]$,
$q_0,\,q_1\in(0,\infty]^k$
with
$
  \sum_{ i = 0 }^1 \sum_{ l = 1 }^k \frac{ 1 }{ q_{ i, l } } = \frac{ 1 }{ p }	
,
$
\sgc{}let\cgs{}
$ 
  U_0 
  = 
  ( U_{ 0, l } )_{ l \in \{ 1, \dots, k \} } 
$,
$
  U_1 = ( U_{ 1, l } )_{ l \in \{ 1, \dots, k \} }
  \in C^{ 2 }( O, \R^k ) 
$,
$ 
  \overline{U}
  = ( \overline{U}_{ l } )_{ l \in \{ 1, \dots, k \} }
  \in C( O, \R^k ) 
$, 
$ V \in \mathcal{L}^0( O^2 ; \R ) $\sgc{},\cgs{}
$
  c \in \mathcal{L}^0( [0,T] ; \R )
$,
let $ \tau \colon \Omega \to [0,T] $ be a stopping time
and let	
$ X^z \colon [0,T] \times \Omega \to O $,
$ z \in \{ x, y \} $,
be adapted
stochastic processes
with continuous sample paths
satisfying
$
  \smallint\nolimits_0^{ \tau }
    \max( c(s),0 )
    +
    \| \mu( X^z_s ) \| 
    +
    \| 
      \sigma( X^z_s )
    \|^2
    +
    \max( V( X^x_s, X^y_s ) ,0)
    \,
  ds
  < \infty
$
$ \P $-a.s.\ and
$
  X^z_{ t \wedge \tau } = 
  z
  + \int_0^{ t \wedge \tau } \mu( X^z_s ) \, ds
  +
  \int_0^{ t \wedge \tau } \sigma( X^z_s ) \, dW_s
$
$ \P $-a.s.\ for all $ (t,z) \in [0,T] \times \{ x, y \} $
and
\begin{equation}
\label{eq:Vestimate}
  V( v, w )
\leq   
  c(t)
  +
  \smallsum_{ n = 1 }^k
  \big[
    \frac{ 
      U_{ 0, n }( v ) + U_{ 0, n }( w ) 
    }{ 2 q_{ 0, n } T e^{ \alpha_{ 0, n } t } } 
    +
    \frac{ 
      \overline{U}_{n}( v ) + \overline{U}_{n}( w ) 
    }{ 
      2 q_{ 1, n } 
      e^{ \alpha_{ 1, n } t }
    }
  \big]
\qquad 
  \text{and}
\end{equation}
\begin{equation}
\label{eq:multiple_exp_est2}
  ( \mathcal{G}_{ \mu, \sigma } U_{ i, l } )( u )
  +
  \tfrac{ 
    1
  }{ 
    2 
    e^{ 
      \alpha_{ i, l } t 
    }
  }
    \|
      \sigma( u )^* ( \nabla U_{ i, l } )( u )
    \|^2
  +
  \mathbbm{1}_{
    \{ 1 \}
  }(i)
  \cdot
  \overline{U}_{l}(u)
\leq
  \alpha_{ i, l } U_{ i, l }(u)
  +
  \beta_{ i, l }
\end{equation}
for all\sgc{}
$(v,w) \in  
\cup_{\omega \in \Omega}
    \{ 
        (X_s^x(\omega), X_s^y(\omega)) \in O \times O \colon s \in [0, \tau(\omega)] 
    \}
$,
$ i \in \{ 0, 1 \} $,
$ l \in \{ 1, \dots, k \} $,
$ u \in \{ v, w \} $,
$t\in [0,T]$\cgs{}.
Then
\begin{equation}
\begin{split}
&
    \left\|
      e^{
        \int_0^{ \tau }
        V( X^x_s , X^y_s )      
        \, ds
      }
    \right\|_{
      L^p( \Omega; \R )
    }
\leq
  \exp\!\left(
      \smallsum\limits_{ l = 1 }^k
       \smallsum\limits_{ i = 0 }^1
        \frac{
          U_{ i, l }( x ) + U_{ i, l }( y) 
        }{
          2 q_{ i, l }
        }
    \right)
\\ & 
  \cdot
\left\|
    \frac{ 
    \exp\!\left(
    {\scriptstyle
      \int\limits_0^{ \tau } c(s) \, ds
      +
      \sum\limits_{i=0}^{1}
      \sum\limits_{ l = 1 }^k
      \int\limits_{ 0 }^{ \tau }
      \frac{
	\beta_{ i, l }(1-\frac{s}{T})^{1-i}
      }{
	q_{ i, l } e^{ \alpha_{ i, l } s }
      }
      \, ds } \right)
}{
    \exp\!\left(
    {\scriptstyle
	\sum\limits_{ l = 1 }^k
	\frac{
	  U_{1,l}( X^x_{ \tau } )
	  +
	  U_{1,l}( X^y_{ \tau } )
	}{ 2 q_{ 1, l } e^{ \alpha_{ 1, l } \tau } }
	+
      \sum\limits_{ l = 1 }^k
      \int\limits_{ \tau }^T
	\frac{
	  U_{0,l}( X^x_{\tau} )
	  +
	  U_{0,l}( X^y_{\tau} )
	}{ 2 q_{ 0, l } T e^{ \alpha_{ 0, l }\tau } }
      \, ds
    }
    \right)
    }
  \right\|_{ 
    L^{ \infty }( \Omega; \R ) 
  }\!\!\!\!
  .
\end{split}
\end{equation}
\end{lemma}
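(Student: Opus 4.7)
The plan is to first use the pointwise estimate~\eqref{eq:Vestimate} to bound the random exponent $Z := \int_0^\tau V(X^x_s, X^y_s)\,ds$ by $\int_0^\tau c(s)\,ds + \sum_{n,z} \int_0^\tau U_{0,n}(X^z_s)/(2q_{0,n}Te^{\alpha_{0,n}s})\,ds + \sum_{n,z} \int_0^\tau \overline{U}_n(X^z_s)/(2q_{1,n}e^{\alpha_{1,n}s})\,ds$, where $n$ ranges over $\{1,\dots,k\}$ and $z$ over $\{x,y\}$. After exponentiating and factorizing, I would apply the generalized H\"older inequality with exponent $\infty$ on the $c$-factor and exponent $2q_{i,n}$ on each $(i,n,z)$-factor; the identity $\sum_{i=0}^1 \sum_{l=1}^k q_{i,l}^{-1} = p^{-1}$ combined with the duplication $z \in \{x,y\}$ guarantees that the reciprocal exponents sum to $p^{-1}$, and the $\int_0^\tau c\,ds$-term is sent directly into the $L^\infty$-factor of the statement.

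For each $\overline{U}_n$-factor I would invoke Corollary~\ref{cor:exp_mom} with $U = U_{1,n}$, $\alpha = \alpha_{1,n}$, and $\overline{U}(t,x) = \overline{U}_n(x) - \beta_{1,n}$; hypothesis~\eqref{eq:multiple_exp_est2} at $i=1$ is precisely the exponential integrability assumption~\eqref{eq:exp_mom_assumption}. The Corollary then bounds $\E[\exp(U_{1,n}(X^z_\tau)/e^{\alpha_{1,n}\tau} + \int_0^\tau[\overline{U}_n(X^z_s) - \beta_{1,n}]/e^{\alpha_{1,n}s}\,ds)]$ by $e^{U_{1,n}(z)}$; rewriting $\exp(\int_0^\tau \overline{U}_n(X^z_s)/e^{\alpha_{1,n}s}\,ds)$ as that quantity times its reciprocal correction and applying the elementary $\E[AB] \leq \|A\|_{L^1}\|B\|_{L^\infty}$ produces the deterministic factor $e^{U_{1,n}(z)/(2q_{1,n})}$ and collects the pathwise term $-U_{1,n}(X^z_\tau)/(2q_{1,n}e^{\alpha_{1,n}\tau}) + \int_0^\tau \beta_{1,n}/(2q_{1,n}e^{\alpha_{1,n}s})\,ds$ into the common $L^\infty$-factor. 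For each $U_{0,n}$-factor I would instead apply Proposition~\ref{prop:exp_mom} with the time-dependent Lyapunov $U(t,x) := G(t)U_{0,n}(x)$, where $G(t) := (1-t/T)e^{-\alpha_{0,n}t}$ is the unique nonnegative solution on $[0,T]$ of the linear ODE $G'(t) + \alpha_{0,n} G(t) = -1/(Te^{\alpha_{0,n}t})$ with $G(T) = 0$. A short computation using~\eqref{eq:multiple_exp_est2} at $i=0$ shows that the residual coefficient of $\tfrac{1}{2}\|\sigma^*\nabla U_{0,n}\|^2$ in the Proposition's hypothesis equals $G(t)^2 - G(t)/e^{\alpha_{0,n}t} = -(1-t/T)(t/T)e^{-2\alpha_{0,n}t} \leq 0$, so the hypothesis is satisfied with $\overline{U}(t,x) := U_{0,n}(x)/(Te^{\alpha_{0,n}t}) - (1-t/T)e^{-\alpha_{0,n}t}\beta_{0,n}$. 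Proposition~\ref{prop:exp_mom} at $r = 1$ then gives an $e^{U_{0,n}(z)}$ bound on $\E[\exp((1-\tau/T)U_{0,n}(X^z_\tau)/e^{\alpha_{0,n}\tau} + \int_0^\tau U_{0,n}(X^z_s)/(Te^{\alpha_{0,n}s})\,ds - \beta_{0,n}\int_0^\tau (1-s/T)/e^{\alpha_{0,n}s}\,ds)]$; splitting $\int_0^\tau = \int_0^T - \int_\tau^T$ on the (deterministic) $\beta_{0,n}$-integral and once more applying $\E[AB] \leq \|A\|_{L^1}\|B\|_{L^\infty}$ extracts the desired deterministic constant $\exp(U_{0,n}(z)/(2q_{0,n}) + \beta_{0,n}/(2q_{0,n})\int_0^T (1-s/T)/e^{\alpha_{0,n}s}\,ds)$, while the pathwise remainders are absorbed into the common $L^\infty$-factor.

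To conclude I would consolidate the $O(k)$ many $L^\infty$-factors via the sub-multiplicativity $\|AB\|_{L^\infty} \leq \|A\|_{L^\infty}\|B\|_{L^\infty}$ and collect the deterministic and pathwise exponentials into the two factors appearing in the statement. The main obstacle is the construction of the time-dependent Lyapunov $G(t) = (1-t/T)e^{-\alpha_{0,n}t}$ in the $U_0$ step: it is forced on one hand by requiring the exponent produced by Proposition~\ref{prop:exp_mom} to contain exactly $\int_0^\tau U_{0,n}(X^z_s)/(Te^{\alpha_{0,n}s})\,ds$ (which fixes an inhomogeneous linear ODE for $G$), and on the other by the sign condition $0 \leq G(t) \leq e^{-\alpha_{0,n}t}$ on $[0,T]$ needed so that the quadratic $\|\sigma^*\nabla U_{0,n}\|^2$-term in the Proposition's hypothesis inherits the correct sign via~\eqref{eq:multiple_exp_est2}. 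The characteristic $(1-s/T)$-weighting of $\beta_{0,l}$ in the deterministic part of the final bound is precisely the signature of this particular solution of the ODE.
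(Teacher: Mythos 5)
Your time-dependent Lyapunov construction $U(t,x)=G(t)\,U_{0,l}(x)$ with $G(t)=(1-t/T)e^{-\alpha_{0,l}t}$ is a nice idea and the verification that the hypothesis of Proposition~\ref{prop:exp_mom} is met is arithmetically correct, but the argument does not yield the inequality stated in the lemma, for two reasons.

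First, the pathwise remainder that Proposition~\ref{prop:exp_mom} forces you to discard into the $L^\infty$-factor is $-(1-\tau/T)\,U_{0,l}(X^z_\tau)/e^{\alpha_{0,l}\tau}$, a pointwise evaluation of $U_{0,l}$ at the stopping time, whereas the $L^\infty$-factor in the statement contains the path integral $-\int_\tau^T U_{0,l}(X^z_s)/(2q_{0,l}Te^{\alpha_{0,l}s})\,ds$. These are genuinely different functionals of the trajectory (they agree only when $\tau=T$ a.s.), and since the $U_{0,l}$ are not assumed to be nonnegative neither dominates the other, so you end up with a related but different bound. The paper's proof works in the reverse order: it extracts the \emph{entire} $L^\infty$-correction $R$ in the very first H\"older split, \emph{before}~\eqref{eq:Vestimate} is used. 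The negative $\int_\tau^T U_{0,l}(\dots)\,ds$ inside $R$ is cancelled by a compensating positive $\int_\tau^T U_{0,l}(\dots)\,ds$ in the $L^{p_0}$-factor, and when~\eqref{eq:Vestimate} is then applied the exponent becomes exactly $\int_0^T$ rather than $\int_0^\tau$. That completion to $\int_0^T$ is what makes the Jensen-type Lemma~\ref{lem:integral_sup} applicable, after which Corollary~\ref{cor:exp_mom} at each fixed deterministic time finishes the job; no time-dependent Lyapunov function is needed. Your $G$ handles $\int_0^\tau$ directly, but the price is a boundary term at $\tau$ of the wrong form.

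Second, the consolidation of $L^\infty$-factors does not close. Applying $\E[AB]\le\|A\|_{L^1}\|B\|_{L^\infty}$ separately to each of the $2k$ H\"older factors and multiplying yields $\prod_i\|B_i\|_{L^\infty}$, and the submultiplicativity $\|B_1B_2\|_{L^\infty}\le\|B_1\|_{L^\infty}\|B_2\|_{L^\infty}$ you cite bounds the desired single norm $\|\prod_iB_i\|_{L^\infty}$ \emph{from above} by your product — the inequality points the wrong way, so you prove a weaker estimate than the one claimed. This is a second reason the $L^\infty$-factor has to be pulled out in one stroke at the start of the argument, as the paper does, rather than factor by factor at the end.
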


\begin{proof}[Proof
of Lemma~\ref{lem:multiple_exp}]
\sgc{}Throughout this proof let $p_0, p_1 \in (0,\infty]$ satisfy for all $i\in \{0,1\}$ that\cgs{}
$ p_i = [\, \sum_{l=1}^{k} \frac{1}{q_{i,l}}\,]^{-1} $\sgc{} 
and let $R\in [0,\infty]$ satisfy\cgs{} 
\begin{align*}
& R = 
\left\|
    \frac{ 
    \exp\!\left(
    {\scriptstyle
      \int\limits_0^{ \tau } c(s) \, ds
      +
      \sum\limits_{i=0}^{1}
      \sum\limits_{ l = 1 }^k
      \int\limits_{ 0 }^{ \tau }
      \frac{
	\beta_{ i, l }(1-\frac{s}{T})^{1-i}
      }{
	q_{ i, l } e^{ \alpha_{ i, l } s }
      }
      \, ds } \right)
}{
    \exp\!\left(
    {\scriptstyle
	\sum\limits_{ l = 1 }^k
	\frac{
	  U_{1,l}( X^x_{ \tau } )
	  +
	  U_{1,l}( X^y_{ \tau } )
	}{ 2 q_{ 1, l } e^{ \alpha_{ 1, l } \tau } }
	+
      \sum\limits_{ l = 1 }^k
      \int\limits_{ \tau }^T
	\frac{
	  U_{0,l}( X^x_{\tau} )
	  +
	  U_{0,l}( X^y_{\tau} )
	}{ 2 q_{ 0, l } T e^{ \alpha_{ 0, l }\tau } }
      \, ds
    }
    \right)
    }
  \right\|_{ 
    L^{ \infty }( \Omega; \R ) 
  }\!\!\!\!
  .
\end{align*}
Observe that
H\"{o}lder's inequality proves that
\begin{equation*}
\begin{split}
&
    \left\|
      e^{
        \int_0^{ \tau }
        V( X^x_s , X^y_s )      
        \, ds
      }
    \right\|_{
      L^p( \Omega; \R )
    }
\\ & 
\leq 
  R
  \cdot 
  \left\| 
  \frac{
    \exp\!\left(
    {\scriptstyle
        \int\limits_0^{ \tau }
        V( X^x_s, X^y_s )
        \,ds
        + 
        \sum\limits_{ l = 1 }^k
        \int\limits_{ \tau }^T
        \frac{
            U_{0,l}( X^x_{\tau} )
            +
            U_{0,l}( X^y_{\tau} )
        }{ 2 q_{ 0, l } T e^{ \alpha_{ 0, l } {\tau} } }
        \, ds
    }\right)
  }{  
    \exp\!\left(
    {\scriptstyle
        \int\limits_0^{ \tau }
        c(s)
        \,ds
        + 
        \sum\limits_{ l = 1 }^k
        \int\limits_0^{ \tau }
        \left[
        \frac{
            \overline{U}_{l}( X^x_s )
            +
            \overline{U}_{l}( X^y_s )
            }{ 2 q_{ 1, l } e^{ \alpha_{ 1, l } s } }
            +
            \frac{
                \beta_{0,k}(1-\frac{s}{T})
            }{
                q_{0,l} e^{\alpha_{0,l} s}
            }
        \right]
        \, ds }
  \right)
  }
  \right\|_{ 
    L^{ 
      p_0
    }( \Omega; \R ) 
  } 
\\ & \quad \cdot 
  \left\|
  \exp\!\left(
  {\scriptstyle
    \sum\limits_{ l = 1 }^k
    \left[
      \frac{
        U_{1,l}( X^x_{ \tau } )
        +
        U_{1,l}( X^y_{ \tau } )
      }{ 2 q_{ 1, l } e^{ \alpha_{ 1, l } \tau } }
      +
      \int\limits_0^{ \tau }
      \frac{
          \overline{U}_{l}( X^x_s )
          +
          \overline{U}_{l}( X^y_s )
        }{ 2 q_{ 1, l } e^{ \alpha_{ 1, l } s } }
      \, ds
      -
    \int\limits_{ 0 }^{ \tau }
    \frac{
      \beta_{ 1, l }
    }{
      q_{ 1, l } e^{ \alpha_{ 1, l } s }
    }
    \, ds
    \right]
  }
  \right)
  \right\|_{ 
    L^{ 
      p_1
    }( \Omega; \R ) 
  } 
  \!\!\!\!.
\end{split}
\end{equation*}
Hence,~\eqref{eq:Vestimate} 
implies that
\begin{equation*}
\begin{split}
&
    \left\|
      e^{
        \int_0^{ \tau }
        V( X^x_s , X^y_s )      
        \, ds
      }
    \right\|_{
      L^p( \Omega; \R )
    }
\leq 
\\ & 
  R \cdot
  \left\|
  \exp\!\left(
    \smallsum\limits_{ l = 1 }^k
    \smallint\limits_0^{ T }
      \tfrac{
        U_{0,l}( X^x_{s\wedge \tau} )
        +
        U_{0,l}( X^y_{s\wedge \tau} )
      }{ 2 q_{ 0, l } T e^{ \alpha_{ 0, l } {s\wedge \tau} } }
    \,
    ds
    -
    \smallint\limits_{0}^{s\wedge\tau}
        \tfrac{
            2\beta_{0,k}
        }{
            2q_{0,l}e^{\alpha_{0,l}u }
        }
    \,du        
  \right)
  \right\|_{ 
    L^{ 
      p_0
    }( \Omega; \R ) 
  } 
\\ & \cdot 
  \left\|
  \exp\!\left(
    \smallsum\limits_{ l = 1 }^k
    \left[
      \tfrac{
        U_{1,l}( X^x_{ \tau } )
        +
        U_{1,l}( X^y_{ \tau } )
      }{ 2 q_{ 1, l } e^{ \alpha_{ 1, l } \tau } }
      +
      \smallint\limits_0^{ \tau }
      \tfrac{
          \overline{U}_{l}( X^x_s )
          +
          \overline{U}_{l}( X^y_s )
          - 2 \beta_{ 1, l }
        }{ 2 q_{ 1, l } e^{ \alpha_{ 1, l } s } }
      \, ds
    \right]
  \right)
  \right\|_{ 
    L^{ 
     p_1
    }( \Omega; \R ) 
  }.
\end{split}
\end{equation*}
\sgc{}H\"{o}lder's\cgs{} inequality
and \sgc{}the fact that for every $i\in \{0,1\}$ it holds that $ p_i = [\sum_{l=1}^{k}\frac{1}{ q_{i,l} }]^{-1} $\cgs{}
therefore prove that
\begin{equation}
\begin{split}
&   \left\|
      e^{
        \int_0^{ \tau }
        V( X^x_s , X^y_s )      
        \, ds
      }
    \right\|_{
      L^p( \Omega; \R )
    }
\\ & \leq 
  R\cdot
  \prod_{ l = 1 }^k
  \left\|
  \exp\!\left(
    \smallint\limits_0^{ T }
      \tfrac{
        U_{0,l}( X^x_{s\wedge \tau} )
      }{ 2 q_{ 0, l } T e^{ \alpha_{ 0, l } {s\wedge \tau} } }
    -
    \smallint_0^{s\wedge \tau}
    \tfrac{
      \beta_{ 0, l }
    }{
      2 q_{ 0, l } T e^{ \alpha_{ 0, l } u }
    }
    \, du
    \,
    ds
  \right)
  \right\|_{ 
    L^{ 
      2 q_{ 0, l }
    }( \Omega; \R ) 
  } 
\\ & \quad \cdot
  \prod_{ l = 1 }^k
  \left\|
  \exp\!\left(
    \smallint\limits_0^{ T }
      \tfrac{
        U_{0,l}( X^y_{s\wedge \tau} )
      }{ 2 q_{ 0, l } T e^{ \alpha_{ 0, l } {s\wedge \tau} } }
    -
    \smallint_0^{s\wedge \tau}
    \tfrac{
      \beta_{ 0, l }
    }{
      2 q_{ 0, l } T e^{ \alpha_{ 0, l } u }
    }
    \, du
    \,
    ds
  \right)
  \right\|_{ 
    L^{ 
      2 q_{ 0, l }
    }( \Omega; \R ) 
  } 
\\ &  \quad 
  \cdot
  \prod_{ l = 1 }^k
  \left\|
  \exp\!\left(
    \tfrac{
      U_{1,l}( X^x_{ \tau } )
    }{ 2 q_{ 1, l } e^{ \alpha_{ 1, l } \tau } }
    +
    \smallint\limits_0^{ \tau }
      \tfrac{
        \overline{U}_{l}( X^x_s )
        -
        \beta_{ 1, l }
      }{ 2 q_{ 1, l } e^{ \alpha_{ 1, l } s } }
    \,
    ds
  \right)
  \right\|_{ 
    L^{ 
      2 q_{ 1, l }
    }( \Omega; \R ) 
  } 
\\ & \quad  
  \cdot
  \prod_{ l = 1 }^k
  \left\|
  \exp\!\left(
    \tfrac{
      U_{1,l}( X^y_{ \tau } )
    }{ 2 q_{ 1, l } e^{ \alpha_{ 1, l } \tau } }
    +
    \smallint\limits_0^{ \tau }
      \tfrac{
        \overline{U}_{l}( X^y_s )
        -
        \beta_{ 1, l }
      }{ 2 q_{ 1, l } e^{ \alpha_{ 1, l } s } }
    \,
    ds
  \right)
  \right\|_{ 
    L^{ 
      2 q_{ 1, l }
    }( \Omega; \R ) 
  }
  .
\end{split}
\end{equation}
Lemma~\ref{lem:integral_sup}
hence implies that
\begin{equation}
\begin{split}
&
    \left\|
      e^{
        \int_0^{ \tau }
        V( X^x_s , X^y_s )      
        \, ds
      }
    \right\|_{
      L^p( \Omega; \R )
    }
\\ &  \leq
  R 
  \cdot
  \prod_{ l = 1 }^k
\left|
  \E\!\left[ 
  \exp\!\left(
  {\scriptstyle
    \frac{
      U_{ 1, l }( X^x_{ \tau } )
    }{
      e^{ \alpha_{ 1, l } \tau }
    }
    +
    \int\limits_0^{ \tau }
    \frac{ 
      \overline{U}_{ l }( X^x_s )
      - \beta_{ 1, l }
    }{ e^{ \alpha_{ 1, l } s } }
    \,
    ds
  }
  \right)
  \right]
  \E\!\left[ 
  \exp\!\left(
  {\scriptstyle
    \frac{
      U_{ 1, l }( X^y_{ \tau } )
    }{
      e^{ \alpha_{ 1, l } \tau }
    }
    +
    \int\limits_0^{ \tau }
    \frac{ 
      \overline{U}_{ l }( X^y_s )
      - \beta_{ 1, l }
    }{e^{ \alpha_{ 1, l } s } }
    \,
    ds
  }
  \right)
  \right]
\right|^{
  \frac{ 1 }{ 2 q_{ 1, l } }
}
\\ & 
\cdot
  \prod_{ l = 1 }^k
  \left|
    \sup_{ s \in [0,T] }
    \E\!\left[
    \exp\!\left(
    {\scriptstyle
      \frac{
	U_{ 0, l }( X^x_{s\wedge \tau} )
      }{ e^{ \alpha_{ 0, l } {s\wedge \tau} } }
      -
      \int\limits_0^{s\wedge \tau}
      \frac{
	\beta_{ 0, l }
      }{
	e^{ \alpha_{ 0, l } u }
      }
      \, du
    }
    \right)
    \right]
    \sup_{ s \in [0,T] }
    \E\!\left[
    \exp\!\left(
    {\scriptstyle
      \frac{
	U_{ 0, l }( X^y_s )
      }{ e^{ \alpha_{ 0, l } s } }
      -
      \int\limits_0^s
      \frac{
	\beta_{ 0, l }
      }{
	e^{ \alpha_{ 0, l } u }
      }
      \, du
    }
    \right) 
    \right]
  \right|^{
    \frac{ 1 }{ 2 q_{ 0, l } }
  }
  .
\end{split}
\end{equation}
Corollary~\ref{cor:exp_mom}
therefore proves that
\begin{equation}
\begin{split}
&  
    \left\|
      e^{
        \int_0^{ \tau }
        V( X^x_s , X^y_s )      
        \, ds
      }
    \right\|_{
      L^p( \Omega; \R )
    }
\\
& \leq
  R \cdot*
  \left[
  \prod_{ l = 1 }^k
    \exp\!\left(
    \tfrac{
      U_{ 0, l }( x )
      +
      U_{ 0, l }( y )
    }{ 2 q_{ 0, l } }
    \right) 
  \right]
  \cdot
  \left[
  \prod_{ l = 1 }^k
  \exp\!\left(
    \tfrac{
      U_{ 1, l }( x )
      +
      U_{ 1, l }( y )
    }{
      2 q_{ 1, l }
    }
  \right)
  \right]\sgc{}.\cgs{}
\end{split}
\end{equation}
\sgc{}This\cgs{} implies that
\begin{equation*}
\begin{split}
    \left\|
      e^{
        \int_0^{ \tau }
        V( X^x_s , X^y_s )      
        \, ds
      }
    \right\|_{
      L^p( \Omega; \R )
    }  
  & \leq 
  R \cdot
  \exp\!\left(
    \sum_{ l = 1 }^k
    \left[
    \tfrac{
      U_{ 0, l }( x )
      +
      U_{ 0, l }( y )
    }{ 2 q_{ 0, l } }
    +
    \tfrac{
      U_{ 1, l }( x )
      +
      U_{ 1, l }( x )
    }{
      2 q_{ 1, l }
    }
    \right]
  \right) 
  .
\end{split}
\end{equation*}
The proof of Lemma~\ref{lem:multiple_exp}
is thus completed.
\end{proof}

In the next step we present
Theorem~\ref{thm:UV} 
which is the main result of
this subsection.
It is an immediate consequence
of 
Proposition~\ref{prop:two_solution_supoutside}
and
Lemma~\ref{lem:multiple_exp}.
Theorem~\ref{thm:UV} appeared in the special
case $ c = 0 $ and $ U_0 \equiv 0 \equiv U_1 $ 
in Theorem 1.2 in Maslowski~\cite{Maslowski1986}
(cf.\ Ichikawa~\cite{Ichikawa1984}
and, e.g., Leha \&\ Ritter~\cite{LehaRitter1994, LehaRitter2003}\sgc{})\cgs{}.
In Sections~\ref{sec:examples_SODE}
and~\ref{sec:examples_SPDE}
below various examples of SDEs
are presented that fulfill the 
assumptions of Theorem~\ref{thm:UV}.

\begin{theorem}
\label{thm:UV}
Assume the setting in Section~\ref{sec:setting},
let $ x, y \in O $,
$ k \in \N $, 
$\alpha_0,\alpha_1,\beta_0,\beta_1\in\R^k$,
$ 
  c \in \mathcal{L}^0( [0,T] ; \R ) 
$,
$ V \in C^{ 2 }( O^2, [0,\infty) ) $,
$ U_0 \in C^2( O, \R^k ) $,
$ U_1 \in C^2( O, [0,\infty)^k ) $,
$ \overline{U} \in C( O, \R^k ) $,
$ r, p\in(0,\infty]$,
$q_0,q_1 \in (0,\infty]^k $
with
$
  \frac{ 1 }{ p } + \sum_{ i = 0 }^1 \sum_{ l = 1 }^k \frac{ 1 }{ q_{ i, l } } 
$
$
  = \frac{ 1 }{ r }	
$
and let	
$ X^z \colon [0,T] \times \Omega \to O $,
$ z \in \{ x, y \} $,
be adapted
stochastic processes
with continuous sample paths
satisfying
\begin{equation*}
  \tfrac{
    ( \overline{ \mathcal{G} }_{ \mu, \sigma } V)( v, w )
  }{
    V( v, w )
  }
  +
    \tfrac{
      ( p - 1 ) \,
      \|
          ( \overline{ G }_{ \sigma } V )( v, w )
      \|^2
    }{
      2 \,
      ( 
        V( v, w ) 
      )^2
    }
\leq   
  c(t) 
  +
  \smallsum\limits_{ n = 1 }^k
  \left[
    \tfrac{ 
      U_{ 0, n }( v ) + U_{ 0, n }( w ) 
    }{ 2 q_{ 0, n } T e^{ \alpha_{ 0, n } t } } 
    +
    \tfrac{ 
      \overline{U}_n( v ) + \overline{U}_n( w ) 
    }{ 
      2 q_{ 1, n } 
      e^{ \alpha_{ 1, n } t }
    }
  \right]
\end{equation*}
and
\begin{equation*}
  ( \mathcal{G}_{ \mu, \sigma } U_{ i, l } )( u )
  +
  \tfrac{ 
    1
  }{ 
    2 
    e^{ 
      \alpha_{ i, l } t 
    }
  }
    \|
      \sigma( u )^* ( \nabla U_{ i, l } )( u )
    \|^2
  +
  \mathbbm{1}_{
    \{ 1 \}
  }(i)
  \cdot
  \overline{U}_l(u)
\leq
  \alpha_{ i, l } U_{ i, l }(u)
  +
  \beta_{ i, l }
\end{equation*}
for all
$ i,j \in \{ 0, 1 \} $,
$ l \in \{ 1, \dots, k \} $,
$ u \in \{ v, w \} $,
$ (v,w) \in \operatorname{im}( (X^x, X^y) ) $,
$ t \in [0,T] $
and
\begin{equation*}
  \int\nolimits_0^{ T }
    | c(s) |
    +
    \| \mu( X^z_s ) \| 
    +
    \| 
      \sigma( X^z_s )
    \|^2
    +
    \tfrac{
      \|
          ( \overline{ G }_{ \sigma } V )( X^x_s, X^y_s )
      \|^2
    }{
      ( 
        V( X^x_s, X^y_s ) 
      )^2
    }
    \,
  ds
  < \infty
\quad \P \text{-a.s.}
\end{equation*} 
and
$
  X^z_{ t } = 
  z
  + \int_0^{ t } \mu( X^z_s ) \, ds
  +
  \int_0^{ t } \sigma( X^z_s ) \, dW_s
$
$ \P $-a.s.\ for all $ (t,z) \in [0,T] \times \{ x, y \} $.
Then
\begin{equation*}
\begin{split}
&
    \left\|
      V( X^x_{ T }, X^y_{ T } )
    \right\|_{
      L^r( \Omega; \R )
    }
\\ & \leq
    \exp\!\left(
      \smallint_0^T c(s) \, ds
      +
      \smallsum\limits_{ i = 0 }^1
      \smallsum\limits_{ l = 1 }^k
      \left[
        \smallint\limits_0^T
          \frac{
            \beta_{ i, l }
            \,
            ( 1 - \frac{ s }{ T } )^{ 
              ( 1 - i )
            }
          }{ 
            q_{ i, l } \,
            e^{ \alpha_{ i, l } s } 
          }
        \, ds
        +
        \frac{
          U_{ i, l }( x ) + U_{ i, l }( y) 
        }{
          2 q_{ i, l }
        }
      \right]
    \right)
  V( x, y )
  \, .
\end{split}
\end{equation*}
\end{theorem}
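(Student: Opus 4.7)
The plan is to obtain Theorem~\ref{thm:UV} as a direct composition of the two preceding key results: Proposition~\ref{prop:two_solution_supoutside} (the marginal strong stability identity) together with Lemma~\ref{lem:multiple_exp} (the multi-exponential-moment bound). All hypotheses have been tailored so that these two tools dovetail precisely, so the proof should be short.

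First I would apply Proposition~\ref{prop:two_solution_supoutside} with stopping time $\tau = T$ and with the triple of exponents $(p,q,r)$ satisfying $\tfrac{1}{p}+\tfrac{1}{q}=\tfrac{1}{r}$, where $q$ is chosen so that $\tfrac{1}{q} = \sum_{i=0}^1\sum_{l=1}^k\tfrac{1}{q_{i,l}}$. Before invoking the proposition I would check the required integrability hypothesis, namely $\int_0^T\max\big(\tfrac{(\overline{\mathcal{G}}_{\mu,\sigma}V)(X_s^x,X_s^y)}{V(X_s^x,X_s^y)},0\big)+\|\mu(X_s^z)\|+\|\sigma(X_s^z)\|^2+\tfrac{\|(\overline{G}_\sigma V)(X_s^x,X_s^y)\|^2}{(V(X_s^x,X_s^y))^2}\,ds<\infty$ $\P$-a.s., which follows from the two pointwise assumptions of the theorem combined with the finiteness hypothesis already imposed on $c$, $\mu(X^z)$, $\sigma(X^z)$, and the ratio $\|\overline{G}_\sigma V\|^2/V^2$; the positive part of $\overline{\mathcal{G}}_{\mu,\sigma}V/V$ is controlled via the first assumed inequality and the integrability of $c$ and of the Lyapunov-type $U_i$, $\overline{U}$ evaluated along $X^x$ and $X^y$ (which themselves are bounded by Corollary~\ref{cor:exp_mom}). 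The output of Proposition~\ref{prop:two_solution_supoutside} is
\begin{equation*}
\|V(X^x_T,X^y_T)\|_{L^r(\Omega;\R)} \leq V(x,y)\,\Big\|\exp\!\Big(\smallint_0^T\!\big[\tfrac{(\overline{\mathcal{G}}_{\mu,\sigma}V)(X^x_s,X^y_s)}{V(X^x_s,X^y_s)}+\tfrac{(p-1)\|(\overline{G}_\sigma V)(X^x_s,X^y_s)\|^2}{2(V(X^x_s,X^y_s))^2}\big]ds\Big)\Big\|_{L^q(\Omega;\R)}.
\end{equation*}

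Next I would apply Lemma~\ref{lem:multiple_exp} with $\tau=T$, with its role of $V$ played by the pointwise function $(v,w)\mapsto \tfrac{(\overline{\mathcal{G}}_{\mu,\sigma}V)(v,w)}{V(v,w)}+\tfrac{(p-1)\|(\overline{G}_\sigma V)(v,w)\|^2}{2(V(v,w))^2}$, with the same $c$, $\alpha_i$, $\beta_i$, $q_i$, $U_0$, $U_1$, $\overline{U}$, and with its role of $p$ played by our $q$. The first assumption of the theorem is exactly the inequality~\eqref{eq:Vestimate} required by Lemma~\ref{lem:multiple_exp}, and the second assumption of the theorem is exactly \eqref{eq:multiple_exp_est2}. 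The compatibility of exponents $\sum_{i,l}\tfrac{1}{q_{i,l}}=\tfrac{1}{q}$ is built into our choice of $q$.

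The remaining step is to simplify the $L^\infty$-factor produced by Lemma~\ref{lem:multiple_exp}. At $\tau=T$ the integral $\int_\tau^T\cdots ds$ vanishes, and the hypothesis $U_1\in C^2(O,[0,\infty)^k)$ ensures $\tfrac{U_{1,l}(X^x_T)+U_{1,l}(X^y_T)}{2q_{1,l}e^{\alpha_{1,l}T}}\geq 0$, so discarding these non-negative subtracted terms bounds the $L^\infty$-factor by $\exp\!\big(\int_0^T c(s)\,ds+\sum_l\int_0^T\tfrac{\beta_{1,l}}{q_{1,l}e^{\alpha_{1,l}s}}\,ds\big)$. Combining this with the explicit prefactor from Lemma~\ref{lem:multiple_exp} yields exactly the right-hand side claimed in the theorem, since the cases $i=0$ and $i=1$ collapse into the single expression $\sum_{i,l}\int_0^T\tfrac{\beta_{i,l}(1-s/T)^{1-i}}{q_{i,l}e^{\alpha_{i,l}s}}\,ds+\sum_{i,l}\tfrac{U_{i,l}(x)+U_{i,l}(y)}{2q_{i,l}}$ (the exponent $(1-s/T)^0=1$ when $i=1$ and $(1-s/T)^1$ when $i=0$).

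The only mildly delicate point — and the one I expect to require the most care — is verifying the two integrability hypotheses needed for Proposition~\ref{prop:two_solution_supoutside} and Lemma~\ref{lem:multiple_exp} from the single integrability hypothesis stated in the theorem; everything else is algebraic bookkeeping. The conditions imposed on $U_0, U_1, \overline{U}$ via \eqref{eq:multiple_exp_est2} supply a priori bounds on the Lyapunov functionals along the paths through Corollary~\ref{cor:exp_mom}, which in turn absorb the positive parts of the integrand appearing in \eqref{eq:Vestimate}; this is precisely the mechanism that justifies the chaining of the two results and thus completes the proof of Theorem~\ref{thm:UV}.
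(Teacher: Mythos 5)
Your proposal is correct and follows exactly the route the paper has in mind: the paper states without further elaboration that Theorem~\ref{thm:UV} is an immediate consequence of Proposition~\ref{prop:two_solution_supoutside} and Lemma~\ref{lem:multiple_exp}, and your argument — choose $q$ with $\tfrac{1}{q}=\sum_{i,l}\tfrac{1}{q_{i,l}}$, invoke Proposition~\ref{prop:two_solution_supoutside} with $\tau=T$ and the triple $(p,q,r)$, then feed the resulting $L^q$-exponential-integral bound into Lemma~\ref{lem:multiple_exp} with the pointwise function $(v,w)\mapsto\tfrac{(\overline{\mathcal{G}}_{\mu,\sigma}V)(v,w)}{V(v,w)}+\tfrac{(p-1)\|(\overline{G}_\sigma V)(v,w)\|^2}{2(V(v,w))^2}$ in the role of that lemma's $V$, discard the nonnegative $U_1$-terms and the vanishing $\int_T^T$ term in the $L^\infty$-factor, and collapse the two cases $i\in\{0,1\}$ into the common expression $(1-s/T)^{1-i}$ — is precisely that composition. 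One small imprecision: the a.s.\ finiteness of the integrals required by Proposition~\ref{prop:two_solution_supoutside} and Lemma~\ref{lem:multiple_exp} (in particular of the positive part of $\overline{\mathcal{G}}_{\mu,\sigma}V/V$ along the paths) is most directly obtained from continuity of $U_0$, $\overline{U}$ together with path continuity of $X^x,X^y$ and the assumed integrability of $c$ and of $\|\overline{G}_\sigma V\|^2/V^2$, rather than by first passing through the moment bounds of Corollary~\ref{cor:exp_mom}, but this does not affect the validity of the argument.
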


Corollary~\ref{cor:UV_squared_norm} below specializes
Theorem~\ref{thm:UV} to the case where $ \mu $ and $ \sigma $
are locally Lipschitz continuous functions
\sgc{}and\cgs{}
where
$ V \in C^2( O^2, [0,\infty) ) $
satisfies
$
  V( x, y ) = \| x - y \|^2
$
for all $ x, y \in O $.
For this the following result from the literature
(see, e.g., 
Theorem 2.1 in Yamada \&\ Ogura~\cite{YamadaOgura1981}) is needed.

\begin{lemma}
\label{lem:nicht_treffen}
Assume the setting in Section~\ref{sec:setting},
let
$ \mu \colon O \to \R^d $
and
$ \sigma \colon O \to \R^{ d \times m } $
be locally Lipschitz continuous,
let $ \tau \colon \Omega \to [0,T] $
be a stopping time
and let	
$ X^i \colon [0,T] \times \Omega \to O $,
$ i \in \{ 1, 2 \} $,
be adapted stochastic processes with continuous
sample paths satisfying
$
  X^i_{ t \wedge \tau } = 
  X^i_0
  + \int_0^{ t \wedge \tau } \mu( X^i_s ) \, ds
  +
  \int_0^{ t \wedge \tau } \sigma( X^i_s ) \, dW_s
$
$ \P $-a.s.\ for all 
$ (t,i) \in [0,T] \times \{ 1, 2 \} $.
Then 
$
  \P\!\left[ 
    \left\{
      \exists
      \,
      t \in [0,\tau] 
      \colon
      X^1_t = X^2_t
    \right\}
    \cap
    \left\{
      X^1_0 \neq X^2_0
    \right\}
  \right]
  = 0
$.
\end{lemma}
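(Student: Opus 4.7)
The plan is to apply Proposition~\ref{prop:two_solution} in its equality form to the squared-distance Lyapunov function $V\in C^2(O^2,[0,\infty))$ defined by $V(x,y):=\|x-y\|^2$, after a stopping-time localization to a compact region of $O^2$ on which $\mu$ and $\sigma$ become globally Lipschitz. The resulting pathwise identity expresses $V(X^1_t,X^2_t)$ as a product of $V(X^1_0,X^2_0)$ with a strictly positive exponential factor, which immediately precludes hitting zero when starting away from zero.

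\textbf{Verification of the diagonal condition.} Example~\ref{ex:phi_identity} (with $\Phi=\operatorname{id}$ and $p=2$) together with Remark~\ref{remark:normd} gives
\begin{equation*}
  (\overline{\mathcal{G}}_{\mu,\sigma}V)(x,y)
  =2\langle x-y,\mu(x)-\mu(y)\rangle
  +\|\sigma(x)-\sigma(y)\|^2_{\HS(\R^m,\R^d)},
\end{equation*}
\begin{equation*}
  (\overline{G}_{\sigma}V)(x,y)
  =2(x-y)^{\!*}(\sigma(x)-\sigma(y)),
\end{equation*}
both of which vanish on the diagonal $\{x=y\}=V^{-1}(0)$. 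In particular, $V^{-1}(0)\subseteq(\overline{\mathcal{G}}_{\mu,\sigma}V)^{-1}([0,\infty))$, which is precisely the assumption that upgrades the inequality of Proposition~\ref{prop:two_solution} to an equality.

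\textbf{Localization (the main technical step).} The core obstacle is that the ratios $(\overline{\mathcal{G}}_{\mu,\sigma}V)/V$ and $\|\overline{G}_\sigma V\|^2/V^2$ are only locally bounded, so the integrability hypothesis of Proposition~\ref{prop:two_solution} needs a localization. Fix a compact exhaustion $K_1\subseteq K_2\subseteq\ldots\subseteq O$ with $\bigcup_{n\in\N}K_n=O$ and define stopping times $\rho_n\colon\Omega\to[0,T]$ by $\rho_n:=\inf(\{\tau\}\cup\{t\in[0,\tau]\colon X^1_t\notin K_n\text{ or }X^2_t\notin K_n\})$. By local Lipschitz continuity of $\mu$ and $\sigma$, there exists $L_n\in[0,\infty)$ such that for all $(x,y)\in K_n\times K_n$ with $x\neq y$,
\begin{equation*}
  \tfrac{(\overline{\mathcal{G}}_{\mu,\sigma}V)(x,y)}{V(x,y)}
  \leq 2L_n+L_n^2,
  \qquad
  \tfrac{\|(\overline{G}_\sigma V)(x,y)\|^2}{V(x,y)^2}
  \leq 4L_n^2,
\end{equation*}
by Cauchy--Schwarz. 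Path continuity combined with $X^i_{[0,\rho_n]}\subseteq K_n$ also forces $\|\mu(X^i_s)\|$ and $\|\sigma(X^i_s)\|^2$ to be bounded on $[0,\rho_n]$. Hence all four integrability conditions of Proposition~\ref{prop:two_solution} are met when the stopping time $\tau$ there is replaced by $t\wedge\rho_n$.

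\textbf{Conclusion.} For every $n\in\N$ and $t\in[0,T]$ the equality form of Proposition~\ref{prop:two_solution} yields
\begin{equation*}
  V(X^1_{t\wedge\rho_n},X^2_{t\wedge\rho_n})
  =V(X^1_0,X^2_0)\cdot
  \exp\!\left(\smallint_0^{t\wedge\rho_n}[\cdots]\,ds
  +\smallint_0^{t\wedge\rho_n}[\cdots]\,dW_s\right)
\end{equation*}
$\P$-a.s., where the exponential factor is $\P$-a.s.\ strictly positive. Since both sides are continuous in $t$, the identity holds $\P$-a.s.\ for all $t\in[0,T]$ simultaneously, so on the event $\{X^1_0\neq X^2_0\}$ one has $V(X^1_t,X^2_t)>0$ for every $t\in[0,\rho_n]$. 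Finally, path continuity of $X^1,X^2$ in $O$ together with $K_n\nearrow O$ implies $\rho_n\nearrow\tau$ $\P$-a.s.; letting $n\to\infty$ gives $\P[\{\exists\,t\in[0,\tau]\colon X^1_t=X^2_t\}\cap\{X^1_0\neq X^2_0\}]=0$, which is the claim.
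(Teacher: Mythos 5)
Your proof is correct, but it takes a genuinely different route from the paper: the paper does not prove Lemma~\ref{lem:nicht_treffen} itself, instead appealing to Theorem~2.1 in Yamada \&\ Ogura~\cite{YamadaOgura1981}. Your argument derives the non-coalescence directly from the paper's own machinery, namely the equality form of Proposition~\ref{prop:two_solution} applied to the squared distance $V(x,y)=\|x-y\|^2$, with the localization to a compact exhaustion of $O$ supplying the integrability that the local Lipschitz hypothesis makes available. This has the benefit of keeping the article self-contained and makes transparent what drives the Yamada--Ogura result within this framework: the pathwise stochastic Gronwall identity multiplicatively carries $V(X^1,X^2)$ forward by a strictly positive exponential factor, so $V$ cannot reach zero if it does not start there. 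Your verification that the diagonal $\{x=y\}=V^{-1}(0)$ is contained in $(\overline{\mathcal{G}}_{\mu,\sigma}V)^{-1}(\{0\})\subseteq(\overline{\mathcal{G}}_{\mu,\sigma}V)^{-1}([0,\infty))$, which is precisely what upgrades~\eqref{eq:V_identity} to an equality, and the pathwise argument that $\rho_n\nearrow\tau$ via compactness of the range of the continuous paths $X^i_{\cdot\wedge\tau}$ inside $O$, are both sound.
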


We are now ready to present the promised
Corollary~\ref{cor:UV_squared_norm}.
It follows immediately from
Theorem~\ref{thm:UV}
and Example~\ref{ex:phi_identity}.

\begin{corollary}
\label{cor:UV_squared_norm}
Assume the setting in Section~\ref{sec:setting},
assume that
$
  \mu \colon O \to \R^d
$
and
$
  \sigma \colon O \to \R^{ d \times m } 
$
are locally Lipschitz continuous,
let 
$ \alpha_0, \alpha_1, \beta_0, \beta_1, c \in \R $, 
$ r, p, q_0, q_1 \in ( 0, \infty ] $
with
$
  \frac{ 1 }{ p } + \frac{ 1 }{ q_0 } + \frac{ 1 }{ q_1 } = \frac{ 1 }{ r }	
$,
$ U_0 \in C^2( O, \R ) $, 
$ U_1 \in C^2( O, [0,\infty) ) $,
$ \overline{U} \in C( O, \R ) $
and let	
$ X^x \colon [0,T] \times \Omega \to O $,
$ x \in O $,
be adapted stochastic processes with continuous sample paths 
satisfying
$
  X^x_t = 
  x
  + \int_0^t \mu( X^x_s ) \, ds
  +
  \int_0^t \sigma( X^x_s ) \, dW_s
$
$ \P $-a.s.,
$ 
  ( \mathcal{G}_{ \mu, \sigma } U_i )( x )
  +
  \frac{ 1 }{ 2 }
  \|
    \sigma( x )^* ( \nabla U_i )( x )
  \|^2
  +
  \overline{U}( x )
  \sgc{}\mathbbm{1}_{ \{ 1 \} }( i )\cgs{}
\leq
  \alpha_i U_i( x )
  + \beta_i
$
and
\begin{equation*}
\begin{split}
&  \tfrac{ 
    \langle
      x - y ,
      \mu( x ) - 
      \mu( y )
    \rangle
    +
    \frac{ 1 }{ 2 }
    \| 
      \sigma( x ) - \sigma( y ) 
    \|^2_{ \HS( \R^m, \R^d ) }
  }{
    \|
      x - y
    \|^2
  }
  {
    +
    }
    \tfrac{
      ( \frac{ p }{ 2 } - 1 ) 
      \| 
        (
          \sigma( x ) - \sigma( y )
        )^*
        ( x - y )
      \|^2
    }{
      \| x - y \|^4
    }
\\ & \leq   
  c 
  +
  \tfrac{
      U_0( x ) + U_0( y )
  }{
    2 q_0 T e^{ \alpha_0 t }
  }
  +
  \tfrac{
    \overline{U}( x ) + \overline{U}( y )
  }{
    2 q_1 e^{ \alpha_1 t }
  }
\end{split}
\end{equation*}
for 
all 
$ i \in \{ 0, 1 \} $,
$ (t,x,y) \in [0,T] \times O^2 $
with $ x \neq y $.
Then it holds for all $ x, y \in O $
that
\begin{equation}
\begin{split}
&
    \left\|
      X^x_{ T } - X^y_{ T }
    \right\|_{
      L^r( \Omega; \R^d )
    }
\\ & \leq
    \exp\!\left(
      c T +
      \smallsum_{ i = 0 }^1
      \left[
      \smallint\nolimits_0^T
      \tfrac{
        \beta_i
        \,
        (
          1 - \frac{ s }{ T }
        )^{
          ( 1 - i )
        }
      }{
        q_i e^{ \alpha_i s } 
      }
      \, ds
      +
      \tfrac{
        U_i( x ) + U_i(y) 
      }{
        2 q_i
      }
      \right]
    \right)
  \left\| x - y \right\|
  .
\end{split}
\end{equation}
\end{corollary}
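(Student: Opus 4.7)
The plan is to derive Corollary~\ref{cor:UV_squared_norm} as an essentially immediate specialization of Theorem~\ref{thm:UV} to the choice $V(x,y) = \|x-y\|^2$, with a careful rescaling of the exponents $p, q_0, q_1, r$ by a factor of $\tfrac12$ to absorb the square in $V$. The case $x = y$ is trivial because both sides of the target estimate vanish, so from here on one assumes $x \neq y$.

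First I would invoke Lemma~\ref{lem:nicht_treffen} to conclude that, given local Lipschitz continuity of $\mu$ and $\sigma$ and $x \neq y$, one has $X^x_s \neq X^y_s$ for all $s \in [0,T]$, $\P$-a.s.; thus $V(X^x_s, X^y_s) > 0$ and the divisions appearing in Theorem~\ref{thm:UV} make sense. Next I would apply Example~\ref{ex:phi_identity} (with the exponent ``$p$'' there set equal to $2$) to obtain closed-form expressions for $\tfrac{(\overline{\mathcal G}_{\mu,\sigma}V)(v,w)}{V(v,w)}$ and $\tfrac{\|(\overline G_\sigma V)(v,w)\|^2}{V(v,w)^2}$ in terms of $\mu(v)-\mu(w)$, $\sigma(v)-\sigma(w)$ and $v-w$. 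The integrability requirement of Theorem~\ref{thm:UV} then follows from path continuity together with the elementary bound $\frac{\|(\sigma(v)-\sigma(w))^*(v-w)\|^2}{\|v-w\|^4} \leq \frac{\|\sigma(v)-\sigma(w)\|_{\HS(\R^m,\R^d)}^2}{\|v-w\|^2}$, which is locally uniformly bounded by the square of a local Lipschitz constant of $\sigma$; continuous sample paths confine $X^x, X^y$ to compact subsets of $O$ $\P$-a.s.\ on $[0,T]$.

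The key book-keeping step is to feed Theorem~\ref{thm:UV} the parameters $k=1$, $p_T = p/2$, $q_{0,T} = q_0/2$, $q_{1,T} = q_1/2$, $r_T = r/2$, together with $\tilde c(t) = 2c$, $\tilde U_0 = U_0$, $\tilde U_1 = U_1$, $\tilde{\overline U} = \overline U$, $\tilde\alpha_i = \alpha_i$, $\tilde\beta_i = \beta_i$. The identity $\tfrac{2}{p}+\tfrac{2}{q_0}+\tfrac{2}{q_1} = \tfrac{2}{r}$ validates the exponent-sum constraint of Theorem~\ref{thm:UV}. With these choices the left-hand side of the Theorem's hypothesis becomes exactly two times the left-hand side of the Corollary's hypothesis (since $2(p_T - 1) = p - 2 = 2(\tfrac{p}{2}-1)$ produces the correct coefficient on the $\|(\sigma(v)-\sigma(w))^*(v-w)\|^2/\|v-w\|^4$ term), and the right-hand side becomes exactly two times the right-hand side of the Corollary's hypothesis; the hypotheses on $U_0, U_1, \overline U$ transfer verbatim.

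Theorem~\ref{thm:UV} then yields
\begin{equation*}
\left\|\|X^x_T-X^y_T\|^2\right\|_{L^{r/2}(\Omega;\R)}
\leq
\exp\!\Big(2cT + \smallsum_{i=0}^1\Big[\smallint_0^T\tfrac{2\beta_i(1-s/T)^{1-i}}{q_i e^{\alpha_i s}}\,ds + \tfrac{U_i(x)+U_i(y)}{q_i}\Big]\Big)\,\|x-y\|^2.
\end{equation*}
Taking square roots (and using $\|\,|Z|^2\|_{L^{r/2}}^{1/2} = \|Z\|_{L^r}$) produces the claimed inequality. The only delicate point — and the ``main obstacle'' — is confirming that the rescaling $p_T = p/2$, $q_{i,T} = q_i/2$ aligns the $(p_T-1)$ factor in Theorem~\ref{thm:UV} with the $(\tfrac{p}{2}-1)$ factor in the Corollary and that the resulting doubling of the constants on both sides cancels cleanly after the square root; everything else reduces to mechanical verification.
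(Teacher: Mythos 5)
Your proposal is correct and follows exactly the route the paper indicates, namely specializing Theorem~\ref{thm:UV} to $V(x,y)=\|x-y\|^2$ via Example~\ref{ex:phi_identity} (with exponent~$2$) and rescaling $p,q_0,q_1,r$ by a factor of $\tfrac12$ so that the factor of $2$ arising from $\tfrac{(\overline{\mathcal{G}}_{\mu,\sigma}V)}{V}$ and $\tfrac{\|\overline{G}_\sigma V\|^2}{V^2}$ is absorbed and the final square root undoes the squaring of $\|x-y\|$. The bookkeeping $\hat p=p/2$, $\hat q_i=q_i/2$, $\hat r=r/2$, $\hat c=2c$ is exactly what is needed, and the Lemma~\ref{lem:nicht_treffen} / local Lipschitz argument for the integrability hypothesis is the right justification.
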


\subsection{Uniform strong stability analysis for solutions of SDEs}
\label{sec:uniform}

\begin{prop}
\label{prop:two_solution_supinside}
Assume the setting in Section~\ref{sec:setting},
let $ x, y \in O $,
$ V \in C^{ 2 }( O^2, [0,\infty) ) $,
let $ \tau \colon \Omega \to [0,T] $
be a stopping time
and let	
$ X^z \colon [0,T] \times \Omega \to O $,
$ z \in \{ x, y \} $,
be adapted stochastic processes with continuous sample paths satisfying
$
  \int_0^{ \tau }
    \| \mu( X^z_s ) \| 
    +
    \| 
      \sigma( X^z_s )
    \|^2
    +
    \max\!\big(
      \frac{
        ( \overline{\mathcal{G} }_{ \mu, \sigma } V 
        )( X_s^x, X_s^y )
      }{
        V(X_s^x,X_s^y)
      }
      ,
      0
    \big)
    +
    \tfrac{
      \|
          ( \overline{ G }_{ \sigma } V )( X^x_s, X^y_s )
      \|^2
    }{
      ( 
        V( X^x_s, X^y_s ) 
      )^2
    }
  \, ds
  < \infty
$
$ \P $-a.s.\ and
$
  X^z_{ t \wedge \tau } = 
  z
  + \int_0^{ t \wedge \tau } \mu( X^z_s ) \, ds
  +
  \int_0^{ t \wedge \tau } \sigma( X^z_s ) \, dW_s
$
$ \P $-a.s.\ for all $ (t,z) \in [0,T] \times \{ x, y \} $.
Then
\begin{equation*}
\begin{split}
&  \bigg\|
    \sup_{ t \in [ 0, \tau ] }
    V( X^x_t, X^y_t )
  \bigg\|_{
    L^r( \Omega; \R )
  }
\\ & \qquad \qquad 
  \leq  
  \frac{ 
    V( x, y ) 
  }{ 
    \big[ 1 - \frac{ \theta }{ p } \big]^{ \frac{1}{\theta} } 
  }
  \Big\|
  \exp\!\Big(
        \big[
          \tfrac{ 1 }{ ( \frac{ 1 }{ p } - \frac{ 1 }{ v } ) } - \theta
        \big]
    \smallint_0^{ \tau }
    \tfrac{
      \|
          ( \overline{ G }_{ \sigma } V )( X^x_s, X^y_s )
      \|^2
    }{
      2 \,
      ( 
        V( X^x_s, X^y_s )
      )^2
    }
    \,
    ds
  \Big)
  \Big\|_{
    L^v( \Omega; \R )
  }
\\ & \qquad \qquad \quad  
  \cdot 
  \bigg\|
  \exp\!\Big(
    \sup_{ t \in [0,\tau] }
    \smallint_0^t
     \tfrac{
       ( \overline{\mathcal{G}}_{ \mu, \sigma } V)( X^x_s, X^y_s )
     }{
      V( X^x_s, X^y_s )
     }
    +
    \tfrac{
      (
        \theta 
        - 1
      ) \,
      \|
          ( \overline{ G }_{ \sigma } V )( X^x_s, X^y_s )
      \|^2
    }{
      2 \,
      ( 
        V( X^x_s, X^y_s )
      )^2
    }
    \,
    ds
  \Big)
  \bigg\|_{
    L^q( \Omega; \R )
  }
\end{split}
\end{equation*}
for all
$ v \in [ p , \infty] $,
$ \theta \in (0, p) $
and all
$ p, q, r \in (0,\infty] $
with $ \frac{ 1 }{ p } + \frac{ 1 }{ q } = \frac{ 1 }{ r } $.
\end{prop}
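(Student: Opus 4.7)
The plan is to combine the almost-sure identity of Proposition~\ref{prop:two_solution} with the maximal inequality for stochastic exponentials in Lemma~\ref{l:exponential.martingale}, using H\"older's inequality and the fact that raising an It\^o exponential with integrand $A$ to the power $\theta$ produces the It\^o exponential with integrand $\theta A$.

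First, for each $t\in[0,T]$ I would apply Proposition~\ref{prop:two_solution} with the stopping time $\tau\wedge t$ in place of $\tau$, obtaining that $\P$-a.s.\
\begin{equation*}
  V( X^x_{t\wedge\tau}, X^y_{t\wedge\tau} )
  \leq
  V(x,y)\,
  \exp\!\left(
    \smallint_0^{t\wedge\tau}\!
    \Big[\tfrac{(\overline{\mathcal{G}}_{\mu,\sigma}V)(X^x_s,X^y_s)}{V(X^x_s,X^y_s)}
    -\tfrac{\|(\overline{G}_\sigma V)(X^x_s,X^y_s)\|^2}{2\,V(X^x_s,X^y_s)^2}\Big]ds
    +\smallint_0^{t\wedge\tau}\!
    \tfrac{(\overline{G}_\sigma V)(X^x_s,X^y_s)}{V(X^x_s,X^y_s)}\,dW_s
  \right).
\end{equation*}
Both sides are continuous in $t$, so by evaluating at a countable dense subset of $[0,T]$ the above holds for all $t\in[0,T]$ on a single $\P$-null event, and taking the supremum over $t\in[0,\tau]$ is therefore legitimate.

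Second, using the identity $-\tfrac{1}{2}=\tfrac{\theta-1}{2}-\tfrac{\theta}{2}$ with $\theta\in(0,p)$, I would split the noise contribution in the exponent and rewrite the right-hand side as $V(x,y)\,E_t\,(M^{(\theta)}_t)^{1/\theta}$, where
\begin{equation*}
  E_t:=\exp\!\left(
    \smallint_0^{t\wedge\tau}\!
    \Big[\tfrac{(\overline{\mathcal{G}}_{\mu,\sigma}V)(X^x_s,X^y_s)}{V(X^x_s,X^y_s)}
    +\tfrac{(\theta-1)\|(\overline{G}_\sigma V)(X^x_s,X^y_s)\|^2}{2\,V(X^x_s,X^y_s)^2}\Big]ds
  \right),
\end{equation*}
and $M^{(\theta)}_t:=\exp\!\big(\theta\smallint_0^{t\wedge\tau}\tfrac{\overline{G}_\sigma V}{V}dW_s-\tfrac{\theta^2}{2}\smallint_0^{t\wedge\tau}\tfrac{\|\overline{G}_\sigma V\|^2}{V^2}ds\big)$ is a nonnegative local martingale. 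Since both factors are nonnegative, $\sup_t(E_t(M^{(\theta)}_t)^{1/\theta})\leq(\sup_tE_t)(\sup_t(M^{(\theta)}_t)^{1/\theta})$, and H\"older's inequality with $\tfrac{1}{r}=\tfrac{1}{p}+\tfrac{1}{q}$ then yields
\begin{equation*}
  \big\|\sup_{t\in[0,\tau]}V(X^x_t,X^y_t)\big\|_{L^r(\Omega;\R)}
  \leq
  V(x,y)\,\big\|\sup_tE_t\big\|_{L^q(\Omega;\R)}\,\big\|\sup_tM^{(\theta)}_t\big\|_{L^{p/\theta}(\Omega;\R)}^{1/\theta}.
\end{equation*}

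Third and most delicate, I would apply Lemma~\ref{l:exponential.martingale} to the integrand $A_s=\theta\,\mathbbm{1}_{[0,\tau]}(s)\,\overline{G}_\sigma V(X^x_s,X^y_s)/V(X^x_s,X^y_s)$ with exponent $p/\theta\in(1,\infty]$ (the assumption $\theta<p$ is exactly what is needed). This bounds $\|\sup_tM^{(\theta)}_t\|_{L^{p/\theta}}$ by $(1-\theta/p)^{-1}\inf_{q'\in[p/\theta,\infty]}\|\exp(\tfrac{\theta^2}{2}[\tfrac{1}{\theta/p-1/q'}-1]\int_0^\tau\|\overline{G}_\sigma V/V\|^2ds)\|_{L^{q'}}$. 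The substitution $v=\theta q'$ (so that $v\in[p,\infty]$) reduces the in-exponential coefficient to $\tfrac{\theta}{2}[\tfrac{1}{1/p-1/v}-\theta]$, and raising the inequality to the $1/\theta$-th power, together with the general identity $\|X\|_{L^{v/\theta}}^{1/\theta}=\|X^{1/\theta}\|_{L^v}$ for nonnegative $X$, produces precisely the prefactor $[1-\theta/p]^{-1/\theta}$ and the coefficient $\tfrac{1}{2}[\tfrac{1}{1/p-1/v}-\theta]$ claimed in the statement. Inserting this and observing $\sup_tE_t=\exp(\sup_t\int_0^{t\wedge\tau}[\cdots]ds)$ completes the argument. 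The main obstacle is the exponent bookkeeping in this last step: one must carefully track how an $L^{p/\theta}$-norm, raised to the power $1/\theta$, interacts with the substitution $v=\theta q'$, and verify that the localization implicit in the local-martingale statement for $M^{(\theta)}$ is exhausted in the limit, which is ensured by the assumed $\P$-a.s.\ finiteness of $\int_0^\tau\|(\overline{G}_\sigma V)(X^x_s,X^y_s)\|^2/V(X^x_s,X^y_s)^2\,ds$.
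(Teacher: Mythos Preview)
Your proposal is correct and follows essentially the same approach as the paper's proof: invoke Proposition~\ref{prop:two_solution} to bound $V(X^x_{t\wedge\tau},X^y_{t\wedge\tau})$ by the exponential expression, split off the drift factor $E_t$ from the stochastic exponential $(M^{(\theta)}_t)^{1/\theta}$ via the identity $-\tfrac{1}{2}=\tfrac{\theta-1}{2}-\tfrac{\theta}{2}$, apply H\"older's inequality with exponents $(p,q)$, and then control the $L^{p/\theta}$-norm of $\sup_t M^{(\theta)}_t$ by Lemma~\ref{l:exponential.martingale} followed by the substitution $v=\theta q'$. The paper carries out these same steps in the same order; your explicit treatment of the passage from a fixed-$t$ inequality to a uniform-in-$t$ inequality via continuity and a countable dense set is a point the paper leaves implicit.
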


\begin{proof}[Proof
of Proposition~\ref{prop:two_solution_supinside}]
Let 
$ p, q ,r \in ( 0, \infty] $
with $ \frac{ 1 }{ p } + \frac{ 1 }{ q } = \frac{ 1 }{ r } $
and
$ \theta \in (0,p) $.
Then H\"{o}lder's inequality 
proves that
\begin{align}
\label{eq:Vsupinside_est}
&
  \bigg\|
    \sup_{ t \in [0,\tau] }
  \exp\!\left(
    \smallint_0^t
    {\scriptstyle
      \frac{
	( \overline{\mathcal{G}}_{ \mu, \sigma } V)( X^x_s, X^y_s )
      }{
	V( X^x_s, X^y_s )
      }
      -
      \frac{
	\|
	    ( \overline{ G }_{ \sigma } V )( X^x_s, X^y_s )
	\|^2
      }{
	2 \,
	( 
	  V( X^x_s, X^y_s )
	)^2
      }
      \,
    }
    ds
    +
    \smallint_0^t
    {\scriptstyle
      \frac{
	( \overline{ G }_{ \sigma } V )( X^x_s, X^y_s )
      }{
	V( X^x_s, X^y_s )
      }
    }
    \,
    dW_s
  \right)
  \bigg\|_{
    L^r( \Omega; \R )
  }
\\ &
\nonumber
  \leq
  \bigg\|
    \sup_{ t \in [0,\tau] }
  \exp\!\left(
    \int_0^t
     \tfrac{
       ( \overline{\mathcal{G}}_{ \mu, \sigma } V)( X^x_s, X^y_s )
     }{
      V( X^x_s, X^y_s )
     }
    +
    \tfrac{
      (
        \theta
        - 1
      )
      \,
      \|
          ( \overline{ G }_{ \sigma } V )( X^x_s, X^y_s )
      \|^2
    }{
      2 \,
      ( 
        V( X^x_s, X^y_s )
      )^2
    }
    \,
    ds
  \right)
  \bigg\|_{
    L^q( \Omega; \R )
  }
\\ & \quad
\nonumber
  \cdot 
  \bigg\|
    \sup_{ t \in [0,\tau] }
  \exp\!\left(
    \int_0^t
    \tfrac{
      ( \overline{ G }_{ \sigma } V )( X^x_s, X^y_s )
    }{
      V( X^x_s, X^y_s )
    }
    \,
    dW_s
    -
    \int_0^t
    \tfrac{
      \theta \,
      \|
          ( \overline{ G }_{ \sigma } V )( X^x_s, X^y_s )
      \|^2
    }{
      2 \,
      ( 
        V( X^x_s, X^y_s )
      )^2
    }
    \,
    ds
  \right)
  \bigg\|_{
    L^p( \Omega; \R )
  }
\\ & 
\nonumber
  =
  \bigg\|
  \exp\!\left(
    \sup_{ t \in [0,\tau] }
    \int_0^t
     \tfrac{
       ( \overline{\mathcal{G}}_{ \mu, \sigma } V)( X^x_s, X^y_s )
     }{
      V( X^x_s, X^y_s )
     }
    +
    \tfrac{
      (
        \theta 
        - 1
      ) \,
      \|
          ( \overline{ G }_{ \sigma } V )( X^x_s, X^y_s )
      \|^2
    }{
      2 \,
      ( 
        V( X^x_s, X^y_s )
      )^2
    }
    \,
    ds
  \right)
  \bigg\|_{
    L^q( \Omega; \R )
  }
\\ & \quad 
 \nonumber
  \cdot 
  \bigg\|
    \sup_{ t \in [0,T] }
  \exp\!\left(
    \int_0^t
    \tfrac{
      \,
      \theta \,
      ( \overline{ G }_{ \sigma } V )( X^x_s, X^y_s )
    }{
      V( X^x_s, X^y_s )
    }
    \,
    dW_s
    -
    \int_0^t
    \tfrac{
      \|
        \theta 
          ( \overline{ G }_{ \sigma } V )( X^x_s, X^y_s )
      \|^2
    }{
      2 \,
      ( 
        V( X^x_s, X^y_s )
      )^2
    }
    \,
    ds
  \right)
  \bigg\|_{
    L^{ p / \theta }( \Omega; \R )
  }^{ \frac{1}{\theta} }
  .
\end{align}
In addition, Lemma~\ref{l:exponential.martingale}
with 
$
  A_s
  =
  \tfrac{
      \mathbbm{1}_{
        \{ s < \tau \}
      }
        \theta 
          ( \overline{ G }_{ \sigma } V )( X^x_s, X^y_s )
    }{
      V( X^x_s, X^y_s )
    }
$,
$s\in [0,T]$,
gives
\begin{equation}
\label{eq:lem_application}
\begin{split}
&
  \bigg\|
    \sup_{ t \in [0,T] }
  \exp\!\left(
    \int_0^{t}
    \tfrac{
      \,
      \theta \, 
      ( \overline{ G }_{ \sigma } V )( X^x_s, X^y_s )
    }{
      V( X^x_s, X^y_s )
    }
    \,
    dW_s
    -
    \int_0^t
    \tfrac{
      \|
        \theta  
          ( \overline{ G }_{ \sigma } V )( X^x_s, X^y_s )
      \|^2
    }{
      2 \,
      ( 
        V( X^x_s, X^y_s )
      )^2
    }
    \,
    ds
  \right)
  \bigg\|_{
    L^{ p / \theta }( \Omega; \R )
  }^{ \frac{1}{\theta} }
\\ & \leq
  \inf_{ 
    v \in 
    [ \frac{ p }{ \theta }, \infty ]
  }
  \left[
  \frac{ 1 }{ ( 1 - \frac{ \theta }{ p } ) }
  \left\|
  \exp\!\left(
        \tfrac{ 1 }{ 2 }
        \Big[
          \tfrac{ 1 }{ 
            ( 
              \frac{ \theta }{ p }
              -
              \frac{ 1 }{ v } 
            ) 
          }
          - 1
        \Big]
    \int_0^{ \tau }
    \tfrac{
      \theta^2 
      \|
          ( \overline{ G }_{ \sigma } V )( X^x_s, X^y_s )
      \|^2
    }{
      ( 
        V( X^x_s, X^y_s )
      )^2
    }
    \,
    ds
  \right)
  \right\|_{
    L^v( \Omega; \R )
  }
  \right]^{ \frac{1}{\theta} }
\\ & =
  \frac{ 1 }{ 
    \left[ 1 - \frac{ \theta }{ p } \right]^{ \frac{1}{\theta} } 
  }
  \inf_{ 
    v \in 
    [ \frac{ p }{ \theta }, \infty ]
  }
  \left\|
  \exp\!\left(
        \Big[
          \tfrac{ 1 }{ 
            ( 
              \frac{ \theta }{ p }
              -
              \frac{ 1 }{ v } 
            ) 
          }
          - 1
        \Big]
    \int_0^{ \tau }
    \tfrac{
      \theta \,
      \|
          ( \overline{ G }_{ \sigma } V )( X^x_s, X^y_s )
      \|^2
    }{
      2 \,
      ( 
        V( X^x_s, X^y_s )
      )^2
    }
    \,
    ds
  \right)
  \right\|_{
    L^{ v \theta }( \Omega; \R )
  }
\\ & =
  \frac{ 1 }{ 
    \left[ 1 - \frac{ \theta }{ p } \right]^{ \frac{1}{\theta} } 
  }
  \inf_{ 
    v \in 
    [ p, \infty ]
  }
  \bigg\|
  \exp\!\left(
        \Big[
          \tfrac{ 1 }{ 
            ( 
              \frac{ 1 }{ p }
              -
              \frac{ 1 }{ v } 
            ) 
          }
          - \theta
        \Big]
    \int_0^{ \tau }
    \tfrac{
      \|
          ( \overline{ G }_{ \sigma } V )( X^x_s, X^y_s )
      \|^2
    }{
      2 \,
      ( 
        V( X^x_s, X^y_s )
      )^2
    }
    \,
    ds
  \right)
  \bigg\|_{
    L^{ v }( \Omega; \R )
  } 
  .
\end{split}
\end{equation}
Combining~\eqref{eq:Vsupinside_est}, 
\eqref{eq:lem_application} and 
Proposition~\ref{prop:two_solution}
proves that
\begin{equation}
\begin{split}
&
 \bigg\|
    \sup_{ t \in [0,\tau] }
    | V( X^x_t, X^y_t ) |
  \bigg\|_{
    L^r( \Omega; \R )
  }
\\
& 
\leq  
  \frac{ 
    | V(x, y) |
  }{ 
    \big[ 
      1 - \frac{ \theta }{ p } 
    \big]^{ \frac{1}{\theta} } 
  }
  \bigg\|
  \exp\!\left(
        \Big[
          \tfrac{ 1 }{ 
            ( 
              \frac{ 1 }{ p }
              -
              \frac{ 1 }{ v } 
            ) 
          }
          - \theta
        \Big]
    \int_0^{ \tau }
    \tfrac{
      \|
          ( \overline{ G }_{ \sigma } V )( X^x_s, X^y_s )
      \|^2
    }{
      2 \,
      ( 
        V( X^x_s, X^y_s )
      )^2
    }
    \,
    ds
  \right)
  \bigg\|_{
    L^{ v }( \Omega; \R )
  }
\\ & \quad 
  \cdot 
  \bigg\|
  \exp\!\bigg(
    \sup_{ t \in [0,\tau] }
    \int_0^t
     \tfrac{
       ( \overline{\mathcal{G}}_{ \mu, \sigma } V)( X^x_s, X^y_s )
     }{
      V( X^x_s, X^y_s )
     }
    +
    \tfrac{
      (
        \theta
        - 1
      ) \,
      \|
          ( \overline{ G }_{ \sigma } V )( X^x_s, X^y_s )
      \|^2
    }{
      2 \,
      ( 
        V( X^x_s, X^y_s )
      )^2
    }
    \,
    ds
  \bigg)
  \bigg\|_{
    L^q( \Omega; \R )
  }
\end{split}
\end{equation}
for all 
$ v \in [ p, \infty ] $.
This completes the proof of Proposition~\ref{prop:two_solution_supinside}.
\end{proof}

The next remark to Proposition~\ref{prop:two_solution_supinside}
is the uniform counterpart to Remark~\ref{remark:not_extended_system1}.

\begin{remark}
Note in the setting of
Proposition~\ref{prop:two_solution_supinside}
that if 
$ \hat{V} \in C^2( O, [0,\infty) ) $, 
if
$ x = y $
and if 
$
 V(v,w) = \hat{V}( v )
$
for all $ v , w \in O $,
then 
Proposition~\ref{prop:two_solution_supinside}
reduces to an estimate for
$
 \| 
   \sup_{ t \in [0, \tau] }
   \hat{V}( X^x_t ) 
 \|_{
   L^r( \Omega; \R )
 }
$
for 
$ p, q, r \in (0,\infty] $,
$ v \in [ p , \infty] $,
$ \theta \in (0, p) $
with $ \frac{ 1 }{ p } + \frac{ 1 }{ q } = \frac{ 1 }{ r } $.
\end{remark}

The next corollary, Corollary~\ref{cor:two_solution_supinside}, specializes 
Proposition~\ref{prop:two_solution_supinside}
to the case where
the function
$ V \in C^2( O^2, [0,\infty) ) $
satisfies
$
  V( x, y )
  = \| x - y \|^2
$
for all $ x, y \in O $
(cf.\ Theorem~5.1 in Li~\cite{Li1994}).
Corollary~\ref{cor:two_solution_supinside}
follows immediately from 
Proposition~\ref{prop:two_solution_supinside}
and Example~\ref{ex:phi_identity}.

\begin{corollary}
\label{cor:two_solution_supinside}
Assume the setting in Section~\ref{sec:setting},
let $ x, y \in O $,
let 
$
  \tau \colon \Omega \to [0,T]
$
be a stopping time
and let	
$ X^z \colon [0,T] \times \Omega \to O $,
$ z \in \{ x, y \} $,
be adapted stochastic processes with continuous sample paths 
satisfying
\begin{equation*}
  \int_0^{ \tau }
    \| \mu( X^z_s ) \| 
    +
    \| 
      \sigma( X^z_s )
    \|^2
    +
    \tfrac{
      \max(
        \langle
          X^x_s - X^y_s ,
          \mu( X^x_s ) - 
          \mu( X^y_s )
        \rangle
      ,0)
      +
      \| 
        \sigma( X^x_s ) - \sigma( X^y_s ) 
      \|^2
    }{
      \| X^x_s - X^y_s \|^2
    }
  \,
  ds
  < \infty
\end{equation*}
$ \P $-a.s.\ 
and
$
  X^z_{ t \wedge \tau } = 
  z
  + \int_0^{ t \wedge \tau } \mu( X^z_s ) \, ds
  +
  \int_0^{ t \wedge \tau } \sigma( X^z_s ) \, dW_s
$
$ \P $-a.s.\ for all $ (t,z) \in [0,T] \times \{ x, y \} $.
Then
\begin{equation}
\begin{split}
&
  \left\|
    \sup_{ t \in [0,\tau] }
    \| X^x_t - X^y_t \|
  \right\|_{
    L^r( \Omega; \R )
  }
\\[1ex] & 
\leq  
  \frac{ 
    \| x - y \|
  }{ 
    \big[ 
      1 - \frac{ \theta }{ p } 
    \big]^{ \frac{1}{\theta} } 
  }
  \left\|
  \exp\!\left(
        \big[
          \tfrac{ 1 }{ ( \frac{ 1 }{ p } - \frac{ 1 }{ v } ) } - \theta 
        \big]
    \smallint\nolimits_0^{ \tau }
    \tfrac{
      \| 
        (
          \sigma( X^x_s ) - \sigma( X^y_s )
        )^*
        ( X^x_s - X^y_s )
      \|^2
    }{
      2 \, \| X^x_s - X^y_s \|^4
    }
    \,
    ds
  \right)
  \right\|_{
    L^v( \Omega; \R )
  }
\\ & \quad 
  \cdot 
  \left\|
  \exp\!\left(
    \sup_{ t \in [0,\tau] }
    \int_0^t
    \left[
    \begin{split}
    \tfrac{
      \langle
        X^x_s - X^y_s ,
        \mu( X^x_s ) - 
        \mu( X^y_s )
      \rangle
      +
      \frac{ 1 }{ 2 }
      \| 
        \sigma( X^x_s ) - \sigma( X^y_s ) 
      \|^2_{ \HS( \R^m, \R^d ) }
    }{
      \| X^x_s - X^y_s \|^2
    }
    \\
    +
    \tfrac{
      (
        \frac{\theta}{2}  
        - 1
      ) \,
      \| 
        (
          \sigma( X^x_s ) - \sigma( X^y_s )
        )^*
        ( X^x_s - X^y_s )
      \|^2
    }{
      \| X^x_s - X^y_s \|^4
    }
    \end{split}
    \right]
    ds
  \right)
  \right\|_{
    L^q( \Omega; \R )
  }
\end{split}
\end{equation}
for all
$ v \in [ p , \infty ] $,
$ \theta \in (0, p) $ and all
$ p, q, r \in (0,\infty] $
with $ \frac{ 1 }{ p } + \frac{ 1 }{ q } = \frac{ 1 }{ r } $.
\end{corollary}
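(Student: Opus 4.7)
My plan is to apply Proposition~\ref{prop:two_solution_supinside} with the Lyapunov function $V(x,y) = \|x-y\|^2 \in C^2(O^2,[0,\infty))$ and to translate the resulting bound by means of the identities from Example~\ref{ex:phi_identity}.

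The case $x = y$ is trivial (both sides of the asserted inequality vanish by pathwise uniqueness of the SDE), so I assume $x \neq y$. The local Lipschitz continuity of $\mu$ and $\sigma$ combined with Lemma~\ref{lem:nicht_treffen} then yields $\P[\exists\, t \in [0,\tau]:X_t^x = X_t^y]=0$, so $V(X_t^x,X_t^y) = \|X_t^x - X_t^y\|^2 > 0$ $\P$-a.s.\ for all $t\in[0,\tau]$. In particular the quotients involving $V$ that appear below are well defined, and the integrability hypothesis of Proposition~\ref{prop:two_solution_supinside} follows from the integrability hypothesis of the corollary together with the explicit formulas from Example~\ref{ex:phi_identity}.

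Specializing Example~\ref{ex:phi_identity} (with $\Phi = \operatorname{id}_O$ and its exponent equal to $2$) gives
\[
\tfrac{(\overline{\mathcal{G}}_{\mu,\sigma}V)(x,y)}{V(x,y)} = \tfrac{2\langle x-y,\mu(x)-\mu(y)\rangle + \|\sigma(x)-\sigma(y)\|_{\HS(\R^m,\R^d)}^2}{\|x-y\|^2},
\quad
\tfrac{\|(\overline{G}_\sigma V)(x,y)\|^2}{(V(x,y))^2} = \tfrac{4\,\|(\sigma(x)-\sigma(y))^*(x-y)\|^2}{\|x-y\|^4},
\]
which I substitute into the conclusion of Proposition~\ref{prop:two_solution_supinside}.

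Finally, to pass from an estimate on the $L^{\rho}(\Omega;\R)$-norm of $\sup_{t\in[0,\tau]} V(X_t^x, X_t^y)$ to the claimed estimate on $\|\sup_{t\in[0,\tau]}\|X_t^x-X_t^y\|\|_{L^r(\Omega;\R)}$, I apply Proposition~\ref{prop:two_solution_supinside} with its parameters taken to be $p/2$, $q/2$, $r/2$, $v/2$ and $\theta/2$ in place of $p,q,r,v,\theta$; all constraints of the proposition ($\tfrac{1}{p/2}+\tfrac{1}{q/2}=\tfrac{1}{r/2}$, $v/2\in[p/2,\infty]$, $\theta/2\in(0,p/2)$) are equivalent to those of the corollary under this rescaling. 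Raising both sides of the resulting inequality to the power $\tfrac{1}{2}$ and applying the identity $\|f\|_{L^s}^{1/2} = \|f^{1/2}\|_{L^{2s}}$ to each of the three factors on the right-hand side yields $\|x-y\|$ in place of $\|x-y\|^2$ on the left, the Lebesgue indices $v$ and $q$ in place of $v/2$ and $q/2$, the prefactor $[1-\theta/p]^{1/\theta}$ in place of $[1-(\theta/2)/(p/2)]^{1/(\theta/2)}$, and the two exponents displayed in the corollary (the factors $p^2 = 4$ and $p = 2$ produced by Example~\ref{ex:phi_identity} combine with the halving of constants from $\|f\|_{L^s}^{1/2} = \|f^{1/2}\|_{L^{2s}}$). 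The only effort involved is the bookkeeping of constants, and no analytic ingredient beyond Proposition~\ref{prop:two_solution_supinside} and Example~\ref{ex:phi_identity} is needed.
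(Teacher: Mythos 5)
Your approach---apply Proposition~\ref{prop:two_solution_supinside} to $V(x,y)=\|x-y\|^2$ with the halved parameters $p/2,q/2,r/2,v/2,\theta/2$, translate via Example~\ref{ex:phi_identity}, and take square roots---is exactly the derivation the paper has in mind, and the left-hand side, the prefactor, and the first exponential factor all come out as stated. The slip is in the second exponential: plugging $\tilde\theta=\theta/2$ and $\tfrac{\|(\overline{G}_{\sigma} V)\|^2}{V^2}=\tfrac{4\|(\sigma(X^x_s)-\sigma(X^y_s))^*(X^x_s-X^y_s)\|^2}{\|X^x_s-X^y_s\|^4}$ into the proposition's term $(\tilde\theta-1)\tfrac{\|(\overline{G}_{\sigma} V)\|^2}{2V^2}$ and then halving gives the coefficient $\tfrac{\theta}{2}-1$ in front of $\tfrac{\|(\sigma(X^x_s)-\sigma(X^y_s))^*(X^x_s-X^y_s)\|^2}{\|X^x_s-X^y_s\|^4}$, not the $\theta-1$ displayed in the corollary. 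This does not break your argument, because $\tfrac{\theta}{2}-1\leq\theta-1$ for all $\theta>0$ and the multiplied quantity is nonnegative, so what you actually derive is a \emph{sharper} inequality from which the stated one follows by monotonicity of the exponential. But your assertion that the resulting exponents ``match'' the ones displayed is not accurate as written: you should either record the $\tfrac{\theta}{2}-1$ you obtain and observe that it dominates the claim, or note that the $\theta-1$ in Corollary~\ref{cor:two_solution_supinside} appears to be a typographical slip---the analogous formula~\eqref{eq:UV2_est_mu_sigma} in Corollary~\ref{cor:UV2}, obtained by the same substitution, indeed displays $\theta/2-1$.
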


The next theorem is the uniform
counterpart to Theorem~\ref{thm:UV}.
It follows directly from
Proposition~\ref{prop:two_solution_supinside}
and from Lemma~\ref{lem:multiple_exp}.

\begin{theorem}
\label{thm:UV2}
Assume the setting in Section~\ref{sec:setting},
let $ x, y \in O $, $ k \in \N $, 
$ r, p \in ( 0, \infty ] $, 
$ \theta \in (0, p) $,
$ \rho \in [p,\infty] $,
$ 
  ( 
    \alpha_{ i, j, l }
  )_{ i, j \in \{ 0, 1 \}, l \in \{ 1, \dots, k \} } 
  ,
  ( 
    \beta_{ i, j, l }
  )_{ i, j \in \{ 0, 1 \}, l \in \{ 1, \dots, k \} } 
  \subseteq \R
$, \sgc{}let\cgs{}
$ 
  ( 
    q_{ i, j, l }
  )_{
    i, j \in \{ 0, 1 \},
    l \in \{ 1, \dots, k \} 
  } 
  \subset
  [r,\infty]
$
\sgc{}be extended real numbers satisfying\cgs{}
$
  \frac{ 1 }{ p } + 
  \sum_{ i = 0 }^1 \sum_{ l = 1 }^k \frac{ 1 }{ q_{ 1, i, l } } 
  = \frac{ 1 }{ r }	
$
\sgc{}and\cgs{} 
$
  \sum_{ i = 0 }^1 \sum_{ l = 1 }^k \frac{ 1 }{ q_{ 0, i, l } } 
  =
  \frac{ 1 }{ \rho },
$
$
  \frac{ 1 }{ p } + 
  \sum_{ i = 0 }^1 \sum_{ l = 1 }^k \frac{ 1 }{ q_{ 1, i, l } } 
  = \frac{ 1 }{ r },	
$ 
$ c_0, c_1 \in \mathcal{L}^0( [0,T] ; \R ) $,
$ V \in C^2( O^2, [0,\infty) ) $,
$
  U_{ 0, 0 }, U_{ 1, 0 } \in
  C^2( O, \R^k )
$,
$
  U_{ 0, 1 }, U_{ 1, 1 } 
  \in C^2( O, [0,\infty)^k )
$,
$
  \overline{U}_{ 0, 1 },
  \overline{U}_{ 1, 1 }
  \in C( O, \R^k )
$
and let	
$ X^z \colon [0,T] \times \Omega \to O $,
$ z \in \{ x, y \} $,
be adapted stochastic processes with continuous sample paths satisfying
$
  \int_0^T
    \max(c_0(s),0)
    +
    \max(c_1(s),0)
    +
    \| \mu( X^z_s ) \| 
    +
    \| 
      \sigma( X^z_s )
    \|^2
  \, ds
  < \infty
$
$ \P $-a.s.\ and
$
  X^z_t = 
  z
  + \int_0^t \mu( X^z_s ) \, ds
  +
  \int_0^t \sigma( X^z_s ) \, dW_s
$
$ \P $-a.s.\ for all $ (t,z) \in [0,T] \times \{ x, y \} $
and
\begin{equation}
\begin{split}
&  ( \mathcal{G}_{ \mu, \sigma } U_{ i, j, l } )( u )
  +
  \tfrac{ 
    1
  }{ 2 e^{ \alpha_{ i, j, l } t }
  }
    \|
      \sigma( u )^* ( \nabla U_{ i, j, l } )( u )
    \|^2
  +
  \mathbbm{1}_{
    \{ 1 \}
  }( j )
  \cdot
  \overline{U}_{i,j,l}(u)
\\ & \leq
  \alpha_{ i, j, l } U_{ i, j, l }(u)
  +
  \beta_{ i, j, l } ,
\end{split}
\end{equation}
\begin{equation}
\begin{split}
\\
&    \left[
      \tfrac{ 1 }{ ( 2 / p - 2 / \rho ) } 
      - 
      \tfrac{ \theta }{ 2 }
    \right]
    \tfrac{
      \|
          ( \overline{ G }_{ \sigma } V )( v, w )
      \|^2
    }{
      ( 
        V( v, w )
      )^2
    }
\\ &  \leq
    c_0(t)
    +
    \smallsum_{ n = 1 }^k
    \left[
    \tfrac{
      U_{0,0,n}( v )
      +
      U_{0,0,n}( w )
    }{
      2 q_{ 0, 0, n }
      T
      e^{  
        \alpha_{ 0, 0, n } t
      }
    }
    +
    \tfrac{
      \overline{U}_{0,1,n}( v )
      +
      \overline{U}_{0,1,n}( w )
    }{
      2 q_{ 0, 1, n }
      e^{  
        \alpha_{ 0, 1, n } t  
      }
    }
    \right]
\end{split}
\end{equation}
and
\begin{equation}
\begin{split}
&
  0 \vee
  \left[
    \tfrac{
      ( \overline{\mathcal{G}}_{ \mu, \sigma } V)( v, w )
    }{
      V( v, w )
    }
    +
    \tfrac{
      (
        \theta 
        - 1
      ) \,
      \|
          ( \overline{ G }_{ \sigma } V )( v, w )
      \|^2
    }{
      2 \,
      ( 
        V( v, w )
      )^2
    }
  \right]
\\ & \leq
  c_1(t)
  +
  \smallsum_{ n = 1 }^k
  \left[
    \tfrac{
      U_{1,0,n}( v )
      +
      U_{1,0,n}( w )
    }{
      2 
      q_{ 1, 0, n }
      T
      e^{  
        \alpha_{ 1, 0, n } t
      }
    }
    +
    \tfrac{
      \overline{U}_{1,1,n}( v )
      +
      \overline{U}_{1,1,n}( w )
    }{
      2 q_{ 1, 1, n }
      e^{  
        \alpha_{ 1, 1, n } t  
      }
    }
  \right]
\end{split}
\end{equation}
for all
$ i \in \{ 0, 1 \} $,
$ l \in \{ 1, \dots, k \} $,
$ u \in \{ v, w \} $,
$ 
  (v,w) \in 
  \operatorname{im}( (X^x,X^y) ) 
$,
$ t \in [0,T] $.
Then
\begin{equation}
\begin{split}
&  \bigg\|
    \sup_{ t \in [ 0, T ] }
    V( X^x_t, X^y_t )
  \bigg\|_{
    L^r( \Omega; \R )
  }
\leq  
  \frac{ 
    V( x, y ) 
  }{ 
    \big[ 1 - \frac{ \theta }{ p } \big]^{ \frac{1}{\theta} } 
  } 
  \exp\!\left(
    \smallint_0^T c_0(s) + c_1(s) \, ds
  \right)
\\ & \qquad \qquad \cdot
  \exp\!\left(
    \smallsum\limits_{ i, j = 0 }^1
    \sum\limits_{ l = 1 }^k
    \left[
      \smallint\limits_0^T
      \tfrac{
        \beta_{ i, j, l }
        \,
        ( 1 - \frac{ s }{ T } )^{ ( 1 - j ) }
      }{
        q_{ i, j, l } 
        \,
        e^{
          \alpha_{ i, j, l } s
        }
      }
      \, ds
      +
      \tfrac{
        U_{ i, j, l }( x ) 
        +
        U_{ i, j, l }( y ) 
      }{
        2 q_{ i, j, l }
      }
    \right]
  \right)
  .
\end{split}
\end{equation}
\end{theorem}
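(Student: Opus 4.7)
The strategy is to combine Proposition~\ref{prop:two_solution_supinside} with two invocations of Lemma~\ref{lem:multiple_exp}. First I would apply Proposition~\ref{prop:two_solution_supinside} to $(X^x,X^y)$ with the deterministic stopping time $\tau\equiv T$, with the given $p$ and $\theta$, with its free parameter $v$ set equal to $\rho$, and with $q\in(0,\infty]$ determined by $\tfrac1p+\tfrac1q=\tfrac1r$. This choice of $q$ is admissible because the hypothesis $\tfrac1p+\sum_{i,l}\tfrac1{q_{1,i,l}}=\tfrac1r$ forces $\sum_{i,l}\tfrac1{q_{1,i,l}}=\tfrac1q$. The resulting factorisation reads
\begin{equation*}
  \bigl\|\sup\nolimits_{t\in[0,T]}V(X^x_t,X^y_t)\bigr\|_{L^r(\Omega;\R)}
  \le \frac{V(x,y)}{(1-\theta/p)^{1/\theta}}\,\mathcal A\,\mathcal B,
\end{equation*}
where $\mathcal A$ is the $L^\rho(\Omega;\R)$-norm of $\exp\bigl(\int_0^T[\tfrac{1}{2/p-2/\rho}-\tfrac{\theta}{2}]\,\|(\overline G_\sigma V)(X^x_s,X^y_s)\|^2/V(X^x_s,X^y_s)^2\,ds\bigr)$ and $\mathcal B$ is the $L^q(\Omega;\R)$-norm of $\exp\bigl(\sup_{t\in[0,T]}\int_0^t[(\overline{\mathcal G}_{\mu,\sigma}V)/V+(\theta-1)\|(\overline G_\sigma V)\|^2/(2V^2)]\,ds\bigr)$ (the integrands being evaluated along the bi-variate solution).

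To estimate $\mathcal A$ I would invoke the first hypothesis of the theorem, which provides precisely the upper bound $c_0(t)+\sum_n[\tfrac{U_{0,0,n}(v)+U_{0,0,n}(w)}{2q_{0,0,n}Te^{\alpha_{0,0,n}t}}+\tfrac{\overline U_{0,1,n}(v)+\overline U_{0,1,n}(w)}{2q_{0,1,n}e^{\alpha_{0,1,n}t}}]$ for the integrand in the $L^\rho$-exponent. Monotonicity of the exponential then permits direct application of Lemma~\ref{lem:multiple_exp}, with its parameter $p$ replaced by $\rho$, its $c$ by $c_0$, and with the $i=0$ slice of data $(U_{0,0},U_{0,1},\overline U_{0,1},\alpha_{0,\cdot,\cdot},\beta_{0,\cdot,\cdot},q_{0,\cdot,\cdot})$; the required sum condition is exactly $\sum_{i,l}\tfrac1{q_{0,i,l}}=\tfrac1\rho$. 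For $\mathcal B$, I would first dominate the supremum-integral pointwise in $\omega$ by the integral over $[0,T]$ of the non-negative quantity $0\vee[(\overline{\mathcal G}_{\mu,\sigma}V)/V+(\theta-1)\|\overline G_\sigma V\|^2/(2V^2)]$ and then use the second hypothesis to majorise this non-negative integrand by $c_1(t)+\sum_n[\tfrac{U_{1,0,n}(v)+U_{1,0,n}(w)}{2q_{1,0,n}Te^{\alpha_{1,0,n}t}}+\tfrac{\overline U_{1,1,n}(v)+\overline U_{1,1,n}(w)}{2q_{1,1,n}e^{\alpha_{1,1,n}t}}]$. Lemma~\ref{lem:multiple_exp} then applies again, this time with its $p$ replaced by $q$ and the $i=1$ slice of data.

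Multiplying the two resulting bounds with the prefactor $V(x,y)/(1-\theta/p)^{1/\theta}$ yields the claimed inequality after two simplifications. Because $c_0$ and $c_1$ are deterministic and $\tau=T$, the residual $\int_\tau^T$-integrals in the $L^\infty$-factor of Lemma~\ref{lem:multiple_exp} vanish, while the terminal contributions $-U_{0,1,l}(X_T)/(\ldots)$ and $-U_{1,1,l}(X_T)/(\ldots)$ are non-positive since $U_{0,1},U_{1,1}\in C^2(O,[0,\infty)^k)$; hence both $L^\infty$-brackets collapse to $\exp(\int_0^T c_0(s)+c_1(s)\,ds)$. The weight $(1-s/T)^{(1-j)}$ in the conclusion arises from combining, for each of the two Lemma applications, its $\int_0^T\beta_{0,l}(1-s/T)/(q_{0,l}e^{\alpha_{0,l}s})\,ds$ contribution (index $j=0$) with its $\int_0^\tau\beta_{1,l}/(q_{1,l}e^{\alpha_{1,l}s})\,ds$ contribution (index $j=1$, corresponding to $(1-s/T)^0=1$). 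No genuine analytic obstacle arises; the main item requiring care is the combinatorial bookkeeping aligning the triply-indexed parameters $(\alpha_{i,j,l},\beta_{i,j,l},q_{i,j,l},U_{i,j,l},\overline U_{i,j,l})$ of the theorem with the two separate invocations of Lemma~\ref{lem:multiple_exp}, and checking that the assumed sum conditions furnish exactly the exponents needed to apply Hölder/Lemma~\ref{l:exponential.martingale} inside Proposition~\ref{prop:two_solution_supinside}.
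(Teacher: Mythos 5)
Your proposal is correct and follows exactly the route indicated in the paper, which states that Theorem~\ref{thm:UV2} ``follows directly from Proposition~\ref{prop:two_solution_supinside} and from Lemma~\ref{lem:multiple_exp}.'' You have correctly identified the choices $\tau=T$, $v=\rho$, $q$ determined by $\tfrac1p+\tfrac1q=\tfrac1r$, the factor-of-two rescaling that aligns the proposition's coefficient $\tfrac12[\tfrac{1}{1/p-1/\rho}-\theta]$ with the hypothesis' coefficient $[\tfrac{1}{2/p-2/\rho}-\tfrac{\theta}{2}]$, the pointwise domination $\sup_{t}\int_0^t\leq\int_0^T 0\vee(\cdot)$ before the second invocation of Lemma~\ref{lem:multiple_exp}, and the role of the non-negativity of $U_{0,1},U_{1,1}$ in discarding the terminal terms in the $L^\infty$-bracket.
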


The next corollary specializes
Theorem~\ref{thm:UV2} to the case
where $ k = 1 $,
where $ V $ satisfies
$ V(x,y) = \| x - y \|^2 $
for all $ x, y \in O $
and where
$ \mu $ and $ \sigma $
are continuous
(cf.\ Lemma~2.3 in Zhang~\cite{Zhang2010}).
It follows directly from
Theorem~\ref{thm:UV2}
and from Example~\ref{ex:phi_identity}.

\begin{corollary}
\label{cor:UV2}
Assume the setting in Section~\ref{sec:setting},
let 
$ x, y \in O $,
$ r, p \in ( 0, \infty ] $, 
$ \theta \in (0, p) $,
$ \rho \in [p,\infty] $,
$ 
  ( 
    \alpha_{ i, j }
  )_{ i, j \in \{ 0, 1 \} 
  }
  ,
  ( 
    \beta_{ i, j }
  )_{ i, j \in \{ 0, 1 \} } 
  \subseteq \R
$,
$ 
  ( 
    q_{ i, j }
  )_{
    i, j \in \{ 0, 1 \}
  } 
  \subset
  [r,\infty]
$
with
$
  \sum_{ i = 0 }^1 \frac{ 1 }{ q_{ 0, i } } 
  =
  \frac{ 1 }{ \rho }
$
and
$
  \frac{ 1 }{ p } + 
  \sum_{ i = 0 }^1 \frac{ 1 }{ q_{ 1, i } } 
  = \frac{ 1 }{ r }	
$,
$
  c_0, c_1 \in C( [0,T], \R )
$,
$
  U_{ 0, 0 }, U_{ 1, 0 } \in
  C^2( O, \R )
$,
$
  U_{ 0, 1 }, U_{ 1, 1 } 
  \in C^2( O, [0,\infty) )
$,
$
  \overline{U}_{ 0, 1 },
  \overline{U}_{ 1, 1 }
  \in C( O, \R )
$, assume that
$ \mu \in C( O, \R^d ) $,
$ \sigma \in C( O, \R^{ d \times m } ) $
and let	
$ X^z \colon [0,T] \times \Omega \to O $,
$ z \in \{ x, y \} $,
be adapted stochastic processes with continuous sample paths satisfying
$
  X^z_t = 
  z
  + \int_0^t \mu( X^z_s ) \, ds
  +
  \int_0^t \sigma( X^z_s ) \, dW_s
$
$ \P $-a.s.\ for all $ (t,z) \in [0,T] \times \{ x, y \} $
and
\begin{equation*}
  ( \mathcal{G}_{ \mu, \sigma } U_{ i, j } )( u )
  +
  \tfrac{ 
    1
  }{ 2 e^{ \alpha_{ i, j } t }
  }
    \|
      \sigma( u )^* ( \nabla U_{ i, j } )( u )
    \|^2
  +
  \mathbbm{1}_{
    \{ 1 \}
  }( j )
  \cdot
  \overline{U}_{i,j}(u)
\leq
  \alpha_{ i, j } U_{ i, j }(u)
  +
  \beta_{ i, j } ,
\end{equation*}
\begin{equation*}
\label{eq:UV2_est0}
    \left[
      \tfrac{ 1 }{ ( 2 / p - 2 / \rho ) } 
      - 
      \tfrac{ \theta }{ 2 }
    \right]
  \tfrac{
    \left\|
      (
        v - w 
      )^*
      (
        \sigma( v ) - 
        \sigma( w )
      )
    \right\|^2
  }{
    \left\| v - w \right\|^4
  }
  \leq
    c_0(t) 
    +
    \tfrac{
      U_{0,0}( v )
      +
      U_{0,0}( w )
    }{
      2
      q_{ 0, 0 }
      T
      e^{  
        \alpha_{ 0, 0 } t
      }
    }
    +
    \tfrac{
      \overline{U}_{0,1}( v )
      +
      \overline{U}_{0,1}( w )
    }{
      2
      q_{ 0, 1 }
      e^{  
        \alpha_{ 0, 1 } t  
      }
    }
\end{equation*}
and
\begin{equation}\label{eq:UV2_est_mu_sigma}
\begin{split}
&
  \max\!\left\{
    0,
  \tfrac{ 
    \left<
      v - w ,
      \mu( v ) - 
      \mu( w )
    \right>
    +
    \frac{ 1 }{ 2 }
    \left\| 
      \sigma( v ) - \sigma( w ) 
    \right\|^2_{ \HS( \R^m, \R^d ) }
  }{
    \left\|
      v - w
    \right\|^2
  }
    +
  \tfrac{
      (
        \theta / 2
        - 1
      ) \,
    \left\|
      (
        v - w 
      )^*
      (
        \sigma( v ) - 
        \sigma( w )
      )
    \right\|^2
  }{
    \left\| v - w \right\|^4
  }
  \right\}
\\ & \leq
  c_1(t) 
  +
    \tfrac{
      U_{1,0}( v )
      +
      U_{1,0}( w )
    }{
      2 
      q_{ 1, 0 }
      T
      e^{  
        \alpha_{ 1, 0 } t
      }
    }
    +
    \tfrac{
      \overline{U}_{1,1}( v )
      +
      \overline{U}_{1,1}( w )
    }{
      2 q_{ 1, 1 }
      e^{  
        \alpha_{ 1, 1 } t  
      }
    }
\end{split}
\end{equation}
for all
$ i, j \in \{ 0, 1 \} $,
$ u \in \{ v, w \} $,
$ 
  (v,w) \in 
  \operatorname{im}( (X^x,X^y) ) 
$,
$ t \in [0,T] $.
Then
\begin{equation*}
\begin{split}
&
  \left\|
    \sup\nolimits_{ t \in [ 0, T ] }
    \frac{
      \| X^x_t - X^y_t \|
    }{ 
      \| x - y \|
    }
  \right\|_{
    L^r( \Omega; \R )
  }
\\ &
\leq  
  \frac{ 
    e^{ \int_0^T c_0(s) + c_1(s) \, ds }
  }{ 
    [ 1 - \frac{ \theta }{ p } ]^{ \frac{1}{\theta} } 
  }
  \exp\!\bigg(
    \smallsum\limits_{ i, j = 0 }^1
    \left[
      \smallint\limits_0^T
      \tfrac{
        \beta_{ i, j }
        \,
        ( 1 - \frac{ s }{ T } )^{ ( 1 - j ) }
      }{
        q_{ i, j } 
        \,
        e^{
          \alpha_{ i, j } s
        }
      }
      \, ds
      +
      \tfrac{
        U_{ i, j }( x ) 
        +
        U_{ i, j }( y ) 
      }{
        2 q_{ i, j }
      }
    \right]
  \bigg) 
  .
\end{split}
\end{equation*}
\end{corollary}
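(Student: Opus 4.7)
The plan is to apply Theorem~\ref{thm:UV2} to the choice $V \in C^2(O^2,[0,\infty))$ defined by $V(v,w) := \|v-w\|^2$ with $k = 1$, after rescaling the scalar parameters by a factor of $1/2$. Example~\ref{ex:phi_identity} applied with $\Phi = \operatorname{id}$ and with its exponent equal to $2$ supplies the required identities $\tfrac{\|(\overline{G}_\sigma V)(v,w)\|^2}{(V(v,w))^2} = \tfrac{4\,\|(v-w)^*(\sigma(v)-\sigma(w))\|^2}{\|v-w\|^4}$ and $\tfrac{(\overline{\mathcal{G}}_{\mu,\sigma} V)(v,w)}{V(v,w)} = 2\bigl[\tfrac{\langle v-w,\,\mu(v)-\mu(w)\rangle + \tfrac{1}{2}\|\sigma(v)-\sigma(w)\|_{\HS(\R^m,\R^d)}^2}{\|v-w\|^2}\bigr]$; the correction term in~\eqref{eq:extended_drift_example} vanishes because $p-2 = 0$ in the Example.

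Second, I would invoke Theorem~\ref{thm:UV2} with the rescaled parameters $p_T := p/2$, $r_T := r/2$, $\theta_T := \theta/2$, $\rho_T := \rho/2$, $q_{i,j,1,T} := q_{i,j}/2$, and $c_{i,T} := 2 c_i$, while keeping $U_{i,j,1,T} := U_{i,j}$, $\overline{U}_{i,j,1,T} := \overline{U}_{i,j}$, $\alpha_{i,j,1,T} := \alpha_{i,j}$, $\beta_{i,j,1,T} := \beta_{i,j}$ unchanged. The algebraic constraints on the $q$'s, as well as $\theta_T \in (0, p_T)$ and $\rho_T \in [p_T, \infty]$, are immediate. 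Using the identity $\tfrac{1}{2/p_T - 2/\rho_T} - \tfrac{\theta_T}{2} = \tfrac{1}{2}\bigl[\tfrac{1}{2/p-2/\rho} - \tfrac{\theta}{2}\bigr]$ together with the explicit formulas above, the two structural hypotheses of Theorem~\ref{thm:UV2} on $c_0$ and $c_1$ become exactly twice the corresponding hypotheses~\eqref{eq:UV2_est0} and~\eqref{eq:UV2_est_mu_sigma} of the Corollary. This factor of $2$ is absorbed precisely on the right-hand side since $c_{i,T} = 2 c_i$, $\tfrac{U_{i,j,1,T}}{2 q_{i,j,1,T}} = 2\,\tfrac{U_{i,j}}{2 q_{i,j}}$, and similarly for $\overline{U}_{i,j,1,T}$. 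The exponential-integrability hypotheses for the $U_{i,j,1,T}$ reduce to the Corollary's unchanged hypotheses, and the remaining integrability condition $\int_0^T c_{0,T}(s) + c_{1,T}(s) + \|\mu(X_s^z)\| + \|\sigma(X_s^z)\|^2 \,ds < \infty$ $\P$-a.s.\ follows from the continuity of $c_0, c_1, \mu, \sigma$ and of the sample paths of $X^z$.

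Finally, since $\sup_{t \in [0,T]} V(X_t^x, X_t^y) = \bigl(\sup_{t \in [0,T]} \|X_t^x - X_t^y\|\bigr)^2$, one has $\|\sup_{t \in [0,T]} V(X_t^x, X_t^y)\|_{L^{r_T}(\Omega;\R)}^{1/2} = \|\sup_{t \in [0,T]} \|X_t^x - X_t^y\|\|_{L^r(\Omega;\R)}$. Taking the square root of Theorem~\ref{thm:UV2}'s conclusion, the prefactor $V(x,y) / [1 - \theta_T/p_T]^{1/\theta_T} = \|x-y\|^2/[1 - \theta/p]^{2/\theta}$ becomes $\|x-y\| / [1 - \theta/p]^{1/\theta}$, and the Theorem's exponent—which equals exactly twice the Corollary's exponent, since each of $c_{i,T}$, $\beta_{i,j,1,T}/q_{i,j,1,T}$, and $U_{i,j,1,T}/(2 q_{i,j,1,T})$ is doubled under the rescaling—is halved to yield the claimed expression. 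Dividing by $\|x-y\|$ then produces the stated inequality.

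The main obstacle is a consistent bookkeeping of the factor of $2$ that appears everywhere because Example~\ref{ex:phi_identity} is applied at exponent $2$, which introduces a $p^2 = 4$ in the noise term and a $p = 2$ in the drift term. The verification I would be most careful about is that the compensating rescalings render Theorem~\ref{thm:UV2}'s hypotheses equivalent (not merely implied by) the Corollary's, and simultaneously cause the theorem's conclusion, after taking a square root, to reproduce the constants in the Corollary exactly.
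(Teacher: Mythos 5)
Your proposal is correct and takes exactly the approach the paper indicates: specialize Theorem~\ref{thm:UV2} to $k=1$ and $V(v,w)=\|v-w\|^2$, using Example~\ref{ex:phi_identity} (with exponent $2$) to rewrite $\overline{\mathcal{G}}_{\mu,\sigma}V/V$ and $\|\overline{G}_\sigma V\|^2/V^2$. The halving of $p$, $r$, $\theta$, $\rho$, and the $q_{i,j}$'s (together with doubling $c_0,c_1$) is precisely the bookkeeping needed to absorb the factor $2$ in the extended generator and the factor $4=2^2$ in the extended noise operator from Example~\ref{ex:phi_identity}, and your verification that the resulting hypotheses are equivalent to (not merely implied by) those of the Corollary, and that taking the square root of the Theorem's conclusion reproduces the Corollary's constants exactly, is sound.
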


The next corollary (Corollary~\ref{cor:UV3}) is the special case of
Corollary~\ref{cor:UV2} 
where 
$
  U_{0,0}(x)
  =
  U_{1,0}(x)
  =
  c \, (1+\|x\|^2)^{\eps}
$
and 
$
  U_{0,1}(x)=U_{1,1}(x)=\overline{U}_{0,1}(x)=\overline{U}_{1,1}(x)=0
$ for all $x\in\R^d$
and some $c\in(0,\infty)$, $\eps\in(0,1]$
and where 
$q_{0,1}=\infty=q_{1,1}$, $q_{1,0}=2r=p$, $q_{0,0}=4r=\rho$, $\theta=2$,
$\beta_{0,0}=\beta_{1,0}=\beta$, $\beta_{0,1}=\beta_{1,1}=0$,
$ r \in ( 1, \infty ] $.
Corollary~\ref{cor:UV3} is related to
Theorem~1.7 in Fang, Imkeller \&\ Zhang~\cite{FangImkellerZhang2007}
and to Corollary~6.3 in Li~\cite{Li1994}.

\begin{corollary}
\label{cor:UV3}
Assume the setting in Section~\ref{sec:setting},
let 
$ x, y \in O $, assume that
$ \mu \in C( O, \R^d ) $, $\sigma \in C(O,\R^{d\times m})$,
let
$ \alpha, \beta, c_0,c_1 \in [0,\infty) $,
$c\in(0,\infty)$,
$ r \in ( 1, \infty ] $,
$ \eps \in (0,1]$
and let	
$ X^z \colon [0,T] \times \Omega \to O $,
$ z \in \{ x, y \} $,
be adapted stochastic processes with continuous sample paths satisfying
$
  X^z_t = 
  z
  + \int_0^t \mu( X^z_s ) \, ds
  +
  \int_0^t \sigma( X^z_s ) \, dW_s
$
$ \P $-a.s.\ for all $ (t,z) \in [0,T] \times \{ x, y \} $
and
\begin{align}
\nonumber  
&  \tfrac{
    \left<
      v - w ,
      \mu( v ) - 
      \mu( w )
    \right>
    +
    \frac{1}{2}
    \|\sigma(v)-\sigma(w)\|_{\HS(\R^m,\R^d)}^2
   }{
     \|v-w\|^2
   } 
  \leq
    c_1
    +
    \tfrac{
      c
      \, (1+\| v \|^2)^{\eps}
      +
      c
      \, 
      (1+\| w \|^2)^{\eps}
    }{
      4r
      T
      e^{  
        \alpha T
      }
    }
    ,
 \\ 
&    \tfrac{
     ( 4 r - 2 )
     \,
     \| ( \sigma(v) - \sigma(w) )^{*} ( v - w ) 
     \|^2
    }{
      2 \, \| v - w \|^4
    } 
  \leq
    c_0
    +
    \tfrac{
      c 
      \,
      (1+\| v \|^2)^{\eps}
      +
      c
      \, 
      (1+\| w \|^2)^{\eps}
    }{
      8r
      T
      e^{  
        \alpha T
      }
    }
    ,
\\ \nonumber
&  2
  \left< u, \mu(u) \right>
  +
  \| \sigma(u) \|^2_{ \HS( \R^m, \R^d ) }
  +
  \tfrac{
    2 c \eps 
    \| \sigma( u )^* u \|^2
  }{
    ( 1 + \| u \|^2 )^{ ( 1 - \eps ) } 
  }
\\ \nonumber & \qquad 
  \leq
  \tfrac{ \alpha }{ \varepsilon }
  \left( 1 + \| u \|^2 \right)
  +
  \tfrac{
    \beta  
  }{
    c \varepsilon
  }
  \left(
    1 + \| u \|^2
  \right)^{ ( 1 - \eps ) }
\end{align}
for all
$ u \in \{ v, w \} $,
$ 
  (v,w) \in 
  \operatorname{im}( (X^x,X^y) )  
$.
Then
\begin{equation}
\begin{split}
&
  \left\|
    \sup\nolimits_{ t \in [ 0, T ] }
    \frac{
      \| X^x_t - X^y_t \|
    }{
      \| x - y \|
    }
  \right\|_{
    L^r( \Omega; \R )
  }
\\ &
\leq  
  \left[
      1 - \tfrac{ 1 }{ r } 
    \right]^{ - \frac{ 1 }{ 2 } }
  \exp\!\left(
    (c_0+c_1) T +
      \tfrac{
        \beta T 
        +
        c(1+\| x \|^2)^{\eps}
        +
        c(1+\| y \|^2)^{\eps}
      }{
        2 r
      }
  \right) 
  .
\end{split}
\end{equation}
\end{corollary}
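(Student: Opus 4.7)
The plan is to derive Corollary~\ref{cor:UV3} as the specialization of Corollary~\ref{cor:UV2} indicated in the paragraph preceding its statement: take $k=1$, $\theta=2$, $q_{1,0}=p=2r$, $q_{0,0}=\rho=4r$, $q_{0,1}=q_{1,1}=\infty$, $\alpha_{0,0}=\alpha_{1,0}=\alpha$, $\beta_{0,0}=\beta_{1,0}=\beta$, $\beta_{0,1}=\beta_{1,1}=0$, $U_{0,0}(u)=U_{1,0}(u)=c(1+\|u\|^2)^{\eps}$, $U_{0,1}=U_{1,1}=\overline{U}_{0,1}=\overline{U}_{1,1}\equiv 0$, and use the given constants $c_0,c_1$ as time-independent functions in Corollary~\ref{cor:UV2}. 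With these choices, three families of hypotheses in Corollary~\ref{cor:UV2} must be verified: the exponential-integrability Lyapunov inequalities for the $U_{i,j}$, inequality~\eqref{eq:UV2_est0}, and inequality~\eqref{eq:UV2_est_mu_sigma}.

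The main step is the Lyapunov inequality for $U(u):=c(1+\|u\|^2)^{\eps}$. Differentiating yields $(\nabla U)(u)=2c\eps(1+\|u\|^2)^{\eps-1}u$ and $(\operatorname{Hess} U)(u)=2c\eps(1+\|u\|^2)^{\eps-1}I+4c\eps(\eps-1)(1+\|u\|^2)^{\eps-2}uu^{*}$. Inserting these into $(\mathcal{G}_{\mu,\sigma}U)(u)+\tfrac{1}{2e^{\alpha t}}\|\sigma(u)^{*}(\nabla U)(u)\|^{2}$ and factoring out $c\eps(1+\|u\|^2)^{\eps-1}$ yields a bracket of the form $2\langle u,\mu(u)\rangle+\|\sigma(u)\|_{\HS(\R^{m},\R^{d})}^{2}+\tfrac{2(\eps-1)\|\sigma(u)^{*}u\|^{2}}{1+\|u\|^{2}}+\tfrac{2c\eps\|\sigma(u)^{*}u\|^{2}}{e^{\alpha t}(1+\|u\|^{2})^{1-\eps}}$. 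Since $\eps\in(0,1]$ the third summand is nonpositive, and since $\alpha\geq 0$ the factor $e^{-\alpha t}$ is bounded by $1$; hence the bracket is dominated by the left-hand side of the third hypothesis of Corollary~\ref{cor:UV3}, which in turn is bounded by $\alpha\eps^{-1}(1+\|u\|^{2})+\beta(c\eps)^{-1}(1+\|u\|^{2})^{1-\eps}$. Multiplying back by $c\eps(1+\|u\|^{2})^{\eps-1}$ produces $\alpha U(u)+\beta$, which is the required Lyapunov bound. The trivial choices $U_{0,1},U_{1,1}\equiv 0$ together with $\beta_{0,1}=\beta_{1,1}=0$ satisfy their Lyapunov inequalities automatically.

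The remaining two conditions simplify drastically because $\theta=2$: the coefficient $\theta/2-1=0$ kills the quartic-$\sigma$ correction in~\eqref{eq:UV2_est_mu_sigma}, while $(2/p-2/\rho)^{-1}-\theta/2=2r-1=(4r-2)/2$. Consequently~\eqref{eq:UV2_est0}, evaluated at the worst time $t=T$, becomes exactly the second hypothesis of Corollary~\ref{cor:UV3}; and~\eqref{eq:UV2_est_mu_sigma} reduces to controlling the ordinary monotonicity ratio $\tfrac{\langle v-w,\mu(v)-\mu(w)\rangle+\tfrac{1}{2}\|\sigma(v)-\sigma(w)\|_{\HS(\R^{m},\R^{d})}^{2}}{\|v-w\|^{2}}$ by $c_{1}+\tfrac{c(1+\|v\|^{2})^{\eps}+c(1+\|w\|^{2})^{\eps}}{4rTe^{\alpha T}}$, which follows from the first hypothesis of Corollary~\ref{cor:UV3} after discarding its nonnegative quartic-$\sigma$ summand; nonnegativity of the right-hand side absorbs the outer $\max\{0,\cdot\}$ for free.

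Substituting everything into the conclusion of Corollary~\ref{cor:UV2} gives the prefactor $[1-\theta/p]^{-1/\theta}=(1-1/r)^{-1/2}$ and an exponent in which only the $(i,j)\in\{(0,0),(1,0)\}$ terms contribute. Using $\int_{0}^{T}(1-s/T)e^{-\alpha s}\,ds\leq T/2$ together with $q_{0,0}^{-1}+q_{1,0}^{-1}=\tfrac{3}{4r}\leq \tfrac{1}{r}$ bounds the total $\beta$-contribution by $\beta T/(2r)$ and the total $U$-contribution by $(c(1+\|x\|^{2})^{\eps}+c(1+\|y\|^{2})^{\eps})/(2r)$; combining this with the $e^{(c_{0}+c_{1})T}$ factor produces precisely the bound in Corollary~\ref{cor:UV3}. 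The main technical obstacle is the Lyapunov factorization for $U$; once the explicit identity is set up, the sign argument for the $(\eps-1)$ summand and the monotonicity argument for $e^{-\alpha t}\leq 1$ are routine, and the rest is parameter matching.
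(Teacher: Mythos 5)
Your proof is correct and follows precisely the reduction the paper indicates in the paragraph preceding Corollary~\ref{cor:UV3}: the paper explicitly lists the same parameter choices ($U_{0,0}=U_{1,0}=c(1+\|\cdot\|^2)^\eps$, $q_{1,0}=p=2r$, $q_{0,0}=\rho=4r$, $\theta=2$, etc.) and leaves the verification implicit, which you carry out accurately, including the Lyapunov factorization for $U$, the sign argument for the $(\eps-1)$ term, and the final arithmetic bounding $\tfrac{3}{8r}\le\tfrac{1}{2r}$.
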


\chapter{Strong completeness of SDEs}
\label{sec:strong_completeness}

The theory developed in Subsection~\ref{sec:two_solution} 
can be used to establish \emph{strong completeness} of SDEs
with non-globally Lipschitz continuous
nonlinearities by combining it with a suitable Kolmogorov
argument; see Theorem~\ref{thm:strong_completeness_uniform}
and Theorem~\ref{thm:strong_completeness_marginal}
below.

\section{Theorems of Kolmogorov-Chentsov type}
The proof of Kolmogorov-Chentsov's continuity theorem for stochastic processes indexed by a bounded subset of $\R^d$, 
i.e., Theorem~\ref{t:KolChen} below, is based 
on an extension result\sgc{};\cgs{} see Theorem~\ref{thm:extension_Holdercont_Rd} below, and Kolmogorov-Chentsov's continuity
theorem for stochastic fields indexed by a cube\sgc{};\cgs{} see Theorem~\ref{t:KolChen_rect} below. 
Theorem~\ref{thm:extension_Holdercont_Rd} below
for the case $E=\R$ is contained in~\cite[Theorem 3 in Section VI.2.2.1]{Stein1970},
the proof of Theorem~\ref{thm:extension_Holdercont_Rd} below follows the proof of~\cite[Theorem 3 in Section VI.2.2.1]{Stein1970}
mutatis mutandis. 
Kolmogorov-Chentsov's continuity criterion for stochastic fields indexed by a cube is well-known\sgc{};\cgs{} see e.g.~\cite[Theorem I.2.1]{RevuzYor1994}. The idea to combine Theorem~\ref{thm:extension_Holdercont_Rd} and Theorem~\ref{t:KolChen_rect} below in order to obtain Theorem~\ref{t:KolChen} below is due to~\cite{MittmannSteinwart2003}\sgc{};\cgs{} see~\cite[Theorem 2.1]{MittmannSteinwart2003} for a result very similar to~Theorem~\ref{t:KolChen} below. In particular, we do not claim originality of Theorem~\ref{thm:extension_Holdercont_Rd}, Theorem~\ref{t:KolChen_rect} or Theorem~\ref{t:KolChen} below, we only provide the proofs of these theorems for the readers' convenience.

\subsection{Preparatory lemmas}

In this section we collect some well-known results which are used in the proofs of 
Theorem~\ref{thm:extension_Holdercont_Rd}, Theorem~\ref{t:KolChen_rect} and Theorem~\ref{t:KolChen} below. 
Theorem~\ref{thm:Bochnerspace} below states
that the Bochner space equipped with the $L^p$-norm is a Banach space.

\begin{theorem}\label{thm:Bochnerspace}
Let $p\in [1,\infty)$, let $(S,\Sigma,\mu)$ be a measure space 
and let $(E,\left\| \cdot \right\|_E)$ be a separable $\R$-Banach space.
Then $(L^p(S;E),\left\| \cdot \right\|_{L^p(S;E)})$ is an $\R$-Banach space. 
\end{theorem}

\begin{proof}[Proof of Theorem~\ref{thm:Bochnerspace}]
Note that for all $f\in \calL^p(S;E)$, $(S_n)_{n\in\N} \in \calF^{\N}$ with $\sgc{}\forall\cgs{} \, n\in \N \colon S_n=\{s\in S\colon \|f(s)\|_E>\frac{1}{n}\}$ it holds for all $n\in \N$, $s\in S\setminus \cup_{m\in \N} S_m$ that $\mu(S_n)<\infty$ and $f(s)=0$.
This and, e.g., \cite[Theorem 1.1.6, Proposition 1.1.16 and the argument on Page 21]{HytonenEtAl:2016} 
imply that $(L^p(S;E),\left\| \cdot \right\|_{L^p(S;E)})$ is an $\R$-Banach space.
\end{proof}

Lemma~\ref{lem:simple_Hoelder} below states a well-known relation between the H\"older space and the H\"older semi-norm.

\begin{lemma}\label{lem:simple_Hoelder}
Let $d\in \N$, $\alpha \in (0,1]$, let
$(E,\left\|\cdot\right\|_{E})$ be an $\R$-Banach space, let $D\subseteq \R^d$ be
bounded and non-empty and let $f\colon D\rightarrow E$ be a function. 
Then  $f\in C^{\alpha}(D,E)$ if and only if $\| f \|_{\calC^{\alpha}(D,E)}<\infty$.
\end{lemma}

\begin{proof}[Proof of Lemma~\ref{lem:simple_Hoelder}]
Observe that for all $x\in D$ it holds that 
\begin{equation}
 \| f \|_{\calC^{\alpha}(D,E)}
 \leq 
 \| f \|_{C^{\alpha}(D,E)}
 \leq 
 \| f(x) \|_E 
 +
 (1+ \sup_{y\in D} \| x - y  \|^{\alpha} )\| f \|_{\calC^{\alpha}(D,E)}.
\end{equation}
This completes the proof of Lemma~\ref{lem:simple_Hoelder}.
\end{proof}

Lemma~\ref{lem:extension_Holdercont_closure} below states a simple extension result for H\"older continuous
functions.

\begin{lemma}\label{lem:extension_Holdercont_closure}
Let $d\in \N$, $\alpha \in (0,1]$, let $D\subseteq \R^d$ be non-empty, 
let $(E,\left\|\cdot\right\|_{E})$ be an $\R$-Banach space and let $f\colon D \rightarrow E$ satisfy 
$\| f \|_{\calC^{\alpha}(D,E)}<\infty$. 
Then there exists $\bar{f}\colon \overline{D} \rightarrow E$ such that
\begin{enumerate}
 \item\label{it:eHc_lim} it holds for all $x\in \overline{D}$ 
 that $\limsup_{D \ni y \rightarrow x} |\bar{f}(x)-f(y)|=0$,
 \item\label{it:eHc_restriction} it holds that $\bar{f}|_{D} = f$,
 \item\label{it:eHc_image} it holds that $\textup{im}(\bar{f}) \subseteq \overline{ \textup{im}(f)}^{\|\cdot \|_E}$ and
 \item\label{it:eHc_Holder} it holds that $ \| \bar{f} \|_{\calC^{\alpha}(\overline{D},E)} 
 =
 \| f \|_{\calC^{\alpha}(D,E)}$.
\end{enumerate}
\end{lemma}

\begin{proof}[Proof of Lemma~\ref{lem:extension_Holdercont_closure}]
First, note that for every Cauchy sequence $(x_n)_{n\in \N} \in D^{\N}$ and every $k,m\in \N$ it holds that 
$\| f(x_k) - f(x_m) \|_{E} 
\leq 
\| f \|_{\calC^{\alpha}(D,E)}\| x_k - x_m \|^{\alpha} $ and that 
$(f(x_n))_{n\in \N} \in E^{\N}$ is a Cauchy sequence in $(E,\left\| \cdot \right\|_E)$. 
This implies for all $x\in \overline{D}$ 
that there exists a unique $z\in \overline{\textup{im}(f)}^{\left\| \cdot \right\|_E}$ such that
$\lim_{D \ni y\rightarrow x}\| f(y) - z \|_E = 0$.
\sgc{}Hence, we obtain\cgs{} that there exists $\bar{f}\colon \overline{D} \rightarrow E$ with $\sgc{}\forall\cgs{} \, x\in \overline {D}\colon$
$ \bar{f}(x)
 =
 \lim_{
    D \ni y \rightarrow x
 }
 f(y).$
This \sgc{}establishes\cgs{}~\eqref{it:eHc_lim}--\eqref{it:eHc_image}. 
Finally, observe that if $\bar{f}\colon \overline{D}\rightarrow E$ satisfies~\eqref{it:eHc_lim}, then 
$\bar{f}$ satisfies~\eqref{it:eHc_Holder}. This completes the proof of Lemma~\ref{lem:extension_Holdercont_closure}.
\end{proof}

Lemma~\ref{lem:image_in_closed_set} below provides conditions on a closed set $F$ and a random field $X$
such that $X$ is indistinguishable from a random field taking values in $F$.

\begin{lemma}\label{lem:image_in_closed_set}
Let $d\in \N$, let $D\subseteq \R^d$ be non-empty and closed,
let $(E,\left\|\cdot\right\|_{E})$ be a separable $\R$-Banach space, let $F\subseteq E$ be non-empty and closed, 
let $(\Omega,\calF,\P)$ be a probability space\sgc{},\cgs{} let $X\in \calL^0(D \times \Omega;E)$, \sgc{}assume for 
all $x\in D$ that\cgs{} $\P\big[ X(z) \in F\big] = 1$ and \sgc{}assume for all $\omega \in \Omega$\cgs{} that $D \ni x \mapsto X(x,\omega) \in E$ is continuous.
Then there exists $Y\in \calL^0(D\times \Omega;F)$ such that
\begin{enumerate}
 \item\label{it:image_in_closed_set:cont} it holds for all $\omega \in \Omega$ that the function $D \ni x \mapsto Y(x,\omega) \in F$ is continuous and
 \item\label{it:image_in_closed_set:modification} it holds that $\P\big[\{\omega \in \Omega\colon (\sgc{}\forall\cgs{} \, x\in D \colon X(x,\omega)=Y(x,\omega) )\}\big]=1$.
\end{enumerate}
\end{lemma}

\begin{proof}[Proof of Lemma~\ref{lem:image_in_closed_set}]
Throughout this proof let $f_0\in F$.
First, observe that the fact that $\R^d$ is separable and the fact that $D$ is closed 
imply that there exist
$ (x_n)_{n\in \N}  \in D^{\N} $
such that
$
  \overline{
    \{ x_n \in D \colon n \in \N \}
  }
  = D
$.
This, the closedness of $F$
and the pathwise continuity of $X$ imply that there exists $ \tilde\Omega \in \mathcal{F} $
such that $\P\big[\tilde{\Omega}\big]=1$
and 
\begin{equation}
 \tilde{\Omega} 
  = 
  \cap_{ n \in \N }
  \{
    \omega \in \Omega 
    \colon
      X( x_n ,\omega ) 
      \in 
      F
  \}
  =
  \cap_{ x \in D }
  \{
    \omega \in \Omega 
    \colon
      X( x ,\omega ) 
      \in 
      F
  \}.
\end{equation}
Let $Y\sgc{}\in\cgs{} \calL^0(D\times \Omega;F)$
satisfy for all $x\in D, \omega \in \Omega$ that
\begin{equation}
 Y(x,\omega)
 =
 \begin{cases}
  \sgc{}X(x,\omega)\cgs{}
  &
  \sgc{}\colon\cgs{}\omega \in \sgc{}\tilde{\Omega}\cgs{}
  \\
  \sgc{}f_0\cgs{}
  &
  \sgc{}\colon\cgs{}\omega \notin \sgc{}\tilde{\Omega}\cgs{}
 \end{cases}\sgc{}.\cgs{}
\end{equation}
\sgc{}Note that\cgs{} $Y$ satisfies~\eqref{it:image_in_closed_set:cont} and ~\eqref{it:image_in_closed_set:modification}.
This completes the proof of Lemma~\ref{lem:image_in_closed_set}.
\end{proof}

We close this section with a simple result regarding closed subsets of $L^p(\Omega;E)$.

\begin{lemma}\label{lem:simple_closed_subset}
Let $p\in [1,\infty)$, let $(S,\Sigma,\mu)$ be a measure space, let $(E,\left\| \cdot \right\|_E)$
be a separable $\R$-Banach space and let $F\subseteq E$ be closed.
Then the set 
\begin{equation}
\begin{aligned}
\Big\{ \xi \in L^p(S;E) 
\colon 
\mu\big[ \xi \in E\backslash F \big] = 0
\Big\}
\end{aligned}
\end{equation}
is a closed subset of $L^p(S;E)$.
\end{lemma}

\begin{proof}[Proof of Lemma~\ref{lem:simple_closed_subset}]
Note that for all $\xi_{\infty} \in \calL^p(S;E)$, $(\xi_n)_{n\in \N} \in (\calL^p(S;E))^{\N}$ with $\sgc{}\forall\cgs{} \,n\in \N\colon \mu\big[ \xi_n \in E\backslash F \big] = 0$ and
$\lim_{m\rightarrow \infty}\| \xi_{\infty} - \xi_m \|_{L^p(S;E)} =0$ it holds that
\begin{equation}\begin{aligned}
& \mu\big[ \xi_{\infty} \in E\backslash F \big]
 = 
\lim_{j\rightarrow \infty}
\mu\big[ \{\omega \in \Omega \colon \inf_{z\in F}\| \xi_{\infty}(\omega) - z \|_{E} > \tfrac{1}{j} \} \big]
\\ &\leq 
\lim_{j\rightarrow \infty} 
\limsup_{n\rightarrow \infty}
\mu\big[ \| \xi_{\infty} - \xi_n \|_E > \tfrac{1}{j} \big]
\leq 
\lim_{j\rightarrow \infty} j^p \big[
\limsup_{n\rightarrow \infty} \| \xi_{\infty} - \xi_n \|_{L^p(S;E)}^p
\big]
=
0.
\end{aligned}\end{equation}
This completes the proof 
of Lemma~\ref{lem:simple_closed_subset}.
\end{proof}

\subsection{An extension result}

Theorem~\ref{thm:partition_of_unity} below is the key ingredient of 
the extension result provided by Theorem~\ref{thm:extension_Holdercont_Rd} below.

\begin{theorem}\label{thm:partition_of_unity}
Let $d\in \N$. Then there exists \sgc{}$C\in \R$\cgs{}
such that for all $\alpha \in (0,1]$, 
every non-empty and closed $D\subsetneq \R^d$, every $\R$-Banach space $(E,\left\| \cdot \right\|_E)$
and every function $f \colon D \rightarrow E$ 
with $\| f \|_{\calC^{\alpha}(D,E)}<\infty$ there 
exist 
$(\rho_n)_{n\in\N} \in (0,\infty)^{\N}$,
$(p_n)_{n\in \N}\in (\R^d)^{\N}$, 
$(x_n)_{n\in \N} =((x_{n,1},\ldots,x_{n,d}))_{n\in \N} \in (\R^d)^{\N}$, 
$(Q_n)_{n\in \N}, (Q_n^*)_{n\in \N} \in (\mathcal{P}(\R^d))^{\N}$, 
$(\sgc{}\varphi_n\cgs{})_{n\in \N} \in (C^{\infty}(\R^d,\R))^{\N}$
and functions
$r\colon \R^d\backslash D\rightarrow (0,\infty)$,
$I \colon \R^d\backslash D \rightarrow \mathcal{P}(\N)$ and
$\bar{f}\colon \R^d \rightarrow E$
such that
\begin{enumerate}
 \item\label{it:pou_def_Q} it holds for all $n\in \N$  that 
 \begin{equation}
    Q_n 
    = 
    [x_{n,1}- \rho_n, x_{n,1}+ \rho_n]
    \times \ldots \times 
    [x_{n,d} - \rho_n, x_{n,d}+\rho_n], 
 \end{equation}
 \item\label{it:pou_cubes_cover} it holds that $\cup_{n\in\N} Q_n = \R^d \backslash D$,
 \item\label{it:pou_dist_D_scales_size_cubes} it holds for all $n\in \N$ that 
 \begin{equation}
  2 \sqrt{d} \rho_n 
  \leq 
  \textup{dist}(Q_n,D) 
  \leq  
  8 \sqrt{d} \rho_n,
 \end{equation}
 \item\label{it:pou_touching_cubes_comparable_sizes} it holds for all $n,m\in \N$ that 
 \begin{equation} 
  \sgc{}\left(\left( 
    Q_n\cap Q_m\neq \emptyset
  \right)
  \Rightarrow 
  \left( 
    \tfrac{1}{4} \rho_n  
    \leq 
    \rho_m 
    \leq 
    4 \rho_n
  \right)\right)\cgs{}
  ,
 \end{equation}
 \item\label{it:pou_Qn_has_limited_neighbours} it holds for all $n\in \N$ that 
 \begin{equation}
  \#\{ 
    j\in \N 
    \colon
    Q_j \cap Q_n \neq \emptyset
  \}
  \leq 12^d,
 \end{equation}
 \item\label{it:pou_def_Qstar} it holds for all $n\in \N$ that
 \begin{equation}
    Q_n^*
    = 
    [x_{n,1}- \tfrac{9}{8} \rho_n, x_{n,1}+ \tfrac{9}{8} \rho_n]
    \times \ldots \times 
    [x_{n,d} - \tfrac{9}{8} \rho_n, x_{n,d}+\tfrac{9}{8} \rho_n], 
 \end{equation}
\item\label{it:pou_Qstarintersects_iff_Qtouches} it holds for all $n,m\in\N$ that
\begin{equation}
\sgc{}\left(\left( 
    Q_n^*\cap Q_m \neq \emptyset
\right)
\Rightarrow 
\left(
    Q_n \cap Q_m \neq \emptyset
\right)\right)\cgs{},
\end{equation}
\item\label{it:pou_Qstarsintersect_iff_Qtouches} it holds 
for all $n,i\in\N$ with $Q_n^*\cap Q_i^*\neq \emptyset$ that there exists $m\in \N$ 
such that $Q_n\cap Q_m\neq \emptyset$ and $Q_i\cap Q_m\neq \emptyset$,
\item\label{it:pou_phi_range} it holds for all $n\in \N$ that $\textup{im}(\sgc{}\varphi_n\cgs{})\subseteq [0,1]$,
\item\label{it:pou_phi_support} it holds for all $n\in \N$, $x\in \R^d\backslash Q_n^*$ that $\sgc{}\varphi_n\cgs{}(x)=0$,
\item\label{it:pou_pou} it holds for all $x\in \R^d\backslash D$ that 
\begin{equation}
 \sum_{n\in \N} \varphi_n(x) = 1,
\end{equation}
\item\label{it:pou_phi_derivest} it holds that $\sup_{n\in \N, x\in \R^d} \textup{dist}(x,D) \| \nabla \sgc{}\varphi_n\cgs{}(x) \| \leq C$,
 \item\label{it:pou_def_r} it holds for all $x\in \R^d\backslash D$ that $\inf_{n\in \N,\, Q_n \ni x} \rho_n \in (0,\infty)$
 and $r(x)=\frac{1}{8} \inf_{n\in \N,\, Q_n \ni x} \rho_n$,
\item\label{it:pou_Br_in_Q} it holds for all $x\in \R^d\backslash D$, $i,n\in \N$ with $x\in Q_n$ and $B_{r(x)}(x)\cap Q_i^* \neq \emptyset$ that there exists $m\in \N$
such that $Q_n\cap Q_m\neq \emptyset$ and $Q_m \cap Q_i\neq \emptyset$,
\item\label{it:pou_Ixdef} it holds for all $x\in \R^d\backslash D$ 
that $I(x)= \{i\in \N \colon B_{r(x)}(x)\cap Q_i^*\neq \emptyset \}$,
\item\label{it:pou_Ixsize} it holds for all $x\in\R^d\backslash D$ 
that $\#I(x)\in \{1,\ldots,12^{2d}\}$,
\item\label{it:pou_pdef} it holds for all $n\in \N$ that $p_n\in D$ and $\textup{dist}(p_n,Q_n) = \textup{dist}(D,Q_n)$, 
\item\label{it:pou_pproperty} it holds that $\sup_{x\in \R^d\backslash D} \sup_{y\in D,\, i\in I(x)} \frac{\| p_i - y \|}{\| x-y\|} \leq 86$,
\item\label{it:pou_def_barf} it holds for all $x\in \R^d$ that $\#\{ n\in \N \colon \sgc{}\varphi_n\cgs{}(x)>0\} < \infty$ and
\begin{align}
 \bar{f}(x) 
 =
 \begin{cases}
  \sgc{}f(x)\cgs{}
  &
  \sgc{}\colon\cgs{} x\in \sgc{}D\cgs{}
  \\
  \sum_{n\in \N} \sgc{}\varphi_n\cgs{}(x) \sgc{}f(p_n)\cgs{}
  &
  \sgc{}\colon\cgs{} x\in \R^d\backslash \sgc{}D\cgs{}
 \end{cases}\sgc{},\cgs{}
\end{align}
\item\label{it:pou_barf_finite_sum} it holds for all $x\in\R^d\backslash D, z\in B_{r(x)}(x)\cap(\R^d\backslash D)$
that
\begin{equation}
 \bar{f}(z) = \sum_{n\in I(x)} \sgc{}\varphi_n\cgs{}(z) f(p_n),
\end{equation}
\item\label{it:pou_barf_diffble} it holds for all $x\in \R^d\backslash D$ that $\bar{f}$ is differentiable in $x$
and 
\begin{equation}
 \| \nabla \bar{f} (x) \|_{L(\R^d,E)} 
 \leq 
  86 \sgc{}\cdot\cgs{} 12^{2d} \sgc{}C\cgs{} \| f \|_{\calC^{\alpha}(D,E)} \left|\textup{dist}(x,D)\right|^{-1+\alpha}
\end{equation} and
\item\label{it:pou_barf_hoelder} it holds that $\| \bar{f} \|_{\calC^{\alpha}(\R^d,E)} 
\leq 
86\cdot 12^{2d} \max\{4,C \}
\| f \|_{\calC^{\alpha}(D,E)}.$
\end{enumerate}
\end{theorem}

\begin{proof}[Proof of Theorem~\ref{thm:partition_of_unity}]
First,~\cite[Theorem 1 in Section VI.1.1]{Stein1970} implies that there exist $(x_n)_{n\in \N} = ((x_{n,1},\ldots,x_{n,d}))_{n\in \N} \in (\R^d)^{\N}$, $(\rho_n)_{n\in \N}\in (0,\infty)^{\N}$ and $(Q_n)_{n\in \N}\in (\mathcal{P}(\R^d))^{\N}$
such that ~\eqref{it:pou_def_Q}--\eqref{it:pou_dist_D_scales_size_cubes} hold. 
Moreover,~\cite[Proposition 1 and Proposition 2 in Section VI.1.3]{Stein1970} imply ~\eqref{it:pou_touching_cubes_comparable_sizes} and~\eqref{it:pou_Qn_has_limited_neighbours}.
In addition,~\cite[the proof of Proposition 3 in Section VI.1.3]{Stein1970} implies that there exist $(Q_n^*)_{n\in \N}\in (\mathcal{P}(\R^d))^{\N}$
such that~\eqref{it:pou_def_Qstar} and~\eqref{it:pou_Qstarintersects_iff_Qtouches} hold. Next, observe that~\eqref{it:pou_cubes_cover} and~\eqref{it:pou_Qstarintersects_iff_Qtouches}
imply~\eqref{it:pou_Qstarsintersect_iff_Qtouches}.
In addition,~\cite[Section VI.1.3, p.170]{Stein1970} implies that there exists $(\sgc{}\varphi_n\cgs{})_{n\in \N} \in (C^{\infty}(\R^d,\R))^{\N}$ 
such that
~\eqref{it:pou_phi_range}
--\eqref{it:pou_pou} hold. 
Furthermore,~\cite[inequality
$(13)$ on p.174 in Section VI.2.2.1]{Stein1970} implies~\eqref{it:pou_phi_derivest}. 
Moreover,~\eqref{it:pou_cubes_cover} and~\eqref{it:pou_Qn_has_limited_neighbours} imply that there 
exists $r\colon \R^d\backslash D \rightarrow (0,\infty)$ such that~\eqref{it:pou_def_r} holds.
Next, observe that~\eqref{it:pou_def_Q},~\eqref{it:pou_def_Qstar} and~\eqref{it:pou_def_r} imply 
for 
all $x\in \R^d\backslash D$ that $B_{r(x)}(x) \subseteq \cap_{n\in \N, Q_n \ni x} Q_n^*$. This,~\eqref{it:pou_def_Qstar},~\eqref{it:pou_Qstarsintersect_iff_Qtouches} and~\eqref{it:pou_def_r} imply~\eqref{it:pou_Br_in_Q}.
Moreover,~\eqref{it:pou_Qn_has_limited_neighbours} implies for all $n\in \N$ that 
\begin{align}
\#\left\{i\in \N \colon 
\left( \sgc{}\exists\cgs{} \, m\in \N \colon ( Q_n \cap Q_m \neq \emptyset) \wedge  (Q_m \cap Q_i \neq \emptyset) \right) \right\} \leq 12^{2d}.\end{align}
This,~\eqref{it:pou_cubes_cover} and~\eqref{it:pou_Br_in_Q} 
imply that there exists $I\colon \R^d\backslash D \rightarrow \mathcal{P}(\N)$ such that~\eqref{it:pou_Ixdef} and~\eqref{it:pou_Ixsize} hold.  
In addition, observe that if $A,B\subseteq \R^d$ are non-empty and $A$ is closed, 
then there exists $p\in A$ such that $\textup{dist}(A,B)=\textup{dist}(p,B)$.
This and~\eqref{it:pou_def_Q} imply that there exist $(p_n)_{n\in \N}\in (\R^d)^{\N}$ such that~\eqref{it:pou_pdef} holds.
Furthermore, note that~\eqref{it:pou_def_Q},~\eqref{it:pou_dist_D_scales_size_cubes},~\eqref{it:pou_touching_cubes_comparable_sizes},~\eqref{it:pou_Br_in_Q},~\eqref{it:pou_Ixdef} and~\eqref{it:pou_pdef} imply for all $x\in \R^d\backslash D$, $y\in D$, $i\in I(x)$ that
there exist $n,m\in \N$ such that $x\in Q_n$, $Q_n\cap Q_m\neq 0$, $Q_m\cap Q_i\neq 0$ and 
\begin{equation}
\begin{aligned}
 \| p_i - y \| 
 & \leq 
 \| p_i - x \| + \| x-y\|
 \\ 
 & \leq 
 \textup{dist}(D,Q_i) 
 + 2\sqrt{d}(\rho_i + \rho_m + \rho_n)
 + \| x - y\|
 \\
 & \leq 
 2\sqrt{d}(5\rho_i + \rho_m + \rho_n)
 + \| x - y\|
 \\
 & \leq 2\sqrt{d}(5\times 4^2+4+1)\rho_n + \| x - y \| \leq 85\, \textup{dist}(Q_n,D) + \| x - y \|
 \\ & \leq 86 \| x - y \|.
\end{aligned}
\end{equation}
This implies~\eqref{it:pou_pproperty}. Moreover,~\eqref{it:pou_phi_support},~\eqref{it:pou_Ixdef} and~\eqref{it:pou_Ixsize}
imply that there exists $\bar{f}\colon \R^d \rightarrow E$ such that~\eqref{it:pou_def_barf} and~\eqref{it:pou_barf_finite_sum} hold. Next, observe 
that~\eqref{it:pou_barf_finite_sum} and the fact that $\sgc{}\varphi_n\cgs{}\in C^{\infty}(\R^d,\R)$ 
imply for all $x\in \R^d\backslash D$ that $\bar{f}$ is differentiable in $x$. In addition,~\eqref{it:pou_pou},~\eqref{it:pou_def_barf} and~\eqref{it:pou_barf_finite_sum} imply 
for all $x\in \R^d\backslash D$, $z\in B_{r(x)}(x)\cap(\R^d\backslash D)$, $y\in D$ that 
\begin{equation}\label{eq:pou_barf_grad}
 \nabla \bar{f}(z) = \sum_{i\in I(x)} \nabla \sgc{}\varphi_i\cgs{}(z) ( f(p_i) - f(y)).
\end{equation}
This,~\eqref{it:pou_phi_derivest},~\eqref{it:pou_Ixsize} and~\eqref{it:pou_pproperty} imply  
for all $x\in \R^d\backslash D$, $y\in D$ that 
\begin{equation}\label{eq:pou_barf_gradest}
\begin{aligned}
 \left\| \nabla \bar{f}(x) \right\|_{L(\R^d,E) }
 & \leq 
 C \| f \|_{\calC^{\alpha}(D,E)} \left|\textup{dist}(x,D)\right|^{-1} 
 \sum_{i\in I(x)} \| p_i - y \|^{\alpha}
 \\ 
 &  \leq 
 86 C 12^{2d} \| f \|_{\calC^{\alpha}(D,E)} \left|\textup{dist}(x,D)\right|^{-1}  \| x - y \|^{\alpha}.
\end{aligned}
\end{equation}
This completes the proof of~\eqref{it:pou_barf_diffble}.
Next, observe that~\eqref{it:pou_def_barf} implies for all $x,y\in D$ that 
\begin{equation}\label{eq:pou_barf_hoelder_xyinD}
 \| \bar{f}(x) - \bar{f}(y) \|_E
 =
 \| f(x) - f(y) \|_{E}
 \leq 
 \| f \|_{\calC^{\alpha}(D,E)} \| x - y \|^{\alpha}.
\end{equation}
In addition,~\eqref{it:pou_phi_range},~\eqref{it:pou_pou},~\eqref{it:pou_Ixsize},~\eqref{it:pou_pproperty},~\eqref{it:pou_def_barf}
and~\eqref{it:pou_barf_finite_sum} imply for all $x\in \R^d\backslash D$, $y\in D$ that 
\begin{equation}\label{eq:pou_barf_hoelder_xnotinDyinD}
\begin{aligned}
 \| \bar{f}(x) - \bar{f}(y) \|_E
 & 
 = 
 \Big\|
    \sum_{i\in I(x)}
    \sgc{}\varphi_i\cgs{}(x)(f(p_i) - f(y))
 \Big\|_E 
 \\ 
 & \leq 
 \sum_{i\in I(x)} 
    \| f \|_{\calC^{\alpha}(D,E)}
    \| p_i - y \|^{\alpha}
 \leq 86 \cdot 12^{2d} 
    \| f \|_{\calC^{\alpha}(D,E)}
    \| x - y \|^{\alpha}.
\end{aligned}
\end{equation}
Next,~\eqref{it:pou_barf_diffble} implies for all $x,y\in \R^d\backslash D$
with $\textup{dist}(\{ x + s(y-x)\colon s\in [0,1] \}, D) > \| x - y\|$
that 
\begin{equation}\label{eq:pou_barf_hoelder_xynotinD1}
 \begin{aligned}
  \| \bar{f}(x) - \bar{f}(y) \|_E
  & =
  \left\|
    \int_{0}^{1} 
       \nabla{\bar{f}}(x+ s(y-x)) (x-y)
    \,ds
  \right\|_{E}
  \\ &
  \leq 
  86 C 12^{2d} \| f \|_{\calC^{\alpha}(D,E)} \| x - y \|^{\alpha}.
 \end{aligned}
\end{equation}
Moreover,~\eqref{eq:pou_barf_hoelder_xnotinDyinD} implies 
for all $x,y\in \R^d\backslash D$, $z\in D$
with $\textup{dist}(\{ x + s(y-x)\colon s\in [0,1] \}, D) \leq \| x - y\|$
and $\textup{dist}(\{ x + s(y-x)\colon s\in [0,1] \}, z) \leq \| x- y\|$ 
that 
\begin{equation}\label{eq:pou_barf_hoelder_xynotinD2}
 \begin{aligned}
  \| \bar{f}(x) - \bar{f}(y) \|_E
  & \leq  
  \| \bar{f}(x) - \bar{f}(z) \|_E
  +
  \| \bar{f}(z) - \bar{f}(y) \|_E
  \\ &
  \leq 
  86 \cdot 12^{2d} 
    \| f \|_{\calC^{\alpha}(D,E)}
  \left( 
    \| x - z \|^{\alpha}
    +
    \| z - y \|^{\alpha}
  \right)
  \\ &
  \leq  
  192 \cdot 2^{\alpha} 12^{2d} 
    \| f \|_{\calC^{\alpha}(D,E)} \| x - y \|^{\alpha}.
 \end{aligned}
\end{equation}
Estimates~\eqref{eq:pou_barf_hoelder_xyinD},~\eqref{eq:pou_barf_hoelder_xnotinDyinD},~\eqref{eq:pou_barf_hoelder_xynotinD1} 
and~\eqref{eq:pou_barf_hoelder_xynotinD2} imply~\eqref{it:pou_barf_hoelder}. This completes the proof of Theorem~\ref{thm:partition_of_unity}.
\end{proof}

\begin{theorem}\label{thm:extension_Holdercont_Rd}
Let $d\in \N$.
Then there exists $C\in [1,\infty)$ such that for every $\alpha\in (0,1]$, every non-empty 
$D\subseteq \R^d$, every $\R$-Banach space $(E,\left\|\cdot\right\|_{E})$ 
and every $f \colon D \rightarrow E$ with $\| f \|_{\calC^{\alpha}(D,E)}<\infty$ 
there exists $\bar{f}\colon \R^d \rightarrow E$ such that
\begin{enumerate}
 \item\label{it:barf_f} it holds that $\bar{f}|_{D} = f$,
 \item\label{it:barf_im} it holds for all $x\in \overline{D}$ that 
 $ \bar{f}(x)\in \overline{\textup{im}(f)}^{\left\| \cdot \right\|_{E}}$ and
 \item\label{it:barf_hoelder} it holds that   $ \| \bar{f} \|_{\calC^{\alpha}(\R^d,E)} 
 \leq C
 \| f \|_{\calC^{\alpha}(D,E)}$.
\end{enumerate}
\end{theorem}

\begin{proof}[Proof of Theorem~\ref{thm:extension_Holdercont_Rd}]
Note that Lemma~\ref{lem:extension_Holdercont_closure} and
Theorem~\ref{thm:partition_of_unity}~\eqref{it:pou_def_barf}\sgc{},\,\cgs{}~\eqref{it:pou_barf_hoelder}
imply that there exists $C\in [1,\infty)$ such that for every $\alpha\in (0,1]$, all non-empty
$D\subsetneq \R^d$, every $\R$-Banach space $(E,\left\|\cdot\right\|_{E})$  
and all $f \colon D \rightarrow E$ with $\| f \|_{\calC^{\alpha}(D,E)}<\infty$ 
there exists $\bar{f}\colon \R^d \rightarrow E$ such that~\eqref{it:barf_f}--\eqref{it:barf_hoelder} hold.
Moreover, for every $\alpha\in (0,1]$, every $\R$-Banach space $(E,\left\|\cdot\right\|_{E})$, all $f \colon \R^d \rightarrow E$ with $\| f \|_{\calC^{\alpha}(\R^d,E)}<\infty$ and every $\bar{f}\colon \R^d \rightarrow E$ with $\bar{f}=f$ we have that~\eqref{it:barf_f}--\eqref{it:barf_hoelder} hold.
This completes the proof of Theorem~\ref{thm:extension_Holdercont_Rd}.
\end{proof}

\subsection{Theorems of Kolmogorov-Chentsov type}
Theorem~\ref{t:KolChen_rect} below states the Kolmogorov-Chentsov
continuity result in the form that we need to prove its generalization, Theorem~\ref{t:KolChen} below.

\begin{theorem}\label{t:KolChen_rect}
There exists $\Theta = (\Theta_{d,M,p,\alpha,\beta})_{d,M,p,\alpha,\beta\in \R} \colon \R^5\rightarrow \R$ such that for all $d, M\in \N$, $p\in (d,\infty)$, $\beta \in (\frac{d}{p},1]$, every separable $\R$-Banach space $(E,\left\|\cdot\right\|_{E})$,  
every probability space $(\Omega,\calF,\P)$ and \sgc{}all\cgs{} $X\in C^{\beta}([-M,M]^d,L^p(\Omega;E))$
there exists 
$Y\in \calL^0([-M,M]^d \times \Omega;E)$ such that 
\begin{enumerate} 
 \item \label{it:KolChen_cont} it holds for all $\omega \in \Omega$ that the function 
 $[-M,M]^d\ni x \mapsto Y(x,\omega) \in  E$ is continuous,
 \item \label{it:KolChen_version} it holds for all $x\in [-M,M]^d$ that $[Y(x,\cdot)]_{\P}= X(x)$ and 
 \item \label{it:KolChen_estimate} it holds for all $\alpha \in (0,\beta-\frac{d}{p})$ that
\begin{equation}
\begin{aligned}
    \E \left[
        \| Y \|^{p}_{
        \calC^{\alpha}([-M,M]^d, E) 
        }
    \right]
 & \leq 
 \Theta_{d,M,p,\alpha,\beta}
 \| X \|_{\calC^{\beta}([-M,M]^d,L^p(\Omega;E))}^p.
 \end{aligned}
\end{equation}
\end{enumerate}
\end{theorem}

\begin{proof}[Proof of Theorem~\ref{t:KolChen_rect}]
For all $m\in \N$
let $D_m= \{ k2^{-m} \colon k\in \Z \}$ and let 
$D = \cup_{n\in \N} D_n$. 
For all $m\in \N$ let $\lfloor \cdot \rfloor_{D_m} \colon \R \rightarrow \R$ be the function such that for all $x\in \R$ it holds that $\lfloor x \rfloor_{D_m} = \sup\{ y\in D_m, y\leq x\}$. For all $m,d\in \N$ let 
$\lfloor \cdot \rfloor_{D_m^d} \colon \R^d \rightarrow \R^d$ be the function such that for all $x=(x_1,\ldots,x_d)\in \R^d$ 
it holds that $\lfloor x \rfloor_{D_m^d} = (\lfloor x_1 \rfloor_{D_m},\ldots,\lfloor x_d \rfloor_{D_m} )$.
First, observe that the proof of~\cite[Theorem I.2.1]{RevuzYor1994} implies that there exists 
$\Theta = (\Theta_{d,M,p,\alpha,\beta})_{d,M,p,\alpha,\beta\in \R} \colon \R^5\rightarrow \R$ such that for all $d\in \N$, $M\in \N$, $p\in (d,\infty)$, $\beta \in (\frac{d}{p},1]$, 
$\alpha\in (0,\beta-\frac{d}{p})$, every separable $\R$-Banach space $(E,\left\|\cdot\right\|_{E})$, 
every probability space $(\Omega,\calF,\P)$  
and all $X\in C^{\beta}([-M,M]^d, L^p(\Omega;E))$ 
there exists $\tilde{X}\in \mathcal{L}^{p}(([-M,M]\cap D)^d \times \Omega; E)$
with $\sgc{}\forall\cgs{} \,x\in ([-M,M]\cap D)^d\colon [\tilde{X}(x,\cdot)]_{\P} = X(x)$ 
and 
\begin{equation}\label{eq:X_KolExtest}
\begin{aligned}
  \E \left[
   \| \tilde{X} \|_{\calC^{\alpha}(([-M,M]\cap D)^d,E)}^p
 \right] 
 &\leq 
 \Theta_{d,M,p,\alpha,\beta}
 \| X \|_{\calC^{\beta}([-M,M]^d,L^p(\Omega;E))}^p.
\end{aligned}
\end{equation}
This and Lemma~\ref{lem:extension_Holdercont_closure} 
imply for all $d, M\in \N$, $p\in (d,\infty)$, $\beta \in (\frac{d}{p},1]$, every separable $\R$-Banach space $(E,\left\|\cdot\right\|_{E})$, 
every probability space $(\Omega,\calF,\P)$  
and all $X\in C^{\beta}([-M,M]^d, L^p(\Omega;E))$ 
that
there exist $\tilde{\Omega}\in \calF$, $\tilde{X}\in \calL^p((D\cap [-M,M])^d \times \Omega; E)$ and 
$Y \colon [-M,M]^d\times \Omega \rightarrow E$
such that 
\begin{enumerate}[(a)]
 \item it holds for all $x\in (D\cap [-M,M])^d$ that $[\tilde{X}(x, \cdot)]_{\P}= X(x)$,
 \item it holds that $\P\big[\tilde{\Omega}\big]=1$,
 \item it holds for all $\omega\in \tilde{\Omega}$, 
 $\alpha\in (0,\beta-\frac{d}{p})$ that 
 $\| \tilde{X}(\cdot, \omega) \|_{\calC^{\alpha}((D\cap [-M,M])^d,E)}<\infty$,
\item it holds for all $x\in [-M,M]^d$, $\omega \in \Omega$ that
\begin{equation}
\begin{aligned}
 Y(x,\omega) 
 & =
 \begin{cases}
  \lim_{(D\cap [-M,M])^d \ni y \rightarrow x} \sgc{}\tilde{X}(y,\omega)\cgs{}
  &
  \sgc{}\colon\cgs{} \omega\in \sgc{}\tilde{\Omega}\cgs{}
  \\
  \sgc{}0\cgs{}
  &
  \sgc{}\colon\cgs{} \omega \in \Omega\backslash \sgc{}\tilde{\Omega}\cgs{}
 \end{cases}\sgc{},\cgs{}
 \\ & = 
 \begin{cases}
  \lim_{m\rightarrow \infty} \sgc{}\tilde{X}(\lfloor x \rfloor_{D_m^d},\omega)\cgs{}
  &
  \sgc{}\colon\cgs{}\omega\in \sgc{}\tilde{\Omega}\cgs{}
  \\
  \sgc{}0\cgs{}
  &
  \sgc{}\colon\cgs{}\omega \in \Omega\backslash \sgc{}\tilde{\Omega}\cgs{}
 \end{cases}\sgc{},\cgs{}
\end{aligned}
\end{equation}
\item it holds that $Y \in \calL^0([-M,M]^d \times \Omega;E)$,
\item it holds for all $\omega \in \Omega$ that the function $[-M,M]^d \ni x \mapsto Y(x,\omega) \in E$ 
is continuous,
\item it holds for all $x\in [-M,M]^d$ that
\begin{equation}
\begin{aligned} 
& \E \left[ \| Y(x) - X(x) \|^p_{E}\right] 
\leq 
\liminf_{([-M,M]\cap D)^d \ni y \rightarrow x} 
  \E \left[ \| X(y) - X(x) \|^p_{E} \right]
\\ & \leq 
 \| X \|_{\calC^{\beta}([-M,M]^d,L^p(\Omega;E))}^p \liminf_{([-M,M]\cap D)^d \ni y \rightarrow x} \| x - y \|^{\beta p} = 0,
\end{aligned}
\end{equation} 
\item it holds for all $x\in [-M,M]^d$ that $[Y(x,\cdot)]_{\P} = X(x)$,
\item it holds for all $\omega \in \Omega$, $\alpha\in (0,\beta-\frac{d}{p})$ that
\begin{equation}\label{eq:Kol_pw_est}
\| Y(\cdot,\omega) \|_{\calC^{\alpha}([-M,M]^d,E)} \leq \| \tilde{X}(\cdot,\omega) \|_{\calC^{\alpha}(([-M,M]\cap D)^d,E)}
\end{equation}
and 
\item  it holds for all $\alpha \in (0,\beta - \frac{d}{p})$ that 
\begin{equation}\label{eq:Kolchen_est}
    \E \left[ 
        \| Y \|_{\calC^{\alpha}([-M,M]^d,E)}^p
    \right]
 \leq 
 \Theta_{d,M,p,\alpha,\beta} 
 \| X \|_{\calC^{\beta}([-M,M]^d,L^p(\Omega;E))}^p
 .
\end{equation}
\end{enumerate}
This completes the proof of Theorem~\ref{t:KolChen_rect}. 
\end{proof}

\begin{theorem}\label{t:KolChen}
There exists $\Theta = (\Theta_{d,M,p,\alpha,\beta})_{d,M,p,\alpha,\beta\in \R} \colon \R^5\rightarrow \R$ such that for all $d, M\in \N$, $p\in (d,\infty)$, $\beta \in (\frac{d}{p},1]$, every non-empty 
$D\subseteq [-M,M]^d$, every separable $\R$-Banach space $(E,\left\|\cdot\right\|_{E})$, every non-empty and closed $F\subseteq E$, 
every probability space $(\Omega,\calF,\P)$ and \sgc{}all\cgs{} $X\in C^{\beta}(D, L^p(\Omega;E))$
with $\sgc{}\forall\cgs{} \,x\in D \colon \P\big[X(x)\in F\big]=1$ there exists 
$Y\in \calL^0(\overline{D} \times \Omega;F)$ such that 
\begin{enumerate} 
 \item \label{it:t:KolChen_cont} it holds for all $\omega \in \Omega$ that the function $\overline{D}\ni x\mapsto Y(x,\omega) \in F$ is continuous,
 \item \label{it:t:KolChen_version} it holds for all $x\in D$ that $[Y(x,\cdot)]_{\P}= X(x)$ and
 \item \label{it:t:KolChen_Holder} it holds for all $\alpha \in (0,\beta-\frac{d}{p})$ that
\begin{equation}
\begin{aligned}
    \E \left[ 
        \| Y \|^{p}_{
            \calC^{\alpha}(\overline{D},E)
        }
    \right]
 & \leq 
 \Theta_{d,M,p,\alpha,\beta} 
 \left\| 
    X
 \right\|_{\calC^{\beta}(D,L^p(\Omega;E))}^p.
 \end{aligned}
\end{equation}
\end{enumerate}
\end{theorem}

\begin{proof}[Proof of Theorem~\ref{t:KolChen}]
First, Theorem~\ref{thm:Bochnerspace}, 
Theorem~\ref{thm:extension_Holdercont_Rd}
and Theorem~\ref{t:KolChen_rect} 
imply that there exist $C=(C_d)_{d\in\N} \sgc{}\colon \N \rightarrow \R\cgs{}$ and
$\Theta = (\Theta_{d,M,p,\alpha,\beta})_{d,M,p,\alpha,\beta\in \R} \colon \R^5\rightarrow \R$
such that for every $d, M\in \N$, $p\in (d,\infty)$, 
$\beta \in (\frac{d}{p},1]$, every non-empty 
$D\subseteq [-M,M]^d$, every separable $\R$-Banach space $(E,\left\|\cdot\right\|_{E})$, every non-empty and closed $F\subseteq E$, 
every probability space $(\Omega,\calF,\P)$ and every $X\in C^{\beta}(D, L^p(\Omega;E))$
with~$\sgc{}\forall\cgs{} \,x\in D \colon \P\big[X(x)\in F\big]=1$ there exist
$\bar{X} \in C^{\beta}([-M,M]^d, L^p(\Omega;E))$ and
$\bar{Y}\in \calL^0([-M,M]^d \times \Omega;E)$ such that 
\begin{enumerate}[(a)]
 \item it holds that $\bar{X}|_{D} = X$,
 \item \label{it:t:Kolchen_image} it holds for all $x\in \overline{D}$ 
 that $\bar{X}(x) \in \overline{\operatorname{im}(X)}^{\left\|\cdot\right\|_{L^p(\Omega;E)}}$,
 \item it holds that $ \| \bar{X} \|_{\calC^{\beta}([-M,M]^d,L^p(\Omega;E))} 
 \leq C_d
 \| X \|_{\calC^{\beta}(D, L^p(\Omega;E))}
 $,
 \item \label{it:t:KolChen_cont_version} it holds for all $\omega \in \Omega$ that the function $[-M,M]^d\ni x\mapsto \bar{Y}(x,\omega) \in E$ is continuous,
 \item \label{it:t:KolChen_version_version1} it holds for all $x\in \overline{D}$ that $[\bar{Y}(x,\cdot)]_{\P}= \bar{X}(x)$, 
 \item \label{it:t:KolChen_version_version2} it holds for all $x\in D$  that $[\bar{Y}(x,\cdot)]_{\P}= X(x)$,
 \item \label{it:t:KolChen_Holder_version1} it holds for all $\alpha \in (0,\beta-\frac{d}{p})$ that
\begin{equation}
\begin{aligned}
    \E \left[ 
        \| \bar{Y} \|^{p}_{
            \calC^{\alpha}([-M,M]^d,E)
        }
    \right]
 & \leq 
 \Theta_{d,M,p,\alpha,\beta} 
 \left\| 
    \bar{X}
 \right\|_{\calC^{\beta}([-M,M]^d,L^p(\Omega;E))}^p
 \end{aligned}
\end{equation}
and
 \item \label{it:t:KolChen_Holder_version2} it holds for all $\alpha \in (0,\beta-\frac{d}{p})$ that
\begin{equation}
\begin{aligned}
    \E \left[ 
        \| \bar{Y} \|^{p}_{
            \calC^{\alpha}([-M,M]^d,E)
        }
    \right]
 & \leq 
 C_d \Theta_{d,M,p,\alpha,\beta} 
 \left\| 
    X
 \right\|_{\calC^{\beta}(D,L^p(\Omega;E))}^p.
 \end{aligned}
\end{equation}
\end{enumerate}
Lemma~\ref{lem:simple_closed_subset} and~\eqref{it:t:Kolchen_image} yield for all $x\in \overline{D}$ that $\P[\bar{X}(x) \in F]=1$. This,~\eqref{it:t:KolChen_cont_version},~\eqref{it:t:KolChen_version_version1},~\eqref{it:t:KolChen_version_version2},~\eqref{it:t:KolChen_Holder_version2} and Lemma~\ref{lem:image_in_closed_set} complete
the proof of Theorem~\ref{t:KolChen}.
\end{proof}

From Theorem~\ref{t:KolChen} we obtain Corollary~\ref{cor:cont_via_KolChen} below, which provides conditions under which a random field indexed by an unbounded domain allows for a continuous modification.

\begin{corollary}\label{cor:cont_via_KolChen}
Let $d\in \N$, $p\in (d,\infty)$, $\beta \in (\frac{d}{p},1]$, let
$D\subseteq \R^d$ be non-empty, let $(E,\left\|\cdot\right\|_{E})$ be a separable $\R$-Banach space, let $F\subseteq E$ be non-empty and closed, let $(\Omega,\calF,\P)$ be a probability space and let $X\colon D \times \Omega \rightarrow E$ be a random field \sgc{}satisfying for all $n\in \N$, $z\in D$ that\cgs{} $\sup_{x,y\in D\cap B_{n}(0)}\frac{\E\left[ \| X(x) - X(y) \|_E^p \right]}{\| x-y \|^{\beta p}}<\infty$\sgc{},\cgs{} $\E\big[\| X(\sgc{}z\cgs{})\|_E^p\big]<\infty$ \sgc{}and\cgs{} $\P\big[X(\sgc{}z\cgs{})\in F\big]=1$.
Then there exists 
$Y\in \calL^0(\overline{D} \times \Omega;F)$ such that  
\begin{enumerate} 
 \item \label{it:t:cont_via_KolChen:cont} it holds for all $\omega\in \Omega$ that the function $\overline{D}\ni x\mapsto Y(x,\omega) \in F$ is continuous and
 \item \label{it:t:cont_via_KolChen:YX} it holds for all $x\in D$ that $\P\big[X(x)=Y(x)\big]=1$.
\end{enumerate}
\end{corollary}

\begin{proof}[Proof of Corollary~\ref{cor:cont_via_KolChen}]
Throughout this proof let $f_0\in F$, $n_0\in \N$ satisfy $D \cap B_{n_0}(0)\neq \emptyset$ and let $(d_n)_{n\in \N}\in D^{\N}$ be dense in $D$. For \sgc{}every\cgs{} $n\in \{n_0,n_0+1,\ldots\}$ let $X_n \colon D\cap B_n(0) \rightarrow L^p(\Omega;E)$
satisfy for all $x\in D\cap B_n(0)$ that $X_n(x)=[X(x)]_{\P}$. Note that Lemma~\ref{lem:simple_Hoelder} implies
for all $n\in \{n_0,n_0+1,\ldots\}$ that $X_n \in C^{\beta}(D\cap B_n(0),L^p(\Omega;E))$.
This and Theorem~\ref{t:KolChen} imply for all $n\in \{n_0,n_0+1,\ldots\}$ that there exists
$Y_n\in \calL^0(\overline{D\cap B_{n}(0)}\times \Omega ;F)$ such that 
\begin{enumerate}[(a)] 
 \item \label{it:t:cont_via_KolChen:cont_n} it holds for all $\omega\in \Omega$ that the function 
 $\overline{D\cap B_{n}(0)} \ni x\mapsto Y_n(x,\omega) \in F$ is continuous and
 \item \label{it:t:cont_via_KolChen:YX_n} it holds  for all $x\in D\cap B_{n}(0)$  that $\P\big[X(x)=Y_n(x)\big]=1$.
 \end{enumerate}
This implies for all $n\in \{n_0,n_0+1,\ldots\}$ that
there exists a non-empty $\Omega_n\in \calF$ with $\sgc{}\forall\cgs{} \,\omega \in \Omega_n, k\in \N, x\in D\cap B_{n}(0), m\in \{n,n+1,\ldots\}\colon\P\big[\Omega_n\big]=1$, $Y_m(d_k,\omega) = X(d_k,\omega)$ and $Y_m(x,\omega)=Y_n(x,\omega)$. This
implies that there exist a non-empty $\tilde{\Omega}\in \calF$ and $Y\in \calL^0(\overline{D}\times \Omega;F)$ with $\sgc{}\forall\cgs{} \,n\in \{n_0,n_0+1,\ldots\}, x\in \overline{D}, \omega \in \tilde{\Omega}\colon\P\big[\tilde{\Omega}\big]=1$ and
\begin{equation}\label{eq:cont_via_KolChen:defY}
\begin{aligned}
 Y(x,\omega) 
 &=
 \begin{cases}
  \sgc{}Y_n(x,\omega)\cgs{}
  &
  \sgc{}\colon\cgs{} x\in \overline{D}\cap B_n(0), \omega \in \sgc{}\tilde{\Omega}\cgs{}
  \\
  f_0 
  & \sgc{}\colon\cgs{} x\in \overline{D}, \omega \in \Omega\backslash \sgc{}\tilde{\Omega}\cgs{}
 \end{cases}\sgc{}.\cgs{}
 \end{aligned}
\end{equation}
This and~\eqref{it:t:cont_via_KolChen:cont_n}--\eqref{it:t:cont_via_KolChen:YX_n}
imply~\eqref{it:t:cont_via_KolChen:cont}--\eqref{it:t:cont_via_KolChen:YX}.
This completes the proof of Corollary~\ref{cor:cont_via_KolChen}.
\end{proof}

\section{Theorems on strong completeness of SDEs}

Theorem~\ref{thm:strong_completeness_uniform} and Theorem~\ref{thm:strong_completeness_marginal} below 
imply strong completeness results for SDEs. 
In order to prove these theorems we use Corollary~\ref{cor:cont_via_KolChen}.
Roughly speaking, Theorem~\ref{thm:strong_completeness_uniform} and Theorem~\ref{thm:strong_completeness_marginal} below provide conditions under which \sgc{}there exists a $\tau \in (0,\infty)$ such that the solution\cgs{} to an SDE \sgc{}restricted to the time interval $[0,\tau]$ allows\cgs{} for a modification that is continuous with respect to the initial value. With some additional effort, as, e.g., in the proof of Theorem~2.4 in Zhang~\cite{Zhang2010}, \sgc{}it might be possible to use\cgs{} the existence of weak solutions, pathwise uniqueness of solutions, the Yamada-Watanabe theorem \sgc{}(cf., e.g., Theorem 21.14 in~\cite{Kallenberg2002})\cgs{} and 
local Lipschitz continuity estimates
on a time interval of positive length to establish
strong completeness on the whole time interval $ [0,\infty) $.\par 

Theorem~\ref{thm:strong_completeness_uniform} below implies a strong completeness result for 
SDEs that 
allow for certain strong stability estimates with respect to the supremum-norm on a time interval.

\begin{theorem}[Strong completeness based on uniform strong stability estimates]
\label{thm:strong_completeness_uniform}
Let $ d,m \in \N $, $T\in (0,\infty)$,
let $ D \subseteq \R^d $
be a non-empty set,
let 
$
  \mu \in \calL^0( \overline{D} ; \R^d )
$,
$
  \sigma \in \calL^0( \overline{D} ; \R^{ d \times m } )
$,
let 
$
  ( \Omega, \mathcal{F}, \P, ( \mathcal{F}_t )_{ t \in [0,T] } )  
$
be a filtered probability space satisfying the usual conditions, 
let 
$
  W \colon [0,T] \times \Omega \to \R^m
$
be a standard 
$ (  \mathcal{F}_t  )_{ t \in [0,T] } $-Brownian motion,
\sgc{}for every $ x \in D $ let 
$  X^x  \colon [0,T] \times \Omega \to \overline{D}  $
be an adapted stochastic process with continuous sample 
paths satisfying for all $t\in [0,T]$ that $\P\text{-a.s.}$ it holds that\cgs{} 
$\int_0^{T} \| \mu( X^x_s ) \| + \|\sigma( X^x_s )\|^2 \, ds < \infty$
and
\begin{align}\label{eq:SDEsol_fixed_initial_data1}
  X^x_t = 
  x
  + \int_0^t \mu( X^x_s ) \, ds
  +
  \int_0^t \sigma( X^x_s ) \, dW_s
\end{align}
and \sgc{}let\cgs{}
$ 
  p \in (d,\infty),
$
$
  \alpha \in (\frac{d}{p},1]
$
\sgc{}satisfy\cgs{} for \sgc{}all $z\in D$, $n\in \N$\cgs{} that
$ 
  (X^z_t)_{t\in[0,T]}
  \in 
  \calL^p(\Omega;C([0,T],\R^d))
$ 
and
\begin{equation}\label{eq:loc_lip_cont_SDEsol} 
  \sup_{x,y\in D\cap B_n(0)}
  \frac{ 
    \E\left[ 
     \sup_{t\in [0,T]}
      \| X^x_t - X^y_t \|^p 
    \right]
  }
  {
    \| x - y \|^{\alpha p}
  }
  < \infty.
\end{equation}
Then there exists a measurable function $Y\colon [0,T] \times \overline{D} \times \Omega \rightarrow \overline{D}$ such that 
\begin{enumerate}
 \item\label{it:thm:strong_completeness_uniform:cont} it holds for all $\omega \in \Omega$
 \sgc{}that $Y(\cdot,\cdot,\omega)$ is continuous,\cgs{}
 \item\label{it:thm:strong_completeness_uniform:adapted} it holds for all $x\in \overline{D}$ that $Y(\cdot,x,\cdot)$ is $(\mathcal{F}_t)_{t\in [0,T]}$-adapted and 
 \item\label{it:thm:strong_completeness_uniform:sol} it holds for all $t\in [0,T]$, $ x\in D$ that it holds \sgc{}$\P$-a.s.\ \cgs{}that
 \begin{equation}
 \int_0^T \| \mu(Y(s,x)) \|+\| \sigma(Y(s,x) )\|^2 \,ds < \infty
 \end{equation} 
 and 
 \begin{equation}
  Y(t,x) = x + \int_{0}^{t} \mu(Y(s,x))\,ds + \int_{0}^{t} \sigma(Y(s,x))\,dW_s.
 \end{equation}
\end{enumerate}

\end{theorem}

\begin{proof}[Proof of Theorem~\ref{thm:strong_completeness_uniform}]
Note that Corollary~\ref{cor:cont_via_KolChen} with $d=d$, $p=p$, $\beta=\alpha$, $D=D$, $E=C([0,T],\R^d)$, $F=C([0,T],\overline{D})$, $(\Omega,\calF,\P)=(\Omega,\calF,\P)$, $X=(D\times \Omega \ni (x,\omega) \mapsto X^x(\omega) \in C([0,T],\R^d\sgc{}))\cgs{}$  and~\eqref{eq:loc_lip_cont_SDEsol} imply that there exists a measurable $Y\colon [0,T] \times \overline{D} \times \Omega \rightarrow \overline{D}$ such that~\eqref{it:thm:strong_completeness_uniform:cont} holds and such that for all $(s,x)\in [0,T]\times D$ it holds that $\P[Y(s,x)=X^x_{s}]=1$. The \sgc{}fact that $(\Omega,\calF,\P,(\calF_t)_{t\in [0,T]})$ satisfies the usual conditions and~\eqref{eq:SDEsol_fixed_initial_data1} hence establish\cgs{}~\eqref{it:thm:strong_completeness_uniform:adapted} and~\eqref{it:thm:strong_completeness_uniform:sol}. This completes the proof of Theorem~\ref{thm:strong_completeness_uniform}.
\end{proof}

Theorem~\ref{thm:strong_completeness_marginal} below implies a strong completeness result for 
SDEs that 
allow for certain marginal strong stability estimates.

\begin{theorem}[Strong completeness based on marginal strong stability estimates]
\label{thm:strong_completeness_marginal}
Let $ d,m \in \N $, $T\in (0,\infty)$,
let $ D \subseteq \R^d $
be a non-empty set,
let 
$
  \mu \in \calL^0( \overline{D} ; \R^d )
$,
$
  \sigma \in \calL^0( \overline{D} ; \R^{ d \times m } )
$,
let 
$
  ( \Omega, \mathcal{F}, \P, ( \mathcal{F}_t )_{ t \in [0,T] } )  
$
be a filtered probability space satisfying the usual conditions,
let 
$
  W \colon [0,T] \times \Omega \to \R^m
$
be a standard 
$ (  \mathcal{F}_t  )_{ t \in [0,T] } $-Brownian motion,
\sgc{}for every $x\in D$ let 
$  X^x  \colon [0,T] \times \Omega \to \overline{D}  $
be a 
adapted
stochastic process\cgs{}
with continuous sample 
paths \sgc{}satisfying that for all $t\in [0,T]$ it holds {}$\P\text{-a.s.}$ that\cgs{}
$\int_0^{T} \| \mu( X^x_s ) \| + \|\sigma( X^x_s )\|^2 \, ds < \infty$
and
\begin{align}\label{eq:SDEsol_fixed_initial_data2}
  X^x_t = 
  x
  + \int_0^t \mu( X^x_s ) \, ds
  +
  \int_0^t \sigma( X^x_s ) \, dW_s
\end{align}
and \sgc{}let\cgs{}
$
  p \in (d + 1,\infty),
$ 
$
  \alpha \in (\frac{d+1}{p},1]
$
\sgc{}satisfy for all $r \in [0,T]$, $z\in D$, $n \in \N$\cgs{} that
$ 
  X^{z}_{r}
  \in 
  \calL^p(\Omega;\R^d)
$ 
and
\begin{equation}\label{eq:loc_lip_cont_SDEsol_marginal} 
  \sup_{(s,x),(t,y) \in [0,T]\times D\cap B_n(0)}
  \frac{ 
    \E\left[
      \| X^x_s - X^y_t \|^p 
    \right]
  }
  {
    \left| \| x - y \|^{2} + | s - t |^2 \right|^{\alpha p/2}
  }
  < \infty.
\end{equation}
Then there exists a measurable $Y\colon [0,T] \times \overline{D} \times \Omega \rightarrow \overline{D}$ such that 
\begin{enumerate}
 \item\label{it:thm:strong_completeness_marginal:cont} it holds for all $\omega \in \Omega$
 \sgc{}that $Y(\cdot,\cdot,\omega)$ is continuous,\cgs{}
 \item\label{it:thm:strong_completeness_marginal:adapted} it holds for all $x\in \overline{D}$ that $Y(\cdot,x,\cdot)$ is $(\mathcal{F}_t)_{t\in [0,T]}$-adapted and 
 \item\label{it:thm:strong_completeness_marginal:sol} it holds for all $t\in [0,T]$, $ x\in D$ that it holds \sgc{}$\P$-a.s.\ \cgs{}that 
 \begin{equation}\int_0^T \| \mu(Y(s,x)) \| + \| \sigma(Y(s,x) )\|^2 \,ds < \infty\end{equation} and 
 \begin{equation}
  Y(t,x) = x + \int_{0}^{t} \mu(Y(s,x))\,ds + \int_{0}^{t} \sigma(Y(s,x))\,dW_s.
 \end{equation}
\end{enumerate}

\end{theorem}

\begin{proof}[Proof of Theorem~\ref{thm:strong_completeness_marginal}]
Note that Corollary~\ref{cor:cont_via_KolChen} with $d=d+1$, $p=p$, $\beta=\alpha$, $D=[0,T]\times D$, $E=\R^d$, $F=\overline{D}$, $(\Omega,\calF,\P)=(\Omega,\calF,\P)$, 
$X=([0,T]\times D \ni (t,x)\mapsto X_t^x\in \overline{D})$ and~\eqref{eq:loc_lip_cont_SDEsol_marginal} 
imply that there exists a measurable $Y\colon [0,T] \times \overline{D} \times \Omega \rightarrow \overline{D}$ such that~\eqref{it:thm:strong_completeness_marginal:cont} holds and such that for all $(s,x)\in [0,T]\times D$ it holds that $\P[Y(s,x)=X^x_s]=1$. \sgc{}The\cgs{} fact that $(\Omega,\calF,\P,(\calF_t)_{t\in [0,T]})$ satisfies the usual conditions and~\eqref{eq:SDEsol_fixed_initial_data2} \sgc{}hence 
establishes\cgs{}~\eqref{it:thm:strong_completeness_uniform:adapted} and~\eqref{it:thm:strong_completeness_marginal:sol}. This completes the proof of Theorem~\ref{thm:strong_completeness_marginal}.
\end{proof}

\section{Strong completeness for SDEs
with additive noise}
\label{sec:strong_completeness_additive}

Theorem~\ref{thm:UV} and Theorem~\ref{thm:UV2} together with 
Theorem~\ref{thm:strong_completeness_marginal}
and Theorem~\ref{thm:strong_completeness_uniform} can be used
to prove strong completeness for SDEs.
In the case of additive noise, another well-known 
possibility for proving strong completeness is to 
subtract the driving noise process from the SDE and 
then to try to solve the resulting random ordinary 
differential equation (RODE) globally for every continuous 
trajectory of the driving noise process. This approach 
works, for instance, if the drift coefficient grows 
at most linearly. However, if the drift coefficient grows
super-linearly then it might happen that an SDE is strongly complete even though
the resulting
RODE can not be solved globally 
for every continuous trajectory of the driving
noise process. This is illustrated by Lemma~\ref{lemma:Drehbeispiel_stronglycomplete} and  Lemma~\ref{lemma:Drehbeispiel_cannotsubtractnoise} below. More specifically, Lemma~\ref{lemma:Drehbeispiel_stronglycomplete} below uses Theorem~\ref{thm:strong_completeness_marginal} to establish that the SODE~\eqref{eq:Drehbeispiel} below is strongly complete, whereas Lemma~\ref{lemma:Drehbeispiel_cannotsubtractnoise} shows for every $T\in (0,\infty)$ that the RODE associated to the SODE~\eqref{eq:Drehbeispiel} cannot be solved on $[0,T]$ for every 
continuous trajectory of the driving noise process.

\begin{lemma}\label{lemma:Drehbeispiel_stronglycomplete} 
Let $(\Omega,\calF,\P,(\mathcal{F}_t)_{t\in [0,\infty)})$ 
be a filtered probability space satisfying the usual conditions,
let
$
  W \colon [0,\infty) \times \Omega \to \R^2
$
be a two-dimensional standard Brownian motion and
let 
$ R = $
{\tiny $
  \left(
    \begin{array}{cc}
      0 & 1 \\
      -1 & 0
    \end{array}
  \right)
$}
$ \in \R^{ 2 \times 2 } $. 
Then there exist $T\in (0,\infty)$ and $Y\colon [0,T]\times \R^2\times \Omega \rightarrow \R^2$ such \sgc{}that\cgs{} 
\begin{enumerate} 
\item for all $\omega \in \Omega$ \sgc{}it holds that $Y(\cdot,\cdot,\omega)$\cgs{} is continuous, 
\item for all $x\in \R^2$ it holds that $Y(\cdot,x,\cdot)$ is $(\mathcal{F}_t)_{t\in [0,T]}$-adapted and
\item for all $(t,x)\in [0,T]\times \R^2$ it holds $\P$-a.s.\ that 
 $\int_0^T \| Y(s,x) \|^2 \| R Y(s,x) \| \,ds < \infty$ and 
 \begin{equation}
  Y(t,x) = x + \int_{0}^{t} \| Y(s,x) \|^2 R Y(s,x) \,ds + W_t.
 \end{equation}
\end{enumerate}
 
\end{lemma}

\begin{proof}
First, observe for every $ \rho \in [0,\infty) $
and every $ x \in \R^2 $ that
\begin{equation}
\begin{split}
&
  2 \rho \left< x , \| x\|^2 Rx \right>
  +
  \tfrac{ 1 }{ 2 }
  \operatorname{tr}\!\big(
    2 \rho I
  \big)
  +
  \tfrac{ 1 }{ 2 }
  \left\|
    2 \rho x
  \right\|^2
=
  2 \rho
  +
  2 \rho^2
  \left\|
    x
  \right\|^2
=
  2 \rho
  +
  2 \rho
  \left[
    \rho \| x \|^2
  \right]
  .
\end{split}
\end{equation}
This and, e.g.,~\cite[Lemma 2.2]{gk96b} imply that there exist $(\mathcal{F}_t)_{t\in [0,\infty)}$-adapted stochastic processes $ X^x \colon [0,\infty) \times \Omega \to \R^2 $,
$ x \in \R^2 $, with continuous sample paths such that
\begin{equation}
\label{eq:Drehbeispiel}
  X^x_t
  =
  x
  +
  \int_0^t
  \| X^x_s \|^2 R X^x_s \, ds
  +
  W_t
\end{equation}
$ \P $-a.s.\ for every
$ (t,x) \in [0,\infty) \times \R^2 $.
\sgc{}Corollary~\ref{cor:exp_mom}\cgs{} 
with $U(x) = 1+\rho \| x \|^2$
and \sgc{}$\overline{U}(x)\equiv 0$\cgs{}
hence implies for 
every $ \rho, t \in [0,\infty) $
and every $ x \in \R^2 $ that
\begin{equation}
  \E\!\left[
    \exp\!\left(
      \frac{ \rho \| X^x_t \|^2 }{ 
        e^{ 2 \rho t }
      }
    \right)
  \right]
\leq
  \exp\!\left(
    1
    - 
      e^{-2\rho t}
    +
      \rho \, \| x \|^2 
  \right)
\leq
  \exp\!\left(
    1
    +
    \rho \, \| x \|^2 
  \right)
  .
\end{equation}
This \sgc{}demonstrates\cgs{} for all $(t,x)\in [0,\infty)\times \R^2$, $\rho\in (0,\infty)$, 
$r\in [1,\infty)$
that 
\begin{equation} 
 \E\! \left[ \| X_t^x \|^r \right]
 \leq \rho^{-r/2} 
 (\lceil \tfrac{r}{2}\rceil)!
 \exp(1+r \rho t +\rho \| x \|^2).   
\end{equation}
This and~\eqref{eq:Drehbeispiel} \sgc{}ensure\cgs{} for every $(T,x)\in (0,\infty)\times \R^2$, $\rho\in (0,\infty)$, $r\in [1,\infty)$ that 
\begin{equation}\label{eq:Drehbeispiel:Holderintime}
\begin{aligned}
& \sup_{s,u\in [0,T]} 
 \left| 
  \frac{ \| X_s^x - X_u^x \|_{L^r(\Omega;\R^{2})} }{|s-u|^{1/2}}
 \right|
\\ & \leq 
 \rho^{-3/2} 
 \left|(\lceil \tfrac{3r}{2}\rceil)!\right|^{1/r}
 \exp\big(\tfrac{1}{r}(1+\rho \| x \|^2)+ 3\rho T \big)
 +
 \| W_1 \|_{L^r(\Omega;\R^{2})}.
\end{aligned}
\end{equation}
In addition, note 
for every 
$ x, y\in \R^2 $ 
with $ x \neq y $ that
\begin{equation}
\begin{split}
&	
  \frac{
    \left< x - y, \| x \|^2 R x - \| y \|^2 R y \right>
  }{
    \| x - y \|^2
  }
=
  \frac{
    \left(
      \| x \|^2 
      - 
      \| y \|^2 
    \right)
    \left< x - y, R (x + y) \right>
  }{
    2 \, \| x - y \|^2
  }
\\ & \leq 
  \frac{
    \left(
      \| x \|
      +
      \| y \|
    \right)
    \left|\left< x - y, R (x + y) \right>\right|
  }{
    2 \, \| x - y \|
  }
\leq
  \frac{
    \left(
      \| x \|
      +
      \| y \|
    \right)
    \left\| R ( x + y ) \right\|
  }{ 2 }
\leq
  \| x \|^2 + \| y \|^2.
\end{split}
\end{equation}
\sgc{}Corollary~\ref{cor:UV_squared_norm}\cgs{}
with $d=m=2$, $O=\R^2$, $\mu(x)=\| x \|^2 Rx$, $\sigma = \textup{Id}_{\R^2}$, $\alpha_0=2\rho$, $\alpha_1=\beta_0=\beta_1=c=0$, $q_1=p=\infty$, $q_0=r$, $U_0= 1+\rho\| \cdot \|^{2}$ and
$U_1=\overline{U}\equiv \sgc{}0\cgs{}$
hence implies
for every $ x, y \in \R^2 $,
$ \rho, T \in ( 0, \infty ) $
and every
$ 
  r \in 
  \big( 
    0, 
    \frac{
      \rho \, e^{ - 2 \rho T }
    }{ 
     2T
    } 
  \big)
$ 
that
\begin{equation}
\sup_{t\in[0,T]}
  \bigg|
    \frac{
      \| X^x_t - X^y_t \|_{ L^r( \Omega; \R^{2} ) }
    }{
      \| x - y \|
    }
  \bigg|
\leq
  \exp\!\left(
    \frac{ 
      2 
      +
      \rho
      \, \| x \|^2
      +
      \rho
      \, \| y \|^2
    }{ 2 r }
  \right)
  .
\end{equation}
This and~\eqref{eq:Drehbeispiel:Holderintime} imply
for every $ \rho, T, n\in ( 0, \infty ) $
and every
$ 
  r \in 
  \big( 
    0, 
    \frac{
      \rho \, e^{ - 2 \rho T }
    }{ 
     2T
    } 
  \big)
$ 
that
\begin{equation}  
 \sup_{(s,x),(u,y)\in [0,T]\times B_n(0)} 
 \Bigg|
  \frac{ \| X_s^x - X_u^y \|_{L^r(\Omega;\R^{2})} }
  {\big||s-u|^{2}+\| x - y \|^2\big|^{1/4}}
 \Bigg|
 < \infty.
\end{equation}
This,
the fact that for every $ \rho \in ( 0, \infty ) $
it holds that
$
  \lim_{ T \searrow 0 }
  \frac{
    \rho \, e^{ - 2 \rho T }
  }{
     2 T 
  }
  = \infty
$ and Theorem~\ref{thm:strong_completeness_marginal}
show that there exist $T\in (0,\infty)$ and $Y\colon [0,T]\times \R^2\times \Omega \rightarrow \R^2$ such \sgc{}that\cgs{} 
\begin{enumerate}
 \item 
for all $\omega \in \Omega$ \sgc{}it holds that $Y(\cdot,\cdot,\omega)$\cgs{} is continuous, 
\item for all $x\in \R^2$ it holds that $Y(\cdot,x,\cdot)$ is $(\mathcal{F}_t)_{t\in [0,T]}$-adapted and 
\item for all $(t,x)\in [0,T]\times \R^2$ it holds $\P$-a.s.\ that 
 $\int_0^T \| Y(s,x) \|^2 \| R Y(s,x) \| \,ds < \infty$ and 
 \begin{equation}
  Y(t,x) = x + \int_{0}^{t} \| Y(s,x) \|^2 R Y(s,x) \,ds + W_t.
 \end{equation}
\end{enumerate}
This completes the proof of Lemma~\ref{lemma:Drehbeispiel_stronglycomplete}.
\end{proof}

\begin{remark}\label{rem:Drehbeispiel_stronglycomplete} 
Let $(\Omega,\calF,\P,(\mathcal{F}_t)_{t\in [0,\infty)})$ 
be a filtered probability space satisfying the usual conditions,
let
$
  W \colon [0,\infty) \times \Omega \to \R^2
$
be a two-dimensional standard Brownian motion and
let 
$ R = $
{\tiny $
  \left(
    \begin{array}{cc}
      0 & 1 \\
      -1 & 0
    \end{array}
  \right)
$}
$ \in \R^{ 2 \times 2 } $. 
Then by adapting the arguments in the proof of Theorem~2.4 in Zhang~\cite{Zhang2010} it is possible to establish the existence of a $Y\colon [0,\infty)\times \R^2\times \Omega \rightarrow \R^2$ such \sgc{}that\cgs{}
\begin{enumerate} 
\item for all $\omega \in \Omega$ \sgc{}it holds that $ Y(\cdot,\cdot,\omega)$\cgs{} is continuous, 
\item for all $x\in \R^2$ it holds that $Y(\cdot,x,\cdot)$ is $(\mathcal{F}_t)_{t\in [0,\infty)}$-adapted and 
\item for all $(t,x)\in [0,\infty)\times \R^2$ it holds $\P$-a.s.\ that  $\int_0^t \| Y(s,x) \|^2 \| R Y(s,x) \| \,ds < \infty$ and 
 \begin{equation}
  Y(t,x) = x + \int_{0}^{t} \| Y(s,x) \|^2 R Y(s,x) \,ds + W_t.
 \end{equation}
\end{enumerate}
\end{remark}

\begin{lemma}\label{lemma:Drehbeispiel_cannotsubtractnoise} 
Let $T\in (0,\infty)$, let $x \in \R^2$ satisfy $\| x \| > 2^8 T^{-8}$, let 
$
  \Omega
  =
  \big\{
    f \in
    C( [0,T], \R^2 )
    \colon
    f(0) = 0
  \big\}
$,
let
$
  \mathcal{F}
  =
  \mathcal{B}( 
    \Omega
  )
$,
let 
$ 
  \mathbb{P} \colon \mathcal{F} \to [0,1] 
$
be the Wiener measure on 
$ ( \Omega, \mathcal{F} ) $,
let
$
  W \colon [0,T] \times \Omega \to \R^2
$
be given by
$ W_t( \omega ) = \omega(t) $
for every  
$ (t,\omega) \in [0,T]\times \Omega $,
let $(\mathcal{F}_t)_{t\in [0,T]}$
be given by $\mathcal{F}_t = \sigma(\{W_s \colon s\in [0,t]\})$
for every $t\in [0,T]$,
let 
$ R = $
{\tiny $
  \left(
    \begin{array}{cc}
      0 & 1 \\
      -1 & 0
    \end{array}
  \right)
$}
$ \in \R^{ 2 \times 2 } $ and  let $y\in C([0,T],\R^2)$.
Then there 
exists $(t,\omega) \in  [0,T]\times \Omega$ 
such that 
\begin{equation}\label{eq:Drehbeispiel_RODE}
    y(t) 
    \neq 
    x + \int_{0}^t \| y(s) + W_s(\omega) \|^2 R (y(s) + W_s(\omega)) \,ds.
\end{equation}
\end{lemma}

\begin{proof}
Let $ \tau \in ( 0 , \infty ] $
be the unique maximal extended real number
such that there exists a unique 
continuously differentiable function
$ z \colon [0,\tau) \to \R^2 $
\sgc{}which satisfies for all $t \in [0,\tau)$ that\cgs{}
\begin{equation}
\label{eq:ODE_example_blowup}
  z(0) = x
\qquad
  \text{and}
\qquad
  z'(t)
  =
  \bigg\n 
    z(t) 
    - 
    \tfrac
      { t  R z(t)}
      {\n z(t) \n^{ 3/2 } }
  \bigg\n^2
  R
  \bigg(
    z(t)
    -
    \tfrac
      { t  R z(t)}
      {\n z(t) \n^{ 3/2 } }
  \bigg)
  .
\end{equation}
Note that this ensures 
for every $ t \in [0,\tau) $
that
\begin{equation}
\begin{split}
&
  \tfrac{ \partial }{ \partial t }
  \| z(t) \|^2
=
  2 
  \left< 
    z(t), 
    z'(t)
  \right>
=
  2
  \left\n 
    z(t) 
    - 
    \tfrac
      { t  R z(t)}
      {\n z(t) \n^{ 3/2 } }
  \right\n^2
  \left< 
    z(t), 
    R
    \left(
      z(t)
      -
      \tfrac
        { t  R z(t)}
        {\n z(t) \n^{ 3/2 } }
    \right)
  \right>
\\ & =
  -
  2
      \tfrac
        { t  }
        {\n z(t) \n^{ 3/2 } }
  \left\n 
    z(t) 
    - 
    \tfrac
      { t  R z(t)}
      {\n z(t) \n^{ 3/2} }
  \right\n^2
  \left< 
    z(t), R^2 z(t)
  \right>
\\ & =
  2
  t 
  \n z(t) \n^{ 1/2 }
  \left\n 
    z(t) 
    - 
    \tfrac
      { t  R z(t)}
      {\n z(t) \n^{ 3/2 } }
  \right\n^2
\\ & =
  2
  t  
  \n z(t) \n^{ 1/2 }
  \left[
    \n z(t) \n^2
    +
    \tfrac{t^2}{\| z(t) \|}
  \right]
\geq
  2
  t  
  \n z(t) \n^{ 5 / 2 }
=
  2
  t  
  \left[
    \n z(t) \n^2 
  \right]^{ 5 / 4 }
  .
\end{split}
\end{equation}
This implies that $\tau \leq 2 \|x\|^{-1/8} < T$. 
Next let $ \omega \in \Omega $ be given by
\begin{equation}
  \omega( t )
  =
  \begin{cases}
    -
    \frac{
      t R z(t)
    }{
      \| z(t) \|^{ \frac{3}{2} }
    }
    & \colon t < \tau
  \\
    0
    & \colon t \geq \tau
  \end{cases}
\end{equation}
for every $ t \in [0,T] $
and note that~\eqref{eq:ODE_example_blowup}
implies that $z(0) = x$, $\lim_{t\nearrow \tau} \| z(t) \| = \infty$ and 
\begin{equation}
  \forall \, t \in [0,\tau) \colon
\quad
  z'(t)
  =
  \left\n 
    z(t) 
    +
    W_t( \omega )
  \right\n^2
  R
  \left(
    z(t)
    +
    W_t( \omega )
  \right)
  .
\end{equation}
Let $T_{\textup{max}}\in [0,T]$ satisfy
\begin{equation}\begin{aligned}
 &  T_{\textup{max}} = 
 \\
 &  \inf\left( 
    \left\{ 
        t\in [0,T]
        \colon 
        y(t) 
        \neq 
        x + \int_{0}^t \| y(s) + W_s(\omega) \|^2 R (y(s) + W_s(\omega)) \,ds
    \right\} 
    \cup 
    \{ T \}
   \right).\end{aligned}
\end{equation}
\sgc{}Note\cgs{} that for all $t \in [ 0, T_{\textup{max}})$
it holds that $y(t)=z(t)$. \sgc{}The\cgs{} fact that $\lim_{t \nearrow \tau} \| z(t) \| = \infty$
\sgc{}hence establishes\cgs{} that $T_{\textup{max}}\leq \tau < T$. 
This completes the proof of Lemma~\ref{lemma:Drehbeispiel_cannotsubtractnoise}.
\end{proof}

\chapter{Examples of SODEs}
\label{sec:examples_SODE}

In this section we apply
Theorem~\ref{thm:UV}
and Theorem~\ref{thm:UV2}
to several example SDEs from the literature.

\section{Setting}\label{sec:setting_SDE}
Throughout Section~\ref{sec:examples_SODE} we shall frequently use the 
following 
setting.
Let $ m, d \in \N $, let $D \subseteq 
\R^d$ be a closed set,
let 
$ ( \Omega, \mathcal{F}, \P, ( \mathcal{F}_t )_{ t \in [0,\infty) } ) $
be a filtered probability space satisfying the usual conditions, 
let
$ W \colon [0,\infty) \times \Omega \to \R^m $
be a standard 
$ ( \mathcal{F}_t )_{ t \in [0,\infty) } $-Brownian \sgc{}motion\cgs{}
and let
$
  \mu \colon D \rightarrow \R^d
$
and 
$
  \sigma \colon D \rightarrow \R^{ d \times m }
$
be locally Lipschitz continuous functions.

Finally, let
$ X^x \colon [0,\infty) \times \Omega \to D $,
$ x \in D $,
be adapted stochastic processes with 
continuous sample paths satisfying
\begin{equation}\label{eq:SDE_examples}
  X^x_t = x + \int_0^t \mu( X_s ) \, ds + \int_0^t 
\sigma( X^x_s ) \, dW_s
\end{equation}
$ \P $-a.s.\ for all $ (t,x) \in [0,\infty) \times D $.

\section{Stochastic van der Pol oscillator}
\label{ssec:stochastic.van.der.Pol.oscillator}

The van der Pol oscillator was proposed to describe stable
oscillations; see van der Pol~\cite{VanDerPol1926}.
Timmer et~al.~\cite{TimmerEtAl2000} considered a stochastic version
with additive noise acting on the velocity. Here we consider a more general
version thereof.

Assume the setting of Section~\ref{sec:setting_SDE}\sgc{}, assume $d=2$, assume $D=\R^2$,\cgs{} let 
$
  \alpha \in ( 0, \infty )
$,
$ 
  \gamma,\, \delta,\, \eta_0,\, \eta_1
  \in [0,\infty)
$,
let $ g \colon \R \to \R^{ 1 \times m } $
be a globally Lipschitz continuous function
with
$
  \| g(y) \|^2
  \leq
  \eta_0 +
  \eta_1 y^2
$
for all $ y \in \R $,
let
$
  \mu \colon \R^2 \rightarrow \R^2
$
and 
$
  \sigma \colon \R^2 \rightarrow \R^{ 2 \times m }
$
be given by
$
  \mu( x )
=
  \left(
    x_2 ,
    \left( \gamma - \alpha ( x_1 )^2 \right)
    x_2
    - \delta x_1
  \right) 
$
and
$
  \sigma( x ) u
=
  \left(
    0 ,
    g( x_1 ) u
  \right)
$
for all
$
  x = (x_1, x_2)
  \in \mathbb{R}^2
$,
$ u \in \R^m $.
Let
$ X^x = (X^{x,1},X^{x,2}) \colon [0,\infty) \times \Omega \to \R^d $,
$ x = (x_1,x_2) \in \R^2 $,
be adapted stochastic processes with 
continuous sample paths satisfying
\eqref{eq:SDE_examples}
$ \P $-a.s.\ for all $ (t,x) \in [0,\infty) \times \R^d $, i.e.,
$ X^{x,1} $ is the solution process to the SDE known as
the \emph{stochastic van der Pol oscillator}:
\begin{equation}\label{eq:van_der_Pol}
\begin{split}
&  \ddot{X}^{x,1}_t 
  =
    \big(
        \gamma
        - 
        \alpha \big(X^{x,1}_t \big)^2 
    \big) 
    \dot{X}^{x,1}_t
    - 
    \delta X_t^{x,1}
    +
    g(X_t^{x,1})\dot{W}_t,
    \quad t\in [0,\infty),
\\ & 
    X^{x,1}_0=x_1,\, \dot{X}^{x,1}_0=x_2.
\end{split}
\end{equation}

Next we define a function
$ \vartheta \colon (0,\infty) \to [0,\infty) $
by
$
  \vartheta( \rho )
:=
  \min_{ r \in ( 0, \infty ) }
  \big(
    \big[
      \tfrac{
        | \delta - 1 |
      }{
        r
      }
      +
      \eta_1 
    \big] 
    \vee
    \big[
      r \, 
      | \delta - 1 |
      +
      2 \gamma
      +
      4 \eta_0 \rho
    \big]
  \big)
$
for all $ \rho \in ( 0, \infty ) $.
If $ \rho \in (0,\infty) $
and if $ U, \overline{U} \colon \R^2 \to \R $ are given by
$
  U( x ) = \rho \, \| x \|^2
$
and
$
  \overline{U}( x )
  =
  2 \rho
  \left[
    \alpha
    -
    \rho \eta_1
  \right]
  ( x_1 x_2 )^2
$
for all $ x = ( x_1, x_2 ) \in \R^2 $,
then it holds for every
$ x = ( x_1, x_2 ) \in \R^2 $ 
that
\begin{align}
& \nonumber
  ( \mathcal{G}_{ \mu, \sigma } U)( x )
  +
  \tfrac{ 1 }{ 2 }
  \|
    \sigma( x )^* ( \nabla U )(x)
  \|^2
  +
  \overline{U}( x )
\\ \nonumber & =
  2 \rho 
  \left[ 
    \left( 1 - \delta \right) x_1 x_2
    +
    \gamma ( x_2 )^2
    - \alpha ( x_1 x_2 )^2
    +
    \tfrac{ 1 }{ 2 }
    \| g( x_1 )^* \|^2
  \right]
\\ \nonumber & \quad
  +
  2 ( \rho x_2 )^2
  \| g( x_1 )^* \|^2
  +
  \overline{U}( x )
\\ & 
\leq 
\label{eq:Van_der_Pol_nonlinearity_Uest}
  \rho \eta_0
  +
  2 \rho
  \left[
    \left( 1 - \delta \right) x_1 x_2
    +
    \tfrac{ \eta_1 }{ 2 } ( x_1 )^2
    +
    \left[
      \gamma
      +
      2 \eta_0 \rho
    \right]
    ( x_2 )^2
  \right]
\\ \nonumber & \quad 
  +
  2 \rho
  \left[
    \rho \eta_1
    - \alpha
  \right]
  ( x_1 x_2 )^2
  +
  \overline{U}( x )
\\ \nonumber & \leq
  \rho \eta_0
  +
  2 \rho
  \inf_{ r \in ( 0, \infty ) }
  \left[
    \big[
      \tfrac{
        | \delta - 1 |
      }{
        2 r
      }
      +
      \tfrac{ \eta_1 }{ 2 } 
    \big] 
    ( x_1 )^2
    +
    \big[
      \tfrac{
        r \, | \delta - 1 |
      }{
        2 
      }
      +
      \gamma
      +
      2 \eta_0 \rho
    \big]
    ( x_2 )^2
  \right]
\\ \nonumber & 
\leq
  \rho \eta_0
  +
  \vartheta( \rho ) \,
  U(x)
  .
\end{align}
Corollary~\ref{cor:exp_mom}
hence proves 
for every $ x \in \R^2 $,
$ t \in [0,\infty) $, 
$
  \rho \in (0, \frac{ \alpha }{ \eta_1 } ] \cap \R
$
that
\begin{equation}
\begin{split}
&  \E\!\left[
    \exp\!\left(
      \tfrac{ 
        \rho 
      }{
        e^{ \vartheta( \rho ) t }
      }
      \| X^x_t \|^2
      +
      \int_0^t
      \tfrac{
        2 \rho
        (
          \alpha - \rho \eta_1
        )
      }{
        e^{ \vartheta( \rho ) s }
      }
        \left|
          X^{ 1, x }_s
          X^{ 2, x }_s
        \right|^2
      ds
    \right)
  \right]
\\ 
& \leq
 \exp\!\left(
    \int_0^t
      \frac{
        \rho \eta_0
      }{ e^{ \vartheta( \rho ) s } 
      } 
    \, ds
    +
    \rho \| x \|^2
 \right)
\leq
  e^{
    \frac{ 1 }{ 4 }
    +
    \rho \| x \|^2
  }
  .
\end{split}
\end{equation}
In the next step we
observe
for every 
$ 
  x = (x_1, x_2), y = ( y_1, y_2 ) \in \R^2 
$ 
with 
$
  \sgc{}x_2\cgs{} y_2 < 0
$
that 
$
  \sgc{}( x_2 - y_2 ) \cgs{}
  ( ( x_1 )^2 x_2 - ( y_1 )^2 y_2 )
  \geq 0
$.
Consequently,
we get
for every $ x = (x_1, x_2), y = ( y_1, y_2 ) \in \R^2 $ 
with $x\neq y$
that
\begin{equation}
\label{eq:Van_der_Pol_nonlinearity}
\begin{split}
&
  -
  \frac{ 
    \alpha
    \left( x_2 - y_2 \right)
    \left[  
      ( x_1 )^2 x_2
        -
      ( y_1 )^2 y_2
    \right]
  }{
    \|
      x - y
    \|^2
  }
\\ & 
\leq
  -
  \1_{[0,\infty)}( x_2 y_2 )
  \cdot
  \frac{ 
    \alpha
    \left( x_2 - y_2 \right)
    \left[  
      ( x_1 )^2 x_2
        -
      ( y_1 )^2 y_2
    \right]
  }{
    \|
      x - y
    \|^2
  }
\\ & =
  -
  \1_{[0,\infty)}(x_2y_2)
  \cdot
  \frac{ 
    \alpha
    \left( |x_2| - |y_2| \right)
    \left[  
      ( x_1 )^2 |x_2|
        -
      ( y_1 )^2 |y_2|
    \right]
  }{
    \|
      x - y
    \|^2
  }
\\ & \leq
  -
  \1_{ [0,\infty) }( x_2 y_2 )
  \cdot
  \frac{ 
    \alpha
    \left( |x_2| - |y_2| \right)
    \left(  
      ( x_1 )^2
        -
      ( y_1 )^2 
    \right) \min(|x_2|,|y_2|)
  }{
    \|
      x - y
    \|^2
  }
\\ & \leq
  \frac{ 
    \alpha
    \left| x_2 - y_2 \right|
    \left|  
      ( x_1 )^2
        -
      ( y_1 )^2 
    \right| \min(|x_2|,|y_2|)
  }{
    \|
      x - y
    \|^2
  }
\\ & \quad
  \cdot
  \1_{ ( - \infty, 0 ] }\!\left(
    ( | x_2 | - | y_2 | )
    ( ( x_1 )^2 - ( y_1 )^2 )
  \right)
\\ & \leq
    \tfrac{\alpha}{2}
    \left( |x_1| + |y_1| \right)
    \min(|x_2|,|y_2|)
    \cdot
  \1_{(-\infty,0]}\!\left((|x_2|-|y_2|)(|x_1|-|y_1|)\right)
\\ & \leq
    \tfrac{\alpha}{2}
    \left( |x_1| + |y_1| \right)
    \min(|x_2|,|y_2|)
\leq
    \tfrac{ \alpha }{ 2 }
    \left[
      |x_1 x_2| + |y_1 y_2|
    \right] .
\end{split}
\end{equation}
This implies 
for every $ t, q, \theta \in ( 0, \infty ) $,
$ \rho \in (0, \frac{ \alpha }{ \eta_1 } ) $,
$ x = (x_1, x_2), y = ( y_1, y_2 ) \in \R^2 $
with $ x \neq y $ that
\begin{equation}
\begin{split}
&
  \tfrac{ 
    \langle
      x - y ,
      \mu( x ) - 
      \mu( y )
    \rangle
    +
    \frac{ 1 }{ 2 }
    \| 
      \sigma( x ) - \sigma( y ) 
    \|^2_{ \HS( \R^m, \R^2 ) }
  }{
    \|
      x - y
    \|^2
  }
    +
    \tfrac{
      ( \frac{ \theta }{ 2 } - 1 ) \,
      \| 
        (
          \sigma( x ) - \sigma( y )
        )^*
        ( x - y )
      \|^2
    }{
      \| x - y \|^4
    }
\\ & =
  \tfrac{ 
    \langle
      x - y ,
      \mu( x ) - 
      \mu( y )
    \rangle
    +
    \frac{ 1 }{ 2 }
    \| 
      g( x_1 )^* - g( y_1 )^* 
    \|^2
  }{
    \|
      x - y
    \|^2
  }
    +
    \tfrac{
      ( \frac{ \theta }{ 2 } - 1 ) \, ( x_2 - y_2 )^2 \,
      \| 
        g( x_1 )^* - g( y_1 )^*
      \|^2
    }{
      \| x - y \|^4
    }
\\ & \leq
  \tfrac{
    \gamma 
    +
    \sqrt{
      \gamma^2
      +
      ( \delta - 1 )^2
    }
  }{
    2
  }
  +
  \tfrac{ \alpha }{ 2 }
  \left(
    | x_1 |
    | x_2 | 
    +
    | y_1 |
    | y_2 |
  \right)
  +
  \tfrac{ 
    \frac{ 1 }{ 2 }
    | x_1 - y_1 |^2
    \| g^* \|_{ \operatorname{Lip}( \R, \R^m ) }^2
  }{
    \|
      x - y
    \|^2
  }
\\ & \quad
    +
    \tfrac{
      \max( \frac{ \theta }{ 2 } - 1 , 0 )\,
      ( x_1 - y_1 )^2 \,
      ( x_2 - y_2 )^2 \,
      \| g^* \|_{
        \operatorname{Lip}( \R, \R^m )
      }^2
    }{
      \| x - y \|^4
    }
\\ & \leq
  \tfrac{
    \gamma 
    +
    \sqrt{
      \gamma^2
      +
      ( \delta - 1 )^2
    }
  }{
    2
  }
  +
  \left[
    \tfrac{ 1 }{ 2 }
    +
    \tfrac{ 1 }{ 4 }
    \max( \tfrac{ \theta }{ 2 } - 1 , 0 )
  \right]
  \| g^* \|_{ \operatorname{Lip}( \R, \R^m ) }^2
  +
  \tfrac{ 
    q 
    \alpha^2 
    e^{ \vartheta( \rho ) t }
  }{ 
    8 \rho [ \alpha - \rho \eta_1 ]
  }
\\ & \quad
  +
  \tfrac{
    2 \rho 
    \left[ 
      \alpha - \rho \eta_1
    \right]
    \left[
      ( x_1 x_2 )^2
      +
      ( y_1 y_2 )^2
    \right]
  }{
    2 q e^{ \vartheta( \rho ) t }
  }
  .
\end{split}
\end{equation}
Combining this and~\eqref{eq:Van_der_Pol_nonlinearity_Uest}
with Corollary~\ref{cor:UV2} \sgc{}with\cgs{}
$ U_{0,0}=U_{1,0}=U_{0,1}=\overline{U}_{0,1} \equiv 0 $,
$ U_{1,1}(x) = \rho \| x \|^2$ for all $x\in \R^2 $,
$ \overline{U}_{1,1}(x_1, x_2) = 2\rho (\alpha - \rho \eta_1)(x_1 x_2)^2 $
for all $ (x_1,x_2) \in \R^2 $,
$
\alpha_{1,1} = \vartheta(\rho)
$,
$
\beta_{1,1} = \rho \eta_0
$,
$ \rho = q_{0,0} = q_{0,1} = q_{1,0} = \infty $,
$ q_{1,1} = q $, 
$ 
  c_0(t) 
  = 
  \inv{8}( p - \theta) 
    \| g^* \|_{ \operatorname{Lip}( \R, \R^m ) }^2
$,
$
  c_1(t)
  =
   \tfrac{
      \gamma 
      +
      \sqrt{
        \gamma^2
        +
        ( \delta - 1 )^2
      }
  }{
    2
  }
  +
  \tfrac{ 
    \left[
      \theta
      + 
      2 \vee ( 4- \theta )
    \right]
    \,
    \| g^* \|_{ \operatorname{Lip}( \R, \R^m ) }^2
  }{ 8 }
  +
  \tfrac{ 
      q 
      \alpha^2 
      e^{ \vartheta( \rho ) t }
  }{ 
    8 \rho [ \alpha - \rho \eta_1 ]
  }
$
for all \sgc{}$t\in [0,T]$\cgs{}
proves 
for every $ T \in ( 0, \infty ) $,
$ x = ( x_1 , x_2 ) $, 
$ y = ( y_1 , y_2 ) \in \R^2 $,
$ \rho \in ( 0, \frac{ \alpha }{ \eta_1 } ) $,
$ r, p, q \in (0,\infty] $,
$ \theta \in ( 0, p ) $
with
$
  \frac{ 1 }{ p } + \frac{ 1 }{ q } = \sgc{}\frac{ 1 }{ r }\cgs{}
$
that
\begin{equation}
\begin{split}
\label{eq:Van_der_Pol_estimate}
&
  \left\|
    \sup\nolimits_{ t \in [ 0, T ] }
      \| X^x_t - X^y_t \|
  \right\|_{
    L^r( \Omega; \R )
  }
\leq  
  \frac{ 
    \| x - y \|
  }{
    \left[ 
      1 - \theta / p 
    \right]^{
      \frac{ 1 }{ \theta }
    }
  }
  \exp\!\left(
    \tfrac{
      \left[
	\gamma 
	+
	\sqrt{
	  \gamma^2
	  +
	  ( \delta - 1 )^2
	}
      \right] T
    }{
      2
    }
  \right)
\\ & \quad \cdot
  \exp\!\left(
    \tfrac{ 
      \left[
      p
      + 
      2 \vee ( 4- \theta )
      \right]
      \,
      T
      \,
      \| g^* \|_{ \operatorname{Lip}( \R, \R^m ) }^2
    }{ 8 }
    +
    \tfrac{ 
      \int_0^T
	q 
	\alpha^2 
	e^{ \vartheta( \rho ) s }
	\,
      ds
    }{ 
      8 \rho [ \alpha - \rho \eta_1 ]
    }
	+
	\tfrac{
	  \frac{ 1 }{ 2 } 
	  +
	  \rho \| x \|^2
	  +
	  \rho \| y \|^2
	}{
	  2 q
	}
  \right)
  .
\end{split}
\end{equation}
In particular, in the case of additive noise,
i.e., $ g(y) = g(0) $ for all $ y \in \R $,
this shows
for every $ T \in ( 0, \infty ) $,
$ x = ( x_1 , x_2 ) $, 
$ y = ( y_1 , y_2 ) \in \R^2 $,
$ \rho \in ( 0, \frac{ \alpha }{ \eta_1 } ) $,
$ r \in (0,\infty] $
that
\begin{equation}
\begin{split}
&
  \left\|
    \sup\nolimits_{ t \in [ 0, T ] }
      \| X^x_t - X^y_t \|
  \right\|_{
    L^r( \Omega; \R )
  }
\\ &
\leq 
  \| x - y \|
    \exp\!\left(
  \tfrac{
    \left[
      \gamma 
      +
      \sqrt{
        \gamma^2
        +
        ( \delta - 1 )^2
      }
    \right] T
  }{
    2
  }
  +
  \tfrac{ 
    \int_0^T
      r 
      \alpha^2 
      e^{ \vartheta( \rho ) s }
      \,
    ds
  }{ 
    8 \rho [ \alpha - \rho \eta_1 ]
  }
      +
      \tfrac{
        \frac{ 1 }{ 2 } 
        +
        \rho \| x \|^2
        +
        \rho \| y \|^2
      }{
        2 r
      }
    \right)
  .
\end{split}
\end{equation}
In addition, combining~\eqref{eq:Van_der_Pol_estimate}
with Theorem~\ref{thm:strong_completeness_uniform}
proves that the stochastic Van der Pol oscillator~\eqref{eq:van_der_Pol}
is strongly complete.
Strong completeness for the SDE~\eqref{eq:van_der_Pol}
\sgc{}in the case where $g$ is globally bounded and globally Lipschitz continuous follows also from\cgs{}
Theorem 2.4 in Zhang~\cite{Zhang2010}
with $ \mathcal{W}(x) = \|x\|^2 $ 
for all $ x \in \R^2 $ and $ \alpha = 1 
$\sgc{}. Strong completeness for the SDE~\eqref{eq:van_der_Pol}
in the case where $ g $ is twice continuously differentiable with a globally 
bounded first derivative follows also with\cgs{} the method of Theorem 3.5 in 
Schenk-Hopp\'{e}~\cite{SchenkHoppe1996Deterministic}
by showing for every $ x \in \R^2 $ 
that 
$
  [0,\infty) \ni 
  t \mapsto 
  \big(
    X_t^{x,1} , X_t^{x,2} - g( X_t^{x,1} ) W_t
  \big)
  \in \R^2
$ 
is the solution of an appropriate random ordinary differential equation (RODE).

\section{Stochastic Duffing-van
der Pol oscillator}
\label{ssec:stochastic.Duffing.van.der.Pol.oscillator}

The Duffing-van der Pol equation unifies both the Duffing equation and
the van der Pol equation and has been used\sgc{}, for example,\cgs{} in 
certain flow-induced structural vibration problems
(see Holmes \&\ Rand~\cite{hr80})
and the references therein.
Schenk-Hopp\'{e}~\cite{SchenkHoppe1996Bifurcation} studied a stochastic version 
with affine-linear
noise acting on the velocity
(see also the references 
in~\cite{SchenkHoppe1996Bifurcation}).
Here we consider a more general version thereof.

Assume the setting of Section~\ref{sec:setting_SDE}, \sgc{}assume $d=2$, assume $D=\R^2$,\cgs{}
let
$
  \eta_0,\, \eta_1,\, \alpha_1 \in [0,\infty)
$,
$
  \alpha_2,\, \alpha_3
  \in (0,\infty)
$,
let $ g \colon \R \to \R^{ 1 \times m } $
be a globally Lipschitz continuous function
with
$
  \| g(y) \|^2
  \leq
  \eta_0 +
  \eta_1 y^2
$
for all $ y \in \R $,
let
$
  \mu \colon \R^2 \rightarrow \R^2
$
and 
$
  \sigma \colon \R^2 \rightarrow \R^{ 2 \times m }
$
be given by
$
  \mu( x )
=
  \left(
    x_2 ,
    \alpha_2 x_2 - \alpha_1 x_1 
    - \alpha_3 ( x_1 )^2 x_2
    - ( x_1 )^3
  \right) 
$
and
$
  \sigma( x ) u
=
  \left(
    0 ,
    g( x_1 ) u
  \right)
$
for all
$
  x = (x_1, x_2)
  \in \mathbb{R}^2
$,
$ u \in \R^m $
and let
$ X^x = ( X^{ x, 1 }, X^{ x, 2 } ) \colon [0,\infty) \times \Omega \to \R^2 $,
$ x = (x_1,x_2) \in \R^2 $,
be adapted stochastic processes with 
continuous sample paths satisfying~\eqref{eq:SDE_examples}
$ \P $-a.s.\ for all $ (t,x) \in [0,\infty) \times \R^2 $,
i.e., $X^{x,1}$ is the solution process to the SDE know as the 
\emph{stochastic Duffing-van der Pol oscillator}:
\begin{equation}\label{eq:Duffing}
\begin{split}
 \ddot{X}^{x,1}_t
 &=
 \alpha_2 \dot{X}^{x,1}_t
 -
 \alpha_1 X^{x,1}_t
 -
 \alpha_3 \big( X^{x,1}_t \big)^2 \dot{X}^{x,1}_t
 -
 \big(X^{x,1}_t\big)^3
 +
 g(X^{x,1}_t) \dot{W}_t, 
 \quad t\in [0,\infty), 
\\
 X^{x,1}_0 &= x_1,\,
 \dot{X}^{x,1}_0 = \sgc{}x_2.\cgs{}
\end{split}
\end{equation}
If $ \rho \in (0,\infty) $
and if
$
  U \colon \R^2 \to \R
$
is given by
$
  U(x_1,x_2) 
=
  \rho
  \big[
    \tfrac{ \left( x_1 \right)^4 }{ 2 }
    +
    \alpha_1
    \left( x_1 \right)^2
    +
    \left( x_2 \right)^2
  \big]
$
for all
$ x = (x_1, x_2) \in \mathbb{R}^2 $
(cf., e.g., (8) in
Holmes \&\ Rand~\cite{hr80}),
then it holds for every 
$ x = ( x_1, x_2 ) \in \R^2 $
that
\begin{equation}
\begin{split}
&
  ( \mathcal{G}_{ \mu, \sigma } U)( x )
  +
  \tfrac{ 1 }{ 2 }
  \|
    \sigma(x)^* 
    ( \nabla U)( x )
  \|^2
\\ & =
  2 \rho \alpha_1 x_1 x_2
  +
  2 \rho
  x_2 
  \left[ 
    \alpha_2 x_2 
    -
    \alpha_1 x_1 
    - \alpha_3 x_2 ( x_1 )^2 
  \right]
  +
  \rho \, \| g( x_1 ) \|^2
  +
  2 ( \rho x_2 )^2  
  \|
    g(x_1) 
  \|^2
\\ & \leq
  \rho \eta_0 
  +
  \rho 
  \left[ 
    \eta_1 ( x_1 )^{ 2 } 
    +
    2 
    \left[
      \rho \eta_0
      +
      \alpha_2 
    \right]
    ( x_2 )^2
  \right]
  +
  2 \rho 
  \left[
    \rho
    \eta_1 
    -
    \alpha_3
  \right] 
  ( x_1 x_2 )^2
\\ & \leq
  \rho \eta_0 
  +
  \rho 
    \left[ 
      \eta_1 
      -
      2 \alpha_1
      (
        \rho \eta_0
        +
        \alpha_2 
      )
    \right] 
    ( x_1 )^{ 2 } 
    +
      2 \rho
      \left(
        \rho \eta_0
        +
        \alpha_2 
      \right)
    \left[
      \alpha_1 ( x_1 )^2
      +
      ( x_2 )^2
    \right]
\\ & \quad
  +
  2 \rho 
  \left[
    \rho
    \eta_1 
    -
    \alpha_3
  \right] 
  ( x_1 x_2 )^2
\\ & \leq
  \rho \eta_0 
  +
    \tfrac{
      \rho
    \left|
      0 \vee 
      (
        \eta_1 
        -
        2 \alpha_1
        (
          \rho \eta_0
          +
          \alpha_2 
        )
      )
    \right|^2
    }{
      4
      \left(
        \rho \eta_0
        +
        \alpha_2 
      \right)
    }
    +
      2 
      \left(
        \rho \eta_0
        +
        \alpha_2 
      \right)
      U(x)
  +
  2 \rho 
  \left[
    \rho
    \eta_1 
    -
    \alpha_3
  \right] 
  ( x_1 x_2 )^2
  .
\end{split} 
\end{equation}
Corollary~\ref{cor:exp_mom}
hence proves for every 
$ t \in (0,\infty) $,
$ \rho \in [0, \frac{ \alpha_3 }{ \eta_1 } ] \cap \R $,
$ x = ( x_1, x_2) \in \R^2 $ that
\begin{equation}
\begin{split}
&
  \E\!\left[
    \exp\!\left(
    \tfrac{
      \rho \,
      \big(
        \frac{ 
          1
        }{ 2 }
        [ X^{ x, 1 }_t ]^4
        +
        \alpha_1
        [ X^{ x, 1 }_t ]^2
        +
        [ X^{ x, 2 }_t ]^2
      \big)
    }{
      \exp\!\left(
        2 t
        \left[
          \rho \eta_0
          +
          \alpha_2 
        \right]
      \right)
    }
    +
    \smallint_0^t
    \tfrac{
      2 \rho 
      \left[ 
        \alpha_3 - \rho \eta_1
      \right]
        \left[
          X^{ x, 1 }_s 
          X^{ x, 2 }_s 
        \right]^2
    }{
      \exp\!\left(
        2 s
        \left[
          \rho \eta_0
          +
          \alpha_2 
        \right]
      \right)
    }
    \, ds
    \right)
  \right]
\\ & \qquad \leq
    \exp\!\left(
      \smallint_0^t
      \tfrac{
  \rho \eta_0 
  +
    \tfrac{
      \rho
    \left|
      0 \vee 
      (
        \eta_1 
        -
        2 \alpha_1
        \left[
          \rho \eta_0
          +
          \alpha_2 
        \right]
      )
    \right|^2
    }{
      4
      \left[
        \rho \eta_0
        +
        \alpha_2 
      \right]
    }
      }{
      \exp\!\left(
        2 s
        \left[ 
          \rho \eta_0
          +
          \alpha_2 
        \right]
      \right)
      }
      \, ds
      +
      \rho 
      \Big[
        \tfrac{ 
          ( x_1 )^4
        }{ 2 }
        +
        \alpha_1 ( x_1 )^2
        +
        ( x_2 )^2
      \Big]
    \right)
\\ & \qquad \leq
   \exp\!\left(
      \tfrac{
        1 +
        \rho 
        ( \alpha_1 )^2
      }{ 2 }
        +
    \tfrac{
      t
      \rho 
      \left( 
        \eta_1 
      \right)^2
    }{
      4
      \left(
        \rho \eta_0
        +
        \alpha_2 
      \right)
    }
        +
      \rho 
        ( x_1 )^4
        +
      \rho 
        ( x_2 )^2
   \right)
  .
\end{split}
\end{equation}
In the next step we observe for every 
$ x = (x_1, x_2) , y = (y_1, y_2) \in \R^2 $
with $ x \neq y $
that
\begin{equation}
\label{eq:Duffing_nonlinearity}
\begin{split}
&
  -
  \tfrac{
    \left[ ( x_1 )^3 - ( y_1 )^3 \right]
    \left[ x_2 - y_2 \right]
  }{
    \| x - y \|^2
  }
=
  -
  \tfrac{
    \left[ ( x_1 )^2 + x_1 y_1 + ( y_1 )^2 \right]
    \left[ x_1 - y_1 \right]
    \left[ x_2 - y_2 \right]
  }{
    \left[ x_1 - y_1 \right]^2
    +
    \left[ x_2 - y_2 \right]^2
  }
\leq
  \tfrac{ 
    ( x_1 )^2 + x_1 y_1 + ( y_1 )^2 
  }{ 2 }
  .
\end{split}
\end{equation}
Combining~\eqref{eq:Van_der_Pol_nonlinearity}
and~\eqref{eq:Duffing_nonlinearity}
implies 
for every
$ \varepsilon, \theta \in ( 0, \infty ) $,
$ x = ( x_1, x_2 ) $, $ y = ( y_1, y_2 ) \in \R^2 $
with $ x \neq y $ that
\begin{equation}
\begin{split}
&
  \tfrac{ 
    \langle
      x - y ,
      \mu( x ) - 
      \mu( y )
    \rangle
    +
    \frac{ 1 }{ 2 }
    \| 
      \sigma( x ) - \sigma( y ) 
    \|^2_{ \HS( \R^m, \R^2 ) }
  }{
    \|
      x - y
    \|^2
  }
    +
    \tfrac{
      ( \frac{ \theta }{ 2 } - 1 ) \,
      \| 
        (
          \sigma( x ) - \sigma( y )
        )^*
        ( x - y )
      \|^2
    }{
      \| x - y \|^4
    }
\\ & \leq
  \tfrac{
    \alpha_2 + \sqrt{
      ( \alpha_1 - 1 )^2
      +
      ( \alpha_2 )^2
    }
  }{
    2
  }
  -
  \tfrac{
    \alpha_3
    \left[ x_2 ( x_1 )^2 - y_2 ( y_1 )^2 \right]
    \left[ x_2 - y_2 \right]
    + \left[ ( x_1 )^3 - ( y_1 )^3 \right]
    \left[ x_2 - y_2 \right]
  }{
    \| x - y \|^2
  }
\\ & \quad
  +
  \tfrac{
    \| 
      g( x_1 ) - g( y_1 ) 
    \|^2
  }{
    2 \,
    \| x - y \|^2
  }
    +
    \tfrac{
      ( \frac{ \theta }{ 2 } - 1 ) \,
      | x_2 - y_2 |^2 \,
      \| 
        g( x_1 ) - g( y_1 ) 
      \|^2
    }{
      \| x - y \|^4
    }
\\ & \leq
  \tfrac{
    \alpha_2 + \sqrt{
      ( \alpha_1 - 1 )^2
      +
      ( \alpha_2 )^2
    }
  }{
    2
  }
  +
  \tfrac{
    \max( \theta + 2 , 4 )
  }{
    8
  }
  \,
  \| 
    g^*
  \|^2_{ \operatorname{Lip}( \R, \R^m ) }
\\ & \quad 
  +
    \tfrac{ 
      \alpha_3
    }{
      2
    }
    \left[ 
      | x_1 x_2 | + | y_1 y_2 |
    \right]
  +
  \tfrac{ 
    3
    [
      ( x_1 )^2 + ( y_1 )^2 
    ]
  }{ 4 }
  \,
  .
\end{split}
\end{equation}
Corollary~\ref{cor:UV2} \sgc{}with\cgs{}
$ U_{0,0}=U_{0,1}=\overline{U}_{0,1} \equiv 0$,
$ U_{1,0}(x_1,x_2) = \rho_0 (\inv{2} x_1^4 + \alpha_1 x_1^2 + x_2^2)$,
$ U_{1,1}(x_1,x_2) = \rho_1 (\inv{2} x_1^4 + \alpha_1 x_1^2 + x_2^2)$,
$ \overline{U}_{1,1}(x_1, x_2) = 2\rho_1 (\alpha_3 - \rho_1 \eta_1)(x_1 x_2)^2$
for all $(x_1,x_2)\in \R^2$,
$
\alpha_{1,0} = 2(\rho_0 \eta_0 + \alpha_2),
$
$
\alpha_{1,1} = 2(\rho_1 \eta_0 + \alpha_2),
$
$
  \beta_{1,0} 
  = 
  \rho_0 \eta_0 
  + 
  \tfrac{
    |
      0 \vee
      (
	\eta_1 -
	2 \alpha_1 ( \rho_0 \eta_0 + \alpha_2 )
      )
    |^2
  }{
    4 ( \rho_0 \eta_0 + \alpha_2 )
  },
$
$
  \beta_{1,1} 
  = 
  \rho_1 \eta_0 
  + 
  \tfrac{
    |
      0 \vee
      (
	\eta_1 -
	2 \alpha_1 ( \rho_1 \eta_0 + \alpha_2 )
      )
    |^2
  }{
    4 ( \rho_1 \eta_0 + \alpha_2 )
  },
$
$ \rho = q_{0,0} = q_{0,1}= \infty$,
$ q_{1,0} = q_0$,
$ q_{1,1} = q_1$, 
$ 
  c_0(t) 
  = 
  \inv{8}( p - \theta) 
    \| g^* \|_{ \operatorname{Lip}( \R, \R^m ) }^2,
$
\begin{equation}
\begin{split}
  c_1(t)
  & =
    \tfrac{
	\alpha_2 + \sqrt{
	( \alpha_1 - 1 )^2
	+
	( \alpha_2 )^2
	}
    }{
      2
    }
  +
    \tfrac{
      \theta + 2 \vee ( 4 - \theta ) 
      \| 
	g^*
      \|^2_{ \operatorname{Lip}( \R, \R^m ) }
    }{
      8
    }
\\ & \quad
  +
    \tfrac{ 
      q_1 ( \alpha_3 )^2
      e^{
        2 t [ \rho_1 \eta_0 + \alpha_2 ]
      }
    }{
      8 \rho_1 
      \left[ 
   \alpha_3   -    \rho_1 \eta_1
      \right]
    }
  +
    \tfrac{ 
      9 q_0 T 
      e^{
        2 t [ \rho_0 \eta_0 + \alpha_2 ] 
      }
    }{
      16 \rho_0
    }
\end{split}
\end{equation}
for all $t\in [0,T]$
\sgc{}hence shows\cgs{} for every 
$ T \in ( 0, \infty ) $,
$ \rho_0 \in (0, \frac{ \alpha_3 }{ \eta_1 } ] \cap \R $,
$ \rho_1 \in (0, \frac{ \alpha_3 }{ \eta_1 } ) $,
$ x = ( x_1, x_2 ) $, $ y = ( y_1, y_2 ) \in \R^2 $,
$ r, p, q_0, q_1 \in ( 0, \infty ] $,
$ \theta \in (0,p) $
with
$
  \frac{ 1 }{ p } + \frac{ 1 }{ q_0 } + \frac{ 1 }{ q_1 } = \frac{ 1 }{ r }	
$
that
\begin{equation}
\begin{split}
&
  \left\|
    \sup\nolimits_{ t \in [0,T] }
      \| 
	X^x_t - X^y_t
      \|
  \right\|_{
    L^r( \Omega; \R^2 )
  }
\\ & \leq
  \tfrac{
    \left\| x - y \right\|
  }{
    [ 
      1 - 
      \theta / p
    ]^{ 
      1 / \theta 
    }
  }
  \exp\!\left(
  \tfrac{
    \big[
      \alpha_2 + \sqrt{
      ( \alpha_1 - 1 )^2
      +
      ( \alpha_2 )^2
      }
    \big] 
    T
  }{
    2
  }
  +
  \tfrac{
    [ p + 2 \vee ( 4 - \theta ) ] T
    \| 
      g^*
    \|^2_{ \operatorname{Lip}( \R, \R^m ) }
  }{
    8
  }
 \right)
\\ & \quad \cdot 
  \exp\!\left(
    \int_0^T
      \tfrac{ 
	9 q_0 T 
	e^{
	  2 s [ \rho_0 \eta_0 + \alpha_2 ] 
	}
      }{
	16 \rho_0
      }
      \,
    ds
    +
    \int_0^T
    \tfrac{ 
      q_1 ( \alpha_3 )^2
      e^{
        2 s [ \rho_1 \eta_0 + \alpha_2 ]
      }
    }{
      8 \rho_1 
      \left[ 
        \alpha_3 - \rho_1 \eta_1 
      \right]
    }
    \, ds
  \right)
\\ & \quad  \cdot
  \exp\!\left(
  \int_0^T
    \tfrac{1}{
      q_0
    }
    \left[ 
      \rho_0 \eta_0 
      + 
      \tfrac{
	|
	  0 \vee
	  (
	    \eta_1 -
	    2 \alpha_1 ( \rho_0 \eta_0 + \alpha_2 )
	  )
	|^2
      }{
	4 ( \rho_0 \eta_0 + \alpha_2 )
      }
    \right]
    ( 1 - \tfrac{ s }{ T } )
      e^{
        - 2 s [ \rho_0 \eta_0 + \alpha_2 ]
      }
    \,ds
  \right)
\\ & \quad \cdot
  \exp\!\left(
    \int_0^T
      \tfrac{1}{ q_1 }
      \left[ 
        \rho_1 \eta_0 
        + 
        \tfrac{
          |
            0 \vee
            (
              \eta_1 -
              2 \alpha_1 ( \rho_1 \eta_0 + \alpha_2 )
            )
          |^2
        }{
          4 ( \rho_1 \eta_0 + \alpha_2 )
        }
      \right]
      e^{
        - 2 s [ \rho_1 \eta_0 + \alpha_2 ]
      }
    \,
  ds
  \right)
\\ & \quad \cdot
  \exp\!\left(
    \left[ 
      \tfrac{ \rho_0 }{ q_0 }
      +
      \tfrac{ \rho_1 }{ q_1 }
    \right]
    \left[
      \tfrac{ 
          ( x_1 )^4 
          +
          ( y_1 )^4 
      }{ 4 }
      +
      \tfrac{
        \alpha_1 
        [ 
          ( x_1 )^2
          +
          ( y_1 )^2
        ]
      }{ 2 }
      +
      \tfrac{
        ( x_2 )^2
        +
        ( y_2 )^2
      }{ 2 }
    \right]
  \right)
  .
\end{split}
\end{equation}
This implies
for every 
$ T \in ( 0, \infty ) $,
$ \rho_0, \rho_1 \in (0, \frac{ \alpha_3 }{ \eta_1 } ) $,
$ x = ( x_1, x_2 ) $, $ y = ( y_1, y_2 ) \in \R^2 $,
$ r, q_0, q_1 \in ( 0, \infty ] $,
$ p \in (2,\infty] $
with
$
  \frac{ 1 }{ p } + \frac{ 1 }{ q_0 } + \frac{ 1 }{ q_1 } = \frac{ 1 }{ r }	
$
that
\begin{equation}
\begin{split}
&
    \left\|
      \sup\nolimits_{ t \in [0,T] }
        \| 
          X^x_t - X^y_t
        \|
    \right\|_{
      L^r( \Omega; \R^2 )
    }
\\ & \leq
  \tfrac{
    \left\| x - y \right\|
  }{
    \sqrt{ 
      1 - 
      2 / p
    }
  }
  \exp\!\left(
    \tfrac{ 1 }{ 2 q_0 }
    +
    \tfrac{ 1 }{ 2 q_1 }
    +
    \alpha_2 T
    +
    \tfrac{ ( \alpha_1 + 1 ) T }{ 2 }
    +
  \tfrac{
    ( p + 2 ) T
    \| 
      g^*
    \|^2_{ \operatorname{Lip}( \R, \R^m ) }
  }{
    8
  }
  \right)
\\ & \quad \cdot
  \exp\!\left(
    \smallsum\limits_{ i = 0 }^1
    \tfrac{
        2^i
        (
          \eta_1 
        )^2
        T
    }{
      8 
      q_i
      ( \rho_i \eta_0 + \alpha_2 )
    }
    +
    \tfrac{ 
      q_0 T^2 
      e^{
        2 T [ \rho_0 \eta_0 + \alpha_2 ] 
      }
    }{
      \rho_0
    }
    +
    \tfrac{ 
      q_1 T ( \alpha_3 )^2
      e^{
        2 T [ \rho_1 \eta_0 + \alpha_2 ]
      }
    }{
      8 \rho_1 
      \left[ 
        \alpha_3 - \rho_1 \eta_1 
      \right]
    }
  \right)
\\ & \quad \cdot
  \exp\!\left(
    \left[ 
      \tfrac{ \rho_0 }{ q_0 }
      +
      \tfrac{ \rho_1 }{ q_1 }
    \right]
    \left[
      \tfrac{ 
          ( x_1 )^4 
          +
          ( y_1 )^4 
      }{ 4 }
      +
      \tfrac{
        \alpha_1 
        [ 
          ( x_1 )^2
          +
          ( y_1 )^2
        ]
      }{ 2 }
      +
      \tfrac{
        ( x_2 )^2
        +
        ( y_2 )^2
      }{ 2 }
    \right]
  \right)
  .
\end{split}
\end{equation}
Combining this with Theorem~\ref{thm:strong_completeness_uniform}
proves that the 
stochastic Duffing-van der Pol oscillator~\eqref{eq:Duffing}
is strongly complete.
Strong completeness for \sgc{}the Duffing oscillator, i.e., \cgs{}the SDE~\eqref{eq:Duffing} 
\sgc{}with $\alpha_3 =0$, follows also from \cgs{}Theorem 
2.4 in \sgc{}Zhang~\cite{Zhang2010}\cgs{} with $\mathcal{W}(x_1,x_2)= \sgc{}1 + \inv{2} (x_1)^4 + 
(x_2)^2\cgs{}$ for all $x_1,x_2\in\R^2$ and $\alpha=1$. \sgc{}
Strong completeness for the SDE~\eqref{eq:Duffing} in the case where $g$ is \cgs{}a twice continuously differentiable function with globally bounded first 
\sgc{}derivative\cgs{} follows also with the method of Theorem 3.5 in 
Schenk-Hopp\'{e}~\cite{SchenkHoppe1996Deterministic}
by showing that $[0,\infty)\ni t\mapsto 
(X_t^{x,1},X_t^{x,2}-g(X_t^{x,1})W_t)\in\R^2$ is the 
solution of a random ordinary differential equation for every $x\in\R^2$.

\section{Stochastic Lorenz equation with additive noise}
\label{ssec:stochastic.Lorenz.equation}

Lorenz~\cite{Lorenz1963} suggested a three-dimensional ordinary differential 
equation
as a simplified model of convection rolls in the atmosphere.
As, for instance, in 
Zhou \&\ E~\cite{ZhouE2010},
we consider a stochastic version thereof with additive noise.

Assume the setting of Section~\ref{sec:setting_SDE}\sgc{}, assume $d=m=3$, assume $D=\R^3$,\cgs{} let
$
  \alpha_1,\, \alpha_2,\, \alpha_3,\, \beta \in [0,\infty)
$,
and
let
$ 
  A \in \R^{ 3 \times 3 } 
$,
$
  \mu \colon \R^3 \to \R^3
$
and
$
  \sigma \colon \R^3 \to \R^{ 3 \times 3 }
$
be given by
\begin{equation}
  A 
  =
  \left(
    \begin{array}{ccc}
      - \alpha_1 & \alpha_1 & 0
      \\
      \alpha_2 & - 1 & 0
      \\
      0 & 0 & - \alpha_3
    \end{array}
  \right)
  ,
\qquad
  \mu\!\left(
    \begin{array}{c}
      x_1
      \\
      x_2
      \\
      x_3
    \end{array}
  \right)
=
  A x
  +
  \left(
    \begin{array}{c}
      0
      \\
      - x_1 x_3
      \\
      x_1 x_2 
    \end{array}
  \right)
\end{equation}
and $ \sigma(x) = \sqrt{ \beta } I_{ \R^3 } $
for all
$
  x = (x_1, x_2, x_3)
  \in \mathbb{R}^3
$.
Moreover, let
\begin{equation}
 X^x = (X^{x,1},X^{x,2},X^{x,3})\colon [0,\infty) \times \Omega \to \R^3 ,
\quad x \in \R^3 ,
\end{equation}
be adapted stochastic processes with 
continuous sample paths satisfying~\eqref{eq:SDE_examples}
$ \P $-a.s.\ for all $ (t,x) \in [0,\infty) \times \R^3 $,
i.e.,
\begin{equation}\label{eq:Lorenz}
  X_t^x 
 = 
  x 
  + 
  \int_0^t 
      A X_s^x
      +
      \left(
	\begin{array}{ccc}
	0 & 0 &  0 \\
	0 & 0 & -X_s^{x,1} \\
	0 & X_s^{x,1} &  0 
	\end{array}
      \right)
      X_s^x
  \,ds
  +
  \sqrt{\beta} W_t
\end{equation}
$\P$-a.s.\ for all $(t,x)\in [0,\infty)\times \R^3$.
Thus the processes $ X^x $, $ x \in \R^d $, are 
solution processes of the
stochastic Lorenz equation
in 
Zhou \sgc{}\&\cgs{} E~\cite{ZhouE2010}.

In the next step we define a real number 
$ \vartheta \in [0,\infty) $ by
\begin{equation}
  \vartheta := 
    \min_{ r \in (0,\infty) }
    \left[
      \big[
      \tfrac{
        ( \alpha_1 + \alpha_2 )^2
      }{
        r
      }
      -
      2 \alpha_1
      \big]
    \vee
      \left[
        r - 1
      \right]
    \vee
      0
    \right]
  .
\end{equation}
If $ \rho \in [0,\infty) $ and
if $ U \colon \R^3 \to \R $ is given by
$ U( x ) = \rho \, \| x \|^2 $
for all $ x \in \R^3 $,
then it holds 
for every $ x = ( x_1, x_2, x_3 ) \in \R^3 $ that
\begin{equation}
\begin{split}
&
  ( \mathcal{G}_{ \mu, \sigma } U)( x )
  +
  \tfrac{ 1 }{ 2 }
  \| \sigma(x)^* ( \nabla U )(x) \|^2
\\ & =
  2 \rho
  \left< x, \mu( x ) \right>
  +
  3 \rho \beta 
  +
  2 \rho^2 \beta
  \| x \|^2
\\ & =
  2 \rho \alpha_1 x_1 ( x_2 - x_1 )
  +
  2 \rho x_2 ( \alpha_2 x_1 - x_2 )
  -
  2 \rho \alpha_3 ( x_3 )^2
  +
  3 \rho \beta 
  +
  2 \rho \beta U(x)
\\ & =
  2 \rho ( \alpha_1 + \alpha_2 ) x_1 x_2 
  -
  2 \rho 
  \left[ \alpha_1 ( x_1 )^2 + ( x_2 )^2 + \alpha_3 ( x_3 )^2 
  \right]
  +
  3 \rho \beta 
  +
  2 \rho \beta U(x)
\\ & \leq
  \rho 
  \cdot
  \inf_{ r \in ( 0, \infty ) }
  \left[
    \big[ 
      \tfrac{
        ( \alpha_1 + \alpha_2 )^2
      }{
        r
      }
      -
      2 \alpha_1 
    \big] 
    ( x_1 )^2 
    + 
    \left(
      r - 1
    \right) 
    ( x_2 )^2 
    - 
    2 \alpha_3 ( x_3 )^2 
  \right]
  +
  3 \rho \beta 
  +
  2 \rho \beta U(x)
\\ & \leq
  3 \rho \beta 
  +
  \left[ 
    2 \rho \beta 
    +
    \inf_{ r \in (0,\infty) }
    \left[
      \big[
      \tfrac{
        ( \alpha_1 + \alpha_2 )^2
      }{
        r
      }
      -
      2 \alpha_1
      \big]
    \vee
      \left[
        r - 1
      \right]
    \vee
      \left[
        - 2 \alpha_3
      \right]
    \right]
  \right] 
  U(x)
\\ & \leq
  3 \rho \beta
  +
  \left[ 2 \rho \beta + \vartheta \right] U(x) .
\end{split}
\end{equation}
Hence, Corollary~\ref{cor:exp_mom}
implies for \sgc{}all $ x \in \R^3 $, $ t, \rho \in [0,\infty) $\cgs{}
that
\begin{equation}
\begin{split}
&
  \E\!\left[
    \exp\!\left(
      \tfrac{ 
        \rho 
      }{
        e^{ ( 2 \rho \beta + \vartheta ) t }
      }
      \| X^x_t \|^2
    \right)
  \right]
\leq
  \exp\!\left(
    \smallint\nolimits_0^t
      \tfrac{
        3 \rho \beta 
      }{ e^{ ( 2 \rho \beta + \vartheta ) s } 
      } 
    \, ds
    +
    \rho \, \| x \|^2
  \right)
\leq
  \exp\!\left(
    \tfrac{ 3 }{ 2 }
    +
    \rho \, \| x \|^2
  \right)
  .
\end{split}
\end{equation}
Next we apply Corollary~\ref{cor:UV2}.
For this observe 
for every 
$ \theta \in (0,\infty] $
and every
$ x = (x_1, x_2, x_3), y = (y_1, y_2, y_3) \in \R^3 $ 
with $ x \neq y $ 
that
\begin{align}
\nonumber
&
  \tfrac{ 
    \langle
      x - y ,
      \mu( x ) - 
      \mu( y )
    \rangle
    +
    \frac{ 1 }{ 2 }
    \| 
      \sigma( x ) - \sigma( y ) 
    \|^2_{ \HS( \R^3 ) }
  }{
    \|
      x - y
    \|^2
  }
    +
    \tfrac{
      ( \frac{ \theta }{ 2 } - 1 ) \,
      \| 
        (
          \sigma( x ) - \sigma( y )
        )^*
        ( x - y )
      \|^2
    }{
      \| x - y \|^4
    }
\\ \nonumber & =
  \tfrac{ 
    \langle
      x - y ,
      \mu( x ) - 
      \mu( y )
    \rangle
  }{
    \|
      x - y
    \|^2
  }
\\ \nonumber & \leq
  \tfrac{
    \max( \text{spectrum}( A + A^* ) )
  }{ 
    2 
  }
  +
  \tfrac{ 
    \left( x_3 - y_3 \right)
    \left( 
      x_1 x_2 - y_1 y_2
    \right)
    -
    \left( x_2 - y_2 \right)
    \left( 
      x_1 x_3 - y_1 y_3
    \right)
  }{
    \|
      x - y
    \|^2
  }
\\ & =
  \tfrac{
    \max( \text{spectrum}( A + A^* ) )
  }{ 
    2 
  }
  +
  \tfrac{ 
    \left( x_1 - y_1 \right) 
    \left( 
      y_2 x_3 - x_2 y_3 
    \right)
  }{
    \|
      x - y
    \|^2
  }
\\ \nonumber & =
  \tfrac{
    \max( \text{spectrum}( A + A^* ) )
  }{ 
    2 
  }
  +
  \tfrac{ 
    \left( x_1 - y_1 \right) 
    \left[
      \left(
        y_2 + x_2
      \right)
      \left( x_3 - y_3 \right) 
      -
      \left( x_2 - y_2 \right)
      \left(
        y_3 + x_3
      \right)
    \right]
  }{
    2 \,
    \|
      x - y
    \|^2
  }
\\ \nonumber & \leq
  \tfrac{
    \max( \text{spectrum}( A + A^* ) )
  }{ 
    2 
  }
  +
  \tfrac{
    | x_2 | 
    +
    | x_3 |
    +
    | y_2 | 
    +
    | y_3 |
  }{ 
    4
  }
  .
\end{align}
The estimate 
$
  a \leq \frac{ \delta }{ 4 } + \frac{ a^2 }{ \delta }
$
for all $ a \in \R $,
$ \delta \in (0,\infty) $
hence proves
for every 
$ x = (x_1, x_2, x_3), y = (y_1, y_2, y_3) \in \R^3 $ 
with $ x \neq y $ 
and every $ r, t, T, \rho \in ( 0, \infty ) $,
$ \theta \in (0,\infty] $
that
\begin{equation}
\begin{split}
\label{eq:estimate.Lorenz.equation}
&
  \tfrac{ 
    \langle
      x - y ,
      \mu( x ) - 
      \mu( y )
    \rangle
    +
    \frac{ 1 }{ 2 }
    \| 
      \sigma( x ) - \sigma( y ) 
    \|^2_{ \HS( \R^3 ) }
  }{
    \|
      x - y
    \|^2
  }
    +
    \tfrac{
      ( \frac{ \theta }{ 2 } - 1 ) \,
      \| 
        (
          \sigma( x ) - \sigma( y )
        )^*
        ( x - y )
      \|^2
    }{
      \| x - y \|^4
    }
\\ & \leq
  \tfrac{
    \max( \text{spectrum}( A + A^* ) )
  }{ 
    2 
  }
  +
  \tfrac{ 1 }{ 4 }
  \cdot
  \tfrac{ 
    r T e^{ ( 2 \rho \beta + \vartheta ) t } 
  }{ 
    8 \rho  
  }
  +
  \tfrac{
    8 \rho
  }{
    r T e^{ ( 2 \rho \beta + \vartheta ) t } 
  }
  \cdot
  \tfrac{
    | x_2 |^2 
    +
    | x_3 |^2
    +
    | y_2 |^2
    +
    | y_3 |^2
  }{ 
    16
  }
\\ & \leq
  \tfrac{
    \max( \text{spectrum}( A + A^* ) )
  }{ 
    2 
  }
  +
  \tfrac{ 
    r T e^{ ( 2 \rho \beta + \vartheta ) t } 
  }{ 
    32 \rho
  }
  +
  \tfrac{
    \rho \left[ \| x \|^2 + \| y \|^2 \right]
  }{ 
    2 r T e^{ ( 2 \rho \beta + \vartheta ) t } 
  }
  .
\end{split}
\end{equation}
Corollary~\ref{cor:UV2}
hence implies 
for every 
$ T, r, \rho \in ( 0, \infty) $
and every $ x, y \in \R^3 $ that
\begin{equation}
\label{eq:Lorenz_stability}
\begin{split}
&
  \left\|
    \sup\nolimits_{ t \in [0,T] }
    \|
      X^x_t - X^y_t
    \|
  \right\|_{
      L^r( \Omega; \R^3 )
    }
\\ & 
\leq
  \| x - y \|
  \exp\!\left(
    \tfrac{
      \max( \text{spectrum}( A + A^* ) ) T
    }{ 
      2 
    }
    +
    \tfrac{ 
      r T^2 e^{ ( 2 \rho \beta + \vartheta ) T } 
    }{ 
      32 \rho
    }
    +
    \tfrac{
      3 +
      \rho \,
      \| x \|^2 + 
      \rho \, \| y \|^2 
    }{
      2 r
    }
  \right)
  .
\end{split}
\end{equation}
Combining this with Theorem~\ref{thm:strong_completeness_uniform}
ensures that the stochastic Lorenz equation~\eqref{eq:Lorenz} is strongly 
complete.
In the same way as above strong stability estimates of the form 
\eqref{eq:Lorenz_stability} and strong completeness can be proved
if the diffusion coefficient is not necessarily constant as in~\eqref{eq:Lorenz}
but globally bounded and globally Lipschitz continuous.
Strong completeness
for the SDE~\eqref{eq:Lorenz}
follows also from\sgc{}
Theorem 2.4
in Zhang~\cite{Zhang2010} and inequality~\eqref{eq:estimate.Lorenz.equation}\cgs{}.
If the diffusion coefficient is linear and if $m=1$, then strong completeness
follows in the case $\alpha_2=\alpha_1$ from Theorem 4.1 in 
Schmalfu\ss~\cite{Schmalfuss1997}.
If the diffusion coefficient is merely globally Lipschitz continuous but
not globally bounded, then it is still an \emph{open question}
whether strong stability estimates of the form~\eqref{eq:Lorenz_stability}
do hold 
(see also Section~2 in Hairer et al.~\cite{hhj12} for a counterexample with a 
related drift coefficient
function and a linear diffusion coefficient function)
and also
whether the SDE \eqref{eq:Lorenz} is strongly complete
if $\sigma$ is non-linear or if 
$m>1$.
Another way for establishing
strong completeness for the stochastic Lorenz equation~\eqref{eq:Lorenz}
in the case of additive noise is to subtract the noise
process and then to solve the resulting random
ordinary differential equations for 
every continuous trajectory of the driving noise process.

\section{Langevin dynamics}
\label{ssec:Langevin.dynamics}

Assume the setting of Section~\ref{sec:setting_SDE}\sgc{}, assume $d=2m$, assume $D=\R^{2m}$,\cgs{} let
$ \gamma,\, \varepsilon \in (0,\infty) $,
$ U \in C^2( \R^m, \R ) $,
let $ \mu \colon \R^{ 2 m } \to \R^{ 2 m } $
and
$ \sigma \colon \R^{ 2 m } \to \R^{ ( 2 m ) \times m } $
be given by
$
  \mu( x )
  =
  ( x_2, - ( \nabla U )( x_1 ) - \gamma x_2 )
$
and
$
  \sigma( x ) u
  = ( 0, \sqrt{ \varepsilon } u )
$
for all $ x = ( x_1, x_2 ) \in \R^{ 2 m } $, $ u \in \R^m $
and let
$ X^x \colon [0,\infty) \times \Omega \to \R^{ 2 m } $,
$ x = (x_1,x_2) \in \R^{ 2 m } $, be adapted stochastic processes 
with continuous sample paths satisfying~\eqref{eq:SDE_examples}
$ \P $-a.s.\ for all $ (t, x) \in [0,\infty) \times \R^{ 2 m } $,
i.e., $X^{x,1}$ is a solution process to the SDE known as the 
\emph{stochastic Langevin equation}, 
a well-known model for the dynamics of a molecular system:
\begin{equation}\label{eq:Langevin_dynamics}
  \ddot{X}^{x,1}_t 
 =
  - 
  (\nabla U)(X^{x,1}_t)
  -
  \gamma \dot{X}^{x,1}_t
  +
  \sqrt{\varepsilon} \dot{W}_t,
  \quad t \in [0,\infty),
  \qquad X^{x,1}_0 = x_1,\, \dot{X}^{x,1}_0 = x_2.
\end{equation}

Next observe that if 
$ \rho \in [0,\infty) $
and if
$ U_0 \colon \R^{ 2 m } \to \R $
is given by
$ U_0( x ) = \rho \, U( x_1 ) + \tfrac{ \rho }{ 2 } \, \| x_2 \|^2 $
for all $ x = ( x_1, x_2 ) \in \R^{ 2 m } $,
then it holds for every
$ x = ( x_1, x_2 ) \in \R^{ 2 m } $
that
\begin{equation}
\label{eq:Langevin_dynamics_expmom}
\begin{split}
&
  ( \mathcal{G}_{ \mu, \sigma } U_0 )( x )
  +
  \tfrac{ 
    1
  }{ 2 }
    \| 
      \sigma( x )^*
      ( \nabla U_0 )( x )	
    \|^2
  =
  \tfrac{ \rho \varepsilon m }{ 2 }
  +
  \rho
  \left[
    \tfrac{ \rho \varepsilon }{ 2 }
    -
    \gamma
  \right]
  \| x_2 \|^2
  .
\end{split}
\end{equation}
Corollary~\ref{cor:exp_mom}
hence implies for every 
$ x = ( x_1 , x_2 ) \in \R^{ 2 m } $,
$ t, \rho \in [0,\infty) $
that
\begin{equation}
\begin{split}
&  \E\!\left[
    \exp\!\left(
      \rho \, U( X^{ x, 1 }_t )
      +
      \tfrac{ 
        \rho 
        \,
        \|
          X^{ x, 2 }_t
        \|^2
      }{ 2 }
      +
      \int_0^t
        \rho 
        \left[
          \gamma 
          -
          \tfrac{ \rho \varepsilon }{ 2 }
        \right]
        \left\|
          X^{ x, 2 }_s
        \right\|^2
      ds
    \right)
  \right]
\\ & 
\leq
 \exp\!\left(
      \tfrac{
        \rho \varepsilon m t
      }{ 
        2
      }
      +
      \rho \, U( x_1 )
      +
      \tfrac{ \rho }{ 2 } 
      \left\| x_2 \right\|^2
 \right)
  \, .
\end{split}
\end{equation}
Combining~\eqref{eq:Langevin_dynamics_expmom}
and Corollary~\ref{cor:UV2} shows 
that if there exist
$ c\in[0,\infty)$,
$ \rho \in [0, \tfrac{ 2 \gamma }{ \varepsilon } ]$\sgc{},\cgs{} $r,\, T \in 
(0,\infty)$ such that for all $x,\,y\in \R^m$ \sgc{}with\cgs{} $x\neq y$ it holds that
\begin{equation}
\label{eq:assumption_Langevin}
  \tfrac{ 
    \|
      ( \nabla U )( x )
      -
      ( \nabla U )( y )
    \|^2
  }{
    2 
    \left\|
      x - y
    \right\|^2
  }
\leq
  c
  +
    \tfrac{
      \rho U( x )
      +
      \rho U( y )
    }{
      2 r
      T
    }
  ,
\end{equation}
then it holds for every
$ x = ( x_1, x_2 ), y = ( y_1, y_2 ) \in \R^{ 2 m } $
that
\begin{equation}  \begin{split}
  &\left\|
    \sup\nolimits_{ t \in [ 0, T ] }
      \| X^x_t - X^y_t \|
  \right\|_{
    L^r( \Omega; \R )
  }
  \\&
\leq  
  \| x - y \|
  \exp\!\left(
      \left[ 
        c + 1 +
        \tfrac{ \rho \varepsilon m }{ 4 r }
      \right] T 
      +
      \tfrac{
        \rho 
        U( x_1 ) 
        +
        \rho U( y_1 ) 
      }{
        2 r
      }
      +
      \tfrac{
        \rho 
        \|x_2\|^2
        +
        \rho \|y_2\|^2
      }{
        4 r
      }
 \right) 
  .
\end{split}     \end{equation}
This and Theorem~\ref{thm:strong_completeness_uniform} imply that
if
\begin{equation}
  \exists \, c \in [0,\infty)
  \colon
\qquad
  \sup\nolimits_{ x, y \in \R^m }
  \left[
  \tfrac{ 
    \|
      ( \nabla U )( x )
      -
      ( \nabla U )( y )
    \|
  }{
    2 
    \left\|
      x - y
    \right\|^2
  }
  -
    c
      U( x )
      -
    c
      U( y )
  \right]
  < \infty
  ,
\end{equation}
then the SDE~\eqref{eq:Langevin_dynamics} is
strongly complete.
Strong completeness
for the SDE~\eqref{eq:Langevin_dynamics}
follows \sgc{}also\cgs{} from 
\sgc{}Theorem 2.4
in Zhang~\cite{Zhang2010} and  inequality~\eqref{eq:Langevin_dynamics_expmom}\cgs{}.
Let us point out that
even in the case of SDEs with additive noise
such as~\eqref{eq:Langevin_dynamics}
strong completeness is not clear in general;
see Subsection~\ref{sec:strong_completeness_additive}
above for details.

\section{Brownian dynamics (Over-damped Langevin dynamics)}
\label{sec:overdamped_Langevin}
\label{ssec:overdamped.Langevin.dynamics}

\emph{Brownian dynamics} is a simplified version of Langevin dynamics
in the limit of no average acceleration, and models the positions
of molecules in a potential
(see, for instance,
Section 2.1 in Beskos \&\ Stuart~\cite{BeskosStuart2009}).

Assume the setting of Section~\ref{sec:setting_SDE}\sgc{}, assume $d=m$, assume $D=\R^d$,\cgs{} and let $ \varepsilon \in (0,\infty) $,
$ \eta_0 \in [0,\infty) $, $\eta_1\in\R$,
$ \eta_2 \in [ 0, \frac{ 2 }{ \varepsilon } ] $,
$ U \in C^2( \R^d, [0,\infty) ) $ 
satisfy
\begin{equation}
\label{eq:eta0eta1eta2}
  \forall \, x \in \R^d
  \colon
\qquad
  ( \Delta U)( x )
\leq
  \eta_0
  +
  2 \eta_1
  U(x)
  +
  \eta_2
  \left\|
    ( \nabla U )( x )
  \right\|^2
  ,
\end{equation}
and let
$ X^x \colon [0,\infty) \times \Omega \to \R^d $,
$ x \in \R^d $,
be adapted stochastic processes with continuous sample paths
satisfying 
\begin{equation}
\label{eq:Langevin}
  X^x_t
  =
  x - \int_0^t ( \nabla U )( X^x_s ) \, ds
  +
  \sqrt{ \varepsilon } W_t
\end{equation}
$ \P $-a.s.\ for all $ (t,x) \in [0,\infty) \times \R^d $.
If $ \rho \in [0,\infty) $
and $ U_1,\, \overline{U} \colon \R^d \to \R $
satisfy for every $x\in\R^d$ that
\begin{equation}
  U_1(x)
  =
  \rho \, U(x)
  \quad\text{and}\quad
  \overline{U}(x)
  =
  \rho 
  \left(
    1 
    - 
    \tfrac{ \varepsilon }{ 2 }
    ( \eta_2 + \rho ) 
  \right)
  \left\| ( \nabla U )( x ) \right\|^2
  ,
\end{equation}
then it holds for every
$ x \in \R^d $ that
\begin{equation}
\begin{split}
\label{eq:estimate.Brownian.dynamics}
&
  ( \mathcal{G}_{ - \nabla U, \sqrt{ \varepsilon } I } U_1 )( x )
  +
  \tfrac{ 1 }{ 2 }
  \left\| 
    \sqrt{ \varepsilon } ( \nabla U_1 )( x )
  \right\|^2
  +
  \overline{U}(x)
\\ & =
  - \rho
  \left\| ( \nabla U )( x ) \right\|^2
  +
  \tfrac{ \varepsilon \rho }{ 2 }
  \operatorname{tr}(
    ( \operatorname{Hess} U)( x )
  )
  +
  \tfrac{
    \varepsilon \rho^2
  }{
    2
  }
  \left\| ( \nabla U )( x ) \right\|^2
  +
  \overline{U}( x )
\\ & \leq
  \tfrac{\eps\rho 2\eta_1}{2}U(x)
  +
  \rho
  \left[
    \tfrac{
      \varepsilon ( \eta_2 + \rho )
    }{
      2
    }
    -
    1
  \right]
  \left\| ( \nabla U )( x ) \right\|^2
  +
  \overline{U}( x )
  +
  \tfrac{ \varepsilon \rho \eta_0 }{ 2 }
\\ & 
=
  \tfrac{\eps\rho\eta_0}{2}
  +
  \eps\eta_1U_1(x)
  \,
  .
\end{split}
\end{equation}
Hence, Corollary~\ref{cor:exp_mom} implies
for every $ t \in [0,\infty) $,
$ \rho \in [ 0, \frac{ 2 }{ \varepsilon } - \eta_2 ] $\sgc{}, \cgs{}$ x \in \R^d $ that
\begin{equation}
  \E\!\left[
    \exp\!\left(
      \tfrac{\rho \, U( X^x_t )}{e^{\eps\eta_1t}}
      +
      \int_0^t
      \tfrac{ 
        \rho ( 1 - \frac{ \varepsilon }{ 2 } ( \eta_2 + \rho ) )
      }{
        e^{ \eps \eta_1 s }
      }
      \left\|
        ( \nabla U )( X^x_s ) 
      \right\|^2
      -\tfrac{\eps\rho\eta_0
      }{2e^{\eps\eta_1s}}
      ds
    \right)
  \right]
\leq
  e^{ \rho U( x ) }
  .
\end{equation}
This exponential moment estimate generalizes 
Lemma 2.5 in the work of
Bou-Rabee \&\ Hairer~\cite{BouRabeeHairer2013}
in the case where the function
$ \Theta $ appearing in Lemma 2.5 in~\cite{BouRabeeHairer2013}
satisfies
$
  \Theta(u) = \exp( \rho u )
$ 
for all $ u \in \R $ and some $ \rho \in [0, \tfrac{ 2 }{ \eps } - \eta_2 ] $.
If 
$ T \in ( 0, \infty ) $,
$ c \in \R $, $ \rho_0, \rho_1 \in [ 0, \frac{ 2 }{ \varepsilon } - \eta_2 ] $,
$ r \in (0,\infty] $,
$ q_0, q_1 \in [r,\infty] $
satisfy 
for every $ x, y \in \R^d $ that
\sgc{}$ \frac{ 1 }{ q_0 } + \frac{ 1 }{ q_1 } = \frac{ 1 }{ r } $ and\cgs{}
\begin{equation}
\begin{split}
\label{eq:condition.Brownian.dynamics}
&
  \tfrac{ 
    \left\langle x-y,
      ( \nabla U )( y )
      -
      ( \nabla U )( x )
    \right\rangle
  }{
    \left\|
      x - y
    \right\|^2
  }
\\ &
\leq
  c
  +
  \tfrac{
    \rho_0
  }{
    2 q_0 T e^{\eps\eta_1 T}
  }
    \left[
      U(x) + U(y)
    \right]
  +
  \tfrac{
    \rho_1
    (
      1 - 
      \frac{ \varepsilon }{ 2 }
      ( \eta_2 + \rho_1 )
    )
  }{
    2 q_1 e^{\eps\eta_1 T}
  }
    \left[
      \| ( \nabla U )( x ) \|^2
      +
      \| ( \nabla U )( y ) \|^2
    \right]\sgc{},\cgs{}
\end{split}
\end{equation}
then Corollary~\ref{cor:UV2} \sgc{}with \cgs{}$U_{0,0}\equiv U_{0,1}\equiv\overline{U}_{0,1}\equiv 0$, 
$U_{1,0}=\rho_0 U$,
$U_{1,1}=\rho_1 U_1$ and $c_0\equiv 0$
together with inequalities~\eqref{eq:estimate.Brownian.dynamics}
and~\eqref{eq:condition.Brownian.dynamics}
shows 
for every $ x, y \in \R^d $ that
\begin{equation}
\label{eq:Langevin_Estimate}
\begin{split}
&
  \left\|
    \sup\nolimits_{ t \in [0,T] }
      \|
        X^x_t - X^y_t
      \|
  \right\|_{
    L^r( \Omega; \R )
  }
\\ &
\leq
  \left\| x - y \right\|
  \exp\!\left(
    c T 
    +
    \big( 
      \tfrac{ \rho_0 }{ q_0 } + \tfrac{ \rho_1 }{ q_1 } 
    \big) 
    \tfrac{ 
      \varepsilon 
      \eta_0 T 
    }{ 2 }
    +
    \tfrac{
      \rho_0
      \left(
        U(x) + U(y)
      \right)
    }{
      2 q_0
    }
    +
    \tfrac{
      \rho_1
      \left(
        U(x) + U(y)
      \right)
    }{
      2 q_1
    }
  \right)
  .
\end{split}
\end{equation}
Hence,
if there exist 
$ 
  \overline{\rho}_0 \in (0,\infty) 
$ 
and 
$
  \overline{\rho}_1
  \in
  \big( 
    0,
    \tfrac{ 
      ( 1 - \frac{ \varepsilon }{ 2 } \eta_2 )^2
    }{ 
      4 \varepsilon d
    }
  \big)
$ 
such that
\begin{equation}  \begin{split}
\label{eq:Langevin_strongcomplete}
&  \sup_{ x , y \in \R^d }\!
  \Big[
  \tfrac{ 
    \left\langle x-y,
      ( \nabla U )( y )
      -
      ( \nabla U )( x )
    \right\rangle
  }{
    \left\|
      x - y
    \right\|^2
  }
  -\overline{\rho}_0\left[U(x)+U(y)\right]
  -
    \overline{\rho}_1 
    \left[
      \| ( \nabla U )( x ) \|^2
  +
      \| ( \nabla U )( y ) \|^2
      \right]\!
  \Big]\!\!
\\ &
  < \infty
  ,
\end{split}     \end{equation}
then there exist 
$ T,c \in ( 0, \infty ) $,
$ \rho_0, \rho_1 \in [ 0, \frac{ 2 }{ \varepsilon } - \eta_2 ] $,
$ r \in (d,\infty) $,
$ q_0, q_1 \in (r,\infty) $
with $ \frac{ 1 }{ q_0 } + \frac{ 1 }{ q_1 } = \frac{ 1 }{ r } $
such that inequality~\eqref{eq:Langevin_Estimate} holds
and then
Theorem~\ref{thm:strong_completeness_uniform}
proves that
the SDE~\eqref{eq:Langevin} is strongly complete.
Strong completeness
for the SDE~\eqref{eq:Langevin}
\sgc{}in the case where~\eqref{eq:eta0eta1eta2} and~\eqref{eq:Langevin_strongcomplete} hold with $\eta_2=0$
and $\overline{\rho}_1=0$\cgs{}
follows \sgc{}also\cgs{} from
Theorem 2.4
in Zhang~\cite{Zhang2010}.

\section{Stochastic SIR model}
\label{ssec:stochastic.SIR.model}

The SIR model from epidemiology for the total number
of \underline{\textbf{s}}usceptible, \underline{\textbf{i}}nfected and 
\underline{\textbf{r}}ecovered individuals
has been introduced by Anderson \&\ May~\cite{AndersonMay1979}. 
This section establishes strong stability estimates for the 
stochastic SIR model 
studied in Tornatore, Buccellato \&\ Vetro~\cite{TornatoreBuccellatoVetro2005}.

Assume the setting of Section~\ref{sec:setting_SDE}\sgc{}, assume $d=3$, assume $m=1$, assume $D=[0,\infty)^3$,\cgs{}
let
$ 
  \alpha,\, \beta,\, \gamma,\, 
  \delta \in (0,\infty) 
$,
let 
$ \mu \colon [ 0, \infty )^3 \to \R^3 $
and
$
  \sigma \colon [ 0, \infty )^3 \to \R^3 
$
be given by
\begin{equation}
  \mu\!\left( 
    \begin{array}{c}
	x_1 \\
	x_2 \\
	x_3
    \end{array}
  \right)
=
  \left( 
    \begin{array}{c}
	- \alpha x_1 x_2
	- \delta x_1
	+ \delta
	\\ 
	\alpha x_1 x_2
	- 
	( \gamma + \delta ) x_2 
	\\ 
	\gamma x_2 - \delta x_3
    \end{array}
  \right) ,
\qquad
  \sigma\!\left( 
    \begin{array}{c}
	x_1
	\\ 
	x_2 
	\\ 
	x_3
    \end{array}
  \right)
=
  \left(
    \begin{array}{ccc}
	- \beta x_1 x_2 
	\\ 
	\phantom{-}\beta x_1 x_2
	\\ 
	0
    \end{array}
  \right)
\end{equation}
for all
$ 
  x = (x_1, x_2, x_3)
  \in [ 0, \infty )^3
$
and let
$
  X^x = 
  ( X^{ x, 1 }, X^{ x, 2 }, X^{ x, 3 } )
  \colon [0,\infty) \times \Omega \to [0,\infty)^3
$,
$ x \in [ 0, \infty )^3 $,
be adapted stochastic processes with continuous sample paths
satisfying~\eqref{eq:SDE_examples}
$ \P $-a.s.\ for all
$ (t,x) \in [0,\infty) \times [ 0, \infty )^3 $, i.e.,
\begin{equation}
\label{eq:SIR}
  X^x_t 
 = 
  x
  +
  \int_0^t
    \left(
      \begin{array}{c}
       {\scriptstyle
	-\alpha X_s^{x,1}X_s^{x,2} - \delta X_s^{x,1} + \delta }\\ 
       {\scriptstyle
	\alpha X_s^{x,1}X_s^{x,2} - (\gamma+\delta) X_s^{x,2} }\\
       {\scriptstyle
	\gamma X_s^{x,2} - \delta X_s^{x,3}    
       }
      \end{array}
    \right)
  \, ds
  + 
  \int_0^t
    \left(
      \begin{array}{c}
       {\scriptstyle
       - \beta X_s^{x,1}X_s^{x,2}  }\\ 
       {\scriptstyle
       \phantom{-}\beta X_s^{x,1}X_s^{x,2}  }\\ 
       {\scriptstyle 
       0
       }
      \end{array}
    \right) 
  \, dW_t
\end{equation}
$\P$-a.s.\ for all $(t,x)\in [0,\infty)\times [0,\infty)^3$.

For the stochastic SIR model 
it is well known that 
the sum of the first two coordinates serves as a Lyapunov-type function
(cf., e.g., Tornatore, Buccellato \&\ Vetro~\cite{TornatoreBuccellatoVetro2005}).
We use this to construct an exponential Lyapunov-type function
in the sense of Corollary~\ref{cor:exp_mom}.
More formally, 
if $ \rho, \kappa \in [0,\infty) $
and if
$ U \colon \R^3 \to \R $
is given by
$
  U( x )
  =
  \rho
  \left( 
    x_1 + x_2 
    -
    \kappa
  \right)
$
for all 
$ x = ( x_1, x_2, x_3 ) \in \R^3 $,
then it holds 
for every
$ x = ( x_1, x_2, x_3 ) \in [0,\infty)^3 $, $t\in[0,\infty)$
that
\begin{equation}
\label{eq:SIR_exp_mom_cond0}
\begin{split}
&
  ( \mathcal{G}_{ \mu, \sigma } U)( x )
  +
  \tfrac{ 1 }{ 2 e^{ - \delta t } }
  \|
    \sigma( x )^*
    ( \nabla U )(x)
  \|^2
\\ & =
  \rho
  \left[
    - \alpha x_1 x_2 - \delta x_1 + \delta 
    + \alpha x_1 x_2 - ( \gamma + \delta ) x_2
  \right]
\\ & =
  \rho
  \left[
    - \delta x_1 + \delta 
    - \delta x_2
  \right]
  - \rho \gamma x_2
=
  - \delta \rho
  \left[
    x_1 + x_2 - 1
  \right]
  - \rho \gamma x_2
\\ & 
=
  - \delta U(x)
  - \delta \rho \left[ \kappa - 1 \right]
  - \rho \gamma x_2
\leq
  - \delta U(x)
  - \delta \rho \left[ \kappa - 1 \right]
\leq 
  \delta \rho
  .
\end{split}
\end{equation}
Corollary~\ref{cor:exp_mom}
hence implies 
for every
$ x = ( x_1, x_2, x_3 ) \in [ 0, \infty )^3 $,
$
  \rho, t \in [ 0, \infty )
$
that
\begin{equation}
\begin{split}
  \E\!\left[
    \exp\!\left(
      \rho
      \,
      e^{ \delta t }
      (
        X^{ x, 1 }_t
        +
        X^{ x, 2 }_t
        -
        1
      )
    \right)
  \right]
& \leq
  e^{
      \rho
      (
        x_1 + x_2
        -
        1
      )
  }
  ,
\\ 
\quad
  \E\!\left[
    \exp\!\left(
      \rho
      \,
      (
        X^{ x, 1 }_t
        +
        X^{ x, 2 }_t
      )
      +
      \delta \rho 
      \int_0^t
      (
        X^{ x, 1 }_s
        +
        X^{ x, 2 }_s
      )
      \, ds
    \right)
  \right]
& \leq
  e^{
    \rho
    (
      \delta t 
      +
      x_1 + x_2
    )
  }
  .
\end{split}
\end{equation}
Moreover, if $ \rho \in [0,\infty) $
and if
$ U \colon \R^3 \to \R $
is given by
$
  U( x )
  =
  \rho
  \left( x_1 + x_2 - 1 \right)^2
$
for all 
$ x = ( x_1, x_2, x_3 ) \in \R^3 $,
then it holds 
for every
$ x = ( x_1, x_2, x_3 ) \in [0,\infty)^3 $, $t\in[0,\infty)$
that
\begin{equation}
\label{eq:SIR_exp_mom_cond}
\begin{split}
&
  ( \mathcal{G}_{ \mu, \sigma } U)( x )
  +
  \tfrac{ 1 }{ 2e^{ -  2 \delta t } }
  \|
    \sigma( x )^*
    ( \nabla U )(x)
  \|^2
\\ & =
  2 \rho \left( x_1 + x_2 - 1 \right)
  \left[
    - \alpha x_1 x_2 - \delta x_1 + \delta 
    + \alpha x_1 x_2 - ( \gamma + \delta ) x_2
  \right]
\\ & =
  2 \rho  
  \left( x_1 + x_2 - 1 \right)
  \left[ - \delta x_1 + \delta - \delta x_2 \right]
  -
  2 \rho \gamma \left( x_1 + x_2 - 1 \right) x_2
\\ & =
  -
  2 \delta U(x)
  -
  2 \rho \gamma \left( x_1 + x_2 - 1 \right) x_2
\leq
  -
  2 \delta U(x)
  -
  2 \rho \gamma \left( x_2 - 1 \right) x_2
\\ & =
  -
  2 \delta U(x)
  -
  2 \rho \gamma \left( ( x_2 )^2 - x_2 + \tfrac{ 1 }{ 4 } \right) 
  +
  \tfrac{ \rho \gamma }{ 2 }
=
  -
  2 \delta U(x)
  -
  2 \rho \gamma \left( x_2 - \tfrac{ 1 }{ 2 } \right)^2
  +
  \tfrac{ \rho \gamma }{ 2 }
\\ & \leq
  \tfrac{ \rho \gamma }{ 2 }
  -
  2 \delta U(x)
\leq
  \tfrac{ \rho \gamma }{ 2 }
  \,
  .
\end{split}
\end{equation}
Corollary~\ref{cor:exp_mom}
hence ensures 
for every $ x = ( x_1, x_2, x_3) \in [0,\infty)^3 $,
$ \rho, t \in [0,\infty) $
that
\begin{equation}
  \E\!\left[
    \exp\!\left(
      \rho
      \,
      e^{ 
        2 \delta t
      }
      \left[
        X^{ x, 1 }_t
        +
        X^{ x, 2 }_t
        - 1
      \right]^2
    \right)
  \right]
\leq
  \exp\!\left( 
    \tfrac{ \gamma }{ 4 }
    \left[ 
      e^{ 2 \delta t } - 1
    \right]
    +
    \rho
    \left[
      x_1
      +
      x_2 
      - 1
    \right]^2
  \right)
\end{equation}
and
$
  \E\big[
    e^{
      \rho
      [
        X^{x,1}_t
        +
        X^{x,2}_t
        - 1
      ]^2
      +
      2 \rho \delta
      \int_0^t
      [
        X^{x,1}_s
        +
        X^{x,2}_s
        - 1
      ]^2
      \, ds
    }
  \big]
\leq
  e^{
    \frac{ \rho \gamma t }{ 2 }
    +
    \rho
    [
      x_1
      +
      x_2 
      - 1
    ]^2
  }
$.
In the next step note 
for every 
$ x = (x_1, x_2, x_3) , y = (y_1, y_2, y_3) \in [ 0, \infty )^3 $
with $ x \neq y $
that
\begin{align}\nonumber
&
    \frac{
      \| 
        (
          \sigma( x ) - \sigma( y )
        )^*
        ( x - y )
      \|^2
    }{
      \| x - y \|^4
    }
  \leq
    \frac{
      \| 
        \sigma( x ) - \sigma( y )
      \|^2
    }{
      \| x - y \|^2
    }
  =
    \frac{
      \beta^2
      \left(
        x_1 x_2
        -
        y_1 y_2
      \right)^2
    }{
      \| x - y \|^2
    }
\\ & =\label{eq:gagaga}
  \frac{
    \beta^2
    \left[
      ( x_1 - y_1 ) ( x_2 + y_2 )
      +
      ( x_1 + y_1 ) ( x_2 - y_2 )
    \right]^2
  }{
    4 \,
    \|
      x - y
    \|^2
  }
\\ \nonumber & \leq
  \tfrac{ \beta^2 }{ 4 }
  \max( [ x_1 + y_1 ]^2 , [ x_2 + y_2 ]^2 )
  +
  \tfrac{ \beta^2 }{ 8 }
  [ x_1 + y_1 ] [ x_2 + y_2 ]
\leq
  \tfrac{ \beta^2 }{ 4 }
  \left[ x_1 + y_1 + x_2 + y_2 \right]^2
\\ \nonumber & \leq
  \beta^2
  \left[ x_1 + x_2 - 1 \right]^2 
  + 
  \beta^2
  \left[ y_1 + y_2 - 1 \right]^2 
  +
  2 \beta^2
\end{align}
\label{eq:SIR_diffusion}
and that
\begin{equation}
\label{eq:SIR_drift}
\begin{split}
&
  \frac{
    \alpha
    \left[
      ( x_2 - y_2 )
      -
      ( x_1 - y_1 )
    \right]
    \left(
      x_1 x_2 - y_1 y_2
    \right)
  }{
    \|
      x - y
    \|^2
  }
\\ & =
  \frac{
    \alpha
    \left( 
      x_2 - y_2 
      -
      ( 
        x_1
        - y_1
      )
    \right)
    \left[
      ( x_1 - y_1 ) ( x_2 + y_2 )
      +
      ( x_1 + y_1 ) ( x_2 - y_2 )
    \right]
  }{
    2 \,
    \|
      x - y
    \|^2
  }
\\ & \leq
  \tfrac{ 3 \alpha }{ 4 }
  \left( x_1 + y_1 \right)
  +
  \tfrac{ 3 \alpha }{ 4 }
  \left( x_2 + y_2 \right)
\leq
  \tfrac{ 3 \alpha }{ 2 }
  +
  \tfrac{ 3 \alpha }{ 4 }
  \left( x_1 + x_2 - 1 \right)
  +
  \tfrac{ 3 \alpha }{ 4 }
  \left( y_1 + y_2 - 1 \right)
\end{split}
\end{equation}
This implies
for every 
$ \theta \in [0,\infty) $,
$ x = (x_1, x_2, x_3) , y = (y_1, y_2, y_3) \in [ 0, \infty )^3 $
with $ x \neq y $
that
\begin{align}
\label{eq:SIR_UV2_cond2}
\nonumber 
&
  \tfrac{ 
    \langle
      x - y ,
      \mu( x ) - 
      \mu( y )
    \rangle
    +
    \frac{ 1 }{ 2 }
    \| 
      \sigma( x ) - \sigma( y ) 
    \|^2_{ \HS( \R, \R^3 ) }
  }{
    \|
      x - y
    \|^2
  }
    +
    \tfrac{
      ( \frac{ \theta }{ 2 } - 1 ) \,
      \| 
        (
          \sigma( x ) - \sigma( y )
        )^*
        ( x - y )
      \|^2
    }{
      \| x - y \|^4
    }
\\ \nonumber & \leq
  \tfrac{
    ( \sqrt{ 2 } - 1 ) \, \gamma
  }{ 2 }
  - \delta
  +
  \tfrac{ 3 \alpha }{ 4 }
  \left( x_1 + x_2 \right)
  +
  \tfrac{ 3 \alpha }{ 4 }
  \left( y_1 + y_2 \right)
  +
  \tfrac{ 
    [
      1
      +
      | \theta - 2 | 
    ]
    \,
    \| 
      \sigma( x ) - \sigma( y ) 
    \|^2
  }{
    2 \,
    \|
      x - y
    \|^2
  }
\\ & \leq
  \gamma
  +
  \tfrac{ 3 \alpha }{ 4 }
  \left( x_1 + x_2 \right)
  +
  \tfrac{ 3 \alpha }{ 4 }
  \left( y_1 + y_2 \right)
\\ \nonumber & \quad
  +
  \beta^2
  \big[
    \tfrac{ 1 }{ 2 }
    +
    | \tfrac{ \theta }{ 2 } - 1 | 
  \big]
  \left(
    \left[ x_1 + x_2 - 1 \right]^2 
    + 
    \left[ y_1 + y_2 - 1 \right]^2 
    +
    2 
  \right)
  .
\end{align}
Combining 
\eqref{eq:SIR_exp_mom_cond0},
\eqref{eq:SIR_exp_mom_cond}, \eqref{eq:gagaga},~\eqref{eq:SIR_diffusion}
and~\eqref{eq:SIR_UV2_cond2} with
Corollary~\ref{cor:UV2}
then shows 
for every
$ T \in [ 0, \infty) $,
$ r, p, q_{ 1, 0, 1 }, q_{ 1, 0, 2 } \in [2,\infty) $,
$ \eta \in [p,\infty] $,
$ \theta \in (0,p) $
with
$
  \frac{ 1 }{ p }
  +
  \frac{ 1 }{ q_{ 1, 0, 1 } }
  +
  \frac{ 1 }{ q_{ 1, 0, 2 } }
  =
  \frac{ 1 }{ r } 
$
and every
$ x = ( x_1, x_2, x_3 ), y = ( y_1, y_2, y_3 ) \in [ 0, \infty)^3 $
that
\begin{equation}
\begin{split}
&
  \left\|
    \sup\nolimits_{ t \in [0,T] }
      \|
        X^x_t - X^y_t
      \|
  \right\|_{
    L^r( \Omega; \R )
  }
\\ & \leq
  \frac{ 
    \| x - y \|
  }{
    [ 
      1 - 
      \tfrac{ \theta }{ p }
    ]^{ \frac{1}{\theta} }
  }
  \cdot
    \exp\!\left(
      \left(
        2 \beta^2 
        \left[ 
          \tfrac{ 1 }{ 
            ( 2 / p - 2 / \eta )
          }
          - 
          \tfrac{ \theta }{ 2 } 
        \right]
        +
        \gamma 
        +\beta^2\big[1+|\theta-2|\big]
      \right)
      T
  \right)
\\ & \quad
  \cdot
  \exp\!\left(
      \beta^2 
      \gamma
      T^2
      \left[ 
        \tfrac{ 1 }{ 
          ( 2 / p - 2 / \eta )
        }
        - 
        \tfrac{ \theta }{ 2 } 
      \right]
      +
      \tfrac{ 3 \alpha  T }{ 2 }
  \right)
\\ & \quad
  \cdot
  \exp\!\left(
      \beta^2 T
      \left[ 
        \tfrac{ 1 }{ 
          ( 2 / p - 2 / \eta )
        }
        - 
        \tfrac{ \theta }{ 2 } 
      \right]
      \big[
        ( x_1 + x_2 - 1 )^2 
        +
        ( y_1 + y_2 - 1 )^2
      \big]
  \right)
\\ & \quad 
  \cdot
  \exp\!\left(
      \tfrac{ 3 \alpha  }{ 4\delta } ( x_1 + x_2 + y_1 + y_2 )
      +
      \beta^2 \gamma T^2
      \left[
        \tfrac{ 1 }{ 2 }
        +
        |
          \tfrac{ \theta }{ 2 } - 1
        |
      \right]
  \right)
\\ & \quad 
  \cdot
  \exp\!\left(
      \beta^2 T
      \left[
        \tfrac{ 1 }{ 2 }
        +
        |
          \tfrac{ \theta }{ 2 } - 1
        |
      \right]
      \left[
        ( x_1 + x_2 - 1 )^2 
        +
        ( y_1 + y_2 - 1 )^2
      \right]
    \right)
  .
\end{split}
\end{equation}
Combining this with Theorem~\ref{thm:strong_completeness_uniform}
proves that the
stochastic SIR model~\eqref{eq:SIR} is 
strongly complete.

\section{Experimental psychology model}
\label{ssec:experimental.psychology}

Assume the setting of Section~\ref{sec:setting_SDE}\sgc{}, assume $d=2$, assume $m=1$, assume $D=\R^2$,\cgs{}
let
$ \alpha,\, \delta \in (0,\infty) $,
$ \beta \in \mathbb{R} $,
let 
$ \mu \colon \R^2 \to \R^2 $
and
$
  \sigma \colon \R^2 \to \R^2
$
be given by
\begin{equation}
  \mu\!\left( 
    \begin{array}{c}
      x_1
      \\ 
      x_2 
    \end{array}
  \right)
=
  \left( 
    \begin{array}{c}
      \left( x_2 \right)^2
      \left( \delta + 4 \alpha x_1 \right)
      - \frac{ \beta^2 x_1 }{ 2 }
      \\[1ex] 
      - x_1 x_2 
      \left( \delta + 4 \alpha x_1 \right)
      - \frac{ \beta^2 x_2 }{ 2 }
    \end{array}
  \right) ,
\qquad
  \sigma\!\left( 
    \begin{array}{c}
      x_1
      \\ 
      x_2 
    \end{array}
  \right)
=
  \left(
    \begin{array}{ccc}
      - \beta x_2
    \\
      \phantom{-}\beta x_1
    \end{array}
  \right)
\end{equation}
for all
$ 
  x = (x_1, x_2)
  \in \mathbb{R}^2
$
and let
$ 
  X^x = (X^{x,1}, X^{x,2})\colon 
  [0,\infty) \times \Omega
  \to \R^2
$,
$ x \in \R^2 $,
be adapted stochastic processes with continuous 
sample paths satisfying~\eqref{eq:SDE_examples}
$ \P $-a.s.\ for all  \sgc{}$t \in [0,\infty)$, $x \in \R^2 $\cgs{}, i.e.,
\begin{equation}
\label{eq:Experimental}
\begin{split}
  \left(
    \begin{array}{c}
      \scriptstyle{
      X^{x,1}_t
      }
      \\
      \scriptstyle{
      X^{x,2}_t
      }
    \end{array}
  \right)
 &=
  x
  +
  \int_0^t
    \left(
      \begin{array}{c}
	{\scriptstyle
	\big( X^{x,2}_s \big)^2
	\big( 
	  \delta 
	  +
	  4 \alpha  X^{x,1}_s 
	\big)
	-
	\frac12 \beta^2 X^{x,1}_s 
	}
      \\[1ex]
	{\scriptstyle
	- X^{x,1}_t X^{x,2}_s 
	\big( 
	  \delta 
	  +
	  4 \alpha  X^{x,1}_s 
	\big)
	-
	\frac12 \beta^2 X^{x,2}_s 
	}
    \end{array}
  \right)
  \, ds
  +
  \int_0^t
      \left(
	\begin{array}{c}
	  {\scriptstyle
	  - \beta X^{x,2}_s 
	  }
	  \\
	  {\scriptstyle
	  \phantom{-}\beta X^{x,1}_s
	  }
	\end{array}
      \right) 
  \, dW_s
\end{split}
\end{equation}
$\P$-a.s.\ for all \sgc{}$t \in [0,\infty)$, $x \in \R^2 $\cgs{}.
The SDE~\eqref{eq:Experimental} 
is a suitable transformed version of a model
proposed in Haken, Kelso \&\ Bunz~\cite{Hakenetal1985} in the deterministic case 
and 
in Sch\"{o}ner, Haken \&\ Kelso~\cite{Schoeneretal1985} in the stochastic case
(see Section~7.2 in Kloeden
\&\ Platen~\cite{kp92} for details).

If 
$ p \in [1,\infty) $,
$ \rho \in (0,\infty) $
and if
$ U \colon \R^2 \to \R $
satisfies
$
  U( x )
  =
  \rho \,
  \| x \|^{ 2 p }
$
for all 
$
  x \in \R^2
$,
then it holds for every
$ x \in \R^2 $ that
\begin{equation}
\begin{split}
&
  ( \mathcal{G}_{ \mu, \sigma } U )( x )
  +
  \tfrac{ 1 }{ 2 }
  \| 
    \sigma( x )^* \,
    ( \nabla U )( x )
  \|^2
=
  2 p \rho
  \left\| x \right\|^{ ( 2 p - 2 ) }
  \left[
    \left< x, \mu( x ) \right>
    +
    \tfrac{ 1 }{ 2 }
    \| \sigma(x) \|^2
  \right]
  = 0
  .
\end{split}
\label{eq:Experimental_Exp}
\end{equation}
In addition, we get from It\^{o}'s formula 
that 
for every $ x \in \R^2 $,
$ t \in [0,\infty) $ it holds \sgc{}$\P $-a.s.\ \cgs{}that
$
  \| X^x_t \| = \| x \|
$.
Next note 
for every 
$ \theta \in (0,\infty) $,
$ x, y \in \R^2 $
with $ x \neq y $
that
\begin{equation}
\begin{split}
&
  \tfrac{ 
    \langle
      x - y ,
      \mu( x ) - 
      \mu( y )
    \rangle
    +
    \frac{ 1 }{ 2 }
    \| 
      \sigma( x ) - \sigma( y ) 
    \|^2_{ \HS( \R, \R^2 ) }
  }{
    \|
      x - y
    \|^2
  }
    +
    \tfrac{
      ( \frac{ \theta }{ 2 } - 1 ) \,
      \| 
        (
          \sigma( x ) - \sigma( y )
        )^*
        ( x - y )
      \|^2
    }{
      \| x - y \|^4
    }
\\ & =
  \frac{ 
    \langle
      x - y ,
      \mu( x ) - 
      \mu( y )
    \rangle
    +
    \frac{ 1 }{ 2 }
    \| 
      \sigma( x ) - \sigma( y ) 
    \|^2	
  }{
    \|
      x - y
    \|^2
  }
\\ & =
  \frac{ 
    \left( x_1 - y_1 \right)
    \left[
      ( x_2 )^2
      ( \delta + 4 \alpha x_1 )
      - 
      \tfrac{ \beta^2 x_1 }{ 2 }
      -
      ( y_2 )^2
      ( \delta + 4 \alpha y_1 )
      +
      \tfrac{ \beta^2 y_1 }{ 2 }
    \right]
  }{
    \| x - y \|^2
  }
\\ &
  +
  \frac{
    \left( x_2 - y_2 \right)
    \left[
      - x_1 x_2
      ( \delta + 4 \alpha x_1 )
      - 
      \tfrac{ \beta^2 x_2 }{ 2 }
      + y_1 y_2
      ( \delta + 4 \alpha y_1 )
      +
      \tfrac{ \beta^2 y_2 }{ 2 }
    \right]
  }{
    \|
      x - y
    \|^2
  }
\\ & +
  \frac{
    \frac{ 1 }{ 2 }
    \beta^2
    \left[
      (
        x_1 - y_1
      )^2
      +
      (
        x_2 - y_2
      )^2
    \right]
  }{
    \|
      x - y
    \|^2
  }
  .
\end{split}
\end{equation}
This implies 
for every 
$ \theta, \varepsilon \in (0,\infty) $,
$ x, y \in \R^2 $
with $ x \neq y $
that
\begin{align} 
\nonumber 
\label{eq:Experimental_Nonl}
&
  \tfrac{ 
    \langle
      x - y ,
      \mu( x ) - 
      \mu( y )
    \rangle
    +
    \frac{ 1 }{ 2 }
    \| 
      \sigma( x ) - \sigma( y ) 
    \|^2_{ \HS( \R, \R^2 ) }
  }{
    \|
      x - y
    \|^2
  }
    +
    \tfrac{
      ( \frac{ \theta }{ 2 } - 1 ) \,
      \| 
        (
          \sigma( x ) - \sigma( y )
        )^*
        ( x - y )
      \|^2
    }{
      \| x - y \|^4
    }
\\ \nonumber & =
  \tfrac{ 
    \left( x_1 - y_1 \right)
    \left[
      ( x_2 )^2
      ( \delta + 4 \alpha x_1 )
      -
      ( y_2 )^2
      ( \delta + 4 \alpha y_1 )
    \right]
  }{
    \| x - y \|^2
  }
  -
  \tfrac{
    \left( x_2 - y_2 \right)
    \left[
      x_1 x_2
      ( \delta + 4 \alpha x_1 )
      - y_1 y_2
      ( \delta + 4 \alpha y_1 )
    \right]
  }{
    \|
      x - y
    \|^2
  }
\\ \nonumber & \leq
  \tfrac{
    \delta
    \left( x_1 - y_1 \right)
    \left( x_2 - y_2 \right)
    \left( x_2 + y_2 \right)
  }{
    \| x - y \|^2
  }
  -
  \tfrac{
    \delta
    \left( x_2 - y_2 \right)
    \left[
      ( x_2 - y_2 ) ( x_1 + y_1 )
      +
      ( x_1 - y_1 ) ( x_2 + y_2 )
    \right]
  }{
    2 \, \| x - y \|^2
  }
\\ \nonumber & \quad
  +
  \tfrac{ 
    4 \alpha 
    \left( x_1 - y_1 \right)
    \left[
      ( x_2 )^2
      x_1 
      -
      ( y_2 )^2
      y_1 
    \right]
  }{
    \| x - y \|^2
  }
  -
  \tfrac{
    4 \alpha
    \left( x_2 - y_2 \right)
    \left[
      ( x_1 )^2 x_2
      - ( y_1 )^2 y_2
    \right]
  }{
    \|
      x - y
    \|^2
  }
\\ & \leq
  \tfrac{
    \delta
    \left( x_1 - y_1 \right)
    \left( x_2 - y_2 \right)
    \left( x_2 + y_2 \right)
  }{
    2 \, \| x - y \|^2
  }
  -
  \tfrac{
    \delta
    \left( x_2 - y_2 \right)^2
    ( x_1 + y_1 )
  }{
    2 \, \| x - y \|^2
  }
\\ \nonumber & \quad
  +
  \tfrac{ 
    2 \alpha 
    \left( x_1 - y_1 \right)
    \left[
      ( x_2 - y_2 )
      ( x_2 + y_2 )
      ( x_1 + y_1 )
      +
      (
        ( x_2 )^2
        +
        ( y_2 )^2
      )
      ( x_1 - y_1 )
    \right]
  }{
    \| x - y \|^2
  }
\\ \nonumber & \quad
  -
  \tfrac{
    2 \alpha
    \left( x_2 - y_2 \right)
    \left[
      ( x_1 - y_1 ) ( x_1 + y_1 )
      ( x_2 + y_2 )
      +
      ( 
        ( y_1 )^2 
        +
        ( x_1 )^2
      )
      ( x_2 - y_2 ) 
    \right]
  }{
    \|
      x - y
    \|^2
  }
\\ \nonumber & \leq   
  \tfrac{ \delta }{ 4 }
  | x_2 + y_2 |
  +
  \tfrac{ \delta }{ 2 }
  | x_1 + y_1 |
  +
  \tfrac{ 
    2 \alpha 
    \left( x_1 - y_1 \right)^2
      (
        ( x_2 )^2
        +
        ( y_2 )^2
      )
    -
    2 \alpha 
    \left( x_2 - y_2 \right)^2
      ( 
        ( x_1 )^2
        +
        ( y_1 )^2 
      )
  }{
    \| x - y \|^2
  }
\\ \nonumber & \leq   
  \tfrac{ \delta }{ 4 }
  | x_2 + y_2 |
  +
  \tfrac{ \delta }{ 2 }
  | x_1 + y_1 |
  +
    2 \alpha 
      \left[
        ( x_2 )^2
        +
        ( y_2 )^2
      \right]
\\ \nonumber & \leq 
  \tfrac{ \delta^2 }{ 32 \varepsilon }
  +
  \tfrac{ \delta^2 }{ 8 ( 2 \alpha + \varepsilon ) }
  +
  [ 2 \alpha + \varepsilon ]
  \left[
    \| x \|^2
    +
    \| y \|^2
  \right]
  .
\end{align}
Combining~\eqref{eq:Experimental_Exp}
and~\eqref{eq:Experimental_Nonl}
with Corollary~\ref{cor:UV2}
proves 
for every 
$ \varepsilon, r, T \in ( 0, \infty ) $,
$ x, y \in \R^2 $
with $ x \neq y $
that
\begin{equation}
\begin{split}
&
  \bigg\|
    \sup_{ t \in [ 0, T ] }
    \left\| X^x_t - X^y_t 
    \right\|
  \bigg\|_{
    L^r( \Omega; \R )
  }
\\ &
\leq  
  \exp\!
  \left(
    \tfrac{ \delta^2 T }{ 32 \varepsilon }
    +
    \tfrac{ \delta^2 T }{ 8 ( 2 \alpha + \varepsilon ) }
    +
    \left[ 2 \alpha + \varepsilon \right] T
    \Big(
      \left\| x \right\|^2
      +
      \left\| y \right\|^2
    \Big)
  \right)
      \| x - y \|
  .
\end{split}
\end{equation}
Combining this with Theorem~\ref{thm:strong_completeness_uniform}
proves that the SDE~\eqref{eq:Experimental}
is strongly complete.
Strong completeness for the SDE~\eqref{eq:Experimental}
follows also from 
Theorem 2.4
in Zhang~\cite{Zhang2010}
\sgc{}together with\cgs{} 
the inequalities~\eqref{eq:Experimental_Exp}
and~\eqref{eq:Experimental_Nonl}.

\section{Stochastic Brusselator in the well-stirred case}
\label{ssec:Brusselator}

The Brusselator
is a model for a trimolecular chemical reaction
and has been studied in Prigogine \&\ Lefever~\cite{PrigogineLefever1968} 
and by other scientists from Brussels (cf.\ Tyson~\cite{Tyson1973}).
A stochastic version thereof
was proposed by Dawson~\cite{Dawson1981} 
(see also Scheutzow~\cite{Scheutzow1986}).

Assume the setting of Section~\ref{sec:setting_SDE}\sgc{}, assume $d=2$, assume $D=[0,\infty)^2$,\cgs{} and let
$ \alpha, \delta \in (0,\infty) $,
let
$ 
  \sigma 
  = ( \sigma_{ i, j } )_{    
    (i,j) \in \{ 1, 2 \} \times \{ 1, \dots, m \}
  }
  \sgc{}\colon\cgs{}
  [0,\infty)^2
  \rightarrow \R^{ 2 \times m }
$
be a globally Lipschitz continuous
function with 
$ 
  \forall \,
  y \in (0,\infty)
\colon
  \sigma( 0, y ) 
= 
  \sigma( y, 0 ) 
= 0 
$
(cf.\ the last sentence in Section~1 in Scheutzow~\cite{Scheutzow1986}),
let $ \mu = ( \mu_1, \mu_2 ) \colon [0,\infty)^2 \to \R^2 $
be given by
\begin{equation}
  \mu\!\left( 
    \begin{array}{c}
      x_1
      \\ 
      x_2 
    \end{array}
  \right)
=
  \left( 
    \begin{array}{c}
      \delta - 
      \left( \alpha + 1 \right) x_1
      +
      x_2  ( x_1 )^2
      \\
      \alpha x_1
      - x_2 
      ( x_1 )^2
    \end{array}
  \right) 
\end{equation}
for all
$ 
  x = (x_1, x_2)
  \in [0,\infty)^2
$.
Let 
$ X^x \colon [0,\infty) \times \Omega \to [0,\infty)^2 $,
$ x \in [0,\infty)^2 $,
be adapted stochastic processes with continuous sample paths
satisfying~\eqref{eq:SDE_examples} $\P$-a.s.\ for all
\sgc{}$t \in [0,\infty)$, $x \in [0,\infty)^2$\cgs{}, i.e.,
\begin{equation}  
\label{eq:ex_Brusselator}
  X_t^x
  =
  x
  +
  \int_0^t
  \left( 
    \begin{array}{c}
      \delta - 
      \left( \alpha + 1 \right) X^{x,1}_s
      +
      X^{x,2}_s ( X^{x,1}_s )^2
      \\
      \alpha X^{x,1}_s
      - X^{x,2}_s 
      \big( X^{x,1}_s \big)^2
    \end{array}
  \right) 
  \, ds
  +
  \int_0^t
  \sigma( X^x_s ) \, dW_s
\end{equation}
$ \P $-a.s.\ for all 
\sgc{}$t \in [0,\infty)$, $x \in [0,\infty)^2$\cgs{}.
Moreover, we assume \sgc{}that\cgs{}
\begin{equation}\label{eq:def_eta_Brusselator}
  \eta :=
  \sup_{ y \in [0,\infty)^2 }
  \|
    \sigma( y )^* ( 1, 1 )
  \|
  \in [0,\infty)
\end{equation}
(cf.\ the last sentence in Section~1 in Scheutzow~\cite{Scheutzow1986}).
If $ \rho \in ( 0, \infty ) $
and if 
$ U \colon \R^2 \to \R $
is given by
$
  U( x ) 
  =
  \rho \left(
    x_1 + x_2
  \right)^2
$
for all 
$ x = ( x_1 , x_2 ) \in \R^2 $,
then it holds 
for every
$ x = ( x_1 , x_2 ) \in [0,\infty)^2 $,
$ \varepsilon \in (0,\infty) $
that
\begin{equation}
\begin{split}
&
  ( \mathcal{G}_{ \mu, \sigma } U)( x ) 
  +
  \tfrac{ 1 }{ 2 }
  \| 
    \sigma( x )^* ( \nabla U )( x )
  \|^2
\\ & =
  2 \rho \left( x_1 + x_2 \right)
  \left(
    \delta -  x_1
  \right)
  +
  \rho
  \operatorname{tr}\!\left(
    \sigma(x)
    \sigma(x)^*
    \left(
      \begin{array}{cc}
        1 & 1
      \\
        1 & 1
      \end{array}
    \right)
  \right)
  +
  2 \rho U(x)
  \| 
    \sigma( x )^* ( 1, 1 )
  \|^2
\\ & \leq
  2 \rho \delta \left( x_1 + x_2 \right)
  +
  \rho \eta^2
  +
  2 \rho \eta^2 
  U(x)
\leq
  \tfrac{ \delta^2 }{ 2 \varepsilon }
  +
  \rho \eta^2
  +
  2 \rho 
  \left[ 
    \eta^2 
    + 
    \varepsilon
  \right]
  U(x)
  \,
  .
\end{split}
\label{eq:Brusselator_expmom}
\end{equation}
In addition, note 
for every
$ x = ( x_1 , x_2 )$\sgc{},\cgs{} $y = ( y_1, y_2 ) \in [0,\infty)^2 $,
$ \theta \in (0,\infty) $
that
$x_2(x_1)^2-y_2(y_1)^2=\frac{(x_1)^2+(y_1)^2}{2}(x_2-y_2)
+\frac{(x_1+y_1)(x_2+y_2)}{2}(x_1-y_1)$
and that
\begin{align}
\nonumber 
\label{eq:Brusselator2}
&
  \tfrac{ 
    \langle
      x - y ,
      \mu( x ) - 
      \mu( y )
    \rangle
    +
    \tfrac{ 1 }{ 2 }
    \| 
      \sigma( x ) - \sigma( y ) 
    \|^2_{ \HS( \R^m, \R^2 ) }
  }{
    \|
      x - y
    \|^2
  }
    +
    \tfrac{
      ( \frac{ \theta }{ 2 } - 1 ) \,
      \| 
        (
          \sigma( x ) - \sigma( y )
        )^*
        ( x - y )
      \|^2
    }{
      \| x - y \|^4
    }
\\ &
 =\tfrac{-(\alpha+1)(x_1-y_1)^2+
 \big(
x_2(x_1)^2-y_2(y_1)^2\big)\big((x_1-y_1)-(x_2-y_2)\big)
+\alpha(x_2-y_2)(x_1-y_1)}{\|x-y\|^2}\nonumber
\\&\quad
    +\tfrac{ 1 }{ 2 }
  \tfrac{ 
    \| 
      \sigma( x ) - \sigma( y ) 
    \|^2_{ \HS( \R^m, \R^2 ) }
  }{
    \|
      x - y
    \|^2
  }
    +
    \tfrac{
      ( \frac{ \theta }{ 2 } - 1 ) \,
      \| 
        (
          \sigma( x ) - \sigma( y )
        )^*
        ( x - y )
      \|^2
    }{
      \| x - y \|^4
    }\nonumber
\\ & \leq
  \tfrac{ 
    ( x_1 )^2 + ( y_1 )^2
  }{ 4 }
  +
  \tfrac{
    3 ( x_1 + y_1 ) ( x_2 + y_2 )
  }{
    4
  }
  +\tfrac{\alpha}{2}
  +
  \big[
    \tfrac{ 1 }{ 2 }
    +
    |
      \tfrac{ \theta }{ 2 }
      - 1
    |
  \big]
  \| 
    \sigma
  \|_{
    \operatorname{Lip}( \R^2, \HS( \R^2 ) )
  }^2
\\ \nonumber & =
  \tfrac{ 
    ( x_1 )^2 + ( y_1 )^2
  }{ 4 }
  +
  \tfrac{
    3 ( x_1 x_2 + x_1 y_2 + y_1 x_2 + y_1 y_2 ) 
  }{
    4
  }
  +\tfrac{\alpha}{2}
  +
  \big[
    \tfrac{ 1 }{ 2 }
    +
    |
      \tfrac{ \theta }{ 2 }
      - 1
    |
  \big]
  \| 
    \sigma
  \|_{
    \operatorname{Lip}( \R^2, \HS( \R^2 ) )
  }^2
\\ \nonumber & \leq
  ( x_1 + x_2 )^2 + ( y_1 + y_2 )^2
  +\tfrac{\alpha}{2}
  +
  \big[
    \tfrac{ 1 }{ 2 }
    +
    |
      \tfrac{ \theta }{ 2 }
      - 1
    |
  \big]
  \| 
    \sigma
  \|_{
    \operatorname{Lip}( \R^2, \HS( \R^2 ) )
  }^2
  .
\end{align}
Combining\sgc{}~\eqref{eq:def_eta_Brusselator}\cgs{},~\eqref{eq:Brusselator_expmom}
and~\eqref{eq:Brusselator2} with 
Corollary~\ref{cor:UV2} hence
implies \sgc{}that for every\cgs{}
$ x = ( x_1, x_2 ), y = ( y_1, y_2 ) \in [0,\infty)^2 $,
$ r, p, q \in (2,\infty) $,
$ T, \varepsilon, \rho \in (0,\infty) $
with
$
  \frac{ 1 }{ p } +
  \frac{ 1 }{ q } = 
  \frac{ 1 }{ r }
$
and
$ 
  \exp\!\big(
    2 \rho T
    [
      \eta^2 + \varepsilon
    ]
  \big)
\leq 
  \frac{ \rho }{ 2 q T } 
$
\sgc{}it holds that\cgs{}
\begin{equation}
\begin{split}
&
    \left\|
      \sup\nolimits_{ t \in [0,T] } 
      \| X^x_t - X^y_t \|
    \right\|_{
      L^r( \Omega; \R )
    }
\\ & \leq
  \frac{
    \left\| x - y \right\|
  }{
    \sqrt{
      1 - 2 / p
    }
  }
    \exp\!\left(
    \tfrac{\alpha T}{2}+
      \tfrac{ ( p - 1 ) T }{ 2 }
      \,
      \| 
        \sigma
      \|_{
        \operatorname{Lip}( \R^2, \HS( \R^2 ) )
      }^2
      +
      \tfrac{
        \delta^2 T / \varepsilon
        +
        1 +
        \rho ( x_1 + x_2 )^2 + 
        \rho ( y_1 + y_2 )^2 
      }{
        2 q
      }
    \right)
  .
\end{split}
\end{equation}
This together with Theorem~\ref{thm:strong_completeness_uniform}
implies \sgc{}in the case where $\sigma$ is globally Lipschitz continuous and where\cgs{}
$
  \sup_{ y \in [0,\infty)^2 }
  \|
    \sigma( y )^* (1,1)
  \|^2
  < \infty
$ 
\sgc{}that\cgs{} the SDE~\eqref{eq:ex_Brusselator}
is strongly complete.

\section[Stochastic volatility processes]{Stochastic volatility processes and 
interest rate models (CIR, Ait-Sahalia, \texorpdfstring{$3/2$}{three halves}-model)}
\label{ssec:cir}

There are a number of models in the finance literature which
generalize the Black-Scholes model 
by the use 
of 
a stochastic process for the squared volatility.
The following SDE includes a number of these models
for the squared volatility.
Let 
$ ( \Omega, \mathcal{F}, \P, ( \mathcal{F}_t )_{ t \in [0,\infty) } ) $
be a filtered probability space satisfying the usual conditions, 
let
$ W \colon [0,\infty) \times \Omega \to \R $
be a standard 
$ ( \mathcal{F}_t )_{ t \in [0,\infty) } $-Brownian motion, let
$
  a \in (1,\infty)
$, 
$
  b \in [ \tfrac{ 1 }{ 2 } , \infty ) 
$, 
$
  c,\, \beta \in (0, \infty)
$, 
$
  \alpha,\, \kappa \in [0,\infty)
$,
$
  \gamma,\, \delta \in \R
$,
and let
$
  X^x\colon[0,\infty)\times\Omega\to[0,\infty)
$, 
$
  x \in (0,\infty)
$,
be adapted stochastic processes
with continuous sample paths satisfying
\begin{equation}  
\label{eq:SDE.volatility.processes}
  X_t^x=x
  +
  \int_0^t
  \left[
  \frac{ \kappa }{ ( X_s^x )^c } 
  + 
  \delta
  +
  \gamma X_s^x
  -
  \alpha \, ( X_s^x )^a 
  \right] 
  ds
  +
  \int_0^t 
  \beta \, ( X_s^x )^b \, dW_s
\end{equation}
$ \P $-a.s.\ for all $ t \in [0,\infty) $\sgc{},\cgs{} $ x \in (0,\infty) $.
The class~\eqref{eq:SDE.volatility.processes} of processes
includes Cox-Ingersoll-Ross processes ($b=0.5$, $\gamma<0<\delta$, 
$\alpha=\kappa=0$),
Ait-Sahalia interest rate models, 
the volatility processes in
Heston's $3/2$-models
($b=1.5$, $a=2$, $\delta=\kappa=0\leq \gamma$, $\alpha,\beta>0$)
and constant elasticity of variance processes
($b\in[0.5,1]$, $\alpha=\delta=\kappa=0\leq\gamma$, $\beta>0$).
Let\sgc{}
$
  \mu \colon (0,\infty) \rightarrow \R
$ and
$
  \sigma \colon (0,\infty) \rightarrow \R
$\cgs{}
be the functions \sgc{}satisfying\cgs{} 
for all $ x \in (0,\infty) $ \sgc{}that\cgs{}
\begin{equation}\label{eq:def_mu_sigma_IRM}
  \mu(x) = \kappa x^{-c} + \delta +\gamma x - \alpha x^a
  \qquad
  \text{and}
  \qquad
  \sigma(x) = \beta x^b
  \sgc{}.\cgs{}
\end{equation}
\sgc{}For every $x\in (0,\infty)$, $y\in [0,\infty)$ let\cgs{}
$
  \tau_y^{x}\colon\Omega\to[0,\infty]
$
\sgc{}be given by\cgs{}
$
  \tau_y^x =
  \sgc{}\inf\!\big( 
    \{ t \in [0,\infty) \colon X_t^x = y \} \cup \{ \infty \} 
  \big)\cgs{}
$.

For the rest of Subsection~\ref{ssec:cir},
we assume that
the boundary point $ 0 $ is inaccessible, that is, that
$
  \P\big[
    \tau_0^x = \infty
  \big] = 1 
$
for all $x\in(0,\infty)$. 
According to
Feller's boundary classification
(see, e.g.,~Theorem V.51.2 in~\cite{RogersWilliams2000b}),
the boundary point $ 0 $ is inaccessible
if and only if
$ (i) $ $ \kappa > 0 $
or $ (ii) $ $ b = \frac{ 1 }{ 2 } $ and $ 2 \delta \geq \beta^2 $
or $ (iii) $ $ b > \frac{ 1 }{ 2 } $ and $ \delta > 0 $
or $ (iv) $ $ b \geq 1 $ and $ \delta \geq 0 $.

\subsection{Local Lipschitz continuity in the initial value}

In the case $ b \neq 1 $, 
the processes 
$ ( X^x )^{ (1 - b) } $, $ x \in (0,\infty) $, satisfy
an SDE with constant diffusion function 
(see, e.g., Alfonsi~\cite{Alfonsi2005})
and globally one-sided
Lipschitz continuous drift function
(see, e.g.,
Dereich, Neuenkirch \&\ Szpruch~\cite{DereichNeuenkirchSzpruch2012} \sgc{}and\cgs{}
Neuenkirch \&\ Szpruch~\cite{NeuenkirchSzpruch2014}).
In the following calculation we exploit this observation
together with the results in Section~\ref{sec:strong_stability}
to derive an estimate for the Lyapunov-type function
$ V \colon ( 0, \infty )^2 \to [0,\infty) $
given by 
$ 
  V( x, y ) = 
  | 
    x^{ (1 - b) } - y^{ (1 - b) } 
  |^2
$
for all $ x, y \in ( 0, \infty ) $.
\sgc{}Example\cgs{}~\ref{ex:vola}
implies that
for all 
$ x, y \in (0,\infty) $
it holds that
\begin{equation}
\begin{split}
&
  \tfrac{
    \|
      ( \overline{G}_{ \sigma } V)( x, y )
    \|^2
  }{
    | V(x,y) |^2
  }
=
  \tfrac{
    4 
    ( 1 - b )^2
    \left[
      x^{ - b }
      \beta 
      x^b
      -
      y^{ - b }
      \beta y^b
    \right]^2
  }{
    \left[
      x^{ ( 1 - b ) }
      -
      y^{ ( 1 - b ) }
    \right]^2
  }
=
  0
\end{split}
\end{equation}
and in the case $ b \neq 1 $ that
for all $ x, y \in (0,\infty) $ with $ x \neq y $
it holds that
\begin{equation}
\begin{split}
\label{eq:GenBarVola}
&
  \tfrac{
    ( \overline{ \mathcal{G} }_{ \mu, \sigma } V)( x, y )
  }{
    V( x, y )
  }
\\ &
=
  \tfrac{ 
    2 
    \left( 1 - b \right)
    \left(
      x^{ - b } 
      \left[
        \kappa 
        x^{ - c }
        +
        \delta
        +
        \gamma x
        -
        \alpha x^a
      \right]
      -
      y^{ - b }
      \left[
        \kappa y^{ - c }
        +
        \delta
        +
        \gamma y
        -
        \alpha y^a
      \right]
      -
      \frac{ b }{ 2 }
      \left[ 
        x^{ 1 - b - 2 }
        \beta^2 x^{ 2 b }
        -
        y^{ 1 - b - 2 }
        \beta^2 y^{ 2 b }
      \right]
    \right)
  }{
      x^{ 1 - b } - y^{ 1 - b }
  }
\\&
=
  2 
  \left( 1 - b \right)
  \gamma
  +
  \tfrac{ 
    2 
    \left( 1 - b \right)
    \int_y^x
    \left[ 
      - \delta b z^{ ( - b - 1 ) }
      - \alpha ( a - b ) z^{ ( a - b - 1 ) }
      - \kappa ( c + b ) z^{ ( - c - b - 1 ) }
      - \frac{ b ( b - 1 ) }{ 2 } 
      \beta^2 z^{ ( b - 2 ) }
    \right]
    \,
    dz
  }{
      x^{ 1 - b } - y^{ 1 - b }
  }
\\&
\leq
  2 
  \left( 1 - b \right)
  \gamma
\\ & \quad
  +
  \tfrac{ 
    2 
    \left( 1 - b \right)
    \left[ 
      \int_y^x
       z^{ - b }
       \,
      dz
    \right]
    \left[
    \sup\nolimits_{ u \in (0,\infty) }
    \left(
      - \delta b u^{-1}
      - \alpha ( a - b ) u^{ ( a - 1 ) }
      - \kappa (c+b) u^{ ( - c - 1 ) }
      - \frac{ b ( b - 1 ) }{ 2 } 
      \beta^2 
      u^{ ( 2 b - 2 ) }
    \right)
    \right]
  }{
      x^{ (1-b) } - y^{ (1-b) }
  }
\\ &
=
  2
  \left[
    \sup\nolimits_{ u \in (0,\infty) }
    \left(
      ( 1 - b ) \gamma
      - \tfrac{ \delta b }{ u }
      - \alpha ( a - b ) u^{ (a - 1) }
      - \tfrac{ \kappa ( c + b ) }{ u^{ (c + 1) } }
      + \tfrac{ b ( 1 - b ) \beta^2 }{ 2 } u^{ ( 2 b - 2 ) }
    \right)
  \right]
  .
\end{split}
\end{equation}
Proposition~\ref{prop:two_solution_supoutside} 
with $ r = \infty = p = q $
together with Lemma~\ref{lem:nicht_treffen} 
hence yields
for all $ t \in [0,\infty) $\sgc{},\cgs{} $ x, y \in (0,\infty) $
that
\begin{equation}
\begin{split}
\label{eq:estimate.cir}
&
  \big\|
    ( X_t^x )^{ (1 - b) }
    -
    ( X_t^y )^{ (1 - b) }
  \big\|_{
    L^{ \infty }( \Omega; \R ) 
  }
\\ & 
\leq
  \left|
    x^{ ( 1 - b ) } 
    -
    y^{ ( 1 - b ) }
  \right|
  \exp\!\left(
    t 
    \left[
    \sup\nolimits_{ 
      u, v \in (0,\infty),
      u \neq v
    }
    \tfrac{
      ( \overline{ \mathcal{G} }_{ \mu, \sigma } V)( u, v )
    }{
      V( u, v )
    }
    \right]
  \right)
\\ & \leq
  \left|
    x^{ ( 1 - b ) } 
    -
    y^{ ( 1 - b ) }
  \right|
\\ & \ \  
  \cdot
  \exp\!\left(
    t \,
      \sup\nolimits_{ u \in (0,\infty) }
      \!
      \big(
        (1-b)\gamma
        - \tfrac{ \delta b }{ u }
        - \alpha ( a - b ) u^{ (a - 1) }
        - 
        \tfrac{ 
          \kappa ( b + c ) 
        }{ 
          u^{ (c + 1) } 
        }
        +
        \tfrac{ b ( 1 - b ) }{ 2 }
        \beta^2
        u^{ (2 b - 2 ) }
      \big)
  \!\right)\!
  .
\end{split}
\end{equation}
Moreover,
Proposition~\ref{prop:two_solution_supinside} 
with $ v = \infty = p = q = r $
together with Lemma~\ref{lem:nicht_treffen}
yields
for all 
$
  t \in [0,\infty)
$\sgc{},\cgs{} 
$ x, y \in (0,\infty) $
that
\begin{equation}
\begin{split}
\label{eq:estimate.sup.cir}
&
  \left\|
    \sup\nolimits_{ s \in [0,t] }
    \left|
      ( X_s^x )^{ (1 - b) }
      -
      ( X_s^y )^{ (1 - b) }
    \right|
  \right\|_{
    L^{ \infty }( \Omega; \R )
  }
\\ & 
\leq
  \left|
    x^{ ( 1 - b ) } 
    -
    y^{ ( 1 - b ) }
  \right|
  \exp\!\left(
    t 
    \max\left\{ 0,
      \sup\nolimits_{ 
        u, v \in (0,\infty)
      }
      \tfrac{
        ( \overline{ \mathcal{G} }_{ \mu, \sigma } V)( u, v )
      }{
        V( u, v )
      }
    \right\}
  \right)
\\ & \leq
  \left|
    x^{ (1 - b) } 
    -
    y^{ (1 - b) }
  \right|
\\ & \quad
  \cdot
  \exp\!\left(
  {\scriptstyle
    t
    \max\left\{
      0,
      \, 
      \sup\nolimits_{ u \in (0,\infty) }
      \big(
        (1-b)\gamma
        - \tfrac{ \delta b }{ u }
        - \alpha ( a - b ) u^{ (a - 1) }
        - 
        \tfrac{ 
          \kappa ( b + c ) 
        }{ 
          u^{ (c + 1) } 
        }
        +
        \tfrac{ b ( 1 - b ) }{ 2 }
        \beta^2
        u^{ (2 b - 2 ) }
      \big)
    \right\}
  }
  \right)
  .
\end{split}
\end{equation}
The right-hand sides
of~\eqref{eq:GenBarVola},
of~\eqref{eq:estimate.cir}
and of~\eqref{eq:estimate.sup.cir}
are finite
if
$ (i) $ $ \kappa > 0 $
or $ (ii) $ $ b = \frac{ 1 }{ 2 } $ and $ 2 \delta \geq \beta^2 $
or $ (iii) $ $ b > \frac{ 1 }{ 2 } $ and $ \delta > 0 $
or $ (iv) $ $ b \geq 1 $ and $ \delta \geq 0 $.

Next, for the convenience of the reader, we derive well-known moment estimates.
For this let
$ \eta \in ( 0, \infty ) $ be a fixed real number and let
$ U_p \colon (0,\infty) \to ( 0, \infty ) $,
$ p \in \R $,
be given by $ U_p( x ) = \eta + x^p $
for all $ x \in (0,\infty) $, $ p \in \R $.
Then
Lemma~\ref{lem:Lyapunov} implies 
for all $ t \in [0,\infty) $, $ x \in ( 0, \infty ) $, $ p \in \R $ 
that
\begin{equation}
\label{eq:moments.cir}
\begin{split}
&
  \E\big[
    \eta + 
    ( X_t^x )^p
  \big]
\leq 
  ( \eta + x^p )
  \exp\!\Big(
      \sup\nolimits_{
        u \in (0,\infty)
      }
      \tfrac{
        t \,
        ( \mathcal{G}_{ \mu, \sigma } U_p )( u )
      }{
        U_p(u)
      }
  \Big)
\\ & =
  \left( 
    \eta + x^p 
  \right)
\\ & \quad 
  \cdot
  \exp\!\Big(
  \sup\nolimits_{
    u \in (0, \infty)
  }
    \tfrac{
      t \, p \,
      \left[
      \kappa u^{ (p - 1 - c) }
      +
      \delta u^{ (p - 1) }
      +
      \gamma u^p
      - 
      \alpha u^{ (p + a - 1) }
      +
      \frac{ 1 }{ 2 } ( p - 1 ) \beta^2 u^{ (p + 2 b - 2) }
      \right]
    }{ \eta + u^p }
  \Big)
  .
\end{split}
\end{equation}
Next observe
for all 
$ 
  p \in [1,\infty) 
  \cap 
  \big[
    ( c + 1 ) \1_{ (0,\infty) }( \kappa )
    ,
    \infty
  \big)
$ 
that
if (i)  $ b \leq 1 $
or (ii) $ b < \frac{ a + 1 }{ 2 } $ and $ \alpha > 0 $
or (iii) $ b = \frac{ a + 1 }{ 2 } $ and $ 2 \alpha \geq (p - 1 ) \beta^2 $
or (iv) $ p = 1 $,
then
\begin{equation}
\begin{split}
\label{eq:normal_Gen_cir}
&     \sup\nolimits_{u\in(0,\infty)}
  \left[
    \tfrac{
      (
        \mathcal{G}_{ \mu, \sigma } U_p 
      )( u )
    }{
      U_p( u )
    }
  \right]
\\ &  =
  p
  \sup\nolimits_{
    u \in (0, \infty)
  }
  \left[
    \tfrac{
      \kappa u^{ (p - 1 - c) }
      +
      \delta u^{ (p - 1) }
      +
      \gamma u^p
      - 
      \alpha u^{ (p + a - 1) }
      +
      \frac{ 1 }{ 2 } ( p - 1 ) \beta^2 u^{ (p + 2 b - 2) }
    }{ \eta + u^p }
  \right]
  < \infty
  .
\end{split}
\end{equation}
Moreover, note that It\^o's formula shows that
\begin{equation}  \begin{split}
\label{eq:cir.Ito}
  U_p(X_t^x)
  &=U_p(x)+\int_0^t (\mathcal{G}_{\mu,\sigma}U_p)(X_r^x)\,dr
                                   +\int_0^t (G_{\sigma}U_p)(X_r^x)\,dW_r
  \\&
  \leq 
    U_p(x)+
    \bigg[
      \sup_{
	u \in (0,\infty)
      }
      \tfrac{
	( \mathcal{G}_{ \mu, \sigma } U_p )( u )
      }{
	U_p(u)
      }
    \bigg]
    \int_0^t  
      U_p(X_r^x)
      \,
    dr
  +
  \int_0^t 
  ( G_{ \sigma } U_p )( X_r^x )
  \, dW_r
\end{split}     \end{equation}
$\P$-a.s.~for all $ t \in [0,\infty) $, $ x \in (0,\infty) $, $ p \in \R $.
In addition,
Jensen's inequality and Doob's martingale inequality 
yield for all $ t \in [0,\infty) $, $ x \in (0,\infty) $, $ p \in \R $ that
\begin{equation}  \begin{split}
\label{eq:CIR.after.BGD}
&
  \bigg\|
    \sup_{ s \in [0,t] }
    \bigg|
      \int_0^s 
        ( G_{ \sigma } U_p )( X_r^x )
        \,
      dW_r
    \bigg|
  \bigg\|_{
    L^1( \Omega; \R )
  }
\leq
  \bigg\|
    \sup_{ s \in [0,t] }
    \bigg|
      \int_0^s 
        ( G_{ \sigma } U_p )( X_r^x )
        \,
      dW_r
    \bigg|
  \bigg\|_{
    L^2( \Omega; \R )
  }
\\&
  \leq
  2
  \,
  \left\|
    \int_0^t 
      \left|
        (G_{\sigma}U_p)(X_r^x)
      \right|^2 
    dr
  \right\|_{
    L^1( \Omega; \R )
  }^{ \frac{1}{2} }
  =
  2 | p | \beta
  \left[ 
  \int_0^t
  \E\!\left[
    ( X_r^x )^{ (2 p - 2 + 2 b) }
  \right]
  dr
  \right]^{ \frac{1}{2} }
  .
\end{split}     \end{equation}
Then taking supremum over the time interval 
$
  [0, t \wedge \tau_n^x ] 
$
for $ t \in [0,\infty) $
in inequality~\eqref{eq:cir.Ito}, taking the expectation
and applying inequality~\eqref{eq:CIR.after.BGD} 
shows
for all 
$
  t \in [0,\infty)
$, 
$ x \in (0,\infty) $, 
$ p \in \R $, $ n \in \N $
that
\begin{align}
\label{eq:CIR.after.taking.expectation}
&
  \bigg\|
    \sup_{ s \in [0, t\wedge\tau_n^x ] 
    }
    U_p( X_s^x )
  \bigg\|_{
    L^1( \Omega; \R )
  }
\nonumber
\\ & 
\leq
     U_p(x)
     +
  \bigg\|
     \int_0^{ t \wedge \tau_n^x }  
     U_p(X_r^x)
     \, dr
  \bigg\|_{L^1(\Omega;\R)}
     \max\!\bigg\{ 
       0, 
       \sup_{
         u \in (0,\infty)
       }
       \tfrac{
         ( \mathcal{G}_{ \mu, \sigma } U_p )( u )
       }{
         U_p(u)
       }
     \bigg\}
\nonumber
\\ & \quad 
  +
  \bigg\|
    \sup_{ s \in [0,t] }
    \bigg|
      \int_0^s 
        ( G_{ \sigma } U_p )( X_r^x )
        \,
      dW_r
    \bigg|
  \bigg\|_{
    L^1( \Omega; \R )
  }
\\ & \leq
  U_p( x )
  +
  2 | p | \beta
  \left[ 
  \int_0^t
  \E\!\left[
    ( X_r^x )^{ (2 p - 2 + 2 b) }
  \right]
  dr
  \right]^{ \frac{1}{2} }
\nonumber
\\ & \quad +
  \bigg[
  \int_0^t
    \bigg\|
      \sup_{
        r \in [0, s \wedge \tau_n^x ] 
      }
      U_p( X_r^x )
    \bigg\|_{
      L^1( \Omega; \R )
    }
  \, ds
  \bigg]
     \max\!\bigg\{ 
       0, 
       \sup_{
         u \in (0,\infty)
       }
       \tfrac{
         ( \mathcal{G}_{ \mu, \sigma } U_p )( u )
       }{
         U_p(u)
       }
     \bigg\}
  .
\nonumber
\end{align}
Now the monotone convergence theorem, Gronwall's inequality,
inequality~\eqref{eq:CIR.after.taking.expectation}
and
inequality~\eqref{eq:moments.cir}
yield 
for all 
$ t \in [0,\infty) $, $ x \in ( 0, \infty) $, $ p \in \R $
that
\begin{align}
\label{eq:uniform.moments}
\nonumber 
  &
  \big\|
    \sup\nolimits_{ s \in [0,t] }
    U_p( X_s^x )
  \big\|_{
    L^1( \Omega; \R )
  }
  = 
  \lim_{ n \to \infty }
  \big\|
    \sup\nolimits_{ 
      s \in [ 0, t \wedge \tau_n^x ]
    }
    U_p( X_s^x )
  \big\|_{
    L^1( \Omega; \R )
  }
\\ \nonumber &
  \leq
  \left[
    U_p(x)
    + 
    2 | p | \beta
    \left|
      \int_0^t
        \E\!\left[
          ( X_r^x )^{ (2 p - 2 + 2 b) }
        \right]
      dr
    \right|^{ \frac{1}{2} }
  \right]
\\ & \quad 
\cdot
  \exp\!\Big(
    t
    \max\!\Big\{
      0 ,
      \sup\nolimits_{
        u \in (0,\infty)
      }\!
      \tfrac{
        ( \mathcal{G}_{ \mu, \sigma } U_p )( u )
      }{
        U_p(u)
      }
    \Big\}
  \Big)
\\ \nonumber & \leq
  \left[
  U_p(x)
  + 
  2 | p | \beta
    \left|
    U_{ 2 p - 2 + 2 b }(x)
    \int_0^t
      \exp\!\bigg(
        \sup_{ u \in (0, \infty) }
        \tfrac{ 
          r \,
          ( \mathcal{G}_{ \mu, \sigma } U_{ 2 p - 2 + 2 b } )( u )
        }{
          U_{ 2 p - 2 + 2 b }( u )
        }
      \bigg)
    \, dr
    \right|^{ \frac{1}{2} }
  \right]
\\ \nonumber & \quad 
\cdot
  \exp\!\bigg(
    \max\bigg\{
      0 ,
      \sup_{
        u \in (0,\infty)
      }\!
      \tfrac{
        t \, 
        ( \mathcal{G}_{ \mu, \sigma } U_p )( u )
      }{
        U_p(u)
      }
    \bigg\}
  \bigg)
  .
\end{align}
In the next step we observe that in the case $ b \neq 1 $ it holds 
for all $ x, y \in ( 0, \infty) $ that
\begin{equation}
\begin{aligned} 
\label{eq:x1b1.locally.Lipschitz}
  | x - y |
&  =
  \left|
    \left[
      x^{ 1 - b }
    \right]^{
      \frac{ 1 }{ ( 1 - b ) }
    }
    -
    \left[
      y^{ 1 - b }
    \right]^{
      \frac{ 1 }{ ( 1 - b ) }
    }
  \right|
\\ &  
  = 
  \bigg|
    \int_{ y^{ 1 - b } }^{ x^{ 1 - b } }
      \tfrac{ 1 }{ (1 - b) }
      \,
      z^{ 
        \left[ 
          \frac{ 1 }{ 1 - b } - 1 
        \right]
      }
    \, dz
  \bigg|
\leq 
  \tfrac{
    \max( x^b , y^b )
  }{
    | 1 - b |
  }
  \sgc{}\left|
    x^{ 1 - b } 
    - 
    y^{ 1 - b }
  \right|\cgs{}
\end{aligned}
\end{equation}
\sgc{}and\cgs{}
\begin{equation}
\begin{aligned}\label{eq:x1b2.locally.Lipschitz}
  \left|
    x^{ 1 - b } - y^{ 1 - b } 
  \right|
&  =
  \left|
    (1-b)
    \int_y^x z^{ - b } \, dz
  \right|
  \leq
  \tfrac{
    |1 - b|
  }{
    \min( x^b, y^b) 
  }
  | x - y |
  .   
\end{aligned}
\end{equation}
The estimate~\eqref{eq:estimate.sup.cir}
together \sgc{}with~\eqref{eq:x1b1.locally.Lipschitz},~\eqref{eq:x1b2.locally.Lipschitz}\cgs{}
and monotonicity of solutions of 
\eqref{eq:SDE.volatility.processes}
with respect to the initial values
implies in the case $b\neq 1$ that
for all $ x, y, t, p \in (0,\infty) $
it holds that
\begin{equation}  \begin{split}
  &\bigg\|
    \sup_{ s \in [0,t] }
    | X^x_s - X^y_s |
  \bigg\|_{
    L^p( \Omega; \R )
  }
  \\&
  \leq
  \frac{1}{|1-b|}
  \bigg\|
    \sup_{ s \in [0,t] }
    \max\!\left\{ 
      ( X_s^x )^b ,
      ( X_s^y )^b
    \right\}
  \bigg\|_{
    L^p( \Omega; \R )
  }
  \bigg\|
    \sup_{ s \in [0,t] }
    \left|
      \left( X_s^x \right)^{ 1 - b }
      -
      \left( X_s^y \right)^{ 1 - b }
    \right|
  \bigg\|_{
    L^{ \infty }( \Omega; \R )
  }
\!\!\!
\\ & \leq
  \frac{ 1 }{ | 1 - b | }
  \left[
    \max_{ z \in \{ x, y \} }
    \bigg\|
      \sup_{ s \in [0, t ] }
      \!
      \left(
        X_s^z
      \right)^b
    \bigg\|_{
      L^p( \Omega; \R )
    }
  \right]
  \bigg\|
    \sup_{ s \in [0,t] }
    \left|
      \left( X_s^x \right)^{ 1 - b }
      -
      \left( X_s^y \right)^{ 1 - b }
    \right|
  \bigg\|_{
    L^{ \infty }( \Omega; \R )
  }
\\ & \leq
  \frac{ 
    \left|
      x^{ 1 - b } - y^{ 1 - b }
    \right|
  }{ | 1 - b | }
  \!
  \left[
    \max_{ z \in \{ x, y \} }
    \bigg\|
      \sup_{ s \in [0, t ] }
      \!
      X_s^z
    \bigg\|_{
      L^{ p b }( \Omega; \R )
    }^b
  \right]
  \exp\!\bigg(\!
    t 
    \max\bigg\{ 0,\!
      \sup_{ 
        u, v \in (0,\infty)
      }
      \!
      \tfrac{
        ( \overline{ \mathcal{G} }_{ \mu, \sigma } V)( u, v )
      }{
        V( u, v )
      }
    \bigg\}\!
  \bigg)
\!\!\!
\\ & \leq
  \frac{ 
    \left|
      x - y
    \right|
  }{ 
    \min( x^b, y^b )
  }
  \!
  \left[
    \max_{ z \in \{ x, y \} }
    \bigg\|
      \sup_{ s \in [0, t ] }
      \!
      X_s^z
    \bigg\|_{
      L^{ p b }( \Omega; \R )
    }^b
  \right]
  \exp\!\bigg(\!
    t 
    \max\bigg\{ 0,\!
      \sup_{ 
        u, v \in (0,\infty)
      }
      \!
      \tfrac{
        ( \overline{ \mathcal{G} }_{ \mu, \sigma } V)( u, v )
      }{
        V( u, v )
      }
    \bigg\}\!
  \bigg)
  \sgc{}.\cgs{}
\end{split}     \end{equation}
\sgc{}Inserting~\eqref{eq:uniform.moments}\cgs{}
hence shows
in the case $ b \neq 1 $ that
for all $ x, y, t, p \in (0,\infty) $
it holds that
\begin{equation}
\begin{split}
\label{eq:Lipschitz.estimate.cir.uniform}
&
  \bigg\|
    \sup_{ s \in [0,t] }
    | X^x_s - X^y_s |
  \bigg\|_{ 
    L^p( \Omega; \R ) 
  }
\leq
  \frac{ 
    \left|
      x - y
    \right|
  }{ 
    \min( x^b, y^b )
  }
\\ & \quad 
  \cdot
  \exp\!\bigg(
    t 
    \bigg[
    \max\bigg\{ 0,
      \sup_{ 
        u, v \in (0,\infty)
      }
      \tfrac{
        ( \overline{ \mathcal{G} }_{ \mu, \sigma } V)( u, v )
      }{
        V( u, v )
      }
    \bigg\}
    +
    \max\bigg\{
      0 ,
      \sup_{
        u \in (0,\infty)
      }
      \!\!
      \tfrac{
        ( \mathcal{G}_{ \mu, \sigma } U_{pb} )( u )
      }{
        p \, U_{pb}(u)
      }
    \bigg\}
    \bigg]
  \bigg)
\\ & \quad
  \cdot
  \max_{ z \in \{ x , y \} }
  \bigg[
      U_{ p b }(z)
      + 
      2 p b \beta
      \bigg[
      U_{ 2 ( p b + b - 1 ) }(z)
      \!
      \int_0^t
      \!
      \exp\!\bigg(
          \sup_{ u \in (0, \infty) } \!\!
          \tfrac{ 
            r \,
            ( \mathcal{G}_{ \mu, \sigma } U_{ 2 p b - 2 + 2 b } )( u )
          }{
            U_{ 2 p b - 2 + 2 b }( u )
          }
      \bigg)
      \, dr
    \bigg]^{ \frac{1}{2} }
  \bigg]^{ \frac{1}{p} }
  \!\!\!\!
  .
\end{split}
\end{equation}
This implies in the case $ b \neq 1 $,
$ \eta \geq 1 $
that for all $ x, y, t \in (0,\infty) $, $ p \in [1,\infty) $
it holds that
\begin{equation}
\begin{split}
\label{eq:Lipschitz.estimate.cir.uniform2}
&  \bigg\|
    \sup_{ s \in [0,t] }
    | X^x_s - X^y_s |
  \bigg\|_{ 
    L^p( \Omega; \R ) 
  } 
\leq
    \left|
      x - y
    \right|
  \tfrac{ 
      \left[ 
        1
        +
        \eta 
        + 
        \max( x^{\frac{b(p+1)}{p}}, y^{\frac{b(p+1)}{p}} )
      \right]
  }{ 
    \min( x^b, y^b )
  }
    \left[ 
      1
      +
      \left[ 
        2 p b \beta \sqrt{t}
      \right]^{ \frac{ 1 }{ p } }
    \right]
\\ & \quad
\cdot
  \exp\!\bigg(
    t 
    \bigg[
      \max\bigg\{ 0,
	\sup_{ 
	  u, v \in (0,\infty)
	}
	\tfrac{
	  ( \overline{ \mathcal{G} }_{ \mu, \sigma } V)( u, v )
	}{
	  V( u, v )
	}
      \bigg\}
      +
      \max\bigg\{
	0 ,
	\sup_{
	  u \in (0,\infty)
	}
	\tfrac{
	  ( \mathcal{G}_{ \mu, \sigma } U_{pb} )( u )
	}{
	  p \, U_{pb}(u)
	}
      \bigg\}
    \bigg]
  \bigg)
\\ & \quad
\cdot
  \exp\!\bigg(
        t \max\bigg\{
          0
          ,
          \sup_{ u \in (0, \infty) }
          \tfrac{ 
            t \,
            ( \mathcal{G}_{ \mu, \sigma } U_{ 2 p b - 2 + 2 b } )( u )
          }{
            2 p \, U_{ 2 p b - 2 + 2 b }( u )
          }
        \bigg\}
  \bigg)
  .
\end{split}
\end{equation}
Combining the statement below~\eqref{eq:estimate.sup.cir}
and~\eqref{eq:normal_Gen_cir}
shows that
the right-hand sides
of~\eqref{eq:Lipschitz.estimate.cir.uniform}
and of~\eqref{eq:Lipschitz.estimate.cir.uniform2}
are finite 
for all 
\sgc{}$ x, y, t \in ( 0, \infty ) $\cgs{} and \sgc{}all\cgs{}
\begin{equation}
  p \in 
  \big[
    \max\!\big\{ 
      1
      ,
      \tfrac{1}{b}
      ,
      \tfrac{( c + 1 ) }{b}
      \1_{ (0,\infty) }( \kappa )
      ,
      ( \tfrac{ c + 3 }{ 2 b } - 1 )
      \1_{ (0,\infty) }( \kappa )
    \big\}
    ,
    \infty
  \big)
\end{equation}
in either of the four following cases:
\begin{enumerate}
 \item
  $ b = \frac{ 1 }{ 2 } $ and ($ 2 
  \delta \geq \beta^2 $ or $ \kappa 
  > 
  0 $),
 \item $ \frac{ 1 }{ 2 } < b \leq 1 $ 
  and 
  $ \kappa + \max( 0, \delta ) > 0 $,
 \item $ 1 \leq b < \frac{ a + 1 }{ 2 } $, $ \alpha > 0 $ 
  and ($ \delta \geq 0 $ or $ \kappa > 0 $),
 \item $ b = \frac{ a + 1 }{ 2 } $, 
  $ 
    \max\{ \frac{ p - 1 }{ 2 } , p b + b - \frac{ 3 }{ 2 } \} 
    \leq \frac{ \alpha }{ \beta^2 } $ 
    and ($ \delta \geq 0 $ or $ \kappa > 0 $).
\end{enumerate}
Analogously, the inequalities~\eqref{eq:estimate.cir},
\eqref{eq:moments.cir}\sgc{},~\eqref{eq:x1b1.locally.Lipschitz} 
and~\eqref{eq:x1b2.locally.Lipschitz}\cgs{}
show in the case $ b \neq 1 $ that
for all $ x, y, t, p \in (0,\infty) $
it holds that
\begin{equation}
\begin{split}
\label{eq:Lipschitz.estimate.cir}
  \left\|
    X^x_t - X^y_t
  \right\|_{ 
    L^p( \Omega; \R ) 
  }
& \leq
    \left|
      x - y
  \right|
  \frac{ 
    \left[
      \eta
      +
      \max( 
        x^{ p b }, y^{ p b }
      )
    \right]^{ 
      \frac{ 1 }{ p }
    }
  }{ 
    \min( x^b, y^b )
  }
\\ & \quad
  \cdot
  \exp\!\bigg(
    t 
    \bigg[
      \sup_{ 
        u, v \in (0,\infty), u \neq v
      }
      \tfrac{
        ( \overline{ \mathcal{G} }_{ \mu, \sigma } V)( u, v )
      }{
        V( u, v )
      }
    +
      \sup_{
        u \in (0,\infty)
      }
      \tfrac{
        ( \mathcal{G}_{ \mu, \sigma } U_{pb} )( u )
      }{
        p \, U_{pb}(u)
      }
    \bigg]
  \bigg)
  .\!\!\!\!\!
\end{split}
\end{equation}
Combining the statement below~\eqref{eq:estimate.sup.cir}
with equation~\eqref{eq:normal_Gen_cir}
shows that
the right-hand side
of~\eqref{eq:Lipschitz.estimate.cir}
is finite 
for all 
$ x, y, t \in ( 0, \infty ) $ \sgc{}and all\cgs{}
\begin{equation}
  p \in 
  [1,\infty)
  \cap 
  [\tfrac{1}{b},\infty)
  \cap 
  \big[
    \tfrac{( c + 1 ) }{b}
    \1_{ (0,\infty) }( \kappa )
    ,
    \infty
  \big)
\end{equation}
in either of the five following cases:
\begin{enumerate}
 \item 
  $ b = \frac{ 1 }{ 2 } $ and 
  ($ 2\delta \geq \beta^2 $ or $ \kappa > 0 $),
 \item $ \frac{ 1 }{ 2 } < b \leq 1 $ 
  and $ \kappa + \max( 0, \delta ) > 0 $,
 \item $ 1 \leq b < \frac{ a + 1 }{ 2 } $, 
  $ \alpha > 0 $ and ($ \delta \geq 0 $ or $ \kappa > 0 )$,
 \item $ b = \frac{ a + 1 }{ 2 } $, $ ( pb - 1 ) \beta^2 \leq 2 \alpha $
   and ($ \delta \geq 0 $ or $ \kappa > 0 )$,
 \item $ b = 1 $, $ \delta \geq 0 $ and $ p = 1 $.
\end{enumerate}
We observe that in the case of the Cox-Ingersoll-Ross processes ($b=0.5$, $\gamma < 0 < \delta$, $\alpha=\kappa=0$)
the statement below~\eqref{eq:estimate.sup.cir} and equations~\eqref{eq:normal_Gen_cir},~\eqref{eq:Lipschitz.estimate.cir.uniform2} \sgc{}and~\eqref{eq:Lipschitz.estimate.cir}\cgs{}
imply that if $2\delta \geq \beta^2$ then for all $x,y,t \in (0,\infty)$, $p\in [1,\infty)$ it holds that
\begin{equation}
\begin{split}
\label{eq:Lipschitz.estimate.cir.uniform2.expl}
&  \bigg\|
    \sup_{ s \in [0,t] }
    | X^x_s - X^y_s |
  \bigg\|_{ 
    L^p( \Omega; \R ) 
  } 
\leq
    \left|
      x - y
    \right|
  \tfrac{ 
      \left[ 
        1
        +
        \eta 
        + 
        \max( x^{\frac{p+1}{2p}}, y^{\frac{p+1}{2p}} )
      \right]
  }{ 
    \min( x^{\frac{1}{2}}, y^{\frac{1}{2}} )
  }
    \left[ 
      1
      +
      \left[ 
        p \beta \sqrt{t}
      \right]^{ \frac{ 1 }{ p } }
    \right]
\\ & \quad
\cdot
  \exp\!\bigg(
    t 
    \bigg[
      \max\bigg\{ 0,
	\sup_{ 
	  u, v \in (0,\infty)
	}
	\tfrac{
	  ( \overline{ \mathcal{G} }_{ \mu, \sigma } V)( u, v )
	}{
	  V( u, v )
	}
      \bigg\}
      +
      \max\bigg\{
	0 ,
	\sup_{
	  u \in (0,\infty)
	}
	\tfrac{
	  ( \mathcal{G}_{ \mu, \sigma } U_{\frac{p}{2}} )( u )
	}{
	  p \, U_{\frac{p}{2}}(u)
	}
      \bigg\}
    \bigg]
  \bigg)
\\ & \quad
\cdot
  \exp\!\bigg(
        t \max\bigg\{
          0
          ,
          \sup_{ u \in (0, \infty) }
          \tfrac{ 
            t \,
            ( \mathcal{G}_{ \mu, \sigma } U_{  p -1 } )( u )
          }{
            2 p \, U_{ p - 1 }( u )
          }
        \bigg\}
  \bigg)
< \infty,
\end{split}
\end{equation}
and
\begin{equation}
\begin{split}
\label{eq:Lipschitz.estimate.cir.expl}
  \left\|
    X^x_t - X^y_t
  \right\|_{ 
    L^p( \Omega; \R ) 
  }
& \leq
    \left|
      x - y
  \right|
  \frac{ 
    \left[
      \eta
      +
      \max\{ 
        x^{ \frac{p}{2} }, y^{ \frac{p}{2}}
      \}
    \right]^{ 
      \frac{ 1 }{ p }
    }
  }{ 
    \min( x^{\frac{1}{2}}, y^{\frac{1}{2}} )
  }
\\ & \quad
  \cdot
  \exp\!\bigg(
    t 
    \bigg[
      \sup_{ 
        u, v \in (0,\infty), u \neq v
      }
      \tfrac{
        ( \overline{ \mathcal{G} }_{ \mu, \sigma } V)( u, v )
      }{
        V( u, v )
      }
    +
      \sup_{
        u \in (0,\infty)
      }
      \tfrac{
        ( \mathcal{G}_{ \mu, \sigma } U_{\frac{p}{2}} )( u )
      }{
        p \, U_{\frac{p}{2}}(u)
      }
    \bigg]
  \bigg)
  \!\!\!\!\!
\\ & < \infty.
\end{split}
\end{equation}
In~\eqref{eq:Lipschitz.estimate.cir.uniform2},~\eqref{eq:Lipschitz.estimate.cir},~\eqref{eq:Lipschitz.estimate.cir.uniform2.expl} \sgc{}and~\eqref{eq:Lipschitz.estimate.cir.expl}\cgs{}
we establish local Lipschitz estimates for solutions to one-dimensional stochastic volatility processes.
A key idea in our proof is to specify a suitable Lyapunov-type function and, thereafter, to apply Proposition~\ref{prop:two_solution_supoutside} and Proposition~\ref{prop:two_solution_supinside} with this specific Lyapunov function. If such a suitable Lyapunov-type function could also be found in the case of multi-dimensional volatility processes, then the above approach could also be applied to multi-dimensional volatility processes such as Wishart processes; see, e.g.,~\cite{Bru:1991}. We leave this for future research.

\subsection{Global Lipschitz continuity in the initial value}
\label{ssec:cir_global}
The estimates provided
by equations~\eqref{eq:Lipschitz.estimate.cir.uniform2}
and~\eqref{eq:Lipschitz.estimate.cir} imply under suitable assumptions
(see the statements below~\eqref{eq:Lipschitz.estimate.cir.uniform2}
and~\eqref{eq:Lipschitz.estimate.cir} for details)
that the solutions process of the SDE~\eqref{eq:SDE.volatility.processes}
is \emph{locally} Lipschitz continuous in the initial value in some sense. 
By applying Corollary~\ref{cor:powerful_est}
and Proposition~\ref{prop:ext_mean_value_theorem}
one can obtain global Lipschitz continuity in the initial
value for a smaller class of problems, as we will now
demonstrate.
\sgc{}Note that~\eqref{eq:def_mu_sigma_IRM} ensures that\cgs{}
for all $ x, y \in (0,\infty) $ with $ x \neq y $
it holds that
\begin{equation}
\begin{split}
&
  \tfrac{
    (x-y) (\mu(x)-\mu(y))
    +
    \frac{1}{2}(p-1)
      (\sigma(x) - \sigma(y))^2
  }
  {
    (x-y)^2
  }
\\ & =
  \gamma
  +
  \tfrac{
    \kappa (x^{-c} - y^{-c})
    - \alpha (x^a - y^a)
  }
  {
    (x-y)
  }
  +
  \tfrac{
    \beta^2 (p-1)
  }{2} 
  \left(
    \tfrac{
      x^b - y^b
    }
    {
      x-y
    }
  \right)^{ \! 2 }
  .
\end{split}
\end{equation}
This together with
Lemma~\ref{lem:nicht_treffen} and
Corollary~\ref{cor:powerful_est}
(see~\eqref{eq:powerful_est_one_dimensional})
implies that 
for all 
$ 
  p, x, y \in (0,\infty)
$,
$ t \in [0,\infty) $
it holds that
\begin{equation}
\label{eq:global_Lip_volatility}
\begin{split}
&
 \|
  X_t^x - X_t^y
 \|_{L^p(\Omega;\R)}
\\ & 
\leq
 | x - y |
 \exp\!
  \left(
    t \, \gamma
    +
    t 
    \sup_{
      \substack{
        u,v \in (0,\infty), 
      \\ 
        u\neq v
      }
    }
    \!
    \bigg[
      \tfrac{
	\kappa (u^{-c} - v^{-c})
	- \alpha(u^a - v^a)
      }
      {
	(u-v)
      }
      +
      \tfrac{ 
        \beta^2 ( p - 1 )
      }{ 2 } 
      \left(
	\tfrac{
	  u^b - v^b
	}
	{
	  u-v
	}
      \right)^{ \! 2 }
    \bigg]
  \right).
\end{split}
\end{equation}
It remains to identify the values of 
$\alpha$, $\beta$, $\kappa$, $a$, $b$, $c$ 
and $p$ for which the right-hand side of~\eqref{eq:global_Lip_volatility} is 
finite. 
First of all, observe that the right-hand side 
of~\eqref{eq:global_Lip_volatility}
is finite if $ p \in (0,1] $. Moreover,  
Proposition~\ref{prop:ext_mean_value_theorem} 
shows that for all $ p \in [1,\infty) $
it holds that
\begin{equation}
\label{eq:application_prop_ext}
\begin{aligned}
&
  \sup_{
    \substack{
      u, v \in (0,\infty), 
    \\
      u \neq v
    }
  }
    \!
    \bigg[
      \tfrac{
	\kappa (u^{-c} - v^{-c})
	- \alpha(u^a - v^a)
      }
      {
	(u-v)
      }
      +
      \tfrac{ 
        \beta^2 ( p - 1 )
      }{ 2 } 
      \left(
	\tfrac{
	  u^b - v^b
	}
	{
	  u-v
	}
      \right)^{ \! 2 }
    \bigg]                
\\ & =
  \sup_{
    x \in (0,\infty)
  }
  \left[
    \mu'(x)
    -
    \gamma
    +
    \tfrac{ p - 1 }{ 2 }
    \left| \sigma'(x) \right|^2
  \right]              
\\ & =
  \sup_{ x \in (0,\infty) }
  \left[
    - c \kappa x^{ - c - 1 }
    - a \alpha x^{a-1}
    + \tfrac{ ( p - 1 ) b^2 \beta^2 }{ 2 } x^{ 2 b - 2 }
  \right]
  .
\end{aligned}\end{equation}
This shows that for every $ p \in [1,\infty) $ it holds that the right-hand side of \eqref{eq:application_prop_ext}
is finite 
if one of the following three
cases is satisfied:
\begin{enumerate}
 \item $ 1 \leq b < \tfrac{a+1}{2}$
and $\alpha>0$,
 \item $ b = \tfrac{a+1}{2} $
 and $p \leq 1 + \tfrac{8\alpha a}{\beta^2 (a+1)^2}$,
 \item $\frac{1}{2}\leq b\leq 1$ and $c,\kappa>0$.
\end{enumerate}
In conclusion, for every $ p \in (0,\infty) $
it holds that the right-hand side of~\eqref{eq:global_Lip_volatility}
is finite for all $ x, y \in (0,\infty) $, $ t \in [0,\infty) $
if at least one of the following three
cases is satisfied:
\begin{enumerate}
 \item $ p \leq 1 $,
 \item $ 1 \leq b < \tfrac{ a + 1 }{ 2 } $ and $ \alpha > 0 $,
 \item $ b = \tfrac{a+1}{2} $
 and $p \leq 1 + \tfrac{8\alpha a}{\beta^2 (a+1)^2}$,
 \item $\frac{1}{2}\leq b\leq 1$ and $c,\kappa>0$.
\end{enumerate}
In particular for all $p \in (0,1]$, $b=\frac{1}{2}$, $\gamma<0<\delta$, $\alpha=\kappa = 0$ (i.e., in the setting of the CIR process) we obtain that the right-hand side of~\eqref{eq:global_Lip_volatility} is finite. \par 
Recall that 
estimate~\eqref{eq:global_Lip_volatility}
holds if the boundary point~$0$ is inaccessible. Note that 
in the appendix of the work Sabanis~\cite{Sabanis2013ECP} 
it is demonstrated 
for the specific case that 
$ \kappa = \delta = 0 $, $ a = 2 $, $ b = \frac{ 3 }{ 2 } $
that estimate~\eqref{eq:global_Lip_volatility}
has a finite right-hand side
for all 
$ x, \, y \in (0,\infty) $,
$ t \in [0,\infty) $,
$
  p \in \big( 0, 1 + \frac{ \alpha }{ \beta^2 } \big]
$.
The arguments above demonstrate 
in the specific case 
$ \kappa = \delta = 0 $, $ a = 2 $, $ b = \frac{ 3 }{ 2 } $
that estimate~\eqref{eq:global_Lip_volatility}
has a finite right-hand side
even for all 
$ x, \, y \in (0,\infty) $,
$ t \in [0,\infty) $,
$
  p \in \big( 0, 1 + \frac{ 16 \alpha }{ 9 \beta^2 } \big]
$.

\section{Wright-Fisher diffusion and Jacobi processes}
\label{ssec:Wright.Fisher.diffusion}

In biology, the Wright-Fisher diffusion is a model
for the relative frequency of type `A' in a panmictic population
of constant population size with two types `A' and `a';
see, e.g., ~Chapter 7 in Durrett~\cite{Durrett2008}.
Let $s\in\R$ denote the relative fitness advantage of
type `A',
let
$\rho_0\in[0,\infty)$ be the mutation rate
from type `a' to type `A',
let
$\rho_1\in[0,\infty)$ be the mutation rate
from type `A' to type `a'
and let $\beta\in(0,\infty)$ be the inverse of the  (effective) number of 
haploid individuals.
Let 
$ ( \Omega, \mathcal{F}, \P, ( \mathcal{F}_t )_{ t \in [0,\infty) } ) $
be a filtered probability space satisfying the usual conditions, 
let
$ W \colon [0,\infty) \times \Omega \to \R $
be a standard 
$ ( \mathcal{F}_t )_{ t \in [0,\infty) } $-Brownian motion,
and let
$X^x\colon[0,\infty)\times\Omega\to[0,1]$, $x\in(0,1)$,
be adapted stochastic processes
with continuous sample paths
satisfying
\begin{equation}  \label{eq:SDE.Wright.Fisher}
  X_{t}^x=x
  +\int_0^{t} \big[\rho_0(1-X_r^x)-\rho_1X_r^x+sX_r^x(1-X_r^x)\big]\,dr
  +\int_0^{t} \sqrt{\beta X_r^x(1-X_r^x)}\,dW_r
\end{equation}
$\P$-a.s.\ for all $t\in[0,\infty)$ and all $x\in(0,1)$.
In addition, define stopping times
$\tau_h^{x}\colon\Omega\to[0,\infty]$,
$x\in(0,1)$, 
$
  h \in [0,1]
$, 
\sgc{}by\cgs{}
\begin{equation}\label{eq:def_tauhx_WF} 
\tau_h^x:=\inf(\{t\in[0,\infty)\colon X_t^x=h\}
\cup\{\infty\})
\end{equation}
for all $x\in(0,1)$ and all $h\in[0,1]$.
Feller's boundary classification
(e.g., Theorem V.51.2 in~\cite{RogersWilliams2000b})
implies \sgc{}that\cgs{}
\begin{enumerate}
\item \sgc{} $\big[\,$\cgs{}$\P[\tau_0^x=\infty]=1$ for all $x\in(0,1)$
if and only if
$2\rho_0\geq \beta$\sgc{}$\,\big]$\cgs{} and
\item \sgc{} $\big[\,$\cgs{}$\P[\tau_1^x=\infty]=1$ for all $x\in(0,1)$
if and only if
$2\rho_1\geq \beta$\sgc{}$\big]\,$\cgs{}.
\end{enumerate}
Note that the processes in~\eqref{eq:SDE.Wright.Fisher} are also called Jacobi processes\sgc{};\cgs{} see, e.g.,~\cite{DelbaenShirakawa:2002}.

In the case 
$ \rho_0, \eta_1 \in [ \frac{ \beta }{ 4 }, \infty) $
the processes $\arcsin(\sqrt{X^x})$, $x\in(0,\infty)$, satisfy
an SDE with constant diffusion function
and non-increasing
drift function;
see, e.g.,
Neuenkirch \&\ Szpruch~\cite{NeuenkirchSzpruch2014}.
In the following calculation, we exploit this observation
together with the results in Section~\ref{sec:strong_stability}
to derive an estimate for the Lyapunov-type function
$ V \colon ( 0, 1 )^2 \to \R $
given \sgc{}by\cgs{} 
\begin{equation}\label{eq:WF-defV} 
  V( x, y ) 
  = 
  | \arcsin(\sqrt{x}) - \arcsin(\sqrt{y}) |^2
\end{equation}
for all $ x, y \in ( 0, 1 ) $.
Let
\sgc{}$ \mu \colon (0,1) \to \R $ and $ \sigma \colon (0,1) \to \R$\cgs{}
be given by
$
  \mu( x ) = \rho_0(1-x)-\rho_1x+sx(1-x)
$
and
$
  \sigma( x ) 
  =
  \sqrt{\beta x(1-x)}
$
for all $ x \in ( 0, 1 ) $
and let
$
  f\colon(0,\pi/2)\to\R
$ 
be given by
\begin{equation}  \begin{split}
  f(x)&=
  \left[ 
    \tfrac{
      \rho_0(1-y)
      -
      \rho_1 y
      +
      sy(1-y)
      -
      \frac{\beta}{4}(1-2y)
    }{
      \sqrt{y(1-y)}
    }
  \right]_{y=(\sin(x))^2}
  \\&
  =\tfrac{
      \rho_0(\cos(x))^2-\rho_1 (\sin(x))^2
      +
      s(\sin(x))^2(\cos(x))^2
      -
      \frac{\beta}{4}\left((\cos(x))^2-(\sin(x))^2\right)
    }
    {
      \sin(x)\cos(x)
    }
  \\&
  =\left(\rho_0-\tfrac{\beta}{4}\right)
   \tfrac{1}{\tan(x)}
   -\left(\rho_1-\tfrac{\beta}{4}\right)
    \tan(x)
   +\tfrac{s}{2}\sin(2x)
\end{split}     \end{equation}
for all 
$
  x \in(0,\pi/2)
$.
Next we infer from  $(0,\pi/2)\ni x\mapsto \tan(x)\in(0,\infty)$ being an 
increasing function
that 
\begin{equation}  \begin{split}
\label{eq:f.globally.one-sided}
  \tfrac{f(x)-f(y)}{x-y}
  \leq \tfrac{s}{2}\tfrac{\sin(2x)-\sin(2y)}{x-y}
  \leq |s|
\end{split}     \end{equation}
for all $x,y\in(0,\pi/2)$
in the case 
$\rho_0,\rho_1\in[\tfrac{\beta}{4},\infty)$.
Now we apply Lemma~\ref{lem:extended_drift}
and inequality~\eqref{eq:f.globally.one-sided}
to obtain that for all $ x, y \in (0,1) $ 
it holds that 
\begin{equation}
\begin{split}
&
  \tfrac{
    \|
      ( \overline{G}_{ \sigma } V)( x, y )
    \|^2
  }{
    | V(x,y) |^2
  }
=
  \tfrac{
    4
    \left[
      \frac{
	\sqrt{\beta x(1-x)}
      }{
	2 \sqrt{x(1-x)}
      }
     -
     \frac{
	\sqrt{\beta y(1-y)}
     }
     {
	2 \sqrt{y(1-y)}
     }
     \right]^2
  }{
    \left|
      \arcsin( \sqrt{x} ) 
      -
      \arcsin( \sqrt{y} )
    \right|^2
  }
=
  0
\end{split}
\end{equation}
and
\begin{equation}  \begin{split}
&  \tfrac{
    ( \overline{ \mathcal{G} }_{ \mu, \sigma } V)( x, y )
  }{
    V( x, y )
  }
=
  \tfrac{
   2
  }{
      \arcsin( \sqrt{x} ) 
      - 
      \arcsin( \sqrt{y} ) 
  }
  \left[
   \tfrac{ 
      \rho_0(1-x)-\rho_1 x+sx(1-x)
    }{
      2 \sqrt{ x (1-x) } 
    }    
    -
    \tfrac{ 
      \rho_0 ( 1 - y )
      -
      \rho_1 y + s y ( 1 - y )
    }{
      2 \sqrt{ y (1 - y ) }
    }
  \right]
\\ &
  \phantom{
  \tfrac{
      ( \overline{ \mathcal{G} }_{ \mu, \sigma } V)( x, y )
    }{
      V( x, y )
    }
  } 
  \quad
  +
  \tfrac{
    1
  }{
      \arcsin(\sqrt{x}) - \arcsin(\sqrt{y}) 
  }
    \left[
      \tfrac{ 
        \beta x ( 1 - x )
        ( 2 x - 1 )
      }{
        4 
        [ x ( 1 - x ) ]^{
          3 / 2
        }
      }
      -
      \tfrac{ 
	\beta y ( 1 - y )
        ( 2 y - 1 )
      }{ 
        4 
        [ y ( 1 - y ) ]^{ 3 / 2
        }
      }
    \right]
\\
&
=
\tfrac{
  \frac{1}{
    \sqrt{x(1-x)}
  }
  \left(
    \rho_0 ( 1 - x )
    -
    \rho_1 x 
    + 
    s x ( 1 - x )
    - 
    \frac{ \beta }{ 4 }( 1 - 2 x )
  \right)
  -
  \frac{ 1 }{ 
    \sqrt{ y ( 1 - y ) }
  }
  \left(
    \rho_0(1-y)
    -
    \rho_1 y
    +
    sy(1-y)
    -
    \frac{\beta}{4}(1-2y)
  \right)
}{ 
         \arcsin( \sqrt{ x } ) 
         - 
         \arcsin( \sqrt{ y } ) 
}
\\
&
=
\tfrac{f\left(\arcsin(\sqrt{x})\right)
      -f\left(\arcsin(\sqrt{y})\right)
      }
     {  \arcsin(\sqrt{x}) - \arcsin(\sqrt{y}) }
\leq |s|
  .
\end{split}     \end{equation}
Hence,
Proposition~\ref{prop:two_solution_supinside} 
with $O=(0,1)$ and
with $v=\infty=p=q=r$ shows
that in the case
$
  \rho_0, \rho_1 \in [\tfrac{\beta}{2},\infty)
$
it holds
for all $ t \in [0,\infty) $\sgc{}, \cgs{}$ x, y \in (0,1) $
that
\begin{equation}  \begin{split}
  \bigg\|
  \!
    \sup_{ r \in [0,t] }
     \left|
       \arcsin\!\big(
         \sqrt{X_{r}^x}
       \big)
       -
       \arcsin\!\big(
         \sqrt{ X_r^y }
       \big)
     \right|^2
  \bigg\|_{L^{\infty}(\Omega;\R)}
  \!\!\leq
  \left|
    \arcsin( \sqrt{x} ) 
    -
    \arcsin( \sqrt{y} )
  \right|^2
  e^{
    t |s|
  } 
  .
  \!
\end{split}     \end{equation}
Clearly, this implies that if
$
  \rho_0, \rho_1 \in [\tfrac{\beta}{2},\infty)
$,
then it holds
for all $ t \in [0,\infty) $, $ x, y \in (0,1) $
that
\begin{equation}  \begin{split}
\label{eq:estimate.sup.wright}
  \bigg\|
    \sup_{ r \in [0,t] }
     \left|
       \arcsin\!\big(
         \sqrt{X_{r}^x}
       \big)
       -
       \arcsin\!\big(
         \sqrt{ X_r^y }
       \big)
     \right|
  \bigg\|_{L^{\infty}(\Omega;\R)}
  \leq
  e^{
    \frac{ t |s| }{ 2 }
  } 
  \left|
    \arcsin( \sqrt{x} ) 
    -
    \arcsin( \sqrt{y} )
  \right|
  .
\end{split}     \end{equation}
This together with the estimates
\begin{equation}  \begin{split}
  |\arcsin(\sqrt{x})-\arcsin(\sqrt{y})|
  & =
  \left|
    \int_y^x
    \tfrac{ 1 }{ \sqrt{ 4 z (1 - z ) }
    }
    \,dz
  \right|
\leq
  \left| x - y \right|
  \left[ 
    \max_{z\in\{x,y\}}\tfrac{1}{\sqrt{4z(1-z)}}
  \right]
  ,
  \\
  \left|x-y\right|
  & =
  \left|
    \left[
      \sin( \arcsin( \sqrt{ x } ) )
    \right]^2
    -
    \left[
      \sin( \arcsin( \sqrt{ y } ) )
    \right]^2
  \right|
\\ &
  =
  \left|\int_{\arcsin(\sqrt{y})}^{\arcsin(\sqrt{x})}
          2\sin(z)\cos(z)\,dz\right|
\\ &
  \leq\left|\arcsin(\sqrt{x})-\arcsin(\sqrt{y})
       \right|
\end{split}     \end{equation}
for all $x,y\in(0,1)$
implies that if
$\rho_0,\rho_1\in[\tfrac{\beta}{2},\infty)$,
then it holds
for all $t\in[0,\infty)$, $x,y\in(0,1)$
that
\begin{equation}  \begin{split}
\label{eq:estimate.sup.wright.final}
  \bigg\|
    \sup_{r\in[0,t]}
     \left|X_{r}^x -X_{r}^y\right|
  \bigg\|_{L^{\infty}(\Omega;\R)}
  \leq
  \left[
    e^{ 
      \frac{ t | s | }{ 2 } 
    }
    \max_{z\in\{x,y\}}
    \tfrac{
      1
    }{
      \sqrt{ 4 z ( 1 - z ) }
    }
  \right]
  \left|x-y\right|
  .
\end{split}     \end{equation}

\chapter{Examples of SPDEs}
\label{sec:examples_SPDE}
The theory developed in Chapter~\ref{sec:strong_stability}
for proving regularity with respect to the 
initial value of the solution to a stochastic differential
equation in $\R^d$, $d\in \N$, can also be applied to
stochastic differential equations in an infinite-dimensional 
Hilbert space $H$ (i.e., an equation that typically describes 
a stochastic \emph{partial} differential equation). 
To be precise, one first verifies that Theorem~\ref{thm:UV2}
(or Corollary~\ref{cor:UV2}) may be applied to
the stochastic differential equations 
in $\R^n$, $n\in \N$, arising by considering Galerkin
projections of the stochastic differential equation in $H$.
If the solutions to the finite-dimensional Galerkin projections 
converge to the solution of the 
original SDE in the right sense, then regularity with respect to the 
initial value of the solution to the SDE in $H$ is obtained.
In this section we demonstrate this 
principle based on two examples, namely the stochastic Burgers
equation (Section~\ref{ssec:stochastic.Burgers.equation})
and the stochastic Cahn-Hilliard-Cook 
equation (Section~\ref{ssec:Cahn_Hilliard}). In addition,
in Section~\ref{ssec:stochastic.wave.equation}
we demonstrate that Theorem~\ref{thm:UV2} can be applied to 
the finite-dimensional
SDEs obtained by taking the Galerkin projection of a certain 
stochastic non-linear wave equation. 
\sgc{}Proving\cgs{}
convergence of these finite-dimensional processes
to the solution of the non-linear wave-equation in a suitable \sgc{}sense\cgs{}
is beyond the scope of this article.

\section{Setting}
\label{sec:setting_SPDE}

Throughout this section the following setting is used.
Let $ T \in (0,\infty) $,
let 
$ 
  ( 
    H, 
    \left< \cdot, \cdot \right>_{H} , 
    \left\| \cdot \right\|_{H} 
  )
$
and 
$ 
  ( 
    \mathcal{H}, 
    \left< \cdot, \cdot \right>_{\mathcal{H}} , 
    \left\| \cdot \right\|_{\mathcal{H}} 
  ) 
$
be non-trivial $\R$-Hilbert spaces,
let $(\HS(\mathcal{H},H),$ 
$\langle \cdot , \cdot \rangle_{\HS(\mathcal{H},H)}, \left\| \cdot \right\|_{\HS(\mathcal{H},H)})$ be the Hilbert space of 
Hilbert-Schmidt operators from $\mathcal{H}$ to $H$, 
let 
$
  ( V_{F,1}, \left\| \cdot \right\|_{V_{F,1}} ) 
$,
$
  ( V_{F,2}, \left\| \cdot \right\|_{V_{F,2}} ) 
$
be $\R$-Banach spaces,
\sgc{}assume
($D(A)\subseteq V_{F,1}\subseteq H$ continuously) 
or
($D(A)\subseteq H\subseteq V_{F,1}$ continuously),
assume
$H\subseteq V_{F,2}$ continuously,\cgs{}
let
$ A \colon D(A) \subseteq H \to H $ be a closed \sgc{}and\cgs{} densely defined 
linear operator,
let 
$
  F \colon V_{F,1} \rightarrow V_{F,2}
$
be Lipschitz continuous on bounded sets, let 
$
  B \colon H \rightarrow \HS(\mathcal{H},H)
$
be globally Lipschitz continuous, let $\xi \in \R$ satisfy
\begin{equation}\label{eq:spde_LipB}
  \varsigma \sgc{}=\cgs{}
  \sup_{ 
    x,y \in \sgc{}H,\, \cgs{}x \neq y
  }
  \sgc{}\frac{
    \| B(x) - B(y) \|_{ \HS( \mathcal{H}, H ) }
  }{
    \| x - y \|_H
  }\cgs{},
\end{equation} 
let 
$ ( \Omega, \mathcal{F}, \P, ( \mathcal{F}_t )_{ t \in [0,T] } ) $
be a filtered probability space satisfying the usual conditions, 
let $ ( W_t )_{ t \in [0,T] } $ be a cylindrical
$ \operatorname{Id}_{\mathcal{H}} $-Wiener process 
with respect to $ ( \mathcal{F}_t )_{ t \in [0,T] } $,
for all $n\in \N$
let $H_n \subseteq D(A)\cap V_{F,1}$ and $\calH_n\subseteq \calH$
be finite-dimensional subspaces
such that for all $v\in H_n$ it holds that
$F(v) \in H$, for all $n\in \N$ let $P_n \in L(H)$,
$Q_n\in L(\calH)$ 
be such that $P_n(H) = H_n$
and $Q_n(\calH) = \calH_n$, 
for all $n\in \N$ define
$\mu_n \colon H_n \rightarrow H_n$,
$\sigma_n \colon H_n \rightarrow \HS(\calH_n,H_n)$,
$W^{n} \colon [0,T]\times \Omega \rightarrow \calH_n$
by 
$
  \mu_n( v ) 
  = P_n( A v + F(v) )
$, 
$
  \sigma_n(v) u 
  = 
  P_n\big( 
    B(v) 
    u 
  \big)
$
for all 
$ v \in H_n $,
$ u \in \calH_n $
and $W^n_t = Q_n W_t $ for 
all $t\in [0,T]$
and for all $(x,n)\in H\times \N$ let
$ X^{x,n} \colon [0,T] \times \Omega \to H_n $
be an adapted stochastic process with 
continuous sample paths such that for all $t\in [0,T]$
it holds \sgc{}$\P$-a.s.\ \cgs{}that
\begin{equation}
 \int_{0}^{T}
  \| \mu_n( X^{x,n}_{s} ) \|_{H}
  +
  \| \sigma_n( X^{x,n}_{s} ) \|_{\HS(\mathcal{H}_n,H)}^{2}
 \,ds
 < \sgc{}\infty\cgs{}
\end{equation}
and
\begin{equation}
\label{eq:Galerkin}
  X_t^{x,n}
  =
  P_n x 
  +
  \int_0^t
  \mu_n( X^{x,n}_s ) \, ds
  +
  \int_0^t
  \sigma_n( X^{x,n}_s ) \, dW^{n}_s
  \sgc{}.\cgs{}
\end{equation}

\section[Stochastic Burgers equation]{Stochastic Burgers equation with a 
globally bounded diffusion coefficient and trace class noise}
\label{ssec:stochastic.Burgers.equation}

Assume the setting of Section~\ref{sec:setting_SPDE}\sgc{}, assume $H=L^2((0,1);\R)$, assume 
$D(A)=W^{2,2}((0,1);\R)\cap W^{1,2}_0((0,1);\R)$, assume for all $v\in D(A)$ that $Av = v''$, 
assume\cgs{}
$V_{F,1} = H$,
$
  V_{F,2} = 
  W^{-1,1}((0,1);\R)
  \subseteq
  W^{-\frac{3}{2},2}((0,1);\R)
$,
let $c\in \R$\sgc{},
assume\cgs{}
$
  F \colon V_{F,1}
  \to V_{F,2}
$
is given by
$
  F( v ) = -\tfrac{ c }{ 2 } \, ( v^2 )'
$
for all $ v \in H $,
\sgc{}let $\eta \in (0,\infty)$ satisfy\cgs{}
$
  \eta 
  \sgc{}=\cgs{}
  \sup_{ x \in H }
  \| B( x ) \|^2_{ \HS( \calH, H ) }
  \sgc{}\cgs{}
$
\sgc{}and let $ e_k \in H $, $ k \in \N $,
be given by\cgs{}
$ 
  e_k( y ) = \sqrt{2} \sin( k \pi y )
$
for all $ y \in ( 0, 1 ) $,
$ k \in \N $ (we follow the convention of 
identifying an element of $v\in L^2((0,1);\R)$
that admits a continuous version with 
this continuous function).
\sgc{}Note that for all $v\in D(A)$ it holds that 
$A v = -\sum_{k\in \N} (k\pi)^2  \langle v, e_k \rangle_{H} e_k$.\cgs{} 
For all $n\in \N$
\sgc{}assume\cgs{}
$
  (
    H_n, 
    \langle \cdot, \cdot \rangle_{H_n},
    \| \cdot \|_{H_n}
  )
  = 
  (
    \textrm{span}_{\R}(\{e_1,\ldots,e_n\}),
    \langle \cdot, \cdot \rangle_{H},
    \| \cdot \|_{H}
  )
$
and 
\sgc{}assume that $ P_n$ is\cgs{} the $H$-orthogonal projection 
on $H_n$, i.e., for all $(v,n) \in H\times \N$
it holds that
$
  P_n v 
  =
  \sum_{ k = 1 }^n
  \left< e_k, v \right>_H
  e_k.
$

Results on the existence of processes
$(X_t^{x,n})_{t\in [0,T]}$\sgc{}, $x \in H$, $n\in \N$,\cgs{}  satisfying~\eqref{eq:Galerkin} $\P$-a.s.\ for all $t\in [0,T]$
can be found in, \sgc{}e.g.,\cgs{}
\cite[Theorem 1.1 and Remark 3.1]{LiuRoeckner2010}. Note that the processes 
$(X_t^{x,n})_{t\in [0,T]}$\sgc{}, $x \in H$, $n\in \N$, \cgs{}are
Galerkin approximations of a solution to the stochastic Burgers equation
with a certain type of 
multiplicative trace-class noise\sgc{};\cgs{} see also Remark~\ref{rem:Burgers_Holder} below.

As for all $n\in \N$ it holds that 
$H_n\subseteq D(A)$, for all 
$n\in \N$ and all $x\in H_n$
it holds that $\langle x , F(x) \rangle_H = 0$
and $ \| x \|_H \leq \frac{1}{\pi} \| x' \|_{H}$.
It follows that for all
$ (n,\rho) \in \N \times [0,\infty) $
and for $ U \colon H_n \to \R $ given by
$ U( x ) = \rho \, \| x \|^2_H $
for all $ x \in H_n $,
one has that for every
$ x \in H_n $ it holds that
\begin{equation}
\label{eq:Burgers_exp_mom}
\begin{split}
&
  U'(x) \, \mu_n( x )
  +
  \tfrac{ 1 }{ 2 }
  \operatorname{tr}\!\big(
    \sigma_n( x ) \,
    \sigma_n( x )^* \,
    ( \operatorname{Hess} U)( x )
  \big)
  +
  \tfrac{ 1 }{ 2 }
  \,
  \|
    \sigma_n( x )^*
    ( \nabla U)( x )
  \|^2_{\calH_n}
\\ & =
  2 \rho
  \left< x, A x + F( x ) \right>_H
  +
  \rho \,
  \| \sigma_n( x ) \|^2_{ \HS( \calH, H_n ) }
  +
  2 \rho^2
  \|
    \sigma_n( x )^*
    x
  \|^2_{\calH_n}
\\ & \leq
  - 2 \rho \,
  \| 
    x'
  \|_{H}^2
  +
  \rho \,
  \| B( x ) \|^2_{ \HS( \calH, H ) }
  +
  2 \rho^2 \,
  \|
    B( x )^*
    x
  \|^2_{\calH}
\\ & \leq
  \rho \eta
  - 2 \rho \,
  \| 
    x'
  \|_{H}^2
  +
  2 \rho^2 \eta 
  \|
    x
  \|^2_H
\leq
  \rho \eta
  +
  2 \rho 
  \left[
    \tfrac{
      \rho \eta 
    }{ \pi }
    - 1
  \right]
  \| 
    x'
  \|_{H}^2
  .
\end{split}
\end{equation}
As for every $ n \in \N $ it holds that
$ 
  ( 
    H_n, 
    \left< \cdot, \cdot \right>_{ H_n }, 
    \left\| \cdot \right\|_{ H_n } 
  ) 
$ 
is isometrically isomorphic to 
$ 
  ( \R^n , \left< \cdot , \cdot \right>, \left\| \cdot \right\| )
$,
it follows from~\eqref{eq:Burgers_exp_mom}
with $ \rho = \frac{ \pi }{ 2 \eta } $
and Corollary~\ref{cor:exp_mom}
(with 
$
 \overline{U}(t,x)
 = 
 \frac{\pi}{2\eta}\| x' \|_H^2 - \frac{\pi}{2}
$
for all $(t,x)\in [0,T]\times H_n$)
that for all
$ (n,t) \in \N \times  [0,T]$
and all 
$ x \in H_n $
it holds that
\begin{equation}
\begin{split}
&
  \E\!\left[
    \exp\!\left(
      \tfrac{ \pi }{ 2 \eta }
      \| X^{ x, n }_t \|^2_H
      +
      \int_0^t
      \tfrac{ \pi }{ 2 \eta }
      \| 
	(X^{ x, n }_s)'
      \|_{H}^2 
      ds
    \right)
  \right]
\leq 
  e^{
    \frac{ \pi t }{ 2 }
    +
    \frac{ \pi }{ 2 \eta }
    \left\| x \right\|_H^2
  }
  .
\end{split}
\end{equation}
In the next step we note \sgc{}that\cgs{}
for all 
\sgc{}
$
  n \in \N
$,
$ 
    p,\varepsilon
  \in
  (0,\infty)
$, 
$ x, y \in H_n $
with $ x \neq y $
it holds that\cgs{}
\begin{equation}
\label{eq:Burgers_diffusion_est}
    \left[
      \tfrac{ 1 }{ ( 2 / p ) } 
      - 
      1
    \right]
  \tfrac{
    \left\|
      (
        \sigma_n( x ) - 
        \sigma_n( y )
      )^*
      ( x - y )
    \right\|^2_{\calH_n}
  }{
    \left\| x - y \right\|^4_H
  }
  \leq
    \tfrac{ \max\{0, p - 2 \} }{ 2 } 
    \varsigma^2
\end{equation}
where $\varsigma$ is as defined 
in~\eqref{eq:spde_LipB}\sgc{} and \cgs{}
\begin{equation}\label{eq:Burgers_h1}
\begin{split}
&
  \max\!\left\{
    0,
  \tfrac{ 
    \left<
      x - y ,
      \mu_n( x ) - 
      \mu_n( y )
    \right>_H
    +
    \frac{ 1 }{ 2 }
    \left\| 
      \sigma_n( x ) - \sigma_n( y ) 
    \right\|^2_{ \HS(  \calH_n, H ) }
  }{
    \left\|
      x - y
    \right\|^2_H
  }
  \right\}
\\ & \leq
  \max\!\left\{
    0,
  \tfrac{ 
    -
    \frac{ c }{ 4 }
    \langle
      ( x - y )^2 ,
      ( x + y )'
    \rangle_H
    -
    \|
       x' - y' 
    \|^2_{H}
  }{
    \left\|
      x - y
    \right\|^2_H
  }
    +
    \tfrac{ 1 }{ 2 }
    \varsigma^2
  \right\}
\\ & \leq
  \max\!\left\{
    0,
  \tfrac{ 
    | c |
    \|
      x' + y'
    \|_H
    \left\|
      x - y 
    \right\|_H
    \left\|
      x - y 
    \right\|_{ L^{ \infty }( (0,1); \R ) }
    -
    \|
      x' - y' 
    \|^2_{H}
  }{
    4
    \,
    \left\|
      x - y
    \right\|^2_H
  }
    +
    \tfrac{ 1 }{ 2 }
    \varsigma^2
  \right\}
\\ & \leq
  \max\!\left\{
    0,
    \tfrac{ \varepsilon 
    \|
      x' + y' 
    \|^2_{H}
    }{ 4 }
    +
  \tfrac{ 
    \frac{ c^2 }{ 16 \varepsilon }
    \left\|
      x - y 
    \right\|_{ L^{ \infty }( (0,1); \R ) }^2
    -
    \|
      ( x' - y' )
    \|^2_{H}
  }{
    \left\|
      x - y
    \right\|^2_H
  }
    +
    \tfrac{ 
      \varsigma^2
    }{ 2 }
  \right\}
  .
\end{split}
\end{equation}
By the Sobolev inequalities, 
\cite[Theorem 4.36]{Lunardi2009} and 
\cite[Section 8]{Grisvard1967},
and the interpolation property
for fractional powers of an operator
(see e.g.~\cite[Theorem 37.6]{sy02})
there exists a function 
$ \kappa \colon (0,\infty) \to (0,\infty) $
such that
for all $ r \in ( 0, \infty ) $ and all
$ u \in W^{1,2}_0((0,1)) = D((-A)^{\frac{1}{2}})$
it holds that
\begin{equation}\label{eq:Laplace_interpolation}
  \left\| u \right\|_{ L^{ \infty }( (0,1); \R ) }^2
\leq
  \kappa(r)
  \left\| u \right\|^2_H
  +
  r
    \|
      u' 
    \|^2_{H}.
\end{equation}
It follows that
for all
$ (n,q) \in \N \times (0,\infty)$
and all 
$ (x, y) \in H_n^2 $
with $ x \neq y $,
by substituting
$\varepsilon = \frac{\pi}{2\eta q}$
and $r= \frac{16 \varepsilon}{c^2} = \frac{8 \pi}{c^2\eta q}$
in \eqref{eq:Burgers_h1} and \eqref{eq:Laplace_interpolation}, 
that
\begin{equation}
\label{eq:Burgers_drift_est}
\begin{split}
&
  \max\!\left\{
    0,
  \tfrac{ 
    \left<
      x - y ,
      \mu_n( x ) - 
      \mu_n( y )
    \right>_H
    +
    \frac{ 1 }{ 2 }
    \left\| 
      \sigma_n( x ) - \sigma_n( y ) 
    \right\|^2_{ \HS(\calH_n, H ) }
  }{
    \left\|
      x-y
    \right\|^2_H
  }
  \right\}
\\ & \leq
    \tfrac{
      \frac{ \pi }{ 2 \eta }
      \|
	x'
      \|^2_{H}
      +
      \frac{ \pi }{ 2 \eta } 
    \|
      y'
    \|^2_{H}
    }{ 2 q
    }
    +
    \tfrac{ 
      c^2 \eta q
      \kappa(
        8 \pi / ( c^2 \eta q )
      )
    }{ 8 \pi }
    +
    \tfrac{ 
      \varsigma^2
    }{ 2 }
  .
\end{split}
\end{equation}
Combining 
\eqref{eq:Burgers_exp_mom},
\eqref{eq:Burgers_diffusion_est}
and~\eqref{eq:Burgers_drift_est}
and Corollary~\ref{cor:UV2} 
(for all $n\in \N$ with, in the setting of that corollary,
$\theta = 2$, $\rho = \infty$,
$q_{1,1} = q$,
$q_{0,0}=q_{0,1}=q_{1,0}=\infty$,
$p=p$, $r=r$, $\alpha_{i,j}=0$
for $(i,j)\in \{0,1\}^2$, 
$\beta_{1,1}=\frac{\pi}{2}$,
$\beta_{0,0}=\beta_{0,1}=\beta_{1,0}=0$,
$c_0 \equiv \frac{p-2}{2}\varsigma^2$,
\begin{equation}
  c_1 
  \equiv
  \tfrac{
    c^2 \eta q 
    \kappa\left( 
      \frac{8\pi}{c^2 \eta q}
    \right)
  }
  {8 \pi}
  +
  \tfrac{\varsigma^2}{2},
\end{equation}
$U_{0,0} = U_{0,1} = U_{1,0} = \overline{U}_{0,1} \equiv 0$
and
$ U_{1,1}(x) = \frac{\pi}{2\eta} \| x \|_H^2$,
$ \overline{U}_{1,1}(x) = \frac{\pi}{2\eta} \| x' \|_{H}^2$
for all $x\in H_n$)
implies that
for all 
$ (n,T,r) \in \N \times (0,\infty) \times (2,\infty)$,
all $p,q\in (r,\infty)$
such that 
$ 
  \frac{ 1 }{ p } + \frac{ 1 }{ q }
  =
  \frac{ 1 }{ r } 
$, 
and all $( x,y ) \in H_n^2$
it holds that
\begin{equation}
\label{eq:Burger_Galerkin}
\begin{split}
&
  \left\|
    \sup\nolimits_{ t \in [ 0, T ] }
      \left\| X^{ x, n }_t - X^{ y, n }_t 
      \right\|_H
  \right\|_{
    L^r( \Omega; \R )
  }
\\ & \leq  
  \frac{ 
      \| x - y \|_H
  }{ 
    \sqrt{
      1 - 2 / p
    }
  }
  \exp\!\left(
    \tfrac{ 
      c^2 \eta q T
      \kappa(
        8 \pi / ( c^2 \eta q )
      )
    }{ 8 \pi }
    +
    \tfrac{ 
      ( p - 1 ) T
      \varsigma^2
    }{ 2 }
    +
    \tfrac{
      \pi T
    }{
      2 q 
    }
    +
    \tfrac{ 
      \pi 
      \left\|
        x
      \right\|_H^2
      +
      \pi 
      \left\|
        y
      \right\|_H^2
    }{ 
      4 \eta q
    }
  \right) 
  .
\end{split}
\end{equation}

\begin{remark}\label{rem:Burgers_Holder}
Theorem 1.1 and Remark 3.1 \sgc{}in\cgs{}~\cite{LiuRoeckner2010} guarantee 
for all $x\in H$ that there exists an adapted stochastic process
$ X^x \colon [0,T] \times \Omega \to H $ with 
continuous sample paths 
such that $\operatorname{im}(X^x|_{(0,T]\times \Omega})\subseteq V_{F,1}$
and such that for all 
$t\in  [0,T]$
it holds \sgc{}$\P$-a.s.\ \cgs{}that
\begin{equation}
  \int_{0}^{t}
    \| e^{A(t-s)}F(X_s^x) \|_{V_{F,2}} 
    +
    \| e^{A(t-s)}B(X_s^x) \|_{\HS(\mathcal{H},H)}^2
  \,ds
  <\sgc{}\infty\cgs{}
\end{equation}
and
\begin{equation}
\label{eq:Burgers_SPDE}
  X_t^x
  =
  e^{ A t } x
  +
  \int_0^t
  e^{ A ( t - s ) } F( X_s^x ) \, ds
  +
  \int_0^t
  e^{ A ( t - s ) } B( X_s^x ) \, dW_s
  \sgc{}.\cgs{}
\end{equation}
Note that the processes 
$(X_t^x)_{t\in [0,T]}$, $x\in H$, \sgc{}provide\cgs{}
mild \sgc{}solutions\cgs{} to the stochastic Burgers equation
with a certain type of 
multiplicative trace-class noise.
One can use~\eqref{eq:Burger_Galerkin}, Fatou's lemma and a standard localization argument
(see\sgc{}, e.g.,~\cite[Corollary 4.4]{Coxetal2020}\cgs{} for specific
types of noise) to show that
that
for all
\sgc{}$x,y\in H $, 
$ T \in(0,\infty)$,
$r\in (2,\infty)$,
\cgs{}$p,q\in (r,\infty)$
\sgc{}with\cgs{}
$ 
  \frac{ 1 }{ p } + \frac{ 1 }{ q }
  =
  \frac{ 1 }{ r } 
$
it holds that
\begin{equation}
\begin{split}
&
  \left\|
    \sup\nolimits_{ t \in [ 0, T ] }
      \left\| 
        X^x_t - X^y_t 
      \right\|_H
  \right\|_{
    L^r( \Omega; \R )
  }
\\ & 
\leq  
  \frac{ 
      \| x - y \|_H
  }{ 
    \sqrt{
      1 - 2 / p
    }
  }
  \exp\!\left(
    \tfrac{ 
      c^2 \eta q T
      \kappa(
        8 \pi / ( c^2 \eta q )
      )
    }{ 8 \pi }
    +
    \tfrac{ 
      ( p - 1 ) T
      \varsigma^2
    }{ 2 }
    +
    \tfrac{
      \pi T
    }{
      2 q 
    }
    +
    \tfrac{ 
      \pi 
      \left\|
        x
      \right\|_H^2
      +
      \pi 
      \left\|
        y
      \right\|_H^2
    }{ 
      4 \eta q
    }
  \right) 
  .
\end{split}
\end{equation}
\end{remark}

\section{Cahn-Hilliard Cook equation with trace class noise}
\label{ssec:Cahn_Hilliard}

Assume the setting 
of Section~\ref{sec:setting_SPDE}\sgc{}, assume $H = L^2((0,1);\R)$,\cgs{}
let
$ L \colon D(L) \subseteq H \to H $
be the Laplacian with homogeneous
Neumann boundary conditions on $ (0,1) $,
\sgc{}in particular,\cgs{}
$ 
  D( L ) = \{ v \in W^{2,2}( (0,1), \R ) \colon v'(0)=v'(1)=0 \},
$
and assume $ A = - L^2$, \sgc{}in particular\cgs{},
$ D( A ) = D( L^2 ) $,
assume $V_{F,1} = W^{\frac{1}{3},2}((0,1);\R)$
and assume $V_{F,2}=W^{-2,2}((0,1);\R)$,
let $c\in (0,\infty)$ and 
assume $F\colon V_{F,1}\rightarrow V_{F,2}$
is given by $F(v)=cL(v^3-v)$ for all $v\in V_{F,1}$ 
\sgc{}and let $ e_k \in H $, $ k \in \N $,
be given by\cgs{}
$
  e_1( x ) = 1
$
and
$
  e_{ k + 1 }( x ) = \sqrt{ 2 } \cos( k \pi x )
$
for all 
$ x \in (0,1) $,
$ k \in \N $.
\sgc{}Note that for all $v\in D(A)$ it holds that 
$A v = -\sum_{k\in \N} (k\pi)^2  \langle v, e_{k+1} \rangle_{H} e_{k+1}$.\cgs{} For all $n\in \N$
\sgc{}assume\cgs{}
$
  (
    H_n, 
    \langle \cdot, \cdot \rangle_{H_n},
    \| \cdot \|_{H_n}
  )
  = 
  (
    \textrm{span}_{\R}(\{e_1,\ldots,e_n\}),
    \langle \cdot, \cdot \rangle_{H},
    \| \cdot \|_{H}
  )
$
and for all \sgc{}$v \in H$, $n \in \N$\cgs{} \sgc{}assume\cgs{}
$
  P_n v 
  =
  \sum_{ k = 1 }^n
  \left< e_k, v \right>_H
  e_k.
$
\sgc{}Let $\eta_{\varepsilon}\in \R$, $\varepsilon \in (0,\infty)$ satisfy for all $\varepsilon \in (0,\infty)$ that\cgs{}
\begin{equation}
\begin{split}
& \label{eq:CHC_B_assumption}
  \eta_{ \varepsilon } \sgc{}=\cgs{}
  \sup_{ v \in H }
  \left[
    \|
      ( I - P_1 ) B( v ) 
    \|^2_{ \HS( \mathcal{H}, H ) }
    -
    \varepsilon 
    \left(
      \| 
	( ( I - P_1 ) v )^2 
      \|_H^2
      -
      \|
	( I - P_1 ) v
      \|_H^2
      \left\| v \right\|^2_H
    \right)
  \right]
  \sgc{}.\cgs{}
\end{split}
\end{equation}

\sgc{}If $\calH = H$ and there exists a strictly positive trace class operator $Q\in L(H)$ that commutes with $L$ (see, e.g., Appendix~B
in Pr\'{e}v\^{o}t \&\R\"{o}ckner~\cite{PrevotRoeckner2007}) such that $ ( B( v ) u )( x ) = ( \sqrt{ Q } u )( x ) $ for all $ x \in (0,1)$, $u,v \in H$,
then it holds that
\begin{enumerate}
\item 
$
  \sup_{ 
    v, w \in H , v \neq w
  }
  \frac{
    \| B(v) - B(w) \|_{ \HS(  \calH, H ) }
  }{
    \| v - w \|_H
  }
  = 0
$ and
\item there exists an $\eta_{\varepsilon} \in (0,\infty)$ such that \eqref{eq:CHC_B_assumption} is satisfied.
\end{enumerate}
In that case results on the existence and uniqueness
of processes 
$(X^{x,n}_t)_{t\in [0,T]}$, \sgc{}$x\in H$, $n\in \N$,\cgs{}
satisfying~\eqref{eq:Galerkin} $\P$-a.s.\ for all $t\in [0,T]$ 
can be found in, e.g., Da Prato \&\
Debussche~\cite[Theorem 2.2 and Remark 2.2]{DaPratoDebussche1996}.\cgs{}
Note that the processes $(X^{x,n}_t)_{t\in [0,T]}$, \sgc{}$x\in H$, $n\in \N$,\cgs{} are Galerkin approximations of a solution to a certain type of Cahn-Hilliard-Cook equation\sgc{};\cgs{} see also Remark~\ref{rem:CHC_Holder} below. 

In order to apply Corollary~\ref{cor:UV2}
we define the projection $ \sgc{}\mathcal{P}\cgs{} \in L(H) $
and the operator $\invL \in L(H) $ 
by setting
\begin{equation}
  \sgc{}\mathcal{P}\cgs{} v 
= 
  ( I - P_1 ) v 
=
  v - e_1 \left< e_1, v \right>_H
\quad \textrm{ and } \quad
  \invL v = 
  -
  \sum_{ k = 2 }^{ \infty }
  (k-1)^{ -2 } \, \pi^{ -2 } 
  \left< e_k , v \right>_H 
  e_k
\end{equation}
for all $ v \in H $.
Note that
for all $ v \in D(L) $ 
it holds that
\begin{equation}
  \invL L v = 
  L \invL v = 
  \sgc{}\mathcal{P}\cgs{} v
  .
\end{equation}
Now observe that 
Young's inequality proves that
for all
\sgc{}$ 
  \delta 
  \in 
  ( 0 , \infty )
$,
$
 n \in \N
$, 
$
  x\in H_n
$\cgs{}
it holds that
\begin{equation}
\label{eq:CHC_key_estimate_Neumann}
\begin{split}
  -
  c \,
  \big\langle 
    \sgc{}\mathcal{P}\cgs{} x, x^3 
  \big\rangle_H
& =
  -
  c \,
  \left\langle 
    \sgc{}\mathcal{P}\cgs{} x, \big( \sgc{}\mathcal{P}\cgs{} x + P_1 x \big)^3 
  \right\rangle_H
\\ & 
= 
  -
  c \,
  \left\langle 
    \sgc{}\mathcal{P}\cgs{} x, \big( \sgc{}\mathcal{P}\cgs{} x \big)^3 
  \right\rangle_H
  -
  3 \, c \,
  \left\langle 
    \sgc{}\mathcal{P}\cgs{} x, \big( \sgc{}\mathcal{P}\cgs{} x \big)^2 ( P_1 x ) 
  \right\rangle_H
\\ & \quad 
  -
  3 \, c \,
  \big\langle 
    \sgc{}\mathcal{P}\cgs{} x, \big( \sgc{}\mathcal{P}\cgs{} x \big) ( P_1 x )^2 
  \big\rangle_H
  -
  c \,
  \big\langle 
    \sgc{}\mathcal{P}\cgs{} x, ( P_1 x )^3 
  \big\rangle_H
\\ &
=
  -
  c \,
  \big\| 
    ( \sgc{}\mathcal{P}\cgs{} x )^2 
  \big\|_H^2
  -
  3 \, c \,
  \left\langle 
    \sgc{}\mathcal{P}\cgs{} x, \big( \sgc{}\mathcal{P}\cgs{} x \big)^2
  \right\rangle_H
  \left< e_1, x \right>_H
\\ & \quad 
  -
  3 \, c \,
  \big\|
    \sgc{}\mathcal{P}\cgs{} x
  \big\|_H^2
  \left| \left< e_1, x \right>_H \right|^2 
  -
  c \,
  \big\langle 
    \sgc{}\mathcal{P}\cgs{} x, e_1
  \big\rangle_H
  ( \left< e_1, x \right>_H )^3 
\\ &  
\leq
  -
  c \,
  \big\| 
    ( \sgc{}\mathcal{P}\cgs{} x )^2 
  \big\|_H^2
  +
  \left[ 
    \sqrt{ 2 c \delta} 
    \big\|
      ( \sgc{}\mathcal{P}\cgs{} x )^2 
   \big\|_H
  \right]
  \left[
  3\sqrt{\tfrac{ c } { 2 \delta} } 
  \big\|
    \sgc{}\mathcal{P}\cgs{} x
  \big\|_H
  \left| \left< e_1, x \right>_H \right|
  \right]
\\ & \quad
  -
  3 \, c \,
  \big\|
    \sgc{}\mathcal{P}\cgs{} x
  \big\|_H^2
  \left| \left< e_1, x \right>_H \right|^2 
\\ & 
\leq
  -
  c \left( 1 - \delta \right)
  \big\| 
    ( \sgc{}\mathcal{P}\cgs{} x )^2 
  \big\|_H^2
  -
  3 \, c \,
  ( 1 - \tfrac{ 3 }{ 4 \delta } )
  \,
  \big\|
    \sgc{}\mathcal{P}\cgs{} x
  \big\|_H^2
  \left| \left< e_1, x \right>_H \right|^2 
\\ &  
=
  -
  c \left( 1 - \delta \right)
  \big\| 
    ( \sgc{}\mathcal{P}\cgs{} x )^2 
  \big\|_H^2
  -
  3 \, c \,
  ( 1 - \tfrac{ 3 }{ 4 \delta } )
  \,
  \big\|
    \sgc{}\mathcal{P}\cgs{} x
  \big\|_H^2
  \left[ 
    \| x \|^2_H 
    -
    \big\| \sgc{}\mathcal{P}\cgs{} x \big\|^2_H
  \right]
\\ &
\leq
  c 
  \left[
    \delta 
    +
    2 
    - \tfrac{ 9 }{ 4 \delta }
  \right]
  \big\| 
    ( \sgc{}\mathcal{P}\cgs{} x )^2 
  \big\|_H^2
  -
  3 \, c \,
  ( 1 - \tfrac{ 3 }{ 4 \delta } )
  \,
  \big\|
    \sgc{}\mathcal{P}\cgs{} x
  \big\|_H^2
  \,
  \| x \|^2_H 
  \,
  .
\end{split}
\end{equation}
In the next step observe that
for all $n\in \N$
and all $x\in H_n$
it holds that
\begin{equation}\label{eq:CHC_U_auxEst2}
\begin{split}
  \langle 
    \invL x, \mu_n( x ) 
  \rangle_H
& = 
  \langle
    \invL x, P_n( A x + F(x) )
  \rangle_H
=
  \langle
    \invL P_n x, A x + F(x)
  \rangle_H
\\ & =
  -
  \left\langle
    \invL x, L^2 x 
  \right\rangle_H
  +
  \langle
    \invL x, 
    F(x)
  \rangle_H
=
  -
  \big\langle
    \sgc{}\mathcal{P}\cgs{} x, L x 
  \big\rangle_H
  +
  c
  \big\langle
    \sgc{}\mathcal{P}\cgs{} x, 
    x^3 - x
  \big\rangle_H
\\ & =
  \big\langle
    ( - L )^{ \frac{1}{2} } \sgc{}\mathcal{P}\cgs{} x, ( - L )^{ \frac{1}{2} } 
    \sgc{}\mathcal{P}\cgs{} x 
  \big\rangle_H
  +
  c
  \big\langle
    \sgc{}\mathcal{P}\cgs{} x, 
    x^3 
  \big\rangle_H
  -
  c
  \big\langle
    \sgc{}\mathcal{P}\cgs{} x, 
    x 
  \big\rangle_H
\\ & =
  \big\|
    ( - L )^{ \frac{1}{2} } \sgc{}\mathcal{P}\cgs{} x
  \big\|^2_H
  +
  c
  \big\langle
    \sgc{}\mathcal{P}\cgs{} x, 
    x^3 
  \big\rangle_H
  -
  c
  \big\|
    \sgc{}\mathcal{P}\cgs{} x
  \big\|^2_H
  \,
  .
\end{split}
\end{equation}
\sgc{}For the remainder of the proof for every $\rho, \hat{\rho} \in (0,\infty)$ let $U_{\rho,\hat{\rho}} \colon H \rightarrow [0,\infty)$ satisfy for all $x\in H$ that\cgs{}
\begin{align}\label{eq:CHC_def_U}
  U_{\rho,\hat{\rho}}(x) 
 = 
  \tfrac{\rho}{2}
  \big\|(-\invL)^{\inv{2}} x\big\|_H^2 
  + \tfrac{\hat{\rho}}{2}
  \big\| \sgc{}\mathcal{P}\cgs{} x \big\|_H^2
  \sgc{}.\cgs{}
\end{align}
It follows from estimate
\eqref{eq:CHC_U_auxEst2}
that for all\sgc{}
$
    \rho, \hat{\rho} \in (0,\infty)
$,
$ 
  n \in \N
$,
$
    x\in H_n
$\cgs{}
it holds that
\begin{equation}
\begin{split}
&
  ( \mathcal{G}_{ \mu_n , \sigma_n } U_{\rho,\hat{\rho}} )( x )
  +
  \tfrac{ 1 }{ 2 } 
  \, \left\| \sigma_n( x )^* ( \nabla U_{\rho,\hat{\rho}} )( x ) \right\|^2_{\mathcal{H}_n} 
\\ &   =
\left[
  -
  \rho
  \,
  \langle 
    \invL x, \mu_n( x ) 
  \rangle_H
  +
  \tfrac{ \rho }{ 2 }
  \,
  \big\|
    ( - \invL )^{ \frac{1}{2} } \sigma_n( x ) 
  \big\|^2_{ \HS( \mathcal{H}_n, H) }
\right]
\\ & \quad 
  +
\left[
  \hat{ \rho }
  \,
  \big\langle 
    \sgc{}\mathcal{P}\cgs{} x , 
    \mu_n( x ) 
  \big\rangle_H
  +
  \tfrac{ \hat{ \rho } }{ 2 }
  \,
  \big\|
    \sgc{}\mathcal{P}\cgs{} \sigma_n( x ) 
  \big\|^2_{ \HS( \mathcal{H}_n, H ) ) }
\right]
  +
  \tfrac{ 1 }{ 2 } 
  \,
  \big\| 
    \sigma_n( x )^* 
    \big[ 
      \rho 
      \,
      ( - \invL ) \, x
      +
      \hat{ \rho } 
      \,
      \sgc{}\mathcal{P}\cgs{} 
      \, x
    \big] 
  \big\|^2_{\mathcal{H}_n} 
\\ & \leq
\rho
\left[
  c \,
  \big\|
    \sgc{}\mathcal{P}\cgs{} x
  \big\|^2_H
  -
  \|
    x'
  \|^2_H
  -
  c \,
  \big\langle 
    \sgc{}\mathcal{P}\cgs{} x, x^3 
  \big\rangle_H
  +
  \tfrac{ 1 }{ 2 }
  \,
  \big\|
    ( - \invL )^{ \frac{1}{2} } B( x ) 
  \big\|^2_{ \HS( \mathcal{H}, H ) }
\right]
\\ & \quad 
  +
  \hat{ \rho }
\left[
  c \,
  \| x' \|^2_H
  -
  \| x'' \|^2_H
  -
  c
  \,
  \langle 
    x' , 
    ( x^3 )'
  \rangle_H
  +
  \tfrac{ 1 }{ 2 }
  \,
  \big\|
    \sgc{}\mathcal{P}\cgs{} B( x ) 
  \big\|^2_{ \HS( \mathcal{H}, H ) }
\right]
\\ &  \quad
  +
  \tfrac{ 1 }{ 2 }
  \,
  \big\| 
    B( x )^* 
    \big[
      \hat{ \rho } \sgc{}\mathcal{P}\cgs{} - \rho \invL
    \big]
    x
  \big\|^2_{\mathcal{H}}
  .
\end{split}
\end{equation} 
Combining this with 
\eqref{eq:CHC_key_estimate_Neumann}
and the estimate \sgc{}that for all $v\in H$ it holds that\cgs{}
$
  \big\| ( - \invL )^{ \frac{1}{2} } v 
  \big\|^2_H
\leq \sgc{}
  \big\| 
    \sgc{}\mathcal{P}\cgs{} v 
  \big\|_H^2\cgs{}
$
proves that
for all \sgc{}
$
 \rho,\hat{\rho},\delta
  \in 
  (0,\infty)
$,
$ n \in \N$,
$ x \in H_n $\cgs{}
it holds that
\begin{equation}
\label{eq:CHC_U_est}
\begin{split}
&
  ( \mathcal{G}_{ \mu_n , \sigma_n } U_{\rho,\hat{\rho}} )( x )
  +
  \tfrac{ 1 }{ 2 } 
  \, \| \sigma_n( x )^* ( \nabla U_{\rho,\hat{\rho}} )( x ) \|^2_{\mathcal{H}_n} 
\\ &\leq
\rho
\left[
  c \,
  \big\|
    \sgc{}\mathcal{P}\cgs{} x
  \big\|^2_H
  -
  \left\|
    x'
  \right\|^2_H
  +
  c 
  \left[
    \delta 
    + 2
    - \tfrac{ 9 }{ 4 \delta }
  \right]
  \big\| 
    ( \sgc{}\mathcal{P}\cgs{} x )^2 
  \big\|_H^2
  -
  3 \, c \,
  ( 1 - \tfrac{ 3 }{ 4 \delta } )
  \,
  \big\|
    \sgc{}\mathcal{P}\cgs{} x
  \big\|_H^2
  \,
  \| x \|^2_H 
\right]
\\ & \quad 
  +
  \hat{ \rho }
\left[
  c \,
  \| x' \|^2_H
  -
  \| x'' \|^2_H
  -
  3 \, c
  \,
  \|
    x' x
  \|_H^2
\right]
  +
  \tfrac12
    ( \rho + \hat{ \rho } ) 
  \,
  \big\|
    \sgc{}\mathcal{P}\cgs{} B( x ) 
  \big\|^2_{ \HS( \mathcal{H}, H ) }
\\ & \quad
  +
  \tfrac12
    \| B(x) \|^2_{ \HS( \mathcal{H}, H ) }
    \big\| 
      \hat{ \rho } \sgc{}\mathcal{P}\cgs{} - \rho \invL 
    \big\|_{ L(H) }^2
    \big\| 
      \sgc{}\mathcal{P}\cgs{} x
    \big\|^2_H
  .
\end{split}
\end{equation} 
From~\eqref{eq:CHC_B_assumption} it
follows that 
for all \sgc{}
$
 \rho,\hat{\rho},\delta
  \in 
  (0,\infty)
$,
$ n \in \N$,
$ x \in H_n $\cgs{}
it holds that
\begin{equation}
\label{eq:CHC_U_est2}
\begin{split}
&
  ( \mathcal{G}_{ \mu_n , \sigma_n } U_{\rho,\hat{\rho}} )( x )
  +
  \tfrac{ 1 }{ 2 } 
  \, \| \sigma_n( x )^* ( \nabla U_{\rho,\hat{\rho}} )( x ) \|^2_{\mathcal{H}_n}
\\ &  \leq
\rho
\left[
  -
  \left\|
    x'
  \right\|^2_H
  +
  c 
  \left[
    \delta 
    + 2 
    - \tfrac{ 9 }{ 4 \delta }
  \right]
  \big\| 
    ( \sgc{}\mathcal{P}\cgs{} x )^2 
  \big\|_H^2
  -
  3 \, c \,
  ( 1 - \tfrac{ 3 }{ 4 \delta } )
  \,
  \big\|
    \sgc{}\mathcal{P}\cgs{} x
  \big\|_H^2
  \,
  \| x \|^2_H 
\right]
\\ & \quad
  +
  \hat{ \rho }
  \left[
    c \,
    \| x' \|^2_H
    -
    \| x'' \|^2_H
    -
    3 \, c
    \,
    \|
      x' x
    \|_H^2
  \right]
\\ &  \quad
  +
  \tfrac12
  ( \rho + \hat{ \rho } ) 
  \,
  \left[ 
    \eta_{ \varepsilon }
    +
    \varepsilon \,
    \big\| 
      ( \sgc{}\mathcal{P}\cgs{} x )^2 
    \big\|_H^2
    +
    \varepsilon
    \,
    \big\|
      \sgc{}\mathcal{P}\cgs{} x
    \big\|_H^2
    \left\| x \right\|^2_H
  \right]
\\ & \quad
  +
  \tfrac12
  \| B(x) - B(0) + B(0) \|^2_{ \HS( \mathcal{H}, H ) }
  \big\| 
    \hat{ \rho } \sgc{}\mathcal{P}\cgs{} - \rho \invL 
  \big\|_{ L(H) }^2
  \big\| 
    \sgc{}\mathcal{P}\cgs{} x
  \big\|^2_H
  +
  \rho \, c \,
  \big\|
    \sgc{}\mathcal{P}\cgs{} x
  \big\|^2_H
\\ & \leq
  \left[ 
    \tfrac{
      ( \rho + \hat{ \rho } ) \, \varepsilon
    }{
      2
    }
    +
    \rho
    \,
    c 
    \left[
      \delta 
      + 2
      - \tfrac{ 9 }{ 4 \delta }
    \right]
  \right]
  \big\| 
    ( \sgc{}\mathcal{P}\cgs{} x )^2 
  \big\|_H^2
\\ &  \quad
  +
  \left[
    \hat{ \rho }
    \,
    c 
    -
    \rho
  \right]
  \| x' \|^2_H
  -
  \hat{ \rho }
  \left[
    \| x'' \|^2_H
    +
    3 \, c
    \,
    \|
      x' x
    \|_H^2
  \right]
\\ &  \quad
  +
  \left[
    \tfrac{ 
      ( \rho + \hat{ \rho } ) \, \varepsilon
    }{ 2 }
    -
    \rho \, c \,
    ( 3 - \tfrac{ 9 }{ 4 \delta } )
  \right]
  \big\|
    \sgc{}\mathcal{P}\cgs{} x
  \big\|_H^2
  \,
  \| x \|^2_H 
\\ & \quad
  +
  \left[
    \varsigma^2 \, \| x \|^2_H
    +
    \| B(0) \|^2_{ \HS( \mathcal{H}, H ) }
  \right]
    \big\| 
      \hat{ \rho } \sgc{}\mathcal{P}\cgs{} - \rho \invL 
    \big\|_{ L(H) }^2
    \,
    \big\| 
      \sgc{}\mathcal{P}\cgs{} x
    \big\|^2_H 
  +
  \rho \, c \,
  \big\|
    \sgc{}\mathcal{P}\cgs{} x
  \big\|^2_H
  +
  \tfrac{ 
    \eta_{ \varepsilon }
    \,
    ( \rho + \hat{ \rho } ) 
  }{ 2 }
  .
\end{split}
\end{equation} 
Hence, we obtain that
for all\sgc{}
$
 \rho,\hat{\rho},\delta
  \in 
  (0,\infty)
$,
$ n \in \N$,
$ x \in H_n $\cgs{}
it holds that
\begin{equation}
\label{eq:CHC_U_est3}
\begin{split}
&
  ( \mathcal{G}_{ \mu_n , \sigma_n } U_{\rho,\hat{\rho}} )( x )
  +
  \tfrac{ 1 }{ 2 } 
  \, \| \sigma_n( x )^* ( \nabla U_{\rho,\hat{\rho}} )( x ) \|^2_{\mathcal{H}_n}
\\ & \leq
  \left[ 
    \tfrac{
      ( \rho + \hat{ \rho } ) \, \varepsilon
    }{
      2
    }
    +
    \rho
    \,
    c 
    \left[
      \delta 
      +
        2 - \tfrac{ 9 }{ 4 \delta }
    \right]
  \right]
  \big\| 
    ( \sgc{}\mathcal{P}\cgs{} x )^2 
  \big\|_H^2
\\ &  \quad
  +
  \left[ 
    \rho \, c 
    +
    \| B(0) \|^2_{ \HS( \mathcal{H}, H ) }
    \,
    \big\| 
      \hat{ \rho } \sgc{}\mathcal{P}\cgs{} - \rho \invL 
    \big\|_{ L(H) }^2
  \right]
  \big\|
    \sgc{}\mathcal{P}\cgs{} x
  \big\|^2_H
\\ &  \quad
  +
  \left[
    \hat{ \rho }
    \,
    c 
    -
    \rho
  \right]
  \| x' \|^2_H
  -
  \hat{ \rho }
  \left[
    \| x'' \|^2_H
    +
    3 \, c
    \,
    \|
      x' x
    \|_H^2
  \right]
\\ &\quad
  +
  \left[
    \tfrac{ 
      ( \rho + \hat{ \rho } ) \, \varepsilon
    }{ 2 }
    +
    \varsigma^2 \, 
    \big\| 
      \hat{ \rho } \sgc{}\mathcal{P}\cgs{} - \rho \invL 
    \big\|_{ L(H) }^2
    -
    \rho \, c \,
    ( 3 - \tfrac{ 9 }{ 4 \delta } )
  \right]
  \big\|
    \sgc{}\mathcal{P}\cgs{} x
  \big\|_H^2
  \,
  \| x \|^2_H 
  +
  \tfrac{ 
    \eta_{ \varepsilon }
    \,
    ( \rho + \hat{ \rho } ) 
  }{ 2 }
  .
\end{split}
\end{equation} 
\sgc{}For all $\rho, \hat{\rho} \in (0,\infty)$\cgs{} one
has
$
  \big\| 
    \hat{\rho} \sgc{}\mathcal{P}\cgs{} - \rho \invL
  \big\|_{L(H)}
  =
  \hat{\rho} + \frac{\rho}{\pi^2}.
$
It follows that for all 
\sgc{}$\rho, \hat{\rho} \in (0,\infty)$\cgs{}
it holds that
\begin{equation}\label{eq:CHC_normPx}
\begin{split}
&  \left[ 
    \rho \, c 
    +
    \| B(0) \|^2_{ \HS( \mathcal{H}, H ) }
    \,
    \big\| 
      \hat{ \rho } \sgc{}\mathcal{P}\cgs{} - \rho \invL 
    \big\|_{ L(H) }^2
  \right]
  \big\|
    \sgc{}\mathcal{P}\cgs{} x
  \big\|^2_H
\\ & \qquad 
 \leq
  \tfrac{\rho c}{ 160 }
  \big\|
    ( \sgc{}\mathcal{P}\cgs{} x )^2
  \big\|^2_H
  +
  40 \rho c
  \left(
    1
    +
    \tfrac{ 
      (\pi^2\hat{\rho}+\rho )^2
    }{ 
      \pi^{4} \rho c 
    } 
    \| B(0) \|_{\HS(\mathcal{H},H)}^2
  \right)^2.
\end{split}
\end{equation}
\sgc{}For the remainder of the proof for every $\rho, \hat{\rho} \in (0,\infty)$ let $\overline{U}_{\rho,\hat{\rho}} \colon D(L) \rightarrow \R$ satisfy for all $x\in H$ that\cgs{}
\begin{equation}\label{eq:CHC_def_Ubar}
 \overline{U}_{\rho,\hat{\rho}}(x) 
 = \hat{ \rho }
    \| x'' \|^2_H
    +
   \tfrac{\rho c}{16}
   \| x \|_H^2 \| \sgc{}\mathcal{P}\cgs{}x\|_H^2
   \sgc{}.\cgs{}
\end{equation}
\sgc{}For the remainder of the proof let $\beta\colon (0,\infty)^2 \rightarrow (-\infty,\infty]$
satisfy for all $\rho, \hat{\rho} \in (0,\infty)$ that\cgs{}
\begin{equation}
\begin{split}
\label{eq:CHC_U_est4}
\beta(\rho,\hat{\rho}) =
  \sup_{ n \in \N }
  \sup_{ x \in H_n }
  \left[
  ( \mathcal{G}_{ \mu_n , \sigma_n } 
    U_{\rho,\hat{\rho}}
  )( x )
  +
  \tfrac{ 1 }{ 2 } 
  \, \| \sigma_n( x )^* ( \nabla U_{\rho,\hat{\rho}} )( x ) \|^2_{\mathcal{H}_n}
  +
  \overline{U}_{\rho,\hat{\rho}}( x )
  \right]
  \sgc{}.\cgs{}
\end{split}
\end{equation}
From~\eqref{eq:CHC_U_est3}
and~\eqref{eq:CHC_normPx} it follows that there exist
\sgc{}$ \rho,\hat{\rho} \in (0,\infty)$\cgs{} (which we fix for the remainder of this section)
such that $\beta(\rho,\hat{\rho})<\infty$. \sgc{}For all $n\in \N$\cgs{} this and Corollary~\ref{cor:exp_mom} \sgc{} with \cgs{} $U=U_{\rho,\hat{\rho}}|_{H_n}$, $\bar{U}=\bar{U}_{\rho,\hat{\rho}}|_{H_n}-\beta(\rho,\hat{\rho})$ and $\alpha=0$ imply that \sgc{}for all $x \in H$, $t\in [0,T]$ it holds that\cgs{}
\begin{equation}\begin{aligned}
& \E \sgc{}\bigg[\cgs{}
 \text{exp}\!\left( 
    \tfrac{\rho}{2}
    \| (-\invL)^{\frac{1}{2}} X_t^{x,n} \|_H^2 
    + 
    \tfrac{\hat\rho}{2} 
    \| \sgc{}\mathcal{P}\cgs{}X_t^{x,n} \|_H^2 \right)
\\ & \quad \cdot 
    \text{exp}\!\left(
    +
    \int_0^{t}
    \big(
    \hat{\rho} 
    \| L X_s^{x,n} \|_H^2
    +
    \tfrac{\rho c}{16} 
    \| X_s^{x,n} \|_H^2 
    \| \sgc{}\mathcal{P}\cgs{}X_s^{x,n} \|_H^2 
    \big)\,ds
 \right)
\sgc{}\bigg]\cgs{}
\\ &
    \leq \text{exp}\! \left( 
        \tfrac{\rho}{2}
        \| (-\invL)^{\frac{1}{2}} P_n x\|^2_H
        +
        \tfrac{\hat{\rho}}{2}
        \| \sgc{}\mathcal{P}\cgs{} P_n x \|^2_H
    \right)
    \leq \text{exp}\! \left( 
        \tfrac{\rho}{2}
        \| (-\invL)^{\frac{1}{2}} x\|^2_H
        +
        \tfrac{\hat{\rho}}{2}
        \| \sgc{}\mathcal{P}\cgs{}  x \|^2_H
    \right).
\end{aligned}
\end{equation}
\sgc{}The\cgs{} next step is to prove that 
condition~\eqref{eq:UV2_est_mu_sigma} 
in Corollary~\ref{cor:UV2} is
satisfied for $\mu=\mu_n,\, \sigma=\sigma_n$,
$n\in \N$. To this end first observe 
that for all $n\in \N$\sgc{}, $x,y\in H_n$\cgs{}
it holds that
\begin{equation}
 \begin{split}
& \left\langle
      x-y,
      c L (x^3-y^3)
  \right\rangle_H
  =
  - c 
  \left\langle
    (x-y)',
    [(x-y)(x^2+xy+y^2)]'
  \right\rangle_H
\\ & 
  =  
  - c 
  \left\langle
    [(x-y)']^2,
    x^2+xy+y^2
  \right\rangle_H 
  - c 
  \left\langle
    (x-y)',
    (x-y)(2x'x+x'y+xy'+2y'y)
  \right\rangle_H
\\ & 
  \leq     
  - 
  \tfrac{c}{4} 
  \left\langle
    [(x-y)']^2,
    (|x|+|y|)^2
  \right\rangle_H 
  + 2 c 
  \left\langle
    |(x-y)'|,
    |x-y|(|x|+|y|)(|x'|+|y'|)
  \right\rangle_H
\\ &  
  \leq     
  - 
  \tfrac{c}{4} 
  \left\|
    (x-y)'
    (|x|+|y|)
  \right\|_H^2
  + 2 c 
  \left\|
    (x-y)'
    (|x|+|y|)
  \right\|_H
  \left\|
    (x-y)(|x'|+|y'|)
  \right\|_H  
\\ &    
  \leq     
  4c
  \left\|
    (x-y)(|x'|+|y'|)
  \right\|_H^2
  \end{split}
\end{equation}
where Young's inequality is applied
in the final estimate. It follows
from this estimate that for all $n\in \N$\sgc{}, $x,y\in H_n$\cgs{} it holds that
\begin{equation}\label{eq:CHC_est1}
 \begin{split}
&   \langle 
      x-y,
      \mu_n(x) - \mu_n(y)
    \rangle_H
    +
    \tfrac{1}{2}
    \|
      \sigma_n(x) - \sigma_n(y)
    \|_{\HS(\mathcal{H}_n,H)}^2
\\ &
  =
    -  
    \left\langle
      x-y,
      L^2(x-y)
    \right\rangle_H
    +
    \left\langle
      x-y,
      c L (x^3-y^3-x+y)
    \right\rangle_H
\\ & \quad
    +
    \tfrac{1}{2}
    \|
      P_n(B(x) - B(y))
    \|_{\HS(\mathcal{H}_n,H)}^2
\\ & 
  \leq 
    -  
    \|
      L(x-y)
    \|_H^2
    +
    4c
    \|
      (x-y)(|x'|+|y'|)
    \|_H^2
    +
    c
    \|
      (x-y)'
    \|_H^2
    +
    \tfrac{\varsigma^2}{2}
    \|
      x-y
    \|_H^2
\\ &
  \leq 
    -  
    \|
      (x-y)''
    \|_H^2
    +
    8c
    \left(
      \|
	x'
      \|_{L^{\infty}((0,1);\R)}^2
      +
      \|
	y'
      \|_{L^{\infty}((0,1);\R)}^2
    \right)
    \|
      x-y
    \|_H^2
    +
    c
    \|
      (x-y)'
    \|_H^2
\\ & \quad 
    +
    \tfrac{\varsigma^2}{2}
    \|
      x-y
    \|_H^2.
 \end{split}
\end{equation}
By integrating by parts and 
applying H\"older's and Young's
inqualities we obtain that for
all $x\in D(A)$ it holds that
\begin{equation}
  c
  \left\| x' \right\|_H^2
\leq 
  \tfrac{c^2}{4}
  \left\| x \right\|^2_H
  +
  \left\| x'' \right\|^2_H.
\end{equation}
Moreover, by similar arguments as used
to obtain~\eqref{eq:Laplace_interpolation},
it follows that 
there exists a function 
$ \kappa \colon (0,\infty) \rightarrow (0,\infty)$
such that
for all $ x \in D(A) $,
\sgc{}$ q,\rho,\hat{\rho} \in (0,\infty)$\cgs{}
it holds that
\begin{equation}\label{eq:kappa_est_CHC}
\begin{aligned}
  8c 
  \left\| 
    x' 
  \right\|^2_{ L^{ \infty }((0,1);\R)}
& =
  8c
  \big\|
    (\sgc{}\mathcal{P}\cgs{}x)'
  \big\|^2_{ L^{ \infty }((0,1);\R)}
\leq 
  c^2 
  \kappa\big(
    \tfrac{\hat{\rho}}{2q}
  \big)
  \big\|
    \sgc{}\mathcal{P}\cgs{}x
  \big\|_{H}^2
  +
  \tfrac{\hat{\rho}}{2q}
  \left\|
    x''
  \right\|_{H}^2
\\ & \leq 
  \tfrac{
    8 c^3 q \left(   
      \kappa\left(
	\frac{\hat{\rho}}{2q}
      \right)
    \right)^2
  }{
    \rho 
  }
  +
  \tfrac{1}{2q}
  \overline{U}_{\rho,\hat{\rho}}(x).
\end{aligned}
\end{equation}
Inserting this into~\eqref{eq:CHC_est1}
proves that for all \sgc{}
$
  n \in \N
$,
$
x,y\in H_n$,
$ 
    q,\rho,\hat{\rho}
  \in (0,\infty)
$\cgs{}
it holds that 
\begin{equation}\label{eq:CHC_est2}
 \begin{split}
&  \max\left\{
    0,
    \tfrac{
      \langle 
	x-y,\,
	\mu_n(x) - \mu_n(y)
      \rangle_H
      +
      \frac{1}{2}
      \|
	\sigma_n(x) - \sigma_n(y)
      \|_{\HS(\mathcal{H}_n,H)}^2
    }{
      \| x - y \|^2_H
    }
  \right\}
\\ & \qquad 
  \leq
    \tfrac{\varsigma^2}{2}
    +
    \tfrac{
      8 c^3 q \left(   
	\kappa\left(
	  \frac{\hat{\rho}}{2q}
	\right)
      \right)^2
    }{
      \rho 
    }
    +
    \tfrac{c^2}{4}
    +
    \tfrac{1}{2q}
    \big( 
      \overline{U}_{\rho,\hat{\rho}}(x)
      +
      \overline{U}_{\rho,\hat{\rho}}(y)
    \big).
 \end{split}
\end{equation}
    It follows from Corollary~\ref{cor:UV2} \sgc{}with\cgs{}
$\theta = 2$, $\rho = \infty$,
$q_{1,1} = q$,
$q_{0,0}=q_{0,1}=q_{1,0}=\infty$,
$p=p$, $r=r$, $\alpha_{i,j}=0$
for $(i,j)\in \{0,1\}^2$, 
$\beta_{1,1}=\beta(\rho,\hat{\rho})$,
$\beta_{0,0}=\beta_{0,1}=\beta_{1,0}=0$,
$c_0 \equiv \frac{p-2}{2}\varsigma^2$,
\begin{equation}
  c_1
  \equiv
    \tfrac{\varsigma^2}{2}
    +
    \tfrac{
      8 c^3 q \left(   
	\kappa\left(
	  \frac{\hat{\rho}}{2q}
	\right)
      \right)^2
    }{
      \rho 
    }
    +
    \tfrac{c^2}{4},
\end{equation}
$ U_{0,0} = U_{0,1} = U_{1,0} = \overline{U}_{0,1} \equiv 0$,
and
$ U_{1,1}= U_{\rho,\hat{\rho}}$,
$ \overline{U}_{1,1} = \overline{U}_{\rho,\hat{\rho}}$)
that for all \sgc{}
$
  n \in \N
$,
$
 x,y\in H_n
$,
$
 \rho,\hat{\rho} \in (0,\infty)
$,
$   
 r \in (2,\infty)
$,
$ p,q \in (r,\infty)$
with
$
  \frac{1}{p} + \frac{1}{q} = \frac{1}{r} 
$\cgs{} 
it holds that
\begin{equation}\label{eq:CHC_Galerkin}
 \begin{split}
&   \big\| 
      \sup\nolimits_{t\in [0,T]}
      \| X_t^{x,n} - X_t^{y,n} \|_H
    \big\|_{L^r(\Omega;\R)}
\\ & \qquad 
    \leq 
    \frac{
      \| x - y \|_H 
    }
    { \sqrt{1-\sgc{}(2/p)\cgs{}} }
    \exp\!\left(
      \left(
	\tfrac{ (p-1) \varsigma^2 }{2}
	+	
	\tfrac{
	  8 c^3 q \left(   
	    \kappa\left(
	      \frac{\hat{\rho}}{2q}
	    \right)
	  \right)^2
	}{
	  \rho 
	}
	+
	\tfrac{c^2}{4}
	+
	\tfrac{ \beta(\rho,\hat{\rho})  }{ q }
      \right)T
    \right)
\\ & \qquad \quad
    \cdot
    \exp\!\bigg(
      \tfrac{
	\rho \left\| (- \invL)^{1/2} x \right\|_H^{2}
	+
	\hat{\rho} \| \sgc{}\mathcal{P}\cgs{} x \|_H^{2}
      }
      { 4q }
      +
      \tfrac{
	\rho \left\| (- \invL)^{1/2} y \right\|_H^{2}
	+
	\hat{\rho} \| \sgc{}\mathcal{P}\cgs{} y \|_H^{2}
      }
      { 4q }
    \bigg).
 \end{split}
\end{equation}

\begin{remark}\label{rem:CHC_Holder}
Assume that $\calH = H$ and \sgc{}assume\cgs{}
that there exists a strictly positive trace class operator $ Q \in L(H) $ that commutes with $L$ and satisfies
$ ( B( v ) u )( \xi) = ( \sqrt{ Q } u )( \xi ) $
for all $ \xi \in (0,1)$, $ (u,v) \in H^2 $. Then~\cite[Theorem 2.2 and Remark 2.2]{DaPratoDebussche1996} implies for all $x\in H$ that there exists an adapted stochastic process
$ X^x \colon [0,T] \times \Omega \to H $ with 
continuous sample paths 
such that $\operatorname{im}(X^x|_{(0,T]\times \Omega})\subseteq V_{F,1}$
and such that for all 
$t\in  [0,T]$
it holds \sgc{}$\P$-a.s.\cgs{} that 
\begin{equation}
  \int_{0}^{t}
    \| e^{A(t-s)}F(X_s^x) \|_{V_{F,2}} 
    +
    \| e^{A(t-s)}B(X_s^x) \|_{\HS(\mathcal{H},H)}^2
  \,ds
  <
  \sgc{}\infty\cgs{}
\end{equation}
and
\begin{equation}
\label{eq:CHC_SPDE}
  X_t^x
  =
  e^{ A t } x
  +
  \int_0^t
  e^{ A ( t - s ) } F( X_s^x ) \, ds
  +
  \int_0^t
  e^{ A ( t - s ) } B( X_s^x ) \, dW_s
 \sgc{}.\cgs{}
\end{equation}
Note for all $x\in H$ that the process 
$(X_t^x)_{t\in [0,T]}$ provides
a mild solution to the Cahn-Hilliard-Cook type SPDE
\begin{equation}
  d X_t( \xi ) 
  =
  \left[ 
    - 
    \tfrac{ \partial^4 }{ \partial \xi^4 } X_t( \xi )
    +
    c
    \,
    \tfrac{ \partial^2 }{ \partial \xi^2 } 
    \left[
      \left( X_t( \xi ) \right)^3
      -
      X_t( \xi )
    \right]
  \right] dt
  +
  \sqrt{ Q }
  \,
  dW_t(\xi)
\end{equation}
for $ \xi \in (0,1) $, $ \sgc{}t \in [0,T]\cgs{} $
equipped with the Neumann and the non-flux boundary conditions
$
  X_t'(0) = X_t'(1) =
  X_t'''(0) = X_t'''( 1 )
  = 0
$
for $ t \in [0,T] $ and initial condition $X_t(0)=x$.
One can use~\eqref{eq:CHC_Galerkin}, Fatou's lemma and a standard localization argument
(see~\cite[Corollary 4.4]{Coxetal2020}) to show that
for all \sgc{}
$x,y \in H$,
$ \rho,\hat{\rho},T \in  (0,\infty)$,
$ r \in (2,\infty)$,
$p,q\in (r,\infty)$ 
with 
$ 
  \frac{ 1 }{ p } + \frac{ 1 }{ q }
  =
  \frac{ 1 }{ r } 
$\cgs{}
it holds that
\begin{equation}
\begin{split}
 &   \big\| 
      \sup\nolimits_{t\in [0,T]}
      \| X_t^{x} - X_t^{y} \|_H
    \big\|_{L^r(\Omega;\R)}
\\ & \qquad 
    \leq 
    \frac{
      \| x - y \|_H 
    }
    { \sqrt{1-2/p} }
    \exp\!\left(
      \left(
	\tfrac{ (p-1) \varsigma^2 }{2}
	+
	\tfrac{
	  8 c^3 q \left(   
	    \kappa\left(
	      \frac{\hat{\rho}}{2q}
	    \right)
	  \right)^2
	}{
	  \rho 
	}
	+
	\tfrac{c^2}{4} 
	+
	\tfrac{ \beta(\rho,\hat{\rho})  }{ q }
      \right)T
      \right)
\\ & \qquad \quad
    \cdot
    \exp\!\bigg(
      \tfrac{
	\rho \left\| (- \invL)^{1/2} x \right\|_H^{2}
	+
	\hat{\rho} \| \sgc{}\mathcal{P}\cgs{} x \|_H^{2}
      }
      { 4q }
      +
      \tfrac{
	\rho \left\| (- \invL)^{1/2} y \right\|_H^{2}
	+
	\hat{\rho} \| \sgc{}\mathcal{P}\cgs{} y \|_H^{2}
      }
      { 4q }
    \bigg).
\end{split}
\end{equation}
\end{remark}

\section{Non-linear wave equation}\label{ssec:stochastic.wave.equation}
\sgc{}Assume the setting of\cgs{} Section~\ref{sec:setting_SPDE}\sgc{}, let $D\subseteq \R^2$ be\cgs{} open and bounded\sgc{}, assume\cgs{} that \sgc{}$D$ has Lipschitz boundary\cgs{},  let
\begin{equation}
\begin{aligned}
&  ( H, \langle \cdot, \cdot \rangle_H , \left\| \cdot \right\|_H ) 
\\ & =
  ( 
     W^{1,2}( D; \R) \times L^2( D; \R ) , 
    \left< \cdot , \cdot \right>_{
      W^{1,2}( D; \R) \times L^2( D; \R )
    } 
    ,
    \left\| \cdot \right\|_{
      W^{1,2}( D; \R) \times L^2( D; \R ) 
    } 	
  )
,
\end{aligned}
\end{equation}
let
$ A \colon D(A) \subseteq H \rightarrow H$ be given by
$
  D(A) 
  =
  (W^{2,2}(D;\R) \cap W^{1,2}_0(D;\R)) \times W^{1,2}_0(D; \R),
$
$
  A((u_1,u_2))
  =
  (
    u_2,
    \Delta u_1 
  )
$ 
for all $(u_1,u_2)\in D(A)$, let $p\in (3,4)$
and assume that $V_{F,1}=L^p(D;\R)\times L^2(D;\R)$ and \sgc{}$V_{F,2}=W^{1,2}(D;\R)\times L^{p/3}(D;\R)$\cgs{},
assume that
$ F\colon V_{F,1} \rightarrow V_{F,2} $
is given by
$
  F((u_1,u_2)) = (0, - u_1^3)
$
for all $(u_1,u_2) \in V_{F,1}$,
let $B_2\in C(H, \HS(\calH,L^2(D;\R)))$,
\sgc{}let $\eta\in\R$
satisfy
\begin{equation}\label{eq:B_bdd}
  \eta \sgc{}=\cgs{} \sup_{v\in H} \| B_2(v) \|_{\HS(\calH,L^2(D;\R))},
\end{equation}
and assume $B \colon H\to \HS(\calH, H)$ satisfies
$
  B(u_1,u_2) = (0, B_2(u_1,u_2))
$
for all $(u_1,u_2)\in H$.\cgs{}
Let 
$e_k \in W^{2,2}(D;\R) \cap W^{1,2}_0(D;\R)$, $k\in \N$,
be such that $(e_k)_{k\in \N}$ is total in 
$W^{1,2}_0(D;\R)$ (hence
also in $L^2(D;\R)$). For
$n\in \N$ we assume that
$
  (
    H_n, 
    \langle \cdot, \cdot \rangle_{H_n},
    \| \cdot \|_{H_n}
  )
  = 
  (
    \left[\textrm{span}(\{e_1,\ldots,e_n\})\right]^2,
    \langle \cdot, \cdot \rangle_{L^2(D;\R^2)},
    \| \cdot \|_{L^2(D;\R^2)}
  ),
$
that $\tilde{P}_n \in L(L^2(D;\R))$ is 
the $L^2(D;\R)$-orthogonal projection of $L^2(D;\R)$ onto 
$\textrm{span}(\{e_1,\ldots,e_n\})$ and that
$P_n \in L(H)$ is defined by
$
  P_n((u_1,u_2))
  =
  (\tilde{P}_n u_1, \tilde{P}_n u_2)
$
for all $(u_1,u_2)\in H$. 

We define
$
  U \colon H \rightarrow \R
$
by 
\begin{equation}\label{eq:wave_U}
  U((u_1,u_2))
  =
  \tfrac14 \| u_1 \|_{L^4(D;\R)}^4
  +
  \tfrac12 \| \nabla u_1 \|_{L^2(D;\R^2)}^2
  +
  \tfrac12 \| u_2 \|_{L^2(D;\R)}^2
\end{equation}
for all $(u_1,u_2)\in H$.
Observe that 
for all $n\in \N$
and all $(u_1,u_2), (v_1,v_2), (w_1,w_2) \in H_n$
it holds that
\begin{equation}
 (U|_{H_n})'((u_1,u_2)) ((v_1,v_2)) = 
  \langle \sgc{}(u_1)^3\cgs{} - \Delta u_1, v_1 \rangle_{L^2(D;\R)}
 + 
 \langle u_2, v_2 \rangle_{L^2(D;\R)} 
\end{equation}
and  
\begin{equation}\begin{aligned}
 & (U|_{H_n})''((u_1,u_2)) ((v_1, v_2), (w_1, w_2)) 
  \\ & = 
   3\langle \sgc{}(u_1)^2\cgs{} w_1, v_1 \rangle_{L^2(D;\R)}
  -
  \langle \Delta w_1, v_1 \rangle_{L^2(D;\R)}
  +
  \langle w_2, v_2 \rangle_{L^2(D;\R)} 
  \sgc{}.\cgs{}
\end{aligned}
\end{equation}
\sgc{}Combining this with~\eqref{eq:B_bdd} shows that for all $n\in \N$, $(u_1,u_2)\in H_n$ it holds that\cgs{}
\begin{equation}\label{eq:wave_GmusigmaU}
\begin{aligned}
  \left(\calG_{\mu_n,\sigma_n}U\right)((u_1,u_2))
&  =
  \langle 
    \sgc{}(u_1)^3\cgs{} - \Delta u_1,
    u_2
  \rangle_{L^2(D;\R)}
  +
  \langle
    u_2,
    \tilde{P}_n\Delta u_1 - \tilde{P}_n \sgc{}(u_1)^3\cgs{}
  \rangle_{L^2(D;\R)}
\\ & \quad  
  +
  \tfrac12
  \|
    \tilde{P}_n B_2(u_1,u_2) 
  \|^2_{\HS(\calH_n, L^2(D;\R))}
\\ &
  =
  \tfrac12
  \|
    \tilde{P}_n B_2(u_1,u_2) 
  \|^2_{\HS(\calH_n,L^2(D;\R))}
  \leq 
  \tfrac{
    \eta^2 
  }{2}
  \sgc{}.\cgs{}
\end{aligned}
\end{equation}
It follows from~\cite[Lemma 2.2]{gk96b}
that for all $(x,n)\in H\times \N$
the $(\calF_{t})_{t\in [0,T]}$-adapted
processes $(X^{x,n}_t)_{t\in [0,T]}$ 
satisfying~\eqref{eq:Galerkin} exist.
Note that the processes $(X^{x,n}_t)_{t\in [0,T]}$ are Galerkin approximations to a certain type of non-linear stochastic wave equation\sgc{};\cgs{} see also Remark~\ref{rem:wave_Holder} below. Moreover, observe that~\eqref{eq:wave_GmusigmaU} and Corollary~\ref{cor:exp_mom} \sgc{}with\cgs{} $U=U$, $\bar{U}\equiv-\frac{\eta^2}{2}$ and $\alpha =0$ imply for all \sgc{}$x \in H$, $t\in [0,T]$\cgs{} that 
\begin{equation}
 \E \left[
    \exp( U( X^{x,n}_t) )
 \right] 
 \leq 
 e^{t \eta^2 /2 + U( P_n x)}.
\end{equation}
\sgc{}We\cgs{} now define
$ V\colon H \times H \rightarrow \R$
by
\begin{equation}\label{eq:NWE_defV}
 V(u,v) = \| u - v \|_{H}^2 
\end{equation}
for all \sgc{}$u,v \in H$. Observe that~\eqref{eq:spde_LipB} and~\eqref{eq:NWE_defV} show that 
for all $n\in \N$, $p\in (2,\infty)$, $u = (u_1,u_2), v= (v_1,v_2) \in H_n$\cgs{}
it holds that
\begin{equation}\label{eq:wave_Gsigma}
\begin{aligned}
  \left[ 
    \tfrac{1}{2/p} -1
  \right]
  \frac{ 
    \| 
      (\overline{G}_{\sigma_n}V)(u,v)
    \|_{H}^2
  }{
    | V(u,v) |^2
  } 
  & =
  \left[ 
    \tfrac{p-2}{2}
  \right]
  \sgc{}\left[
  \frac{ 
    \left\| 
      (\sigma_n(u) - \sigma_n(v))^*(u-v)
    \right\|_{L^2(D;\R)}^2
  }{
    \| u-v \|_{H}^4
  }
  \right]\cgs{}
\\ & \leq
  \left[ 
    \tfrac{p-2}{2}
  \right]
  \sgc{}\left[
    \frac{
    \| B_2 \|_{\operatorname{Lip}(H,\HS(\calH,L^2(D;\R)))}^2
    \| u - v \|_{H}^4
  }{
    | V(u,v) |^2
  }
  \right]\cgs{}
\\ & =
  \left[ 
    \tfrac{p-2}{2}
  \right]
    \varsigma^2
\end{aligned}
\end{equation}
\sgc{}and\cgs{}
\begin{equation}\label{eq:waveGmusigma1}
\begin{aligned}
&
  \frac{ 
  (\overline{\calG}_{\mu_n,\sigma_n}V)
    (u,v)
 }{
  V(u,v)
 }
\\ & 
 =
 \frac{ 
      \left\langle
	\left(
	  \begin{array}{c}
	    u_1 - v_1 - \Delta(u_1-v_1)
	  \\ 
	    u_2 - v_2
	  \end{array}
	\right)
	,
	\left(
	  \begin{array}{c}
	    u_2 - v_2
	  \\ 
	    \tilde{P}_n (\Delta u_1 - u_1^3) 
	    -
	    \tilde{P}_n (\Delta v_1 - v_1^3)
	  \end{array}
	\right)
      \right\rangle_{L^2(D;\R)}
 }{
    V(u,v)
 }
\\ & \quad
 +
 \frac{ 
   \| 
    \tilde{P}_n 
      (B_2(u) - B_2(v)) 
  \|_{\HS(\calH_n, L^2(D;\R))}^2
 }{	
  2 V(u,v)
 }
\\ & 
  \leq 
  \frac{
    \frac{1}{2}
    \left(
      \| u_1 - v_1 \|_{L^2(D;\R))}^2
      +
      \| u_2 - v_2 \|_{L^2(D;\R))}^2
    \right)
    +
    \left\langle
      u_2 - v_2,
      - u_1^3 + v_1^3 
    \right\rangle_{L^2(D;\R))}
 }{
      V(u,v)
 }
\\ & \quad
  +
  \frac{ 
   \varsigma^2
      \| u -v \|_{H}^2
   }{	
    2 | V(u,v) |
   }
.
\end{aligned}  
\end{equation}
By similar arguments as used
to obtain~\eqref{eq:Laplace_interpolation}
we have that there exists a 
$\kappa\colon (0,\infty)\rightarrow (0,\infty)$
such that for all $r\in (0,\infty)$,
$(u_1,u_2)\in H$
it holds that
\begin{equation}\label{eq:kappa_wave}
\begin{aligned}
  & \| u_1 \|_{L^6(D;\R)}^2
  \leq 
  \kappa(r) \| u_1 \|_{L^2(D;\R)}^2
  +
  r\| \nabla u_1 \|_{L^2(D;\R^2)}^2
\\ &  \leq
  \tfrac{(\kappa(r))^2|D|}{2r}
  +
  \tfrac{r}{2} \| u_1 \|_{L^4(D;\R)}^{4}
  +
  r \| \nabla u_1 \|_{L^2(D;\R^2)}^2
  \leq  
  \tfrac{(\kappa(r))^2|D|}{2r}
  +
  2rU((u_1,u_2)). 
\end{aligned}
\end{equation}
It follows that 
for all $n\in \N$,
\sgc{}$u = (u_1,u_2), v=(v_1,v_2)\in H_n$,
$r\in (0,\infty)$\cgs{}
it holds that
\begin{equation}
 \begin{aligned}    
&    \left\langle
      u_2 - v_2,
      - u_1^3 + v_1^3 
    \right\rangle_{L^2(D;\R)}
\\  & =
    \int_{D}
      (u_1^2(x) + u_1(x)v_1(x) + v_1^2(x))
      (v_1(x) - u_1(x))
      (u_2(x) - v_2(x))
    \, dx
\\ &
  \leq  
  \left(
    \int_D
      | u_1^2(x)+ u_1(x)v_1(x) + v_1^2(x)|^{3}
    \,dx
  \right)^{\frac13}
  \| v_1 - u_1\|_{L^6(D;\R)}
  \| u_2 - v_2 \|_{L^2(D;\R)}
\\ &
  \leq
  6
  \left(
    \| u_1 \|_{L^{6}(D;\R)}^2
    +
    \| v_1 \|_{L^{6}(D;\R)}^2
  \right)
  \left(
    \| v_1 - u_1\|_{L^6(D;\R)}^2
    +
    \| u_2 - v_2 \|_{L^2(D;\R)}^2
  \right)
\\ & 
  \leq
  6 \max\{ 1,\kappa(1)\}
  \left(
    \tfrac{
      (\kappa(r))^2|D|
    }{ r }
    +
    2r(U(u) + U(v))
  \right)
\\ & \quad \cdot
  \left(
    \| v_1 - u_1\|_{W^{1,2}(D;\R)}^2
    +
    \| u_2 - v_2 \|_{L^2(D;\R)}^2
  \right)
\\ &
  =
  6 \max\{ 1,\kappa(1)\}
  \left(
    \tfrac{
      (\kappa(r))^2|D|
    }{ r }
    +
    2r(U(u) + U(v))
  \right)
  V(u,v).
  \end{aligned}
\end{equation}
For all $q\in [1,\infty)$ define
$ 
  r_{q} 
  := 
  \left(
    24 \max\{ 1,\kappa(1) \}
    q T e^{\eta T}
  \right)^{-1}.
$
Taking $r = r_q$ 
in the above estimate and 
inserting this estimate into~\eqref{eq:waveGmusigma1}
and taking~\eqref{eq:wave_Gsigma} into account gives
that for all \sgc{}
$
    q\in (2,\infty)
$,
$ n\in \N $,
$u= (u_1,u_2), v = (v_1,v_2)\in H_n$ \cgs{}
it holds that
\begin{equation}
\begin{aligned}
&
  \max\left\{ 
   0,
    \frac{ 
    (\overline{\calG}_{\mu_n,\sigma_n}V)((u_1,u_2),(v_1,v_2))
    }{
      V((u_1,u_2),(v_1,v_2))
    }
  +
  \frac{
    \| 
      (\overline{G}_{\sigma_n}V)((u_1,u_2),(v_1,v_2))
    \|_{H}^2
  }{
    2 | V((u_1,u_2),(v_1,v_2)) |^2
  }
  \right\}
\\ & 
  \leq
  \tfrac{1}{2}
  +
  \tfrac{
    6 \max\{ 1,\kappa(1)\}
    (\kappa(r_q))^2|D|
  }{ r_q }
  +
  \frac{U(u) + U(v)}
  { 2 q T e^{\eta T}}
  +
  \varsigma^2
  . 
\end{aligned}  
\end{equation}
Moreover, it follows from~\eqref{eq:wave_GmusigmaU}
that for all \sgc{}$n\in \N$, $u = (u_1,u_2) \in H_n$, $t\in [0,T]$\cgs{}
it holds that
\begin{equation}
 \left(
  \calG_{\mu_n,\sigma_n} U
 \right)
 (u)
 +
 \tinv{2e^{\eta t}}\| \sigma_n(u)^*(U')(u) \|_{H}^2
 \leq 
 \tfrac{\eta^{2}}{2}(1+\| u_2\|_{L^2(D;\R)}^2)
 \leq  
 \tfrac{\eta^{2}}{2} + \eta^{2} U(u).
\end{equation} 
It thus follows from Theorem~\ref{thm:UV2} 
\sgc{}with\cgs{}
$k=1$, $\theta=2$, $\rho=\infty$, 
$ \alpha_{1,0,1} = \eta^{2}$,
$ \alpha_{0,0,1} = \alpha_{0,1,1} = \alpha_{1,1,1} = 0$,
$ \beta_{1,0,1} = \frac{\eta^{2} }{2}$,
$ \beta_{0,0,1} = \beta_{0,1,1} = \beta_{1,1,1} = 0$,
$ q_{1,0,1} = q $,
$ q_{0,0,1} = q_{0,1,1} = q_{1,1,1} = \infty$,
$ c_0 \equiv \frac{(p-2)\varsigma^2}{2}$,
$
  c_1 
  \equiv 
  \frac{1}{2}
  +
  \varsigma^2 
  + 
  \frac{	
    6 \max\{ \kappa(1), 1\}
    (\kappa(r_q))^2|D|
  }{ r_q}
$,
$ U_{1,0,1} = U$,
$ U_{0,0,1} = U_{0,1,1} = U_{1,1,1}
= \overline{U}_{0,1,1} = \overline{U}_{1,1,1} \equiv 0$
\sgc{}that\cgs{} 
for all
\sgc{}$n\in \N$,
$ x,y  \in H_n$,
$ r \in (2,\infty)$,
$p,q\in (r,\infty)$
with  
$ 
  \frac{ 1 }{ p } + \frac{ 1 }{ q }
  =
  \frac{ 1 }{ r } 
$\cgs{}
it holds that
\begin{equation}
\begin{aligned}
& \left\|
  \sup_{t\in [0,T]}
    \| X_t^{x,n} - X_t^{y,n} \|_{H}
 \right\|_{L^r(\Omega;\R)}
  \leq
  \frac{
    \| x - y \|_{H} 
  }
  { 
    \sqrt{ 1 - 2/p} 
  }
\\ & \quad \cdot
  \exp\!\left(
    \tfrac{T}{2}
    + \tfrac{p\varsigma^2T}{2}
    + \tfrac{ 6 T \max\{ \kappa(1), 1\} (\kappa(r_q))^2|D|}{ r_q }
    +  \tfrac{(e^{\eta^{2} T} + \eta^{2} T -1) }{2\eta^{2} Tq}
    + \tfrac{U(x) +U(y)}{2q}
 \right).
\end{aligned}
\end{equation}

\begin{remark}\label{rem:wave_Holder} 
Assume that $\mathcal{H} = L^2(D;\R)$, assume that there exist a Lipschitz continuous $b\colon \R\rightarrow \R$ and a positive trace class operator $Q\in L(\mathcal{H})$ such that $b(0)=0$ and $(B_2((u_1,u_2))v)(\xi) = b(u_1(\xi))\sqrt{Q}v$ for all $(u_1,u_2)\in H$, $v\in \mathcal{H}$, $\xi\in D$ and assume that there exists a non-empty and compact $K \sgc{}\subseteq\cgs{} \R^2$ such that
$
  D\supseteq
  \{
    x\in \R^2
    \colon
    \inf_{y\in K} \| x - y \| < T
  \}.
$
Under some additional regularity assumptions on $Q$~\sgc{}\cite[Proposition 4.1]{MilletMorien2001}\cgs{} implies for every 
$(u_0,v_0)\in H$ with $\operatorname{supp}(u_0)\cup \operatorname{supp}(v_0)\subseteq K$ and $u_0\in C^1(D,\R)$ that there exists a\sgc{} stochastic process 
$ u \colon [0,T] \times \Omega \rightarrow L^p(D;\R)$ with continuous sample paths\cgs{} which is a mild solution to 
the following stochastic wave equation:
\begin{equation}
\begin{aligned}
 \tfrac{\partial^2}{\partial t^2} u(t,\xi)
& =
 \Delta_s u(t,\xi)
 -
 u^3(t,\xi)
 +
 b\left(
  u(t,\xi)
 \right)
 \sqrt{Q} \dot{W}_t(\xi \big), \quad (t,\xi) \in [0,T]\times D,
\\
 u(0,\xi) 
 &
 = u_0(\xi);
\\
 \tfrac{\partial}{\partial t} u(0,\xi) 
 &
 = v_0(\xi), \quad \xi\in D.
\end{aligned}  
\end{equation}
The processes $(X^{n,(u_0,v_0)}_t)_{t\in [0,T]}$, $n\in \N$, are Galerkin approximations of $(u, \frac{\partial}{\partial t}u)$.  However, it is beyond 
the scope of this article to verify that 
$ \lim_{n\rightarrow \infty} X^{n,(u_0,v_0)} = (u, \frac{\partial}{\partial t}u) $ in some sense.
\end{remark}

\chapter*{Acknowledgments}

We thank Dirk Bl\"{o}mker for helpful 
remarks on the Cahn-Hilliard-Cook equation, 
Annika Lang for useful discussions
on the Kolmogorov-Chentsov continuity criterion,
Sara Mazzonetto for pointing out some typos in a preliminary version,
Siddhartha Mishra for references on the non-linear
wave equation,
Michael R\"{o}ckner
for useful comments on the extended generator,
Andrew Stuart for fruitful discussions on
the stochastic Lorenz equation and anonymous referees for 
useful comments.

This work has been partially funded by ETH Fellowship ``Convergence rates for approximations of stochastic (partial) differential equations with non-globally Lipschitz continuous coefficients'', 
by the Netherlands Organization for Scientific Research (NWO) under the VENI Vernieuwingsimpuls with project number 639.031.549, 
and by the Deutsche Forschungsgemeinschaft (DFG, German Research Foundation) under the research project ``Numerical approximation of stochastic differential equations with non-globally Lipschitz continuous coefficients''. 
Moreover, this work has been partially funded by the by the European Union (ERC, MONTECARLO, 101045811). 
The views and the opinions expressed in this work are however those of the authors only and do not necessarily reflect those of the European Union or the European Research Council (ERC). 
Neither the European Union nor the granting authority can be held responsible for them. In addition, the third author gratefully acknowledges the Cluster of Excellence EXC 2044-390685587, Mathematics M\"unster: Dynamics-Geometry-Structure funded by the Deutsche Forschungsgemeinschaft (DFG, German Research Foundation).
%


\bibliographystyle{amsalpha}
\bibliography{bibfile}

\end{document}